\tikzset{mid vert/.style={/utils/exec=\tikzset{every node/.append style={outer sep=0.8ex}},
postaction=decorate,decoration={markings,
mark=at position 0.5 with {\draw[-] (0,#1) -- (0,-#1);}}},
  mid vert/.default=0.75ex}
\tikzset{shorten <>/.style={shorten >=#1,shorten <=#1}} 
\newcommand{\Sp}{\operatorname{Sp}}
\subjclass[2010]{18D05,18D20,19K14,37C25}
\title{Iterated traces in 2-categories and Lefschetz theorems}
\date{\today}
\author{Jonathan A. Campbell}
\email{jonathan@math.duke.edu, jonalfcam@gmail.com}
\author{Kate Ponto} 
\email{kate.ponto@uky.edu}
\begin{document}
\maketitle
\begin{abstract}
While not obvious from its initial motivation in linear algebra, there are many context where iterated traces can be defined.  
In this paper we  prove a very general theorem about iterated 2-categorical traces. We show that many Lefschetz-type theorems in the literature are consequences of this result and the new perspective we  provide allows for immediate spectral generalizations. 

\end{abstract}

\setcounter{tocdepth}{1}
\tableofcontents

  \section{Introduction}

In recent years, there has been a proliferation of Lefschetz-type theorems in noncommutative geometry \cite{ben_zvi_nadler_1,ben_zvi_nadler_2,polishchuk,shklyarov,lunts,cisinski_tabuada, hoyois}.
These results are comparisons of invariants and in their simplest form they compare the dimension of the Hochschild homology of a bimodule with the trace of the map induced by tensoring with that bimodule.  For example Lunts \cite{lunts} showed that for a sufficiently nice dg-algebra $A$ and perfect $(A,A)$-bimodule $M$, 
  \[
  \sum_i (-1)^i\dim \HH_i (A;M) = \sum_i \tr\left(\HH_i\left(\Mod^\mathrm{perf}_A\right)\xto{\HH_i (-\otimes M)}\HH_i\left(\Mod^\mathrm{perf}_A\right)\right)
  \]  

In this paper we describe the underlying formal structure for these theorems and demonstrate that they are consequences of the following generalization of the main result of \cite{ben_zvi_nadler_1}.   
\begin{thm}[\cref{thm:main_umbra,thm:monoidal_to_umbra}]\label{thm:main_bzn}
  Let $X, Y$ be endomorphism 1-cells in a symmetric monoidal bicategory $\mc{B}$ where all 0-cells are 2-dualizable (\cref{defn:2_dualizable}).  If $\phi\colon X \odot Y \to Y \odot X$ is a 2-morphism and the traces of $\phi$ with respect to $X$ and $Y$ are defined then
  \[
	\tr_X\left( \tr_Y(\phi)\right)=\tr_Y\left( \tr_X(\phi)\right)
  \]
\end{thm}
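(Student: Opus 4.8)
The plan is to write both iterated traces as single 2-cells of $\mc{B}$ assembled from the same pieces — the 2-cell $\phi$, the (co)evaluations witnessing dualizability of the 1-cells $X$ and $Y$, the (co)evaluations witnessing 2-dualizability of the 0-cell on which they live, and the structure constraints (associators, unitors, symmetries) of the symmetric monoidal bicategory — and then to show that these two 2-cells agree by a coherence/graphical argument. First I would make the shadow explicit: since all 0-cells are 2-dualizable, a 0-cell $A$ with dual $A^\vee$ supplies, via the symmetry, a canonical shadow $\langle\langle - \rangle\rangle$ on endomorphism 1-cells of $A$ valued in the symmetric monoidal category $\mc{B}(\mathbb{1},\mathbb{1})$ of endomorphisms of the monoidal unit $\mathbb{1}$, and $\tr_Y(\phi)$ is then built in the usual way: wrap the $Y$-strand of $\phi$ around using the coevaluation of $Y$, apply $\phi$, use cyclicity of the shadow (equivalently, slide along the $A^\vee$--$A$ coevaluation via $\sigma$), and collapse with the evaluation of $Y$. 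This produces a 2-cell in which only $X$-data survive; taking $\tr_X$ wraps the $X$-strand around in the same manner, and the result is a ``scalar'' 2-cell $U_\mathbb{1}\to U_\mathbb{1}$ represented by a closed surface diagram: the coupon $\phi$ carrying an $X$-loop and a $Y$-loop, the two loops linked through $\phi$ by the symmetry.

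Carrying out the identical unwinding for $\tr_Y(\tr_X(\phi))$ gives the same closed diagram with the roles of the $X$-loop and the $Y$-loop interchanged — equivalently, with one loop formed on the inside of the other rather than on the outside. The second and main step is to exhibit these two diagrams as equal 2-cells. After strictifying $\mc{B}$ (by the coherence theorem for symmetric monoidal bicategories) I would present the equality as an ambient isotopy realized by a finite sequence of elementary moves, each justified by a structural law: naturality of the symmetry $\sigma$ to pass a strand across the coupon $\phi$ or across the other loop; the triangle (zig-zag) identities for the dualities of $X$, of $Y$, and of $A$ to straighten the cups and caps introduced by wrapping; and the interchange law together with the hexagon coherence to reassociate. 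Since isotopic diagrams in this calculus denote equal 2-cells, $\tr_X(\tr_Y(\phi))=\tr_Y(\tr_X(\phi))$.

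I expect the genuine difficulty to be organizational rather than concentrated in one hard step, which is why the proof is routed (as the statement's references to \cref{thm:main_umbra} and \cref{thm:monoidal_to_umbra} indicate) through an intermediate abstraction. A head-on diagram chase in $\mc{B}$ forces one to juggle all the coherence data of the symmetric monoidal bicategory and of the shadow at once; the economical route is to axiomatize exactly the data and identities the argument consumes — an ``umbra'' — prove the commutativity of iterated traces once at that level (\cref{thm:main_umbra}), and then check that 2-dualizability of the 0-cells really does produce such a structure (\cref{thm:monoidal_to_umbra}), the delicate part there being that the canonically constructed shadow is suitably cyclic and compatible with $\sigma$. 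Finally I would emphasize that symmetry of the monoidal product is essential and not a convenience: it is precisely what allows the two loops to be slid past one another, and without it iterated traces do not commute in general.
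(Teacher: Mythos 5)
You have the right architecture --- indeed you name the paper's own factorization through \cref{thm:main_umbra} and \cref{thm:monoidal_to_umbra} --- but the proof you actually propose for the central step is the one the paper deliberately refuses to make, and as written it has a gap. Your argument rests on ``since isotopic diagrams in this calculus denote equal 2-cells,'' i.e.\ on a coherence/strictification theorem for surface diagrams in a symmetric monoidal bicategory equipped with dual pairs of 1-cells, dualizable 0-cells, and the shadow. No such theorem is cited or proved, and the paper explicitly declines to treat its pictures as proofs for exactly this reason: the relevant coherence result is not off the shelf, the framings are delicate (the trace is a figure-8 rather than a circle, and the two legs of the torus in \cref{fig:motivation_iterated_traces} must switch sides), and the cobordism hypothesis does not apply because the coupon $\phi$ and the shadow isomorphism are 2-dimensional data with no manifold analogue. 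So ``ambient isotopy realized by elementary moves'' is not a proof until each move is an identified algebraic identity and the bookkeeping of which identities suffice is actually carried out; that bookkeeping is the content of the paper, not an organizational convenience.

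Concretely, what replaces your isotopy step is this. One extracts from the 2-dualizability data four functors to $\mc{B}(I,I)$ (the shadow, a second shadow built from the duals of $C$ and $E$, and two auxiliary ``half-shadows'') together with splitting maps, forming a \emph{penumbra}; the one nontrivial axiom beyond naturality and the triangle identities is the single square \cref{eq:big_square} of \cref{defn:umbra}, relating the shadow cyclicity isomorphisms to the symmetry of $\mathbf{T}=\mc{B}(I,I)$. That square is verified not by isotopy but by showing both composites agree with an explicitly defined ``3-fold twisting map'' via circuit-diagram computations (\cref{fig:compare_3_shadows}), each region of which commutes by naturality of $\gamma$ or by the $\triangle$-identities. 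Once the umbra axioms are in hand, the equality $\tr_X(\tr_Y(\phi))=\tr_Y(\tr_X(\phi))$ is a finite diagram chase (\cref{fig:commuting_traces}) in which every small region commutes by naturality and the large central square is precisely \cref{eq:big_square}. If you replace your appeal to isotopy with this explicit verification --- or supply and prove the coherence theorem you are implicitly using --- your outline becomes the paper's proof.
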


There are many examples of bicategories that satisfy the conditions of \cref{thm:main_bzn}.  
Identifying these allows us to demonstrate that seemingly unrelated theorems are immediate consequences of \cref{thm:main_bzn}.
\begin{itemize}
\item   The bicategory of dg-categories and their bimodules recovers the results of \cite{lunts, cisinski_tabuada,polishchuk}.
\item  In the bicategory of spectral categories and their bimodules, \cref{thm:main_bzn} extends the main result of \cite{lunts,cisinski_tabuada} to the following new result.
\end{itemize}  

\begin{thm}[\cref{thm:lunts_2}]
  Let $\mc{A}$ be a smooth, proper spectral category with $\mc{M}$ an $(\mc{A},\mc{A})$-module. Then 
\[
\chi (\thh(\mc{A}; \mc{M})) = \tr (\thh (- \sma \mc{M})) 
\]
where $- \sma \mc{M}$ is a  functor $_{\mc{A}} \Mod_{\mc{A}} \to _{\mc{A}} \Mod_{\mc{A}}$ given by tensoring with $\mc{M}$. 
\end{thm}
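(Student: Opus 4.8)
The plan is to exhibit both sides of the asserted identity as iterated bicategorical traces of a single $2$-morphism in a symmetric monoidal bicategory of spectral categories, taken in the two possible orders, and then to invoke \cref{thm:main_bzn}. Concretely, I would work in the symmetric monoidal bicategory $\mc{B}$ whose $0$-cells are smooth and proper spectral categories, whose $1$-cells are bimodules, and whose $2$-cells are bimodule maps, with monoidal product $\sma$; this is the spectral analogue of the bicategory of dg-categories that (as the introduction indicates) recovers the results of \cite{lunts,cisinski_tabuada}. Three facts feed the argument, all of which I would cite or record first: (i) every $0$-cell of $\mc{B}$ is $2$-dualizable in the sense of \cref{defn:2_dualizable}, since that is exactly the condition ``smooth and proper,'' and smoothness and properness are inherited by $_{\mc{A}}\Mod_{\mc{A}}$, so it too is an object of $\mc{B}$; (ii) a perfect $(\mc{A},\mc{A})$-bimodule, such as $\mc{M}$ (an object of $_{\mc{A}}\Mod_{\mc{A}}$), is dualizable as a $1$-cell, so the traces with respect to it are defined; and (iii) the bicategorical trace of an endomorphism bimodule is its topological Hochschild homology, $\tr_{\mc{A}}(\mc{M})\simeq \thh(\mc{A};\mc{M})$, a perfect spectrum because $\mc{A}$ is smooth and proper.

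Granting these, I would carry out the identification as follows. View $\mc{M}$ as an endomorphism $1$-cell of $\mc{A}$ carrying its identity $2$-morphism; because the $0$-cell $\mc{A}$ and the $1$-cell $\mc{M}$ are both dualizable there are two essentially different ways to contract this datum all the way down to a scalar (an element of $\pi_0$ of the sphere, hence an integer). Contracting the $0$-cell $\mc{A}$ first yields $\thh(\mc{A};\mc{M})$; contracting the resulting self-map of this perfect spectrum --- that is, taking the trace of its identity in the symmetric monoidal category of spectra --- is by definition $\chi(\thh(\mc{A};\mc{M}))$, the left-hand side. Contracting in the other order reorganizes $\mc{M}$ into data on the hom-category $\mc{B}(\mc{A},\mc{A})\simeq {}_{\mc{A}}\Mod_{\mc{A}}$: a Morita comparison identifies the bimodule that $\mc{M}$ becomes with the one representing the endofunctor $-\sma\mc{M}$ of $_{\mc{A}}\Mod_{\mc{A}}$, so that the same double contraction now first forms $\thh$ of this endofunctor of the smooth proper spectral category $_{\mc{A}}\Mod_{\mc{A}}$ and then the trace of the resulting self-map of $\thh({}_{\mc{A}}\Mod_{\mc{A}})$, which is $\tr(\thh(-\sma\mc{M}))$, the right-hand side. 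With both sides displayed as $\tr_X\tr_Y(\phi)$ and $\tr_Y\tr_X(\phi)$ for a suitable choice of endomorphism $1$-cells $X$, $Y$ (one encoding $\mc{M}$, the other, over the unit $0$-cell, encoding the Euler-characteristic step) and coherence $2$-cell $\phi$, \cref{thm:main_bzn} closes the argument.

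The step I expect to be the main obstacle is the second identification: verifying that ``contract $\mc{M}$ first'' is genuinely computed by $\thh(-\sma\mc{M})$ together with its trace of spectra. This requires (i) the Morita invariance of $\thh$-with-coefficients, identifying the relevant $\thh$ and induced self-map with the bicategorical data attached to $\mc{M}$ over $\mc{A}$, and (ii) enough functoriality and symmetric monoidality of $\thh$ on spectral categories that applying $\thh$ commutes with the formation of the trace in question, so that $\thh(-\sma\mc{M})$ is literally the one-step contraction needed. Tracking the duality data and the coherence $2$-cell $\phi$ of \cref{thm:main_bzn} through these comparisons is where the real bookkeeping lies; once it is done, the theorem is an immediate consequence of \cref{thm:main_bzn}, exactly as in the dg case.
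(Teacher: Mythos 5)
Your proposal is correct and follows essentially the same route as the paper: apply \cref{thm:main_bzn} to the unit coherence $2$-cell $U_{\mc{A}}\odot\mc{M}\cong\mc{M}\odot U_{\mc{A}}$, identify the trace over $U_{\mc{A}}$ followed by the symmetric monoidal trace with $\chi(\sh{\mc{M}})$ (the paper's \cref{thm:lunts_1}), and identify the trace over $\mc{M}$ followed by the symmetric monoidal trace with $\tr(\sh{-\odot\mc{M}})$ via the Morita comparison (the paper's \cref{thm:pol_2}, resting on \cref{thm:cp}). The "main obstacle" you flag is exactly the step the paper outsources to \cite[5.21]{campbell_ponto}, so nothing essential is missing.
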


In the two examples above the dualizability conditions have similar and familiar flavors.  There are other examples of the bicategories satisfying \cref{thm:main_bzn} where the dualizability conditions are more difficult to describe.  We record these bicategories here, but we do not address the consequences of \cref{thm:main_bzn} in this paper. 
\begin{itemize}
\item Dualizablity in the  bicategory of parameterized spectra \cite{may_sigurdson} 
can be satisfied in the $K(n)$-local category (here $K(n)$ is Morava $K$-theory).
\item 
The 2-category of varieties discussed in \cite{caldararu_mukai} is also an example. Much of the theory of Fourier-Mukai transforms fits inside the framework of this paper. This is current work.
\item Finally, it would also be interesting to know if this formalism can be applied to matrix factorizations (see e.g. \cite{polishchuk_vaintrob}) or knot and link invariants (see e.g.\cite{quantum_link}).
  \end{itemize}

Ben-Zvi and Nadler were motivated by a very different application of \cref{thm:main_bzn}. 
Motivated by \cite{Hopkins-Kuhn-Ravenel}, Ganter and Kapranov  \cite{ganter_kapranov} generalized representations and characters  to 
2-representations and 2-characters.  
The 2-characters of \cite{ganter_kapranov} are iterated traces in the sense of \cref{thm:main_bzn} and in this context, \cref{thm:main_bzn} implies the definition of the 2-character is independent of choices.  

A fundamental observation in representation theory is that the character of the representation is conjugation invariant.
One of the main results of 
 \cite{ben_zvi_nadler_1} shows 2-characters have a much richer invariance than characters.  
\begin{thm}\cite{ben_zvi_nadler_1} \label{lem:bzn_invariance}
2-class functions have an $SL_2(\bZ)$ action and 2-characters are $SL_2 (\Z)$-invariant.
\end{thm}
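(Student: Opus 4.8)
The plan is to separate the statement into two essentially independent pieces: constructing the $SL_2(\Z)$-action on $2$-class functions, which is elementary, and checking that $2$-characters are fixed by it, which is fundamentally a repackaging of \cref{thm:main_bzn}. For the first piece, recall that a $2$-class function on $G$ is an (equivariant, suitably valued) function on the set of commuting pairs $(g,h)$ in $G$, equivalently on $\mathrm{Hom}(\Z^2,G)$ modulo conjugation. The group $SL_2(\Z)\subset GL_2(\Z)=\mathrm{Aut}(\Z^2)$ acts on $\mathrm{Hom}(\Z^2,G)$ by precomposition; explicitly $\begin{pmatrix}a&b\\c&d\end{pmatrix}\cdot(g,h)=(g^{a}h^{c},\,g^{b}h^{d})$, which is well defined because $g$ and $h$ commute and which respects conjugacy classes. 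This is a bona fide group action -- it is just the functoriality of $\mathrm{Hom}(-,G)$, so there are no relations to verify -- and pulling back $2$-class functions along it (together with the change-of-value datum carried by the chosen orientation, which is why one sees $SL_2(\Z)$ rather than $GL_2(\Z)$) produces the required action. No trace theory enters here. For what follows it is convenient to record the generators $S\colon(g,h)\mapsto(h,g^{-1})$ and $T\colon(g,h)\mapsto(g,gh)$.

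Since $S$ and $T$ generate $SL_2(\Z)$, a $2$-class function fixed by both is fixed by the whole group, so I would reduce invariance of the $2$-character $\chi$ of a $2$-representation $\rho$ to invariance under $S$ and under $T$. The $T$-relation $\chi(g,gh)=\chi(g,h)$ is the formal one: unwinding \cite{ganter_kapranov}, computing $\chi(g,h)$ amounts to first passing to the bicategorical trace of the structure $1$-cell $\rho(g)$ and then taking the trace of the endomorphism of it induced by $\rho(h)$; replacing $h$ by $gh$ changes that endomorphism only by the multiplicative coherence $\rho(gh)\simeq\rho(g)\odot\rho(h)$, which cyclicity and $2$-functoriality of the bicategorical trace absorb. \cref{thm:main_bzn} is not needed for $T$.

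The key step -- and the only one genuinely using \cref{thm:main_bzn} -- is $S$-invariance. Writing $X=\rho(g)$, $Y=\rho(h)$, and $\phi=\phi_{g,h}\colon X\odot Y\to Y\odot X$ for the $2$-cell witnessing $gh=hg$, the two readings of the $2$-character give $\chi(g,h)=\tr_{X}\bigl(\tr_{Y}(\phi)\bigr)$. The hypotheses of \cref{thm:main_bzn} hold -- $\rho$ takes values in a symmetric monoidal bicategory whose $0$-cells are $2$-dualizable and the traces in question are defined -- so $\tr_{X}\bigl(\tr_{Y}(\phi)\bigr)=\tr_{Y}\bigl(\tr_{X}(\phi)\bigr)$. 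The right-hand side is again an iterated-trace formula for a $2$-character with the two elements exchanged; the only discrepancy is that swapping the order of the traces forces one to re-read $\tr_{X}=\tr_{\rho(g)}$ through the duality $\rho(g)^{\vee}\simeq\rho(g^{-1})$ (available because $\rho(g)$ is an equivalence, hence dualizable), which is precisely what produces the inverse in $S\colon(g,h)\mapsto(h,g^{-1})$. Hence $\chi(S\cdot(g,h))=\chi(g,h)$, and with the $T$-relation this gives full $SL_2(\Z)$-invariance.

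The conceptual content is thus entirely \cref{thm:main_bzn}; the remaining work, and the main obstacle, is bookkeeping. One must fix the iterated-trace presentation of the $2$-character precisely enough that the $T$-move is visibly cyclicity and the $S$-move is visibly the trace-swap of \cref{thm:main_bzn}, and in particular thread the dualizations $\rho(g)\leftrightarrow\rho(g^{-1})$ carefully enough to see that the involutive swap of the two traces is correctly upgraded to the order-four generator $S$ -- i.e.\ to reconcile the fact that $S$ has order $4$ in $SL_2(\Z)$ with the trace-swap being an involution. One should also check the routine point that a $2$-character really is a $2$-class function (conjugation-equivariance follows from functoriality of $\rho$ and conjugation-invariance of traces), so that the equivariance assertion is meaningful.
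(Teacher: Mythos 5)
Your proposal is correct in outline and isolates the same two mathematical inputs the paper uses ($T$-invariance from horizontal cyclicity of the trace, $S$-invariance from the iterated-trace swap of \cref{thm:main_bzn}), but it packages the theorem quite differently from the paper. You construct the $SL_2(\Z)$-action group-theoretically, as precomposition on $\mathrm{Hom}(\Z^2,G)$, so the group relations hold for free and you only need to check invariance of the character under the generators $S$ and $T$. The paper instead works one level up: it defines $S$ and $T$ as operations on (equivalence classes of) $2$-cells of an arbitrary double category with companions and conjoints, proves that the iterated trace is a ``double shadow'' invariant under both operations by pasting arguments, and then must separately verify the $SL_2(\Z)$ relations (\cref{lem:compareststst}), which hold only on invertible endomorphism $2$-cells and under extra hypotheses --- whence the paper's remark that in general one only gets invariance under the free group $\langle S,T\rangle$. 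Your route is shorter and entirely adequate for the stated group-theoretic theorem; the paper's route buys the generalization to arbitrary fibrant double categories and makes visible exactly which structure ($\cn{\cm{X}}=\cm{\cn{X}}$, invertibility, etc.) is needed for the modular group to act at all. Do be aware that the step you defer as ``bookkeeping'' --- identifying the swapped iterated trace $\tr(\tr_{\rho(h)}(\phi)\colon\dsh{\rho(g)}\to\dsh{\rho(g)})$ produced by \cref{thm:main_bzn} with the character of the new pair $(h,g^{-1})$, via $\dsh{\rho(g)}\simeq\sh{\rho(g)^{\vee}}\simeq\sh{\rho(g^{-1})}$ --- is not negligible: it is precisely what the paper's companion/conjoint pasting diagrams (\cref{fig:triv_s_sh}) accomplish, and it needs \cref{trace_with_dual} together with the compatibility of the chosen coevaluation/evaluation data for an invertible $1$-cell. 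As written your argument is a correct plan rather than a complete proof at that point, but the plan is sound.
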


In \cref{sec:two_characters} we prove a generalization of this theorem while avoiding the cobordism hypothesis.

\subsection*{Comparison to previous perspectives}
There are several important observations to make about \cref{thm:main_bzn} and the proof given here.  Like \cite{ben_zvi_nadler_1}, our proof of \cref{thm:main_bzn} is entirely formal.  In particular, new examples of symmetric monoidal bicategories with 2-dualizable objects would immediately give rise to new Lefschetz theorems.  

There are also two important points of contrast with \cite{ben_zvi_nadler_1}.  We state this result for symmetric monoidal bicategories rather than symmetric monoidal $(\infty,2)$-categories.  This simplifies exposition, but more importantly, a bicategory is the correct context for the examples of interest, since the underlying homotopy bicategory captures all information about dualizability in symmetric monoidal $(\infty,2)$-categories.  
Second, the proof we give here for \cref{thm:main_bzn} requires only minor generalizations of duality and traces in bicategories \cite{may_sigurdson,p:thesis,ps:bicat}.  It does not rely on the cobordism hypothesis. Indeed, it could not, since the cobordism hypothesis does not apply:  we use 2-dimensional data that does not have a manifold analog.

\subsection*{Organization}
The technical elements that go in to proving this theorem are somewhat formidable. There are two main technical hurdles: proving \cref{thm:main_bzn} and producing useful examples of symmetric monoidal bicategories. 

We begin the first of these with a review of bicategorical duality  in \cref{sec:duality}. This is a very terse review of the machinery that we will need for this paper. More leisurely and thorough treatments of this material can be found in \cite{p:thesis, ps:bicat, campbell_ponto}.

Before going on to proofs of the main theorem, we discuss the applications. We first give some further explanation of \cref{thm:main_bzn} in \cref{sec:explanation_main}.
\cref{sec:two_characters} discusses our first application: the modular invariance of 2-characters. This first appeared in \cite{ben_zvi_nadler_1}. We also discuss a generalization to $n$-characters.
\cref{sec:lefschetz} shows that a variety of Lefschetz-type theorems follow from \cref{thm:main_bzn}. The only additional inputs are results from \cite{campbell_ponto} on Morita equivalence and traces. 

In \cref{sec:iterated} we discuss a formalism called \textit{umbras}, a variant of shadows. We show that in any bicategory equipped with an umbra we can verify the main theorem. By design, this section is purely formal, and the result becomes a diagram chase.

In \cref{sec:monoidal_bicat} we show any symmetric monoidal bicategory with suitably dualizable objects yields an example of an umbra. The verifications in this section are somewhat arduous, but doable, 2-category theory.
 This is the technical core of the paper --- the main categorical computations occur here. 
Some of the difficulty in this section is eased by the use of ``circuit diagrams''.

Finally, we produce interesting examples of symmetric monoidal bicategories in \cref{sec:enriched}. Many of the categories we work with are homotopical, i.e. possess a notion of equivalence much weaker than isomorphism, and enriched, i.e. have hom objects in some category rather than vanilla hom sets. To properly work with these examples, our bicategories must be suitably homotopical, and they must be symmetric monoidal. This work was done by Shulman \cite{shulman, shulman_symmetric} and we summarize his results in \cref{sec:enriched}.

  \subsection*{Acknowledgements} We thank David Ben-Zvi and David Nadler for writing the paper \cite{ben_zvi_nadler_1} which provided significant impetus and inspiration. We also thank John Lind for a very careful reading of an ealier version of this paper. He caught many errors in writing and thinking, and we especially thank him for educating us on how 2-characters should work. JC thanks  
the University of Kentucky for many pleasant visits.  
KP was partially supported by DMS-1810779 and the Royster Research Professorship.

\section{Duality and trace}\label{sec:duality}

In order to keep this paper fairly self-contained, 
we first review our perspective on duality and trace. All of this is developed in \cite{dold_puppe,lms, may_sigurdson,p:thesis,ps:bicat}, and nothing in \cref{sec:duality} 
is new, except, perhaps, for some examples. However, these sections demonstrate the wide applicability and utility of duality theory in category theory and higher category theory. The main point is that dualizability allows for the extraction of interesting invariants of the dualizable object.

\subsection{Symmetric monoidal duality}\label{sec:sym_duals}
We first recall duality theory in a symmetric monoidal category.

\begin{defn}\label{defn:symm_mon_duality}
  Let $(\mc{C}, \otimes,1)$ be a symmetric monoidal category. An object $X$ of $\mc{C}$ is \textbf{dualizable} if there is an object $Y$ of $\mc{C}$ and  maps
  \[
  \eta\colon  1 \to X \otimes Y \qquad \epsilon \colon Y \otimes X \to 1
  \]
  such that
  \begin{align*}
    &X \xrightarrow{\eta \otimes \id} X \otimes Y \otimes  X\xrightarrow{\id \otimes \epsilon} X \\
    &Y \xrightarrow{\id \otimes \eta} Y \otimes X \otimes Y \xrightarrow{\epsilon \otimes \id} Y 
  \end{align*}
are both the identity. 
\end{defn}

Throughout, it will be useful to have in mind particular categories. We work mostly in vector spaces, dg-algebras, dg-categories and spectra. 

\begin{eg}
A vector space $V$ over a field $k$ is dualizable if and only if it is finite dimensional. Its dual is given by $\operatorname{Hom}_k (V, k)$. 
\end{eg}

\begin{eg}
A spectrum $S$ is dualizable if and only if it is compact as an $S$-module. 
\end{eg}

The existence of a dual allows for the extraction of some interesting invariants.

\begin{eg}
  If $V$ is a vector space over $k$ and $V^\ast$ is its dual, then the composition
  \[
  k \xrightarrow{\eta} V^\ast \otimes V \xrightarrow{\cong} V \otimes V^\ast \xrightarrow{\epsilon} k
  \]
is an element of $\hom_k(k, k)$ and is multiplication by $\dim V$. 
\end{eg}

\begin{eg}
  If $X$ is a compact CW complex, $\Sigma^\infty_+ X$ is a dualizable spectrum, with dual $DX$ (this is the Spanier-Whitehead dual) then
  \[
  S \to \Sigma^\infty_+ X \sma DX \simeq DX \sma \Sigma^\infty_+ X \to S
  \]
  is a map in $[S, S] = \pi_0 (S) \cong \Z$ which is multiplication by $\chi(X)$, the Euler characteristic of $X$. 
\end{eg}

The above examples gives us more: inserting maps in various points give \textit{traces}.

\begin{eg}
  Let $f\colon V \to V$ be an endomorphism of a vector space $V$ over a field $k$. Then the composite
  \[
  k \xrightarrow{\eta} V \otimes V^\ast \xrightarrow{f \otimes \id} V \otimes V^\ast \xrightarrow{\cong} V^\ast \otimes V \xrightarrow{\epsilon} k
  \]
  is multiplication by $\operatorname{tr}(f)$. It is important to note there that the trace is a \textit{map} rather than a \textit{number}. 
\end{eg}

\begin{eg}
  Let $f\colon X \to X$ be a map of topological spaces. Then the composite
  \[
  S \to X \sma DX \xrightarrow{f \sma \id} X \sma DX \simeq DX \sma X \to S
  \]
  is the Lefschetz number $L(f)$. The Lefschetz theorem is a formal consequence of this fact \cite{dold_puppe}. 
\end{eg}

\subsection{Bicategorical duality}\label{sec:bicat_duals}

We move on to duality in bicategories. For definitions of bicategories see \cite{leinster}. The most useful bicategory to keep in mind is the Morita bicategory of rings, bimodules and bimodule maps.

\begin{notn}  We denote the bicategorical composition in a bicategory $\mc{B}$ by $\odot$.  If $A$ is an object of $\mc{B}$ we denote the identity 1-cell for $A$ by $U_A$.  
In the category of bimodules, $U_A =\, _A A_A$ and $\odot$ is the tensor product.
\end{notn}

The following definition first appeared in \cite{may_sigurdson}.

\begin{defn}
  Let $M$ be a 1-cell in a bicategory $\mc{B}(C, D)$. We say $M$ is \textbf{right dualizable} if there is a 1-cell $N$ together with 2-cells
  \[
  \eta\colon U_C \to M \odot N \qquad \epsilon\colon N \odot M \to U_D 
  \]
  such that the triangle identities hold. We say $N$ is \textbf{right dual} to $M$. 
  We  say that $(M,N)$ is a \textbf{dual pair}, that $N$ is \textbf{left dualizable}, and that $M$ is its \textbf{left dual}.
\end{defn}

\begin{rmk}
In an unfortunate clash of nomenclature, in the bicategory of categories, 1-cells are functors and right duals correspond to \textit{left} adjoints. 
\end{rmk}

The following lemma is easy, but critical. 

\begin{lem}\label{lem:duals_compose}
If $M_1 \in \mc{B}(A, B)$ and $M_2 \in \mc{B}(B, C)$ are right dualizable, then so is $M_1 \odot M_2$. 
\end{lem}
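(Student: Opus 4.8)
The plan is to write down an explicit right dual for $M_1 \odot M_2$ in terms of the given right duals $N_1$ of $M_1$ and $N_2$ of $M_2$, together with explicit unit and counit 2-cells built from those of the two dual pairs, and then verify the triangle identities. Let $(M_1, N_1)$ be a dual pair in $\mc{B}(A,B)$ with $\eta_1 \colon U_A \to M_1 \odot N_1$ and $\epsilon_1 \colon N_1 \odot M_1 \to U_B$, and similarly $(M_2, N_2)$ a dual pair in $\mc{B}(B,C)$ with $\eta_2\colon U_B \to M_2 \odot N_2$ and $\epsilon_2\colon N_2 \odot M_2 \to U_C$. The natural candidate for the right dual of $M_1 \odot M_2 \in \mc{B}(A,C)$ is $N_2 \odot N_1 \in \mc{B}(C,A)$.

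First I would construct the unit $\eta \colon U_A \to (M_1\odot M_2)\odot(N_2\odot N_1)$ as the composite
\[
U_A \xrightarrow{\eta_1} M_1 \odot N_1 \xrightarrow{\id\odot (\text{unitor})\odot\id} M_1 \odot U_B \odot N_1 \xrightarrow{\id\odot\eta_2\odot\id} M_1\odot M_2\odot N_2\odot N_1,
\]
and dually the counit $\epsilon\colon (N_2\odot N_1)\odot(M_1\odot M_2) \to U_C$ as
\[
N_2\odot N_1\odot M_1\odot M_2 \xrightarrow{\id\odot\epsilon_1\odot\id} N_2\odot U_B\odot M_2 \xrightarrow{\id\odot(\text{unitor})\odot\id} N_2\odot M_2 \xrightarrow{\epsilon_2} U_C,
\]
where throughout I suppress the associativity 2-cells $\mathfrak{a}$ and unit 2-cells $\mathfrak{l}, \mathfrak{r}$ of the bicategory. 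Then I would check the two triangle identities for $(M_1\odot M_2, N_2\odot N_1)$ by expanding each in terms of the constituent 2-cells, sliding the associators and unitors past each other using the coherence theorem for bicategories, and reducing to the triangle identities for $(M_1,N_1)$ and $(M_2,N_2)$ applied in the appropriate "slots."

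The main obstacle is purely bookkeeping: keeping track of the associativity and unit constraints. Because $\mc{B}$ is a bicategory rather than a strict 2-category, the composites above are only well-defined up to the chosen insertion of $\mathfrak{a}$, $\mathfrak{l}$, $\mathfrak{r}$, and verifying the triangle identities requires repeated use of the pentagon and triangle axioms (equivalently, Mac Lane's coherence theorem) to show that all the "structural" 2-cells assemble into identities, leaving exactly the images of $\eta_i, \epsilon_i$ to cancel via the hypothesized triangle identities. This is entirely routine but notationally heavy; in a strict 2-category the verification is a one-line diagram chase, and the general case differs only by coherence isomorphisms. I would either carry this out with a coherence-theorem appeal ("we suppress associativity and unit constraints, which is justified by coherence") or defer the detailed diagram to the circuit-diagram calculus introduced later in the paper.
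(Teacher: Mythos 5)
Your proposal is correct and is the standard argument: take $N_2\odot N_1$ as the right dual, build the unit and counit by inserting $\eta_2$ (resp.\ $\epsilon_1$) into the middle of $\eta_1$ (resp.\ $\epsilon_2$), and reduce the triangle identities to those of the two given dual pairs via coherence. The paper offers no proof at all (it declares the lemma ``easy''), so there is nothing to contrast with; your construction is exactly the one intended.
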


Again, given duality data one would like to extract invariants. However, it is not the case that $M \odot N \cong N \odot M$. To fix this, the second author introduced the formalism of shadows \cite{p:thesis}. A shadow is a gadget that repairs this defect. We define a shadow in terms of formal properties, and give more examples in \cref{sec:applications,sec:enriched}. 

\begin{defn}\label{defn_shadow}
A \textbf{shadow} for a bicategory $\mc{B}$ consists of functors
 \[\sh{-}\colon \mc{B}(R, R)\to \mathbf{T}\]
for each object $R$ of $\mc{B}$ and some fixed category $\mathbf{T}$, equipped with a natural isomorphism
\[\theta\colon  \sh{ M \odot  N}\to \sh{N \odot M }\]
for $M \in \mc{B}( R, S)$  and $N \in \mc{B}( S,  R)$ such that the following diagrams commute whenever they make sense:
  \[\xymatrix@C=12pt{
    \sh{(M\odot N)\odot P} \ar[r]^\theta \ar[d]_{\sh{a}} &
    \sh{P \odot (M\odot N)} \ar[r]^{\sh{a}} &
    \sh{(P\odot M) \odot N}
    \\
    \sh{M\odot (N\odot P)} \ar[r]^\theta 
    & \sh{(N\odot P)\odot M} \ar[r]^{\sh{a}} 
   & \sh{N\odot (P\odot M)}\ar[u]_\theta 
  }\hfill \xymatrix@C=12pt{
    \sh{M\odot U_C} \ar[r]^\theta \ar[dr]_{\sh{r}} &
    \sh{U_C\odot M} \ar[d]^{\sh{l}} \ar[r]^\theta &
    \sh{M\odot U_C} \ar[dl]^{\sh{r}}
    \\
    &\sh{M}}\]
\end{defn}

For this paper the most important example of a shadow is Hochschild homology which is a shadow on the bicategory of rings, bimodules and bimodule maps and valued in graded abelian groups.

\begin{defn}
   The \textbf{Euler characteristic} of a (right) dualizable 1-cell $M \in \mc{B}(A, B)$  is the map
  \[
  \sh{U_A} \to \sh{M \odot N} \cong \sh{N \odot M} \to \sh{U_B}
  \]
\end{defn}

In what follows, a more general construction is be needed.

\begin{defn}\label{defn:twisted_trace}
  Let $\phi\colon P \odot M \to M \odot Q$ be a 2-cell where $M$ is right dualizable. The {\bf twisted trace} of $\phi$ is  the composite
  \[
  \sh{P} \cong \sh{P \odot U_A} \to \sh{P \odot M \odot N} \to \sh{M \odot Q \odot N} \cong \sh{N \odot M \odot Q} \to \sh{U_B \odot Q} \cong \sh{Q} 
  \]
\end{defn}

One can think of this as a dualizable object, $M$, witnessing a trace between $\sh{P}$ and $\sh{Q}$.  There is a corresponding construction for twisted endomorphisms of $N$.

\begin{eg}
The case of the Euler characteristic for a 1-cell $M \in \mc{B}(A, B)$ corresponds to performing the above procedure for the canonical isomorphism 2-cell $U_A \odot M \xrightarrow{\cong} M \odot U_B$. 
\end{eg}

One can imagine longer strings of such maps. For example, suppose we are given $M_1 \in \mc{B}(A, B)$ and $M_2 \in \mc{B}(B, C)$ and $Q_1, Q_2, Q_3$ which twist endomorphisms of $M_1, M_2$: 
\[
f_1\colon Q_1 \odot M_1 \to M_1 \odot Q_2 \qquad f_2 \colon Q_2 \odot M_2 \to M_2 \odot Q_3. 
\]
These will witness maps $\sh{Q_1} \to \sh{Q_2}$ and $\sh{Q_2} \to \sh{Q_3}$. The following theorem says that we can obtain the composite of these all at once.

\begin{thm}\cite[7.5]{ps:bicat}\label{thm:composite}
  Let $M_1 \in \mc{B}(A, B)$, $M_2 \in \mc{B}(B, C)$ be right dualizable and $Q_1 \in \mc{B}(A,A)$, $Q_2 \in \mc{B}(B, B)$ and $Q_3 \in \mc{B}(C, C)$. Let $f_1, f_2$ be as above. Then the trace of
  \[
  Q_1 \odot M_1 \odot M_2 \xrightarrow{f_1 \odot \id_{M_2}} M_1 \odot Q_2 \odot M_2 \xrightarrow{\id_{M_1} \odot f_2} M_1 \odot M_2 \odot Q_3
  \]
  is
  \[
  \sh{Q_1} \xrightarrow{\tr(f_1)} \sh{Q_2} \xrightarrow{\tr(f_2)} \sh{Q_3} 
  \]
\end{thm}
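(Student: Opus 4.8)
The plan is to expand both sides of the asserted identity into composites of $2$-cells in $\mathbf{T}$ and recognize them as two ways of bracketing one and the same composite. First I would fix, for each $i$, a dual pair $(M_i, N_i)$ with unit $\eta_i$ and counit $\epsilon_i$, and record (this is the content of \cref{lem:duals_compose}) the resulting dual pair $(M_1 \odot M_2, N_2 \odot N_1)$: its unit is the composite $U_A \xrightarrow{\eta_1} M_1 \odot N_1 \cong M_1 \odot U_B \odot N_1 \xrightarrow{\id_{M_1} \odot \eta_2 \odot \id_{N_1}} M_1 \odot M_2 \odot N_2 \odot N_1$ and its counit is $N_2 \odot N_1 \odot M_1 \odot M_2 \xrightarrow{\id_{N_2} \odot \epsilon_1 \odot \id_{M_2}} N_2 \odot U_B \odot M_2 \cong N_2 \odot M_2 \xrightarrow{\epsilon_2} U_C$, all taken up to associativity and unit constraints.

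Substituting this duality data and $g = (\id_{M_1} \odot f_2) \circ (f_1 \odot \id_{M_2})$ into \cref{defn:twisted_trace} for the dualizable $1$-cell $M_1 \odot M_2$ and applying $\sh{-}$, I would read off $\tr(g)$ as a long composite in $\mathbf{T}$ whose non-structural factors are, in order, $\sh{\eta_1}$, $\sh{\eta_2}$ (inserted between the copies of $M_1$ and $N_1$), $\sh{f_1}$, $\sh{f_2}$, the cyclic permutation $\theta$ dragging the block $N_2 \odot N_1$ to the front, the inner counit $\sh{\epsilon_1}$, and $\sh{\epsilon_2}$, interleaved with images of associators and unitors. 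The key manipulation is then to reorganize this composite. By the hexagon axiom of \cref{defn_shadow} (together with the associativity coherence), the rotation of the block $N_2 \odot N_1$ factors as ``rotate $N_1$ to the front, then rotate $N_2$ to the front.'' The factors $\sh{\eta_1}, \sh{f_1}, \sh{\epsilon_1}$ and the $N_1$-rotation involve only $M_1, N_1, Q_1$, while $\sh{\eta_2}, \sh{f_2}, \sh{\epsilon_2}$ and the $N_2$-rotation involve only $M_2, N_2, Q_2, Q_3$; using naturality of $\theta$ and of the structure constraints I would slide these two families past one another so that the whole composite factors through $\sh{Q_2}$. The first factor is exactly $\sh{Q_1} \cong \sh{Q_1 \odot U_A} \xrightarrow{\sh{\eta_1}} \sh{Q_1 \odot M_1 \odot N_1} \xrightarrow{\sh{f_1}} \sh{M_1 \odot Q_2 \odot N_1} \xrightarrow{\theta} \sh{N_1 \odot M_1 \odot Q_2} \xrightarrow{\sh{\epsilon_1}} \sh{U_B \odot Q_2} \cong \sh{Q_2}$, i.e. $\tr(f_1)$, and the second is likewise $\tr(f_2)$.

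The hard part will not be any individual naturality move but the coherence bookkeeping they rest on: one must verify that the specific web of associativity and unit $2$-cells generated by unwinding $\tr(g)$ with the composite duality data is the one needed for the horizontal factorization through $\sh{Q_2}$. This is where the coherence theorem for bicategories and the two compatibility diagrams of \cref{defn_shadow} do the work, normalizing every composite of structure $2$-cells so that the two sides become literally equal. It is precisely this verification that is carried out in \cite[7.5]{ps:bicat}, and it becomes visually transparent in the circuit-diagram calculus used later in this paper, where the composite unit and counit of $M_1 \odot M_2$ appear as nested cups and caps, one pair closing off the $\tr(f_1)$ box and the other the $\tr(f_2)$ box; iterating the same picture handles an arbitrary string of composable dualizable $1$-cells.
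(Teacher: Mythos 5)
The paper does not prove this statement itself but imports it from \cite[7.5]{ps:bicat}, and your outline faithfully reproduces the strategy of that proof: form the composite duality data as in \cref{lem:duals_compose}, expand \cref{defn:twisted_trace} for $M_1\odot M_2$, split the single rotation $\theta$ of the block $N_2\odot N_1$ into two rotations via the hexagon axiom of \cref{defn_shadow}, and separate the two families of $2$-cells by naturality of $\theta$ and the structure constraints so that the composite factors through $\sh{Q_2}$. This is correct and essentially the same argument as in the cited source.
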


The utility of this result cannot be overstated, and it will be lurking in the background of many examples below. When applied to the isomorphisms 
  \[
  U_A \odot M_1 \odot M_2 \xrightarrow{\cong} M_1 \odot U_B \odot M_2 \xrightarrow{\cong} M_1 \odot M_2 \odot U_C 
  \] it gives the following statement.  

\begin{cor}\label{thm:composite_euler}
  If $M_1 \in \mc{B}(A, B)$ and $M_2 \in \mc{B}(B, C)$ are right dualizable then
  \[
  \chi(M_1 \odot M_2) = \chi(M_2) \circ \chi(M_1)
  \]
\end{cor}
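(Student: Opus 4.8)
The plan is to read this off from \cref{thm:composite} by taking all of the twisting $1$-cells to be identities and all of the twisting $2$-cells to be canonical unit isomorphisms, exactly as in the displayed string just above the statement.

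First I would observe that $M_1 \odot M_2$ is right dualizable by \cref{lem:duals_compose}, so $\chi(M_1 \odot M_2)$ is defined; if $N_i$ is a right dual of $M_i$, the relevant right dual is $N_2 \odot N_1$, with (co)evaluation assembled from that of the $M_i$ in the usual way. Now apply \cref{thm:composite} with $Q_1 = U_A$, $Q_2 = U_B$, $Q_3 = U_C$ and with $f_1 \colon U_A \odot M_1 \xrightarrow{\cong} M_1 \odot U_B$, $f_2 \colon U_B \odot M_2 \xrightarrow{\cong} M_2 \odot U_C$ the canonical isomorphisms. By the example identifying the Euler characteristic of a $1$-cell with the twisted trace of such a canonical isomorphism, $\tr(f_1) = \chi(M_1)$ and $\tr(f_2) = \chi(M_2)$. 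Hence the right-hand side of \cref{thm:composite} is literally $\chi(M_2)\circ\chi(M_1)$; there is nothing to adjust on shadows, since $\sh{U_A}$, $\sh{U_B}$, $\sh{U_C}$ are the actual source and target objects.

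It remains to identify the left-hand side --- the twisted trace of
\[
U_A \odot M_1 \odot M_2 \xrightarrow{f_1 \odot \id_{M_2}} M_1 \odot U_B \odot M_2 \xrightarrow{\id_{M_1} \odot f_2} M_1 \odot M_2 \odot U_C
\]
--- with $\chi(M_1 \odot M_2)$. This composite is again a canonical isomorphism $U_A \odot (M_1\odot M_2) \xrightarrow{\cong} (M_1\odot M_2)\odot U_C$, and once we know that, \cref{defn:twisted_trace} applied to it, using the dual pair $(M_1\odot M_2,\, N_2\odot N_1)$, computes exactly $\chi(M_1\odot M_2)$ --- here one uses that the composite duality data fed into the twisted trace by the proof of \cref{thm:composite} in \cite{ps:bicat} is precisely the data produced in \cref{lem:duals_compose}. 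The one genuinely non-formal point is this coherence check: that the above composite, modulo the evident re-associations of the triple products, agrees with the canonical isomorphism used to define $\chi(M_1\odot M_2)$. This follows from the coherence theorem for bicategories together with the compatibility axioms for a shadow in \cref{defn_shadow}, but it is the only step requiring care; everything else is just selecting the right instance of \cref{thm:composite}.
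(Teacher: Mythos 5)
Your proposal is correct and is essentially the paper's own argument: the paper derives the corollary by applying \cref{thm:composite} to the canonical unit isomorphisms $U_A \odot M_1 \odot M_2 \cong M_1 \odot U_B \odot M_2 \cong M_1 \odot M_2 \odot U_C$, using the identification of $\chi$ with the twisted trace of the unit isomorphism. The extra details you supply (composability of duals via \cref{lem:duals_compose} and the coherence check identifying the composite with the canonical isomorphism for $M_1\odot M_2$) are exactly the points the paper leaves implicit.
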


\begin{defn}\label{defn:mate}
Let $f\colon P \odot M \longrightarrow M \odot Q$ be a twisted endomorphism where $M$ is right dualizable with right dual $N$. The {\bf mate} of $f$ is the map $f^*\colon N \odot P \longrightarrow  Q \odot N$ defined as follows:
\begin{align*}
f^*:N \odot P \xrightarrow{\id \odot \eta}  N \odot P \odot M \odot N  \xrightarrow{\id \odot f \odot \id} N \odot M \odot Q \odot N \xrightarrow{\epsilon \odot \id} Q \odot N  
\end{align*}
\end{defn}

\begin{prop}\cite[7.6]{ps:bicat} \label{trace_with_dual}
  Let $f\colon P \odot M \longrightarrow M \odot Q$ be a twisted endomorphism where $M$ is right dualizable.  Then $\operatorname{tr}(f) = \operatorname{tr}(f^*)$. 
\end{prop}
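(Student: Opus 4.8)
The plan is to unwind both sides into explicit composites of shadows of $1$-cell composites and then chase a diagram. A preliminary point: $\operatorname{tr}(f^*)$ should be read via the ``corresponding construction for twisted endomorphisms of $N$'' mentioned after \cref{defn:twisted_trace}. Since $M$ is right dual to $N$, the $1$-cell $N$ is left dualizable with left dual $M$, and $f^*\colon N\odot P\to Q\odot N$ has exactly the right shape; concretely I would take
\[
\operatorname{tr}(f^*)\colon\ \sh{P}\cong\sh{U_A\odot P}\xrightarrow{\sh{\eta\odot\id}}\sh{M\odot N\odot P}\xrightarrow{\sh{\id\odot f^*}}\sh{M\odot Q\odot N}\xrightarrow{\theta}\sh{Q\odot N\odot M}\xrightarrow{\sh{\id\odot\epsilon}}\sh{Q\odot U_B}\cong\sh{Q},
\]
to be compared with
\[
\operatorname{tr}(f)\colon\ \sh{P}\cong\sh{P\odot U_A}\xrightarrow{\sh{\id\odot\eta}}\sh{P\odot M\odot N}\xrightarrow{\sh{f\odot\id}}\sh{M\odot Q\odot N}\xrightarrow{\theta}\sh{N\odot M\odot Q}\xrightarrow{\sh{\epsilon\odot\id}}\sh{U_B\odot Q}\cong\sh{Q}.
\]
Both are maps $\sh{P}\to\sh{Q}$ (so the statement makes sense), and both factor through $\sh{M\odot Q\odot N}$, though with different incoming and outgoing legs, so the work is to reconcile these.

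Substituting the definition of the mate from \cref{defn:mate} into the first composite yields a single longer composite in which the duality unit $\eta$ and counit $\epsilon$ each occur \emph{twice} --- once from the trace recipe, once from the mate --- along with one copy of $f$ and one $\theta$. I would then run a diagram chase with three ingredients: (i) naturality of $\theta$, to slide the $\theta$-isomorphism past the copy of $f$ and past the counit contributed by the mate; (ii) the coherence axioms of \cref{defn_shadow}, to reassociate at will and to recognize that the resulting string of $\theta$'s and associators behaves as a single cyclic rotation; and (iii) one of the two triangle identities for $(M,N)$, which cancels the redundant $\eta$-$\epsilon$ pair that is the only essential difference between $\operatorname{tr}(f^*)$ and $\operatorname{tr}(f)$. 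After this cancellation, together with the unit axioms for $\theta$ and the coherence theorem for bicategories, the composite should collapse exactly to the displayed formula for $\operatorname{tr}(f)$.

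The hard part will be the coherence bookkeeping: keeping track of all the associativity and unit constraints and, in particular, checking that the two instances of $\theta$ produced by the substitution recombine correctly --- which is precisely what the hexagon and unit axioms of \cref{defn_shadow} guarantee, but carrying it out symbol by symbol is unpleasant. The efficient way to organize the computation is with the circuit diagrams used elsewhere in the paper: both $\operatorname{tr}(f)$ and $\operatorname{tr}(f^*)$ become closed diagrams consisting of a single loop that carries $f$ and is closed up by the shadow, and steps (i)--(iii) amount to a planar isotopy straightening a kink by the snake equation, after which the equality is manifest. (One might hope to deduce the result instead from \cref{thm:composite} applied to the factorization of $U_A$ through $M\odot N$, but that appears to require $N$ to be right dualizable, which is not assumed, so the direct chase seems to be the right route.)
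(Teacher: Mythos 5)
The paper does not actually prove this proposition---it is imported verbatim from \cite[7.6]{ps:bicat}---but your argument correctly reconstructs the proof given there: expand the definition of the mate inside the left-handed twisted trace, use naturality of $\theta$ together with the coherence axioms of \cref{defn_shadow} to bring the extra $\eta$--$\epsilon$ pair (one coming from the trace recipe, one from the mate) cyclically adjacent, and cancel it by a triangle identity, after which the composite is literally $\operatorname{tr}(f)$. Your guess for the left-dualizable form of the twisted trace is the intended one, and your closing remark that \cref{thm:composite} is unavailable here (since $N$ is not assumed right dualizable) is also correct.
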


\begin{eg}
  Let $V$ be a representation of a group $G$. This data is equivalent to the bimodule $_{k[G]} V_k$. When $V$ is a finite dimension vector space, $_{k[G]} V_k$ is right dualizable, and the Euler characteristic is a map
  \[
  \sh{k[G]} \to \sh{k} 
  \]
  When the shadow is given by $\HH_0$ we then obtain a map $\HH_0(k[G],k[G]) \to \HH_0 (k)$. That is, a map of (formal sums of) class functions to $k$. This is the character $\chi_V$. 

  This can be used to easily recover the induction formula for group representations. Given a representation of $H$ on $V$, i.e. a bimodule $_{k[H]} V_k$ the induced representation is given by a composition of bimodules $_{k[G]} k[G]_{k[H]} \odot _{k[H]} V_k$. If $[G:H]<\infty$ then $_{k[G]} k[G]_{k[H]}$ is right dualizable. By \cref{thm:composite}  $\chi_{\operatorname{Ind}^G_H(V)}$ is the composite
  \[
  \sh{k[G]} \xrightarrow{\chi^G_H} \sh{k[H]} \xrightarrow{\chi_V} \sh{k}. 
  \]

  If one wishes, this says that induction formulae follow from computing a universal example.
\end{eg}

\begin{eg}
The following simple example appears to not be well-known, but is extremely useful in computations in $\thh$. Let $X$ be a compact $CW$-complex. If we consider $S$ as a $(S, \Sigma^\infty_+ \Omega X)$-module then it is right dualizable (by compactness) and we obtain an Euler characteristic $S \to \Sigma^\infty_+ \mc{L} X$. Similarly, if we consider $S$ as a $(\Sigma^\infty_+ \Omega X, S)$-module we obtain an Euler characteristic $\Sigma^\infty_+ \mc{L} X \to S$ (this map always exists and is induced by the ``collapse to a point'' map). Then, \cref{thm:composite} says that the composite of these two maps is the Euler characteristic of $S \sma^L_{\Sigma^\infty_+ \Omega X} S$. This is $\Sigma^\infty_+ X$ and the Euler characteristic is the usual symmetric monoidal one. Thus, the composite $S \to \Sigma^\infty_+ \mc{L}  X \to S$ is the element $\chi(X) \in \pi_0 (S)$ (this is a special case of the main theorem of \cite{ps:mult}). 
\end{eg}

\section{Explanation of the Main Theorem}\label{sec:explanation_main}

Having laid out the necessary background, we explain the main theorem. Understanding the motivation and statement will allow one to read \cref{sec:two_characters,sec:lefschetz}  without wading into the significant categorical computations and homotopical technicalities of \cref{sec:iterated,sec:monoidal_bicat,sec:enriched}. 

In a symmetric monoidal bicategory $\mc{B}$ where all 0-cells are 2-dualizable (\cref{defn:2_dualizable}) and $I$ is the monoidal unit there is a shadow and it takes values in $\mc{B}(I,I)$ (\cref{prop:shadow}).  If $M$ is a left dualizable endomorphism 1-cell and $N$ is a right dualizable endomorphism 1-cell, for example $M: A \to A$ and $N: A \to A$, and 
\[\phi: M \odot N \to N \odot M\] 
is a 2-morphism we have a choice of bicategorical traces.  If we  take the trace of $\phi$ with respect to $M$ as in \cref{defn:twisted_trace} this gives us a map 
\[\tr_M (\phi): \sh{N} \to \sh{N}.\]
 If $\sh{N}$ is dualizable in $\mc{B}(I,I)$, we are entitled to take the trace of this map and obtain a map 
\[\tr_\sh{N} (\tr_M (\phi)): I \to I.\] 
Alternatively, we can  first take the trace with respect to $N$. In this case, we get a map 
\[\tr_\sh{M} (\tr_N (\phi)): I \to I.\]
 Our main theorem \cref{thm:main_bzn} states that these coincide.

This statement deserves significant amplification. We reach to the language of topological field theories for intuition, since our diagrammatic language below is reminiscent of them (though independent).

Symmetric monoidal traces have a well-known interpretation in terms of field theories of one dimensional framed manifolds (see, e.g. \cite{lurie_cobordism}). A {\bf framed zero-manifold} is a point labeled either  $+$ or  $-$ for a positive or negative orientation.  We adopt the convention of representing these as in \cref{fig:famed_zero_mfld}.
Ignoring worries about homotopy, gluing, etc, these oriented zero manifolds form the objects of a category whose morphisms are framed 1-manifolds. We represent a framing by a fattened edge (\cite{douglas_schommer_pries_snyder, schommer-pries,ps:indexed}).  \cref{fig:famed_one_mfld} are instances of 1-morphisms.

A {\bf 0 dimensional topological field theory}, as defined in \cite{atiyah}, is a symmetric monoidal functor $F: \mbf{Bord}^{\text{fr}}_1 \to \mc{C}$, where $\mc{C}$ is a  symmetric monoidal category. Such a functor will associate an object of $\mc{C}$ to each of the framed 0-manifolds and morphisms of $\mc{C}$ to each of the framed 1-manifolds in \cref{fig:famed_one_mfld}.  Note that the structure of $\mbf{Bord}^{\text{fr}}_1 $ 
and  the diagrams in \cref{fig:motivation_triangle} show that $F(+)$ and $F(-)$ must be dual.
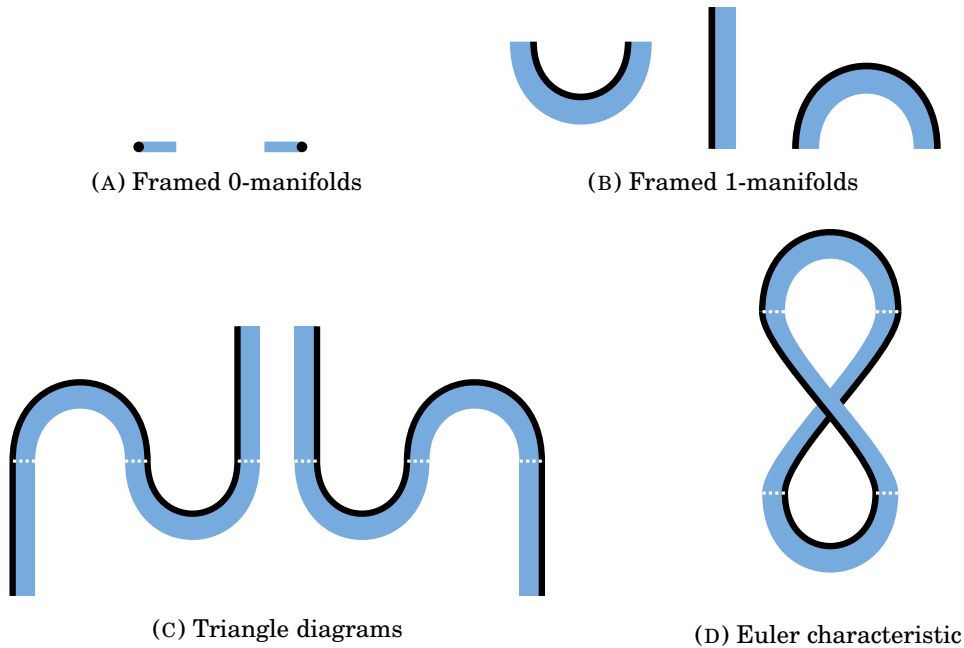
\begin{figure}
    \begin{subfigure}[t]{.3\textwidth}
	\hfill
	\begin{tikzpicture}
	\draw [color = \et, line width= 4pt] (0,0)--(.5,0);
	\fill (0,0) circle[radius=2pt] ;
	\end{tikzpicture}
	\hfill
	\begin{tikzpicture}
	\draw [color = \et, line width= 4pt] (0,0)--(-.5,0);
	\fill (0,0) circle[radius=2pt] ;
	\end{tikzpicture}
	\hspace{1cm}
        \caption{Framed 0-manifolds}\label{fig:famed_zero_mfld}
    \end{subfigure}
    \qquad\qquad 
    \begin{subfigure}[t]{.40\textwidth}
	\resizebox{\textwidth}{!}{
	\begin{tikzpicture}
	\fill[color = \et] (0,0)to [out = -90, in = -90, looseness = 2] (-2,0)--(-2.5,0)to [out = -90, in = -90, looseness = 2] ( .5,0);
	\draw [ line width= 4pt] (0,0)to [out = -90, in = -90, looseness = 2] (-2,0);
	\end{tikzpicture}
		\hspace{1cm}
	\begin{tikzpicture}

	\fill[color = \et] (0,0)--(0,-3)--(.5,-3)-- ( .5,0);
	\draw [ line width= 4pt] (0,0)--(0,-3);
	\end{tikzpicture}
		\hspace{1cm}
	\begin{tikzpicture}

	\fill[color = \et] (0,0)to [out = 90, in = 90, looseness = 2] (-2,0)--(-2.5,0)to [out = 90, in = 90, looseness = 2] ( .5,0);
	\draw [ line width= 4pt] (.5,0)to [out = 90, in = 90, looseness = 2] (-2.5,0);
	\end{tikzpicture}}
        \caption{Framed 1-manifolds}\label{fig:famed_one_mfld}
    \end{subfigure}
    \begin{subfigure}[t]{.5\textwidth}
\resizebox{\textwidth}{!}{
\begin{tikzpicture}

\fill[color = \et] (0,0)to [out = 90, in = 90, looseness = 2] (-2,0)--(-2.5,0)to [out = 90, in = 90, looseness = 2] ( .5,0);
\draw [ line width= 4pt] (.5,0)to [out = 90, in = 90, looseness = 2] (-2.5,0);

\fill[color = \et] (.5,0)to [out = -90, in = -90, looseness = 2] (2.5,0)--(3,0)to [out = -90, in = -90, looseness = 2] (0, 0);
\draw [ line width= 4pt] (.5,0)to [out = -90, in = -90, looseness = 2] (2.5,0);

\fill[color = \et] (3,3)--(3,0)--( 2.5,0)--( 2.5,3);
\draw[line width = 4pt] (2.5,3)--( 2.5,0);

\fill[color = \et] (-2,-3)--(-2,0)--( -2.5,0)--( -2.5,-3);
\draw[line width = 4pt] ( -2.5,-3)--( -2.5,0);

\draw[dotted, line width= 2pt, white] (0,0)--(.5,0);
\draw[dotted, line width= 2pt, white] (3,0)--(2.5,0);
\draw[dotted, line width= 2pt, white] (-2,0)--(-2.5,0);
\end{tikzpicture}
\hspace{.5cm}
\begin{tikzpicture}

\fill[color = \et] (2,0)to [out = 90, in = 90, looseness = 2] (0,0)--(-.5,0)to [out = 90, in = 90, looseness = 2] ( 2.5,0);
\draw [ line width= 4pt] (2.5,0)to [out = 90, in = 90, looseness = 2] (-.5,0);

\fill[color = \et,] (-2.5,0)to [out = -90, in = -90, looseness = 2] (-.5,0)--(0,0)to [out = -90, in = -90, looseness = 2] ( -3,0);
\draw [ line width= 4pt] (-2.5,0)to [out = -90, in = -90, looseness = 2] (-.5,0);

\fill[color =\et] (-3,3)--(-3,0)--( -2.5,0)--( -2.5,3);
\draw[line width = 4pt] ( -2.5,3)--(-2.5,0);

\fill[color = \et] (2,-3)--(2,0)--( 2.5,0)--( 2.5,-3);
\draw[line width = 4pt] ( 2.5,-3)--( 2.5,0);

\draw[dotted, line width= 2pt, white] (2.5,0)--(2,0);
\draw[dotted, line width= 2pt, white] (0,0)--(-.5,0);
\draw[dotted, line width= 2pt, white] (-3,0)--(-2.5,0);
\end{tikzpicture}
}
\caption{Triangle diagrams}\label{fig:motivation_triangle}
\end{subfigure}
\qquad\qquad
\begin{subfigure}[t]{.3\textwidth}
\begin{center}
\resizebox{.45\textwidth}{!}{
\begin{tikzpicture}

\fill[color = \et,] (2,0)to [out = 90, in = 90, looseness = 2] (0,0)--(-.5,0)to [out = 90, in = 90, looseness = 2] (2.5,0);
\draw [ line width= 4pt] (2.5,0)to [out = 90, in = 90, looseness = 2] (-.5,0);

\fill[color = \et] (2,-4)to [out =-90, in = -90, looseness = 2] (0,-4)--(-.5,-4)to [out = -90, in = -90, looseness = 2] (2.5,-4);
\draw [ line width= 4pt] (2,-4)to [out = -90, in = -90, looseness = 2] (0,-4);

\fill[color = \et] (2.5,0)to [out = -90, in =90, looseness =.5](0,-4)--( -.5,-4)to [out = 90, in =-90, looseness =.5]( 2,0);
\draw[line width = 4pt] ( 2.5,0)to [out = -90, in =90, looseness =.5]( 0,-4);

\fill[color =\et,] (2.5,-4)to [out = 90, in =-90, looseness =.5](0,0)--( -.5,0)to [out =- 90, in =90, looseness =.5](2,-4);
\draw[line width = 4pt] (2,-4)to [out = 90, in =-90, looseness =.5]( -.5,0);

\draw[dotted, line width= 2pt, white] (2.5,0)--(2,0);
\draw[dotted, line width= 2pt, white] (0,0)--(-.5,0);
\draw[dotted, line width= 2pt, white] (2.5,-4)--(2,-4);
\draw[dotted, line width= 2pt, white] (0,-4)--(-.5,-4);
\end{tikzpicture}}
\end{center}
       \caption{Euler characteristic}\label{fig:motivation_euler_char}
    \end{subfigure}
\caption{Framed manifolds}
\end{figure}

Now turning to traces, \cref{fig:motivation_euler_char} is the Euler characteristic of $F(+)$.
Note that the framing requires that we think of the trace as a figure-8 rather than a circle.  At this stage this may feel unnecessarily pedantic but this level of specificity will be essential later.  It is also  useful for understanding diagrams when we replace monoidal categories with bicategories.

Typically, topological field theories are thought of as sources of invariants of manifolds.  Here we will use manifolds to represent invariants of maps in a category. The distinction is important: we will mark manifolds in ways that are not geometrically motivated, but are categorically motivated. For example, given a morphism $f: A \to A$ of a dualizable object, we can depict $\tr(f)$ as in \cref{fig:motivation_trace_1}
where we have marked a region of the framed $S^1$ by a morphism. That section of the $S^1$ should be regarded as having a different nature from the rest of the manifold.

Moving up dimensions, and further away from field theory motivation, we view objects in 2-categories as vertices, morphisms as (framed) lines, and 2-morphisms as sheets between one morphisms. We run into issues with clarity of presentation here, so we do not illustrate this with a diagram. The 2-morphisms become difficult to picture and not particularly helpful in many cases.

The trace of a 2-cell is the cylindrical picture in \cref{fig:motivation_bicat_trace}. The disk marked $f$ should be interpreted as a 2-dimensional region remembering how to swap the red and blue regions. As one moves down the cylinder, a blue dual pair appears, one of the duals is swapped with the red strand, and after a rotation the blue duals are canceled. This is a pictorial representation of the maps in  \cref{defn:twisted_trace}.

The cylinder in \cref{fig:motivation_bicat_trace} is a morphism in the category where shadows take values, which we assume to be symmetric monoidal. Taking a trace in this category would amount to gluing that morphism into a figure-8, as in \cref{fig:motivation_trace_1}. Thus,  combining \cref{fig:motivation_trace_1,fig:motivation_bicat_trace}, the iterated trace is the colored torus in  \cref{fig:motivation_iterated_traces}.

The main theorem, \cref{thm:main_bzn}, is a statement about the equivalence of iterated trace diagrams, and is depicted in \cref{fig:motivation_main_thm}. Intuitively, these diagrams should be equivalent. However, the difficulty is turning the geometric intuition, i.e. all of the geometric moves, into 2-category theory.

\begin{figure}
\begin{tabular}{cc}
    \begin{subfigure}[t]{.30\textwidth}

\begin{centering}
\resizebox{!}{.3\textheight}{
\begin{tikzpicture}

\fill[color = \et] (2,0)to [out = 90, in = 90, looseness = 2] (0,0)--(-.5,0)to [out = 90, in = 90, looseness = 2] ( 2.5,0);
\draw [ line width= 4pt] (2.5,0)to [out = 90, in = 90, looseness = 2] (-.5,0);

\fill[color = \et] (2,-6)to [out = -90, in = -90, looseness = 2] (0,-6)--(-.5,-6)to [out = -90, in = -90, looseness = 2] (2.5,-6);
\draw [ line width= 4pt] (2,-6)to [out = -90, in = -90, looseness = 2] (0,-6);

\fill[color = \et] (2.5,-2)to [out =-90, in =90, looseness =.5](0,-6)--( -.5,-6)to [out = 90, in =-90, looseness =.5]( 2,-2);
\draw[line width = 4pt] ( 2.5,-2)to [out=-90, in =90, looseness =.5](0,-6);

\fill[color = \et] (2.5,-6)to [out = 90, in =-90, looseness =.5](0,-2)--( -.5,-2)to [out = -90, in =90, looseness =.5]( 2,-6);
\draw[line width = 4pt] ( 2,-6)to [out = 90, in =-90, looseness =.5]( -.5,-2);

\fill[color = \ep] (0,0)--(0,-2)--(-.5,-2)--( -.5,0);
\draw[line width = 4pt] (-.5,0)--( -.5, -2);

\fill[color = \et] (2,0)--(2,-2)--( 2.5,-2)--( 2.5,0);
\draw[line width = 4pt] (2.5,0)--( 2.5,-2);

\draw[dotted, line width= 2pt, white] (2.5,0)--(2,0);
\draw[dotted, line width= 2pt, white] (0,0)--(-.5,0);
\draw[dotted, line width= 2pt, white] (2.5,-2)--(2,-2);
\draw[dotted, line width= 2pt, white] (0,-2)--(-.5,-2);
\draw[dotted, line width= 2pt, white] (2.5,-6)--(2,-6);
\draw[dotted, line width= 2pt, white] (0,-6)--(-.5,-6);
\end{tikzpicture}}

\end{centering}
       \caption{Symmetric monoidal trace}\label{fig:motivation_trace_1}
\end{subfigure}

&    \multirow{2}{*}[7cm]{ \begin{subfigure}[t]{.42\textwidth}
\input{II-motivation_iterated_traces}\vspace{-1cm}
\caption{Iterated trace}\label{fig:motivation_iterated_traces}
\end{subfigure}}
\hfill\\
  \begin{subfigure}[t]{.30\textwidth}
\begin{centering}{
\tdplotsetmaincoords{90}{90}
\tdplotsetrotatedcoords{0}{10}{80}

\def\r{3.5}
\def\sr{2}
\def\tr{1}
\def\h{-2}
\def\inc{-40}
\def\startangle{-30}

\def\ax{\r*cos(\startangle+0*\inc))}
\def\ay{\r*sin(\startangle+0*\inc))}

\def\bx{\r*cos(\startangle+.4*\inc)}
\def\by{\r*sin(\startangle+.4*\inc)}

\def\cx{\r*cos(\startangle+2.35*\inc)}
\def\cy{\r*sin(\startangle+2.35*\inc)}

\def\dx{\r*cos(\startangle+2.75*\inc)}
\def\dy{\r*sin(\startangle+2.75*\inc)}

\def\ex{\r-\tr+\tr*cos(\startangle+.75*\inc)}
\def\ey{\sr*sin(\startangle+.75*\inc)}

\def\fx{\r-\tr+\tr*cos(\startangle+1.75*\inc)}
\def\fy{\sr*sin(\startangle+1.75*\inc)}

\def\gx{-\r+\sr+\sr*cos(\startangle+0*\inc)}
\def\gy{\sr*sin(\startangle+0*\inc)}

\def\hx{-\r+\sr+\sr*cos(\startangle+.5*\inc)}
\def\hy{\sr*sin(\startangle+.5*\inc)}

\def\ix{-\r+\sr+\sr*cos(\startangle+1*\inc)}
\def\iy{\sr*sin(\startangle+1*\inc)}

\def\jx{-\r+\sr+\sr*cos(\startangle+1.5*\inc)}
\def\jy{\sr*sin(\startangle+1.5*\inc)}

\def\kx{-\r+\sr+\sr*cos(\startangle+2*\inc)}
\def\ky{\sr*sin(\startangle+2*\inc)}

\def\mx{-\r+\sr+\sr*cos(\startangle+1.75*\inc)}
\def\my{\sr*sin(\startangle+1.75*\inc)}

\def\lx{-\r+\sr+\sr*cos(\startangle+2.5*\inc)}
\def\ly{\sr*sin(\startangle+2.5*\inc)}

\def\px{-\r+\tr+\tr*cos(\startangle+.8*\inc)}
\def\py{\sr*sin(\startangle+.8*\inc)}

\def\qx{-\r+\tr+\tr*cos(\startangle+1.8*\inc)}
\def\qy{\sr*sin(\startangle+1.8*\inc)}

\def\rx{\r-\sr+\sr*cos(\startangle+.3*\inc)}
\def\ry{\sr*sin(\startangle+.3*\inc)}

\def\sx{\r-\sr+\sr*cos(\startangle+.9*\inc)}
\def\sy{\sr*sin(\startangle+.9*\inc)}

\resizebox{!}{.3\textheight}{
\begin{tikzpicture} [tdplot_rotated_coords,
    mystyle/.style={%
      label={right:\pgfkeysvalueof{/pgf/minimum width}},
    },
   my style/.style={%
     append after command={
       \pgfextra{\node [right] at (\tikzlastnode.mid east) {\tikzlastnode};}
     },
   },
  ]

\draw[dotted] (-\r+\sr,0,0*\h) circle(\sr);

\coordinate
	(C7) at ({\kx},{\ky},0*\h) ;
\coordinate
	(C8) at ({\lx},{\ly},0*\h) ;

\draw[dotted] (-\r+\sr,0,1*\h) circle(\sr);

\coordinate
	(B3) at ({\gx},{\gy},1*\h);
\coordinate
	 (B4) at ({\hx},{\hy},1*\h) ;
\coordinate
	(B5) at ({\ix},{\iy},1*\h);
\coordinate
	(B6) at ({\jx},{\jy},1*\h) ;
\coordinate
	(B7) at ({\kx},{\ky},1*\h) ;
\coordinate
	(B8) at ({\lx},{\ly},1*\h);

\draw[dotted] (-\r+\sr,0,2.25*\h) circle(\sr);

\coordinate 
	(D3) at ({\gx},{\gy},2.25*\h);
\coordinate
	(D4) at ({\hx},{\hy},2.25*\h) ;
\coordinate
	 (D5) at ({\ix},{\iy},2.25*\h);
\coordinate
	 (D6) at ({\jx},{\jy},2.25*\h) ;
\coordinate
	(D7) at ({\kx},{\ky},2.25*\h) ;
\coordinate
	 (D8) at ({\lx},{\ly},2.25*\h);

\node[draw, fill =white, circle] at  ({\mx},{\my},1.625*\h) {$f$};

\draw[dotted] (-\r+\sr,0,4*\h) circle(\sr);

\coordinate
	(E3) at ({\gx},{\gy},4*\h);
\coordinate
	(E4) at ({\hx},{\hy},4*\h) ;
\coordinate
	(E5) at ({\ix},{\iy},4*\h);
\coordinate
	 (E6) at ({\jx},{\jy},4*\h) ;
\coordinate
	 (E7) at ({\kx},{\ky},4*\h) ;
\coordinate
	 (E8) at ({\lx},{\ly},4*\h);

\draw[dotted] (-\r+\sr,0,5*\h) circle(\sr);
\coordinate
	(F3) at ({\gx},{\gy},5*\h);
\coordinate
	(F4) at ({\hx},{\hy},5*\h) ;

\coordinate
	 (co1) at ({-\r+\sr+\sr*cos(15)},{\sr*sin(15)},0*\h) ;
\coordinate
	(co4) at ({-\r+\sr+\sr*cos(15)},{\sr*sin(15)},5*\h) ;

\coordinate
	 (co7) at ({\r*cos(195)},{\r*sin(195)},5*\h) ;
\coordinate
	 (co8) at ({\r*cos(195)},{\r*sin(195)},0*\h) ;

\coordinate
	(tw1) at ({-\r+\sr+\sr*cos(15)},{\sr*sin(15)},2.6*\h) ;
\coordinate
	(tw2) at ({-\r+\sr+\sr*cos(195)},{\tr*sin(195)},3.25*\h) ;
\coordinate
	(tw3) at ({-\r+\sr+\sr*cos(15)},{\tr*sin(15)},2.85*\h) ;
\coordinate
	 (tw4) at ({-\r+\sr+\sr*cos(195)},{\sr*sin(195)},3.5*\h) ;

\begin{scope}[on background layer]
\fill[black!10!white](co1)--(co4)--
(co7)--(co8);

\fill [black!5!white] (-\r+\sr,0,5*\h) circle(\sr);
\fill[\et!5!white] (tw1) to [ out =180, in =0,looseness =1](tw2)--(tw4)to [ out =0, in =180,looseness =1](tw3);

\fill[\ep] (B8) -- (C8) --(C7)--(B7);

\fill[\ep](F4)-- (E4)to [out = 90, in = -90, looseness =.75](D6) to [out = 90, in = -90, looseness =.75](B8)--(C8)--(C7)--(B7)to [out = -90, in = 90, looseness =.75](D5)to [out = -90, in = 90, looseness =.75](E3) --(F3);

\draw[very thick](C7)--(B7)to [out = -90, in = 90, looseness =.75](D5)to [out = -90, in = 90, looseness =.75](E3) --(F3);

\fill[\et] (E6)to [out = 90, in = -90, looseness =.75](D8)to [out = 90, in = -90, looseness =.75] (B6)to [out = 90, in =90, looseness = 2] (B3)--(D3)to [out = -90, in = 180, looseness =.75](tw1)--(tw3)to [out = 180, in = -90, looseness =.75](D4)--(B4)to [out = 90, in =90, looseness = 2] (B5)to [out = -90, in = 90, looseness =.75](D7)to [out = -90, in = 90, looseness =.75] (E5)to [out = -90, in =-90, looseness = 2] (E8)to [out = 90, in = 0, looseness =.75](tw4)--(tw2)to [out = 0, in = 90, looseness =.75](E7)to [out = -90, in =-90, looseness = 2] (E6);

\draw[very thick] (tw3)to [out = 180, in = -90, looseness =.75](D4)--(B4)to [out = 90, in =90, looseness = 2] (B5)to [out = -90, in = 90, looseness =.75](D7)to [out = -90, in = 90, looseness =.75] (E5)to [out = -90, in =-90, looseness = 2] (E8)to [out = 90, in = 0, looseness =.75](tw4);

\fill [black!20!white] (-\r+\sr,0,0*\h) circle(\sr);

\end{scope}

\end{tikzpicture}
}

\end{centering}
\caption{Bicategorical trace}\label{fig:motivation_bicat_trace}
\end{subfigure}
\end{tabular}
\caption{Traces}
\end{figure}

\begin{figure}
\input{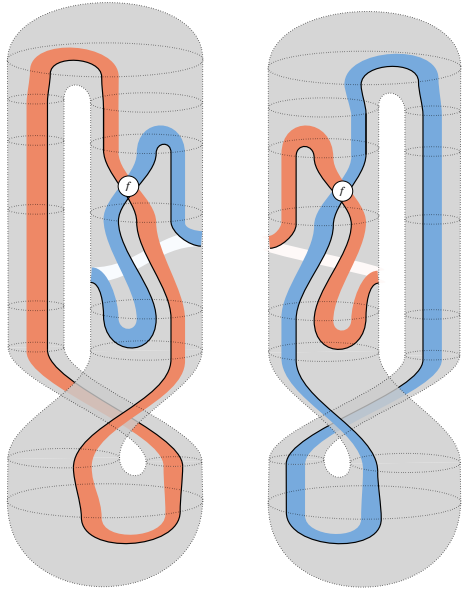}
\caption{The traces in \cref{thm:main_bzn}.  The statement of that theorem is that these two pictures depict the same composite. }\label{fig:motivation_main_thm}
\end{figure}

\begin{rmk} We are not interpreting these diagrams as formal proofs both because the relevant coherence theorem would take us too far afield and because it is difficult to depict the proper framings.  For example, the framings require that the two legs of the torus in \cref{fig:motivation_iterated_traces} switch sides near the bottom.  \cref{sec:iterated,sec:monoidal_bicat} are concerned with a rigorous proof motivated by \cref{fig:motivation_iterated_traces}.
\end{rmk}

\section{Applications: generalized Lefschetz theorems} \label{sec:lefschetz}
\label{sec:applications}

We now consider consequences of combining \cref{thm:main_bzn} with one of the main results in \cite{campbell_ponto} and some basic facts about the trace.  Special cases of these results have already appeared in the literature \cite{lunts,polishchuk,cisinski_tabuada} and we will describe the comparisons below.  The proofs we give here offer significant generalizations.  

\subsection{Lefschetz theorems}
In this section we will restrict our attention to the following  bicategories.

\begin{example}\label{ex:enriched_bicat}\cref{main_categorical_construction} implies the following bicategories are monoidal. 
\begin{itemize}
\item The bicategory of dg-algebras, and the derived category of bimodules and homomorphisms.   The bicategorical product is the tensor product.
\item The bicategory of dg-categories, where 1 cells are dg-categories and 2-cells are the derived category of bimodules.  We will abuse notation and also denote this category $\Mod(\Cat_{\dg})$.
\item The bicategory of spectral categories where 1 and 2 cells are the homotopy category of bimodules. We will  denote this category $\Mod(\Cat_{\Sp})$.
\end{itemize}
The bicategorical product in the second and third examples is a generalization of the usual tensor product of modules.  The shadow can be interpreted as the bicategorical product with the diagonal module associated to a category.   

See \cref{sec:enriched_bimod} for careful definitions of these bicategories.  
\end{example}

The first of these examples is a subbicategory of the second where we only consider the categories with a single object.  Correspondingly, there is a spectral analog of the first example where the objects are replaced by ring spectra.

\begin{example}We will be most interested in very specific objects and 1-cells in these bicategories.
\begin{itemize}
\item If $A$ is a dg-algebra then the categories of $A$-modules and perfect $A$-modules  are dg-categories.  So $\Mod_A$ and $\Mod^\perf_A$ are 0-cells in the bicategory $\Mod(\Cat_{\dg})$. The category $\Mod^\perf_A$ is Morita equivalent to $A$, it can be treated as functionally equivalent to $A$. 
\item If $\mc{A}$ is a dg-category then the categories of $\mc{A}$-modules  and dualizable $\mc{A}$-modules are dg-categories and so  are 0-cells in  the bicategory $\Mod(\Cat_{\dg})$.  
\item Similarly, if $\mc{A}$ is a spectral category then the categories of $\mc{A}$-modules  and dualizable $\mc{A}$-modules are spectral categories and so  are 0-cells in  the bicategory $\Mod(\Cat_{\mathrm{Sp}})$. 
\end{itemize}
\end{example}

Between a pair of dg-categories or spectral categories we have enriched functors.  (These become algebra homomorphism when we restrict to a category with one object.)  
\begin{defn}\label{defn:base_change}
If $F\colon \mc{C}\to \mc{D}$ is an enriched functor there is a $(\mc{D},\mc{C})$-bimodule  $\mc{D}_F$  defined by 
\[\mc{D}_F(d,c)\coloneqq \mc{D}(d,F(c)).\] 
 Morphisms of $\mc{C}$ act through $F$.   This is a  {\bf base change} 1-cell associated to $F$.
There are similar base change 1-cells $_F\mc{D}$ and $_F\mc{D}_G$.
\end{defn}

\begin{example} While there are many interesting base change 1-cells we are most interested in functors defined by tensoring.
\begin{itemize}
\item If $A$ is a dg-algebra and $Q$ is an $(A,A)$-module there is a $(\Mod_A^\perf,\Mod_A^\perf)$-bimodule, and so 1-cell in $\Mod(\Cat_{\dg})$, whose value on compact modules $M$ and $N$ is the dg-category of module homomorphisms 
\[\Mod_A(M,N\odot Q)\]
$\Mod_A$ acts on the left by composition and on the right by tensoring with the identity map of $Q$ and composing.  

This is the base change object for the functor $-\odot Q\colon \Mod_A^\perf\to \Mod_A$.  There are corresponding bimodules for tensoring in the domain and tensoring in both domain and target. 

\item If $\mc{A}$ is a dg-category (respectively spectral category) and $\mc{Q}$ is an $(\mc{A},\mc{A})$-module there is a $(\Mod_\mc{A}^\perf,\Mod_\mc{A}^\perf)$-bimodule, and so 1-cell in $\Mod(\Cat_{\dg})$ (respectively, $\Mod(\Cat_{\Sp})$), whose value on compact modules $\mc{M}$ and $\mc{N}$ is the dg-category (respectively spectral category)  of module homomorphisms 
\[\Mod_{\mc{A}}(\mc{M},\mc{N}\odot \mc{Q})\]
The actions are as above.

\end{itemize}
\end{example}

 If $\mc{Q}$ and $\mc{M}$ are endomorphism 1-cells and  $\phi\colon \mc{Q}\odot \mc{M}\to \mc{M}\odot \mc{Q}$ is a 2-cell in any of the bicategories above, then $\phi$ induces a map 
	\[(\Mod_\mc{A})_{-\odot \mc{Q}}\to \,_{-\odot\mc{M}}(\Mod_\mc{A})_{-\odot \mc{M}\odot \mc{Q}}\] 
as the composite 
\begin{align*}
\Mod_\mc{A}(\mc{X},\mc{Y}\odot \mc{Q})&\xto{-\odot \id_\mc{M}} 
	\Mod_\mc{A}(\mc{X}\odot \mc{M},\mc{Y}\odot \mc{Q}\odot \mc{M})
\\
	&\xto{(\id_\mc{Y}\odot \phi)_*} 
	\Mod_\mc{A}(\mc{X}\odot \mc{M},\mc{Y}\odot \mc{M}\odot \mc{Q})
\end{align*}
for $\mc{X}, \mc{Y}\in \Mod_{\mc{A}}^{\perf}$.
There is also a forgetful map 
\[_{-\odot \mc{M}}(\Mod_\mc{A})_{-\odot \mc{M}\odot \mc{Q}}\to
  (\Mod_\mc{A})_{-\odot \mc{Q}}.\]
Together these define an endomorphism $\Phi$ of $  (\Mod_\mc{A})_{-\odot \mc{Q}}$.

\begin{prop}\label{thm:cp}\cite[5.21]{campbell_ponto}  
If $\mc{M}$ is a right dualizable $(\mc{A},\mc{A})$-bimodule,  $\mc{Q}$ is an $(\mc{A},\mc{A})$-bimodule, and $\phi\colon \mc{Q}\odot \mc{M}\to \mc{M}\odot \mc{Q}$ is a 2-cell then the 
 following diagram commutes. 
\[\xymatrix{
  \sh{\mc{Q}}	\ar[d]^\chi	\ar[r]^-{\tr_\mc{M}(\phi)}
  &\sh{\mc{Q}}\ar[d]^\chi
  \\
  \sh{(\Mod_\mc{A})_{-\odot \mc{Q}}}\ar[r]^{\sh{\Phi}}
  &\sh{(\Mod_\mc{A})_{-\odot \mc{Q}}}
}\]
The vertical maps are isomorphisms and the horizontal maps are as above. 
\end{prop}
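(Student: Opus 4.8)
To prove \cref{thm:cp}, the plan is to recognize the square as an instance of \emph{Morita invariance} of the twisted trace and to reduce its commutativity to the composition theorem \cref{thm:composite} together with a coherence argument. First I would record the Morita equivalence that underlies the vertical map $\chi$: the $0$-cells $\mc{A}$ and $\Mod_\mc{A}^\perf$ of the ambient bicategory $\mc{B}$ are equivalent via a dual pair $(\mc{P},\mc{P}^\vee)$ --- the evaluation/Yoneda bimodules --- with coherent isomorphisms $\mc{P}\odot\mc{P}^\vee\cong U_\mc{A}$ and $\mc{P}^\vee\odot\mc{P}\cong U_{\Mod_\mc{A}^\perf}$. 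Unwinding \cref{defn:base_change} identifies the base change $1$-cell as the conjugate $(\Mod_\mc{A})_{-\odot\mc{Q}}\cong \mc{P}^\vee\odot\mc{Q}\odot\mc{P}$ of $\mc{Q}$, and under this identification $\chi$ is exactly the twisted trace, in the sense of \cref{defn:twisted_trace}, of the canonical isomorphism $\mc{Q}\odot\mc{P}\xrightarrow{\cong}\mc{P}\odot(\mc{P}^\vee\odot\mc{Q}\odot\mc{P})$ taken with respect to $\mc{P}$; it is an isomorphism because the underlying $2$-cell is.

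Next I would express both legs of the square as single twisted traces. For the top-then-right leg, \cref{thm:composite} (applied with $M_1=\mc{M}$, $M_2=\mc{P}$, twisting $2$-cell $\phi$ on $\mc{M}$, and the canonical isomorphism on $\mc{P}$) rewrites $\chi\circ\tr_\mc{M}(\phi)$ as the twisted trace along $\mc{M}\odot\mc{P}$ --- right dualizable by \cref{lem:duals_compose} --- of an explicit composite $2$-cell built from $\phi$ and that canonical isomorphism. For the left-then-bottom leg, one unwinds the three maps defining $\Phi$: $-\odot\id_\mc{M}$, the pushforward along $\id_\mc{Y}\odot\phi$, and the forgetful map. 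The key point is that the forgetful map is precisely the step that uses right-dualizability of $\mc{M}$, so that $\sh{\Phi}$ carries a ``cap and cup'' along $\mc{M}$ built from the unit $\eta$ and counit $\epsilon$ of the dual pair $(\mc{M},\mc{N})$; combining this with naturality of the twisted trace in its target twist and with \cref{thm:composite} once more, $\sh{\Phi}\circ\chi$ is likewise exhibited as a twisted trace along $\mc{M}\odot\mc{P}$ of a composite $2$-cell built from $\phi$ and the canonical isomorphism.

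It then remains to check that the two composite $2$-cells produced this way agree, and this is the main obstacle. It is a diagram chase: one must reorganize the three maps defining $\Phi$, using the associativity and unit constraints of $\mc{B}$ together with the cyclicity isomorphism $\theta$ and the hexagon/unit axioms of \cref{defn_shadow}, into the $\eta$--$\phi$--$\epsilon$ string of \cref{defn:twisted_trace}, and one must keep careful track of how the mate construction (\cref{defn:mate}, \cref{trace_with_dual}) behaves for $\mc{M}$ versus for $\mc{M}\odot\mc{P}$. Managing the many $\odot$-factors is exactly the kind of bookkeeping for which the ``circuit diagram'' calculus is designed. Once the two $2$-cells are matched the two legs coincide, giving commutativity of the square; that the vertical maps are isomorphisms was already observed above.
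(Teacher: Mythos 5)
The paper does not actually prove this proposition: it is quoted from \cite[5.21]{campbell_ponto}, so there is no in-paper argument to compare yours against. Judged on its own terms, your outline follows the natural route, and almost certainly the one taken in the cited source: identify $\chi$ as a twisted trace along the invertible Morita bimodule $\mc{P}$ relating $\mc{A}$ and $\Mod^\perf_{\mc{A}}$, identify $(\Mod_\mc{A})_{-\odot\mc{Q}}$ with the conjugate $\mc{P}^{\vee}\odot\mc{Q}\odot\mc{P}$, and use \cref{thm:composite} together with naturality of the twisted trace to fuse each leg of the square into a single twisted trace along a composite $1$-cell, reducing the claim to an equality of $2$-cells.

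However, the proposal stops exactly where the content begins. Your assertion that the forgetful map $_{-\odot\mc{M}}(\Mod_\mc{A})_{-\odot\mc{M}\odot\mc{Q}}\to(\Mod_\mc{A})_{-\odot\mc{Q}}$ ``carries a cap and cup along $\mc{M}$ built from $\eta$ and $\epsilon$'' is precisely the identification on which the proposition rests, and you give no argument for it; note that the paper introduces this forgetful map with no reference to the duality data of $\mc{M}$, so matching $\sh{\Phi}$ against the $\eta$--$\phi$--$\epsilon$ string of \cref{defn:twisted_trace} is not a formality but the heart of the proof. A second, smaller slip: ``$\chi$ is an isomorphism because the underlying $2$-cell is'' is not a valid inference --- the twisted trace of an invertible $2$-cell is generally not invertible (the Euler characteristic is the trace of an identity $2$-cell). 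What you need is that $\mc{P}$ is invertible as a $1$-cell, so that the trace along $\mc{P}^{\vee}$ of the inverse $2$-cell provides an inverse via \cref{thm:composite}. Deferring ``the two composite $2$-cells agree'' to an unspecified diagram chase therefore leaves the proof incomplete; to establish the result rather than cite it, that chase (or the argument of \cite{campbell_ponto}) has to be carried out.
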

  
To apply \cref{thm:main_bzn} we need to restrict to 2-dualizable 0-cells.  We will formally define this condition in \cref{defn:2_dualizable} but for this section it is enough to describe the relevant 0-cells. 

In the first of the bicategories in \cref{ex:enriched_bicat} this condition is well studied in the literature.

\begin{defn}\cite[2.3]{toen}
A dg-$k$-algebra $B$ is {\bf proper} if $B$ is perfect as an object of $D(k)$ (i.e. $B$ is perfect when regarded as a chain complex of $k$ modules).
It is {\bf smooth} if $B$ is perfect as an object of $D(B \otimes^{\mathbf{L}}_k B^\op)$
\end{defn}

This definition admits an immediate generalization to the other two bicategories.  If $\mc{A}$ is a dg-category (respectively spectral category), the unit 1-cell $U_\mc{A}$ can be regarded as a $(k,\mc{A}\otimes \mc{A})$-bimodule (respectively $(\bS,\mc{A}\otimes \mc{A})$-bimodule).  Denote this module $\overrightarrow{U}_\mc{A}$. 
\begin{defn}
A 0-cell 
 $\mc{A}$ in a dg-category (respectively spectral category) is {\bf proper} if $\overrightarrow{U}_\mc{A}$ is left dualizable.  It is {\bf smooth} if $\overrightarrow{U}_\mc{A}$ is right dualizable. 
\end{defn}

The following theorem is essentially contained in \cite[Thm.~5.8]{cisinski_tabuada_dg}, and shows that the usage above is consistent with standard usage of these terms.  It also provides a satisfying explanation for the necessity of two different conditions needed for the 2-dualizability of dg or spectral categories: the conditions correspond to taking different duals in a bicategory. 

\begin{thm}
For spectral or dg-categories, $\mc{C}$ is 2-dualizable if and only if it is smooth and proper. 
\end{thm}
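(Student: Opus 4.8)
The plan is to unwind \cref{defn:2_dualizable} in the bicategories $\Mod(\Cat_{\dg})$ and $\Mod(\Cat_{\Sp})$, reduce it to left and right dualizability of the single bimodule $\overrightarrow{U}_\mc{C}$, and then match those two conditions with properness and smoothness. First I would observe that every $0$-cell $\mc{C}$ of these bicategories is already dualizable as an object: both the coevaluation and the evaluation $1$-cells exhibiting this are incarnations of the unit bimodule $U_\mc{C}$ --- namely $\overrightarrow{U}_\mc{C}$ and the bimodule $\overleftarrow{U}_\mc{C}$ obtained from $U_\mc{C}$ by reversing the roles of the two $\mc{C}$-factors --- while the triangle $2$-cells are assembled from the unit and composition morphisms of $\mc{C}$. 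Consequently the only content of $2$-dualizability beyond this ordinary object-duality is the requirement that $\overrightarrow{U}_\mc{C}$ and $\overleftarrow{U}_\mc{C}$ be dualizable $1$-cells, admitting both left and right duals, and \cref{lem:duals_compose} then handles the dualizability of the composite $1$-cells that arise along the way. A symmetry argument --- transporting duality data across the braiding that swaps the two tensor factors --- identifies right (respectively left) dualizability of $\overleftarrow{U}_\mc{C}$ with left (respectively right) dualizability of $\overrightarrow{U}_\mc{C}$, so that $\mc{C}$ is $2$-dualizable if and only if $\overrightarrow{U}_\mc{C}$ is both left dualizable and right dualizable.

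By the definitions given above, ``$\overrightarrow{U}_\mc{C}$ left dualizable'' is precisely ``$\mc{C}$ proper'' and ``$\overrightarrow{U}_\mc{C}$ right dualizable'' is precisely ``$\mc{C}$ smooth,'' so the previous paragraph already yields the asserted equivalence in both directions: a smooth and proper $\mc{C}$ has $\overrightarrow{U}_\mc{C}$ both left and right dualizable and is therefore $2$-dualizable, and a $2$-dualizable $\mc{C}$ conversely has $\overrightarrow{U}_\mc{C}$ both left and right dualizable and so is smooth and proper. To reconcile this with the \emph{classical} meaning of smooth and proper --- and thus to justify the claim that our usage agrees with the standard one --- one further unwinds left dualizability of $\overrightarrow{U}_\mc{C}$, via the characterization of dualizable modules recalled in \cref{sec:enriched}, into the condition that every hom-object $\mc{C}(c,c')$ be a perfect module over the base ($k$ in the dg case, $\bS$ in the spectral case), and right dualizability of $\overrightarrow{U}_\mc{C}$ into the condition that $U_\mc{C}$ be perfect as a module over $\mc{C}\otimes\mc{C}^{\op}$. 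In the dg case these identifications are the content of \cite[Thm.~5.8]{cisinski_tabuada_dg}; in the spectral case the argument is formally identical, using only the symmetric monoidal structure of $\bS$.

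The step I expect to be the main obstacle is the variance bookkeeping in the first paragraph: one must track the braiding and the two module structures on $U_\mc{C}$ carefully enough to be certain that the two conditions needed for $\overrightarrow{U}_\mc{C}$ to have both a left and a right dual line up with \emph{proper} and \emph{smooth} in the correct order rather than being interchanged, and that the $1$-cells and $2$-cells witnessing the object-duality of $\mc{C}$ genuinely carry no dualizability obstruction beyond those of $\overrightarrow{U}_\mc{C}$ itself. Once that is settled, the remaining work --- the translation into perfectness conditions --- is a direct appeal to \cite{cisinski_tabuada_dg} together with the homotopical input of \cref{sec:enriched}.
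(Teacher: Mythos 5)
Your argument is correct in outline, and it is worth noting that the paper does not actually supply a proof of this statement: it defers entirely to \cite[Thm.~5.8]{cisinski_tabuada_dg} with the remark that the theorem is ``essentially contained'' there. So your proposal fills a genuine gap rather than duplicating an argument. The route you take is the intended one: with the paper's internal definitions, \emph{proper} and \emph{smooth} literally mean that $\overrightarrow{U}_{\mc{C}}$ is left, respectively right, dualizable, so the whole content is (i) that $\mc{C}$ is $1$-dualizable with dual $\mc{C}^{\op}$ and with both witnessing $1$-cells $C$ and $E$ given by incarnations of the unit bimodule (exactly as in the paper's ring example, where $C$ and $E$ are $A$ viewed as a $(\bZ, A\otimes A^{\op})$- and an $(A^{\op}\otimes A,\bZ)$-bimodule), and (ii) that condition (2) of \cref{defn:2_dualizable} for this witness is precisely ``$\overrightarrow{U}_{\mc{C}}$ left and right dualizable.'' One small simplification: the ``variance bookkeeping'' you flag as the main obstacle --- transporting duality data across the braiding to compare $\overleftarrow{U}_{\mc{C}}$ with $\overrightarrow{U}_{\mc{C}}$ --- is not really needed, since the paper's unnumbered lemma preceding \cref{defn:2_dualizable} already shows that for any $1$-dualizable $0$-cell the left/right dualizability of $C$ and the right/left dualizability of $E$ determine each other, which is why the four conditions in that definition are equivalent and why it suffices to check them on $C=\overrightarrow{U}_{\mc{C}}$ alone. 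The only step that genuinely requires outside input is the reconciliation with the classical perfectness formulations of smooth and proper, and there the citation of \cite{cisinski_tabuada_dg} (with the formally identical argument over $\bS$) is exactly what the paper itself intends.
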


\begin{rmk}
If $\mc{A}$ is a smooth or proper dg-category, then so is $\Mod^{\perf}_{\mc{A}}$. Indeed, $\Mod^{\perf}_{\mc{A}}$ is Morita equivalent to $\mc{A}$ and so has all of the same dualizability properties. 
\end{rmk}

This result and \cref{thm:main_bzn} then imply the following statement.

\begin{prop}\label{thm:pol_2}
If $\mc{A}$ is smooth and proper, $\mc{M}$ is a right dualizable $(\mc{A},\mc{A})$-bimodule,  $\mc{Q}$ is a left dualizable $(\mc{A},\mc{A})$-bimodule, and $\phi\colon \mc{Q}\odot \mc{M}\to \mc{M}\odot \mc{Q}$ is a 2-cell then 
\[\tr(\sh{\mc{M}} \xrightarrow{\tr_{\mc{Q}}(\phi)} \sh{\mc{M}})=\tr(  \sh{(\Mod_\mc{A})_{-\odot \mc{Q}}}\xto{\sh{\Phi}}
  \sh{(\Mod_\mc{A})_{-\odot \mc{Q}}})\]
\end{prop}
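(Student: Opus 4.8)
The strategy is to recognize the two sides of the asserted equality as the two iterated traces of $\phi$ that \cref{thm:main_bzn} identifies, applied with $X=\mc{Q}$ and $Y=\mc{M}$. We work inside $\Mod(\Cat_{\dg})$ (resp.\ $\Mod(\Cat_{\Sp})$), or rather inside its full sub-bicategory on the 2-dualizable 0-cells, which is again symmetric monoidal. Since $\mc{A}$ is smooth and proper it is 2-dualizable, and by the theorem identifying 2-dualizability with smoothness and properness together with the subsequent remark, $\Mod^{\perf}_{\mc{A}}$ is 2-dualizable as well; thus $\mc{M}$ and $\mc{Q}$ are endomorphism 1-cells of the 2-dualizable 0-cell $\mc{A}$, $(\Mod_\mc{A})_{-\odot\mc{Q}}$ is an endomorphism 1-cell of the 2-dualizable 0-cell $\Mod^{\perf}_{\mc{A}}$, and every object in sight lives in a bicategory of the form demanded by \cref{thm:main_bzn}.

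First I would rewrite the right-hand side. By \cref{thm:cp} the Euler characteristic isomorphism $\chi\colon\sh{\mc{Q}}\xrightarrow{\ \cong\ }\sh{(\Mod_\mc{A})_{-\odot\mc{Q}}}$ conjugates $\tr_\mc{M}(\phi)$ to $\sh{\Phi}$, i.e.\ $\sh{\Phi}=\chi\circ\tr_\mc{M}(\phi)\circ\chi^{-1}$. Since the symmetric monoidal trace is invariant under conjugation by an isomorphism, this yields
\[
\tr\bigl(\sh{(\Mod_\mc{A})_{-\odot\mc{Q}}}\xrightarrow{\sh{\Phi}}\sh{(\Mod_\mc{A})_{-\odot\mc{Q}}}\bigr)=\tr\bigl(\sh{\mc{Q}}\xrightarrow{\tr_\mc{M}(\phi)}\sh{\mc{Q}}\bigr).
\]

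Next I would invoke \cref{thm:main_bzn} with $X=\mc{Q}$, $Y=\mc{M}$, and the 2-cell $\phi\colon\mc{Q}\odot\mc{M}\to\mc{M}\odot\mc{Q}$, which has exactly the shape $X\odot Y\to Y\odot X$. In the notation of that theorem $\tr_Y(\phi)=\tr_\mc{M}(\phi)\colon\sh{\mc{Q}}\to\sh{\mc{Q}}$ and $\tr_X(\phi)=\tr_\mc{Q}(\phi)\colon\sh{\mc{M}}\to\sh{\mc{M}}$, so the conclusion of \cref{thm:main_bzn} reads
\[
\tr\bigl(\sh{\mc{Q}}\xrightarrow{\tr_\mc{M}(\phi)}\sh{\mc{Q}}\bigr)=\tr\bigl(\sh{\mc{M}}\xrightarrow{\tr_\mc{Q}(\phi)}\sh{\mc{M}}\bigr).
\]
Concatenating this with the previous display produces precisely the claimed identity.

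The one point that needs genuine care — and the reason smoothness and properness both enter, rather than merely 2-dualizability of $\mc{A}$ — is the verification that the hypothesis ``the traces of $\phi$ with respect to $X$ and $Y$ are defined'' actually holds, which in particular requires that the shadows $\sh{\mc{M}}$ and $\sh{\mc{Q}}$ be dualizable in $\mc{B}(I,I)$, i.e.\ that $\thh(\mc{A};\mc{M})$ and $\thh(\mc{A};\mc{Q})$ be perfect. This I would deduce from $\overrightarrow{U}_\mc{A}$ being both left dualizable (properness) and right dualizable (smoothness) together with the dualizability of $\mc{M}$ and $\mc{Q}$; the same bookkeeping records which duals of $\overrightarrow{U}_\mc{A}$, $\mc{M}$, and $\mc{Q}$ are used at each stage, a routine but notationally heavy check that I would defer to \cite{campbell_ponto,ps:bicat}.
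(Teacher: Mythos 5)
Your proposal is correct and follows essentially the same route as the paper: apply \cref{thm:main_bzn} to exchange $\tr_{\sh{\mc{M}}}(\tr_{\mc{Q}}(\phi))$ with $\tr_{\sh{\mc{Q}}}(\tr_{\mc{M}}(\phi))$, then use \cref{thm:cp} together with conjugation (cyclic) invariance of the trace to replace $\tr_{\mc{M}}(\phi)$ by $\sh{\Phi}$; the paper performs these two steps in the opposite order, which is immaterial. Your closing worry about the outer traces being defined is handled automatically by the umbra machinery (\cref{lem:penumbra_dual} supplies the dual of $\sh{N}$ from a dual of $N$ once the $0$-cell is $2$-dualizable, which for dg/spectral categories is exactly smooth and proper), so no separate verification is needed.
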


\begin{proof}
By \cref{thm:main_bzn} $\tr_{\sh{\mc{M}}}(\tr_\mc{Q}(\phi))=\tr_{\sh{\mc{Q}}}(\tr_\mc{M}(\phi))$.
By \cref{thm:cp}
  \begin{align*}
   \sh{\mc{Q}} \xrightarrow{\tr_\mc{M}(\phi)} \sh{\mc{Q}}
	&= \sh{\mc{Q}} \xrightarrow{\chi} \sh{(\Mod_\mc{A})_{-\odot \mc{Q}}}  \xrightarrow{\sh{\Phi}} 
	\sh{(\Mod_\mc{A})_{-\odot \mc{Q}}} \xrightarrow{\chi^{-1}} \sh{\mc{Q}}
\end{align*}
Since the trace is invariant under cyclic permutation 
\[
\tr \left(\sh{\mc{Q}} \xrightarrow{\chi} \sh{(\Mod_\mc{A})_{-\odot \mc{Q}}}  \xrightarrow{\sh{\Phi}} 
	\sh{(\Mod_\mc{A})_{-\odot \mc{Q}}} \xrightarrow{\chi^{-1}} \sh{Q} \right)\] is 
    \[\tr \left( \sh{(\Mod_\mc{A})_{-\odot \mc{Q}}}  \xrightarrow{\sh{\Phi}}
 \sh{(\Mod_\mc{A})_{-\odot \mc{Q}}}\right) 
\]
\end{proof}

There is an interesting simplification of this result when $\phi$ is the identity.  It requires a preliminary lemma.

\begin{lem}\label{thm:lunts_1}  Let $\sB$ be a symmetric monoidal bicategory where all zero cells are 2-dualizable.  If  $M\in \sB(A,A)$ is right dualizable then 
\[\chi(\sh{M})=  \tr_{\sh{U_A}}\left(\sh{U_A} \xrightarrow{\chi(M)} \sh{U_A}\right) .\] 
\end{lem}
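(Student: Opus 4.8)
The plan is to deduce this directly from the main theorem \cref{thm:main_bzn}: the displayed identity is exactly the commutativity of an iterated trace in the degenerate case where one of the two endomorphism 1-cells is the unit. So first I would apply \cref{thm:main_bzn} in $\sB$ itself to the pair of endomorphism 1-cells $U_A$ and $M$ of $A$ and to the 2-cell $\iota\colon U_A\odot M \xrightarrow{\cong} M\odot U_A$ assembled from the unit coherence isomorphisms. This $\iota$ has the shape $U_A\odot M \to M\odot U_A$ required by the hypothesis; the 1-cell $U_A$ is its own dual and $M$ is right dualizable by assumption, so each one-step trace of $\iota$ exists. It then remains to compute the two resulting iterated traces.

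Taking the trace over $M$ first: by the example following \cref{defn:twisted_trace}, the Euler characteristic $\chi(M)$ is by definition the twisted trace of the canonical isomorphism $U_A\odot M\xrightarrow{\cong}M\odot U_A$ with respect to $M$, so $\tr_M(\iota) = \big(\sh{U_A}\xrightarrow{\chi(M)}\sh{U_A}\big)$ and this side of \cref{thm:main_bzn} is $\tr_{\sh{U_A}}(\chi(M))$. Taking the trace over $U_A$ first: unwinding \cref{defn:twisted_trace} with the trivial duality data of the unit 1-cell, every structure map in the composite is a unit coherence isomorphism, and the composite collapses to $\sh{-}$ applied to the underlying endomorphism $M\xrightarrow{\id}M$ of $\iota$; that is, $\tr_{U_A}(\iota) = \id_{\sh M}$. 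Hence this side of \cref{thm:main_bzn} is $\tr_{\sh M}(\id_{\sh M})$, which is the Euler characteristic of the object $\sh M$ in the symmetric monoidal category $\sB(I,I)$, i.e. $\chi(\sh M)$. Both iterated traces are defined: $\sh{U_A}$ is dualizable in $\sB(I,I)$ because $A$ is $2$-dualizable, and $\sh M$ is dualizable because moreover $M$ is right dualizable (\cref{prop:shadow} together with the constructions of \cref{sec:iterated,sec:monoidal_bicat}). Therefore \cref{thm:main_bzn} applies and gives $\tr_{\sh{U_A}}(\chi(M)) = \chi(\sh M)$, which is the assertion.

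The only substantive points are the two identifications $\tr_M(\iota) = \chi(M)$ and $\tr_{U_A}(\iota) = \id_{\sh M}$, together with checking that the dualizability conditions ensuring the iterated traces exist are met. The first identification is already recorded as an example in \cref{sec:duality}; the dualizability facts are part of the shadow construction; and the computation $\tr_{U_A}(\iota) = \id_{\sh M}$ is a short diagram chase through the unit coherence 2-cells. I expect that last chase — although routine — to be the most error-prone step, so I would write it out in full, most cleanly by observing that the twisted trace along an identity 1-cell is simply the shadow functor applied to the underlying endomorphism of the 2-cell.
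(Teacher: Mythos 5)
Your proposal is correct and follows essentially the same route as the paper: apply \cref{thm:main_bzn} to the unit isomorphism $U_A\odot M\to M\odot U_A$, identify $\tr_{U_A}$ of it with $\sh{\id_M}$ (the paper cites \cite[7.4]{ps:bicat} for this rather than redoing the unit-coherence chase) and $\tr_M$ of it with $\chi(M)$ (which the paper verifies by an explicit commutative diagram). No gaps.
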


\begin{proof}
The unit $U_A$ is left dualizable, so 
applying \cref{thm:main_bzn} to the unit isomorphism $i\colon U_A \odot M \to M \odot U_A$ we get the following equality 
  \[
  \tr(\sh{M} \xrightarrow{\tr_{U_A}(i)} \sh{M}) = \tr(\sh{U_A} \xrightarrow{\tr_M(i)} \sh{U_A}).  
  \]
By \cite[7.4]{ps:bicat} $\sh{M} \xrightarrow{\tr_{U_A}(i)} \sh{M} = \sh{M} \xrightarrow{\sh{\id_M}} \sh{M}$.   By definition the trace of this map is $\chi(\sh{M})$. 

To identify the right hand side,  the following commutative diagram demonstrates that the trace of $i$ with respect to $M$ is $\chi(M)$
\begin{center}\resizebox{\textwidth}{!}{\xymatrix{
	\sh{U_A\odot U_A}\ar[r]^-{\sh{\id\odot \eta}}\ar[d]_\sim^{\sh{l}}
	& \sh{U_A\odot M\odot N}\ar[r]^-{\sh{i\odot \id}}\ar[d]^{\sh{l\odot \id}}
	&\sh{M\odot U_A\odot N}\ar[r]^-\theta\ar[d]^{\sh{r\odot \id}}
	&\sh{N\odot M\odot U_A}\ar[r]^-{ \sh{\epsilon\odot\id}}\ar[d]^{\sh{\id\odot r}}
	&\sh{U_A\odot U_A}\ar[d]_\sim^{\sh{r}}
\\
	\sh{U_A}\ar[r]^-{\sh{\eta}}
	& \sh{M\odot N}\ar[r]^-{\sh{\id}}
	&\sh{M\odot N}\ar[r]^-\theta
	&\sh{N\odot M}\ar[r]^-{\sh{\epsilon}}
	&\sh{U_A}
}}\end{center}
\end{proof}

\begin{rmk}
The proof of \cref{thm:main_bzn} shows it is enough to assume that $A$ is 2-dualizable rather than assuming all zero cells are 2-dualizable.
\end{rmk}

\begin{cor}\label{thm:lunts_2}
If $\mc{M}$ is a right dualizable $(\mc{A},\mc{A})$-bimodule and $\mc{A}$ is smooth and proper 
	then \[\chi(\sh{\mc{M}})= \tr \left( \sh{\Mod_\mc{A}}  \xrightarrow{\sh{- \odot \mc{M}}} \sh{\Mod_\mc{A}}\right) \]
\end{cor}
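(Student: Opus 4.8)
The plan is to deduce the corollary from \cref{thm:lunts_1} together with the Euler-characteristic case of \cref{thm:cp}. Since $\mc{A}$ is smooth and proper it is $2$-dualizable, so \cref{thm:lunts_1} applies to the right dualizable $1$-cell $\mc{M}\in\sB(\mc{A},\mc{A})$ (working in the relevant bicategory, e.g.\ $\Mod(\Cat_{\dg})$ or $\Mod(\Cat_{\Sp})$) and gives
\[
\chi(\sh{\mc{M}}) = \tr_{\sh{U_{\mc{A}}}}\!\left(\sh{U_{\mc{A}}}\xrightarrow{\chi(\mc{M})}\sh{U_{\mc{A}}}\right).
\]
Thus it suffices to identify the right-hand side with $\tr\big(\sh{\Mod_{\mc{A}}}\xrightarrow{\sh{-\odot\mc{M}}}\sh{\Mod_{\mc{A}}}\big)$; in particular this reduces the statement to a comparison, in the category where the shadow takes values, between the Euler characteristic endomorphism of $\sh{U_{\mc{A}}}$ and the endomorphism $\sh{-\odot\mc{M}}$.

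For that comparison I would apply \cref{thm:cp} with $\mc{Q}=U_{\mc{A}}$ and $\phi$ the canonical isomorphism $\psi\colon U_{\mc{A}}\odot\mc{M}\xrightarrow{\ \cong\ }\mc{M}\odot U_{\mc{A}}$. By the example following \cref{defn:twisted_trace}, the twisted trace $\tr_{\mc{M}}(\psi)$ is precisely the Euler characteristic $\chi(\mc{M})\colon\sh{U_{\mc{A}}}\to\sh{U_{\mc{A}}}$, so the commuting square of \cref{thm:cp} reads $\chi(\mc{M})=\chi^{-1}\circ\sh{\Phi}\circ\chi$, where $\chi\colon\sh{U_{\mc{A}}}\xrightarrow{\ \sim\ }\sh{(\Mod_{\mc{A}})_{-\odot U_{\mc{A}}}}$ is the displayed isomorphism. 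Because $\mc{A}$ (equivalently $\Mod^{\perf}_{\mc{A}}$) is proper, $\sh{U_{\mc{A}}}$ and $\sh{(\Mod_{\mc{A}})_{-\odot U_{\mc{A}}}}$ are dualizable in $\mathbf{T}$, so both traces are defined, and cyclic invariance of the trace gives $\tr_{\sh{U_{\mc{A}}}}(\chi(\mc{M}))=\tr(\sh{\Phi})$.

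It then remains to match $\sh{\Phi}$ with $\sh{-\odot\mc{M}}$, which is where I expect the only genuine bookkeeping. The functor $-\odot U_{\mc{A}}$ is the inclusion $\Mod^{\perf}_{\mc{A}}\hookrightarrow\Mod_{\mc{A}}$, so the base change $1$-cell $(\Mod_{\mc{A}})_{-\odot U_{\mc{A}}}$, whose value on perfect $\mc{X},\mc{Y}$ is $\Mod_{\mc{A}}(\mc{X},\mc{Y})$, is the unit $1$-cell $U_{\Mod^{\perf}_{\mc{A}}}$; by Morita invariance of the shadow, $\sh{(\Mod_{\mc{A}})_{-\odot U_{\mc{A}}}}\cong\sh{\Mod_{\mc{A}}}$. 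Unwinding the construction of $\Phi$ in this case --- the map $g\mapsto g\odot\id_{\mc{M}}$ followed by the forgetful map --- exhibits $\Phi$ as the base change $1$-cell of the endofunctor $-\odot\mc{M}$ of $\Mod^{\perf}_{\mc{A}}$, which is defined because right dualizability of $\mc{M}$ makes $-\odot\mc{M}$ preserve perfect modules, and the compatibility of the shadow with base change (as in \cite{campbell_ponto}) then identifies $\sh{\Phi}$ with $\sh{-\odot\mc{M}}$. Assembling the three displayed equalities yields $\chi(\sh{\mc{M}})=\tr(\sh{-\odot\mc{M}})$. The main obstacle is purely this last identification of $\Phi$ with the base-change $1$-cell of $-\odot\mc{M}$ and checking it is transported correctly by the shadow; every other ingredient is a direct invocation of a result already in hand.
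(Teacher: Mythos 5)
Your proof is correct and follows essentially the same route as the paper: the paper also first applies \cref{thm:lunts_1} and then invokes \cref{thm:pol_2} with $\phi$ the unit isomorphism, and \cref{thm:pol_2} is itself proved exactly by the combination of \cref{thm:cp}, cyclic invariance of the trace, and the identification of $\Phi$ with $-\odot\mc{M}$ that you carry out by hand. You have merely inlined that intermediate proposition rather than citing it.
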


\begin{proof}
\cref{thm:lunts_1} identifies $\chi(\sh{\mc{M}})$ with $\tr(\chi(\mc{M}))$.  If $\phi$ is the identity map of $\mc{M}$, 
\cref{thm:pol_2} identifies $\tr(\chi(\mc{M}))$ with $\tr(\sh{-\odot \mc{M}})$.
\end{proof}

If $A$ is a smooth and proper dg-algebra over a field $k$ and $M \in \Mod^{\text{perf}}_{(A,A)}$ is a perfect DG bimodule then 
 \[\sum (-1)^i \dim \operatorname{HH}_i (A, M)=\chi(\sh{M})\] and 
\[\sum_j (-1)^j \tr \HH_j (-\odot  M) =
	\tr \left( \sh{\Mod^{\text{perf}}_A}  \xrightarrow{\sh{- \odot  M}} \sh{\Mod^{\text{perf}}_A}\right) 
\]
With these identifications, the following theorem is a consequence of \cref{thm:lunts_2}.

\begin{thm}\cite[Theorem 1.4]{lunts}\label{thm:lunts}
  Let $A$ be a smooth, proper dg-algebra over a field $k$ and $M \in \Mod^{\text{perf}}_{(A,A)}$ be a perfect DG bimodule. Then
  \[
  \sum_i (-1)^i \dim\HH_i (A;M) = \sum_i \tr\HH_i (-\odot  M)
  \]
\end{thm}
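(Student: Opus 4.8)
The plan is to read the statement off from \cref{thm:lunts_2}, so that the only real work is translating the two categorical invariants appearing there into the alternating sums in the statement. First I would take the $0$-cell to be the dg-category $\mc A = \Mod^\perf_A$, which is smooth and proper since it is Morita equivalent to the smooth proper dg-algebra $A$; the perfect $(A,A)$-bimodule $M$ transports along this Morita equivalence to a right dualizable $(\Mod^\perf_A,\Mod^\perf_A)$-bimodule, still written $M$, and because the equivalence is monoidal and compatible with shadows and traces \cite{campbell_ponto} it leaves all the invariants below unchanged. Applying \cref{thm:lunts_2} with $\mc A = \Mod^\perf_A$ and $\mc M = M$ then gives
\[
\chi(\sh{M}) = \tr\!\left(\sh{\Mod^\perf_A}\xrightarrow{\sh{-\odot M}}\sh{\Mod^\perf_A}\right).
\]

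It remains to recognize the two sides of this equality. In these bicategories the shadow is Hochschild homology valued in graded $k$-vector spaces with their Koszul-signed symmetry, so $\sh{M}\simeq \HH_\ast(A;M)$, and by Morita invariance of Hochschild homology \cite{campbell_ponto} $\sh{\Mod^\perf_A}\simeq \HH_\ast(A;A)$ with $\sh{-\odot M}$ corresponding to the induced endomorphism $\HH_\ast(-\odot M)$. Because $A$ is smooth and proper and $M$ is perfect, both $\HH_\ast(A;M)$ and $\HH_\ast(A;A)$ are perfect over $k$ --- bounded and finite-dimensional in each homological degree --- hence dualizable as graded $k$-vector spaces. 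For such a graded vector space $V_\ast = \bigoplus_i V_i$ the symmetric monoidal Euler characteristic $\chi(V_\ast)\colon k\to k$ is multiplication by $\sum_i (-1)^i \dim_k V_i$, and for a degree-preserving endomorphism $g_\ast$ the symmetric monoidal trace $\tr(g_\ast)$ is $\sum_i (-1)^i \tr(g_i)$, the sign $(-1)^i$ being exactly the Koszul sign contributed by the symmetry isomorphism in homological degree $i$. Applying the first formula to $V_\ast = \HH_\ast(A;M)$ and the second to $V_\ast = \HH_\ast(A;A)$ with $g_\ast = \HH_\ast(-\odot M)$, the displayed equality above becomes
\[
\sum_i (-1)^i \dim \HH_i(A;M) = \sum_i (-1)^i \tr \HH_i(-\odot M),
\]
which is the assertion.

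The main obstacle is the degreewise bookkeeping of the second paragraph --- verifying that the categorical trace and Euler characteristic in graded $k$-vector spaces really do produce these alternating signs, which comes down to tracking the Koszul sign in the symmetry isomorphism and checking that the coevaluation and evaluation for $V_\ast$ split as direct sums over homological degree. This is standard, and it is exactly here that the hypotheses \emph{smooth} and \emph{proper} enter: they are what force $\HH_\ast(A;M)$ and $\HH_\ast(A;A)$ to be perfect over $k$, so that the finite alternating sums in the statement are defined and the categorical invariants on the left of \cref{thm:lunts_2} are meaningful.
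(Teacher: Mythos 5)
Your proposal is correct and follows the paper's own route: apply \cref{thm:lunts_2} (with $A$, equivalently its Morita-equivalent $\Mod^\perf_A$, as the smooth proper $0$-cell) and then identify $\chi(\sh{M})$ with $\sum_i(-1)^i\dim\HH_i(A;M)$ and the trace of $\sh{-\odot M}$ with $\sum_i(-1)^i\tr\HH_i(-\odot M)$. The paper simply asserts these two identifications, whereas you spell out the Koszul-sign bookkeeping and the finiteness of $\HH_*$ needed for them; that is the only difference.
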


Similarly, \cref{thm:lunts_2} implies the following generalization due to Cisinski and Tabuada \cite{cisinski_tabuada} .

\begin{cor}\cite[Theorem 1.7]{cisinski_tabuada}
  Let $\mc{A}$ be a smooth, proper dg-category over a field $k$ and $\mc{M}$ be a perfect $(\mc{A}, \mc{A})$-bimodule.   Then
  \[
  \sum_i (-1)^i \dim\HH_i (\mc{A};\mc{M}) = \sum_i \tr\HH_i (-\otimes {\mc{M}}) 
  \]
\end{cor}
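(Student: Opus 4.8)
The plan is to read this off from \cref{thm:lunts_2} applied inside the monoidal bicategory $\Mod(\Cat_{\dg})$ of \cref{ex:enriched_bicat}, exactly as \cref{thm:lunts} is obtained in the dg-algebra case. First I would check that the hypotheses transport: a smooth and proper dg-category $\mc{A}$ is a 2-dualizable 0-cell of $\Mod(\Cat_{\dg})$ (that is, $\overrightarrow{U}_{\mc{A}}$ is both left and right dualizable), and a perfect $(\mc{A},\mc{A})$-bimodule $\mc{M}$ is a right dualizable endomorphism 1-cell. The shadow on $\Mod(\Cat_{\dg})$ is Hochschild homology, valued in the derived category $D(k)$ of $k$-complexes, so $\sh{\mc{M}}=\HH(\mc{A};\mc{M})$; and since $\Mod^{\perf}_{\mc{A}}$ is Morita equivalent to $\mc{A}$, $\sh{\Mod^{\perf}_{\mc{A}}}\simeq\HH(\mc{A})$, with $\sh{-\otimes\mc{M}}$ identified with the map induced by tensoring with $\mc{M}$. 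Thus \cref{thm:lunts_2} gives
\[
  \chi\big(\HH(\mc{A};\mc{M})\big)=\tr\Big(\HH(\mc{A})\xrightarrow{\sh{-\otimes\mc{M}}}\HH(\mc{A})\Big).
\]

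The remaining step is to identify both sides with the numerical invariants in the statement. Since $\mc{A}$ is smooth and proper and $\mc{M}$ is perfect, both $\HH(\mc{A};\mc{M})$ and $\HH(\mc{A})$ are perfect, hence dualizable, objects of $D(k)$, so the Euler characteristic and trace of \cref{sec:duality} are defined and land in $k$. For a perfect complex over a field the categorical Euler characteristic is the classical graded one, so $\chi(\HH(\mc{A};\mc{M}))=\sum_i(-1)^i\dim\HH_i(\mc{A};\mc{M})$; and the categorical trace of a graded endomorphism of a perfect complex is the alternating sum of the traces on its homology groups, so the right-hand side is $\sum_i(-1)^i\tr\,\HH_i(-\otimes\mc{M})$. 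Substituting these identifications into the displayed equality proves the corollary.

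The only genuinely nontrivial ingredient, and hence the main obstacle, is the passage between the formal invariants and the classical ones in the second paragraph: one needs that Hochschild homology is indeed a shadow on $\Mod(\Cat_{\dg})$ valued in $D(k)$ (part of \cref{ex:enriched_bicat}, set up via \cref{sec:enriched}), that smoothness and properness of $\mc{A}$ are precisely the conditions making the relevant complexes perfect so that the traces exist, and that the categorical trace in $D(k)$ computes the Lefschetz number. Granting these standard facts, the corollary is \cref{thm:lunts_2} read through the dictionary of \cref{ex:enriched_bicat}, exactly as for \cref{thm:lunts}.
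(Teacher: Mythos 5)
Your proposal is correct and follows the paper's own route: the paper derives this corollary by applying \cref{thm:lunts_2} in the bicategory of dg-categories and then identifying the categorical Euler characteristic and trace with the classical alternating sums of dimensions and traces on Hochschild homology, exactly as you describe. Your write-up is in fact more explicit than the paper's one-line "Similarly," but the content is the same.
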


\subsection{Lefschetz reciprocity}
In \cite{polishchuk} Polishchuk also proved a consequence of \cref{thm:main_bzn} but his result relies on mates (\cref{defn:mate}) rather than the main result of \cite{campbell_ponto}.

\begin{thm}\label{thm:mate}
If $(M,N)$ is a dual pair in a monoidal bicategory $\sB$ where all objects are 2-dualizable, 
$Q$ is left dualizable and $f\colon Q\odot M\to M\odot Q$ then
\[\tr_\sh{M}\left(\sh{M}\xto{\tr_Q(f)}\sh{M}\right)=\tr_\sh{Q}\left(\sh{Q}\xto{\tr_N(f^*)}\sh{Q}\right)\]
\end{thm}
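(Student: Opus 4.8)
The plan is to get \cref{thm:mate} as a short formal consequence of \cref{thm:main_bzn} and \cref{trace_with_dual}; no new diagram chase is required, and this is essentially why mates enter the picture.

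First I would check that $f$ meets the hypotheses of \cref{thm:main_bzn}. The 2-cell $f\colon Q\odot M\to M\odot Q$ has the shape $X\odot Y\to Y\odot X$ with $X=Q$ and $Y=M$, and $Q,M,N$ are necessarily endomorphism 1-cells of a common 0-cell $A$ (otherwise $Q\odot M$ and $M\odot Q$ could not have matching source and target). The twisted trace $\tr_M(f)\colon\sh{Q}\to\sh{Q}$ is defined because $M$ is right dualizable, being half of the dual pair $(M,N)$; and the twisted trace $\tr_Q(f)\colon\sh{M}\to\sh{M}$ --- in its left-hand form, cancelling $Q$ against a left dual of $Q$ --- is defined because $Q$ is left dualizable by hypothesis. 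Since every 0-cell of $\sB$ is 2-dualizable, \cref{thm:main_bzn} applies to $f$ and gives
\[
\tr_{\sh{M}}\left(\sh{M}\xto{\tr_Q(f)}\sh{M}\right)=\tr_{\sh{Q}}\left(\sh{Q}\xto{\tr_M(f)}\sh{Q}\right).
\]

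Next I would rewrite the right-hand side using the mate. By \cref{trace_with_dual}, applied to the twisted endomorphism $f\colon Q\odot M\to M\odot Q$ with $M$ right dualizable and right dual $N$, the map $\tr_M(f)$ agrees with the twisted trace $\tr_N(f^*)$ of the mate $f^*\colon N\odot Q\to Q\odot N$ from \cref{defn:mate}, both regarded as endomorphisms of $\sh{Q}$. The symmetric monoidal trace in $\sB(I,I)$ depends only on the underlying endomorphism, so $\tr_{\sh{Q}}(\tr_M(f))=\tr_{\sh{Q}}(\tr_N(f^*))$; chaining this with the previous display yields the asserted identity
\[
\tr_{\sh{M}}\left(\sh{M}\xto{\tr_Q(f)}\sh{M}\right)=\tr_{\sh{Q}}\left(\sh{Q}\xto{\tr_N(f^*)}\sh{Q}\right).
\]

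There is no deep obstacle; the work is bookkeeping. The point that needs care is keeping the left/right dualizability conventions aligned: one must confirm that $\tr_Q(f)$ is the left-hand variant of the twisted trace (cancelling the left-dualizable $Q$ via its left dual), that $\tr_N(f^*)$ is taken against the left dual $M$ of $N$, and that the trace of $f^*$ named in \cref{trace_with_dual} is exactly this $\tr_N(f^*)$ rather than another variant. Dualizability of $\sh{M}$ and $\sh{Q}$ in $\sB(I,I)$, needed to form the outer traces, is already subsumed in the statement of \cref{thm:main_bzn} once the twisted traces $\tr_Q(f)$ and $\tr_M(f)$ are known to be defined, so nothing further is needed there.
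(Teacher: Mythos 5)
Your proposal is correct and follows exactly the paper's own argument: apply \cref{thm:main_bzn} to $f$ to exchange the order of the two traces, then use \cref{trace_with_dual} to replace $\tr_M(f)$ by $\tr_N(f^*)$. The additional bookkeeping you supply about the left/right dualizability conventions is consistent with the paper's setup and does not change the route.
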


Here  $f^*$ is the dual of $f$ as in \cref{trace_with_dual}.

\begin{proof}
By \cref{thm:main_bzn} \[\tr_\sh{M}\left(\sh{M}\xto{\tr_Q(f)}\sh{M}\right)=\tr_\sh{Q}\left(\sh{Q}\xto{\tr_M(f)}\sh{Q}\right).\]  
 \cref{trace_with_dual} implies $\tr_\sh{Q}(\sh{Q}\xto{\tr_M(f)}\sh{Q})
=\tr_\sh{Q}(\sh{Q}\xto{\tr_N(f^*)}\sh{Q})$.
\end{proof}

\begin{rmk}
If $Q$ is also right  dualizable then  we have the sequence of equalities 
\begin{align*}
\tr_\sh{M}(\sh{M}\xto{\tr_Q(f)}\sh{M})&=\tr_\sh{Q}(\sh{Q}\xto{\tr_M(f)}\sh{Q})
=\tr_\sh{Q}(\sh{Q}\xto{\tr_N(f^*)}\sh{Q})
\\
&=\tr_\sh{N}(\sh{N}\xto{\tr_Q(f^*)}\sh{N})
\end{align*}
\end{rmk}

To compare this to  \cite[Eq. 0.4]{polishchuk} requires a careful comparison of notation.
For a dg-category $\sC$ over a field $k$, let  $\Mod^\perf_{\dg}(\sC)$
let  be the derived category of perfect $\sC^{\op}$-modules.

Up to an appropriate equivalence, every dg-functor $F :\Mod^\perf_{\dg}(\sC) \to  \Mod^\perf_{\dg}(\sD)$  is of the form \[F( M) = M \odot ^\mathbb{L}_{\sC} K_F\] 
for 
a {\bf kernel} $K_F \in \Mod^\perf_{\dg}(\sC^{\op} \otimes \sD)$.  Similarly, a natural transformation $F\to G$ is given by a homomorphism $K_F\to K_G$. 
Then \cite{polishchuk} defines \[\ptr_{\sC} (F )\coloneqq \sh{K_F}\] for an endofunctor $F : \Mod^\perf_{\dg}(\sC) \to \Mod^\perf_{\dg}(\sC)$ given by a kernel $K_F \in \Mod^\perf_{\dg}(\sC^{\op}\otimes \sC)$.  

If $F : \Mod^\perf_{\dg}(\sC) \to  \Mod^\perf_{\dg}(\sD)$ is a dg-functor with adjoint $G$ and $f\colon  F\circ \Psi\to \Psi\circ F$ is a natural transformation \cite{polishchuk} defines  $(F,f)^∗ $ be the composite.
\[\ptr_\sC(\Psi)\to \ptr_\sC(G\circ F\circ \Psi) \xto{G\circ f} \ptr_\sC(G\circ \Psi \circ F) \to  \ptr_\sC(F\circ G\circ \Psi)\to \ptr_\sC(\Psi).\] 
Since an adjoint for $F$ is equivalent to a dual for $K_F$,  
the map
$(F,f)^∗ $ is the bicategorical trace of the map 
\[K_\Psi\odot K_F\to K_F\odot K_\Psi\] induced by $f$.

Then \cref{thm:mate} implies 
\[\tr_{\sh{K_{\Psi}}}(\tr_{K_F}(f))=\tr_{\sh{K_{G}}}(\tr_{K_\Psi}(f^*)).\]
(Note the conflicting uses of $^*$.  In this case it refers to the mate of $f$.)
Using the notational comparisons above this equality is that in the following result of \cite{polishchuk}. 

\begin{thm}\cite[Equation 0.4]{polishchuk}\label{thm:pol}
If $F :\Mod^\perf_{\dg}(\sC) \to  \Mod^\perf_{\dg}(\sD)$ is a dg-functor with adjoint $G$ and $f\colon  F\circ \Psi\to \Psi\circ F$ is a natural transformation then 
  \[
  \operatorname{str}((F,f)^\ast, \operatorname{Tr}(\Psi)) = \operatorname{str}((\Psi, \psi)^\ast, \operatorname{Tr}(G))
  \]
where  $\psi\colon \Psi\circ G\to G\circ \Psi$ is the composite:
\[\Psi\circ G\to G\circ F\circ \Psi\circ G\to G\circ \Psi\circ F\circ  G \to G\circ \Psi.\]
\end{thm}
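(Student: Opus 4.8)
The plan is to obtain this from \cref{thm:mate} after translating Polishchuk's kernel formalism into the monoidal bicategory $\Mod(\Cat_{\dg})$ of \cref{ex:enriched_bicat}, restricted to its smooth and proper $0$-cells so that the hypotheses of \cref{thm:mate} are met and the perfect kernels that occur are dualizable. The first step is to set up the dictionary. Since $\Mod^\perf_{\dg}(\sC)$ is Morita equivalent to $\sC$, a dg-functor $F\colon\Mod^\perf_{\dg}(\sC)\to\Mod^\perf_{\dg}(\sD)$ with kernel $K_F$ is, up to the relevant equivalence, the $1$-cell $K_F$ viewed as a $(\sC,\sD)$-bimodule with $F(-)=-\odot K_F$; composition of such functors corresponds to $\odot$ of kernels; an adjoint $G$ of $F$ corresponds to a dual $K_G$ of $K_F$, the unit and counit of the adjunction supplying the coevaluation $\eta$ and evaluation $\epsilon$; an endofunctor $\Psi$ corresponds to an endomorphism $1$-cell $K_\Psi$, dualizable because it is a perfect kernel over smooth and proper dg-categories; and a natural transformation $f\colon F\circ\Psi\to\Psi\circ F$ corresponds to a $2$-cell $f\colon K_\Psi\odot K_F\to K_F\odot K_\Psi$. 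Under this dictionary Polishchuk's $\ptr$ is the shadow $\sh{-}$, so $\operatorname{Tr}(\Psi)=\sh{K_\Psi}$ and $\operatorname{Tr}(G)=\sh{K_G}$, while his supertrace $\operatorname{str}$ is the symmetric monoidal trace of an endomorphism in the target category of the shadow.

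Next I would identify the two sides of the asserted identity with iterated bicategorical traces. Unwinding the composite that defines $(F,f)^*$ term by term — the first map induced by $\eta$, the middle one by $f$, followed by a cyclic rotation and then $\epsilon$ — and comparing with \cref{defn:twisted_trace} shows that, after translation, $(F,f)^*$ is exactly the twisted trace $\tr_{K_F}(f)\colon\sh{K_\Psi}\to\sh{K_\Psi}$; applying $\operatorname{str}$ gives $\operatorname{str}((F,f)^*,\operatorname{Tr}(\Psi))=\tr_{\sh{K_\Psi}}(\tr_{K_F}(f))$. Running the same analysis on the composite that defines $\psi\colon\Psi\circ G\to G\circ\Psi$ and matching it against \cref{defn:mate} with $M=K_F$, $N=K_G$, $P=Q=K_\Psi$ identifies $\psi$ with the mate $f^*\colon K_G\odot K_\Psi\to K_\Psi\odot K_G$, whence $\operatorname{str}((\Psi,\psi)^*,\operatorname{Tr}(G))=\tr_{\sh{K_G}}(\tr_{K_\Psi}(f^*))$.

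It then remains to prove the bicategorical identity $\tr_{\sh{K_\Psi}}(\tr_{K_F}(f))=\tr_{\sh{K_G}}(\tr_{K_\Psi}(f^*))$. This follows by combining \cref{thm:mate} with \cref{thm:main_bzn}: concretely, one has the chain
\[
\tr_{\sh{K_\Psi}}(\tr_{K_F}(f))=\tr_{\sh{K_F}}(\tr_{K_\Psi}(f))=\tr_{\sh{K_\Psi}}(\tr_{K_G}(f^*))=\tr_{\sh{K_G}}(\tr_{K_\Psi}(f^*)),
\]
where the outer two equalities are instances of \cref{thm:main_bzn} (applied to $f$ and to $f^*$, to put the iterated traces into matching cyclic order) and the middle equality is \cref{thm:mate} applied to the dual pair $(K_F,K_G)$, the left dualizable $1$-cell $K_\Psi$, and the $2$-cell $f$. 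Translating back through the dictionary of the first step yields the stated formula.

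The main obstacle is not the categorical content, which is already isolated in \cref{thm:mate} and \cref{thm:main_bzn}, but the careful reconciliation of conventions in the first two steps: matching Polishchuk's kernel formalism (the placement of $\sC^\op$, the left/right module conventions governing composition of functors, and his sign conventions for the supertrace) with the order of $\odot$, the twisted traces of \cref{defn:twisted_trace}, the mate of \cref{defn:mate}, and the symmetric monoidal trace in the shadow's target. Pinning down every coherence isomorphism and the direction of every $2$-cell is where the real work lies; once that bookkeeping is settled, the theorem is immediate.
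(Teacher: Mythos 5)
Your proposal is correct and follows essentially the same route as the paper: translate Polishchuk's formalism into kernels so that $\ptr$ becomes the shadow, an adjoint for $F$ becomes a dual for $K_F$, $(F,f)^*$ becomes the bicategorical trace of $f\colon K_\Psi\odot K_F\to K_F\odot K_\Psi$, and $\psi$ becomes the mate $f^*$; the identity is then the instance of \cref{thm:mate} for the dual pair $(K_F,K_G)$ and $Q=K_\Psi$, with your explicit three-term chain being exactly the expansion of \cref{thm:mate} via \cref{thm:main_bzn} and \cref{trace_with_dual} that appears in the remark following that theorem.
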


Here we have also identified  $\operatorname{str}$, the supertrace, as the trace in the category of graded vector spaces.

\section{Umbras and iterated traces}\label{sec:iterated}
We now turn to the more technical work of proving \cref{thm:main_bzn}.  It follows from \cref{thm:main_umbra}  in this section and   \cref{thm:monoidal_to_umbra} in the next section.  

\cref{thm:main_umbra} identifies a small set of additional conditions that a bicategory must satisfy in order to define iterated traces.  We call this structure an {\it umbra} since it extends the shadows of \cite{p:thesis}.  It is also related to shadows philosophically since in both cases we seek to retain a minimum of structure.

In all examples of interest to us, an umbra comes from a monoidal bicategory and so we are not interested in this axiomatization because we expect further generalizations but because this structure is easier to work with.

\begin{defn}
A {\bf penumbra} on a bicategory $\sB$ taking values in a monoidal category $(\mathbf{T},\otimes, I)$ is 
functors \[\sh{-},\dsh{-},\csh{-},\esh{-} \colon \sB(A,A)\to \mathbf{T}\]
for all 0-cells $A$ of $\sB$ 
and  natural maps 
\begin{align*}
\rspl\colon \dsh{M}\otimes \csh{N}&\to \dsh{M\odot N}
&
\luspl\colon \dsh{M\odot N}&\to \esh{M}\otimes \dsh{N}
\\
     \uspl\colon \dsh{M}\otimes \sh{N} &\to \esh{M\odot  N}
&\spl\colon \csh{M\odot  N}&\to \sh{M}\otimes \dsh{N} 	  
\\
\lspl\colon \csh{M}\otimes \sh{N}&\to \sh{M\odot N}
&
\ruspl\colon \sh{M\odot N}&\to \sh{M}\otimes \esh{N}
\end{align*}
 \begin{align*} \iunit\colon I&\to \csh{U_A}&\ounit\colon \esh{U_A}&\to I\end{align*}
making the  diagrams in \cref{def:penumbra} commute. 
\end{defn}

  The following is essentially the corresponding statement for monoidal functors \cite{dold_puppe,p:thesis}.

\begin{lem}\label{lem:penumbra_dual}
If $(N,M)$ is a dual pair and $(\sh{-},\dsh{-},\csh{-},\esh{-})$ is a penumbra, then 
$(\sh{N},\dsh{M})$ is a dual pair in $\mathbf{T}$. 
 \end{lem}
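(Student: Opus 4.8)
The plan is to build the duality data for $(\sh{N},\dsh{M})$ out of the penumbra maps applied to the duality data for $(N,M)$, in direct analogy with the way a strong monoidal functor turns $1\to F1\xrightarrow{F\eta}F(X\otimes Y)\cong FX\otimes FY$ into a coevaluation --- this is the sense in which the statement is ``essentially the corresponding statement for monoidal functors.'' Write $N,M\in\sB(A,A)$ with $\eta\colon U_A\to N\odot M$ and $\epsilon\colon M\odot N\to U_A$ the 2-cells exhibiting $(N,M)$ as a dual pair, so that
\[
N\xrightarrow{\cong}U_A\odot N\xrightarrow{\eta\odot\id}(N\odot M)\odot N\xrightarrow{a}N\odot(M\odot N)\xrightarrow{\id\odot\epsilon}N\odot U_A\xrightarrow{\cong}N
\]
is $\id_N$ and the analogous composite for $M$ is $\id_M$. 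I would take as coevaluation $\mathrm{coev}\colon I\to\sh{N}\otimes\dsh{M}$ the composite
\[
I\xrightarrow{\iunit}\csh{U_A}\xrightarrow{\csh{\eta}}\csh{N\odot M}\xrightarrow{\spl}\sh{N}\otimes\dsh{M}
\]
and as evaluation $\mathrm{ev}\colon\dsh{M}\otimes\sh{N}\to I$ the composite
\[
\dsh{M}\otimes\sh{N}\xrightarrow{\uspl}\esh{M\odot N}\xrightarrow{\esh{\epsilon}}\esh{U_A}\xrightarrow{\ounit}I.
\]
These are essentially the only composites of penumbra data with the correct source, target, and shape.

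Then I would check the two triangle identities. For the first, substituting the formulas above makes $(\id_{\sh{N}}\otimes\mathrm{ev})\circ(\mathrm{coev}\otimes\id_{\sh{N}})$ into a diagram in $\mathbf{T}$, which I would collapse in three steps: first, the coherence in \cref{def:penumbra} identifying $(\spl\otimes\id)$ followed by $(\id\otimes\uspl)$ on $\csh{N\odot M}\otimes\sh{N}$ with $\lspl$ followed by $\sh{a}$ followed by $\ruspl$; second, naturality of $\lspl$ and of $\ruspl$ to pull $\csh{\eta}$ and $\esh{\epsilon}$ out as $\sh{\eta\odot\id}$ and $\sh{\id\odot\epsilon}$; third, the two unit coherences of \cref{def:penumbra} that absorb $\iunit$ against $\lspl$ and the left unit constraint, and $\ounit$ against $\ruspl$ and the right unit constraint. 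What remains is $\sh{-}$ applied to the first triangle identity for $(N,M)$, hence $\sh{\id_N}=\id_{\sh{N}}$. The second triangle identity is handled symmetrically, now routing $\dsh{M}\otimes\sh{N}\otimes\dsh{M}$ through $\rspl$ and $\luspl$ via the coherence identifying $(\id\otimes\spl)$ then $(\uspl\otimes\id)$ with $\rspl$ then $\dsh{a}$ then $\luspl$, and landing on $\dsh{-}$ applied to the second triangle identity for $(N,M)$.

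The main obstacle is bookkeeping rather than ideas: the argument uses precisely the coherence squares posited in \cref{def:penumbra}, so one must first unwind that definition and confirm the needed compatibilities are exactly among them, and one must keep careful track of which of the four functors $\sh{-},\dsh{-},\csh{-},\esh{-}$ each strand of each diagram lives in, since the split maps mix all four. Once this is set up, each triangle identity is a finite pasting of naturality squares, penumbra coherence cells, and the single $\sB$-triangle cell, with no residual choices.
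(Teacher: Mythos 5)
Your proposal is correct and matches the paper's proof essentially verbatim: the same coevaluation $\iunit$, $\csh{\eta}$, $\spl$ and evaluation $\uspl$, $\esh{\epsilon}$, $\ounit$, with the triangle identities verified by pasting exactly the penumbra coherence squares, naturality of the split maps, and the triangle identity for $(N,M)$, as in \cref{fig:penumbra_dual}.
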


 \begin{proof} 
The coevaluation and evaluation for $(\sh{N}, \dsh{M})$ are the composites
\begin{center}\begin{tikzcd}
I
\ar[r,"\iunit"]
&
\csh{U_A}\ar[r,"\csh{\eta_N}"]&\csh{N\odot M}\ar[r,"\spl"]&\sh{N}\otimes \dsh{M}
\end{tikzcd}
\begin{tikzcd}
\dsh{M}\otimes \sh{N}\ar[r,"\uspl"]
&
\esh{M\odot N}\ar[r,"\esh{\epsilon_N}"]&\esh{U_A}\ar[r,"\ounit"]&I
\end{tikzcd}
\end{center}
The triangle diagrams are in \cref{fig:penumbra_dual}.
All small regions commute by definition of a penumbra or a dual pair.
  \end{proof}

\afterpage{\clearpage
\begin{figure}
\[\xymatrix@C=30pt{\csh{M\odot N}\otimes \sh{P}\ar[r]^-{\spl\otimes \id}\ar[d]^-{\lspl}
	&\sh{M}\otimes \dsh{N}\otimes \sh{P}\ar[d]^-{\id\otimes \uspl}
	\\
	\sh{M\odot N\odot P}\ar[r]^-{\ruspl}
	&\sh{M}\otimes \esh{N\odot P}
	}\hspace{1cm}
\xymatrix@C=30pt{\dsh{M}\otimes\csh{N\odot P}\ar[r]^-{\id\otimes \spl}\ar[d]^{\rspl}
	&\dsh{M}\otimes \sh{N}\otimes \dsh{P}\ar[d]^{\uspl\otimes \id}
	\\
	\dsh{M\odot N\odot P}\ar[r]^-{\luspl}
	&\esh{M\odot N}\otimes \dsh{P} 
}\]
\[\xymatrix@C=30pt{
	\csh{M\odot N\odot P} \ar[d]^{\spl}\ar[r]^-{\spl}
	&\sh{M}\otimes \dsh{N\odot P}\ar[d]^{\id\otimes\luspl}
	\\
	\sh{M\odot N}\otimes \dsh{P}\ar[r]^-{\ruspl\otimes \id}
    	&\sh{M}\otimes \esh{N}\otimes \dsh{P}}
	\hspace{1cm}
\xymatrix@C=30pt{
	\dsh{P} \otimes \csh{N}\otimes\sh{M}\ar[d]^{\id\otimes \lspl}\ar[r]^-{\rspl\otimes\id}
	&\dsh{P\odot N}\otimes \sh{M}\ar[d]^{\uspl}
	\\
	\dsh{P}\otimes \sh{N\odot M}\ar[r]^-{\uspl}
    &\esh{P\odot N\odot M}}
\]
\[\xymatrix{\csh{U_A}\otimes \sh{M}\ar[r]^-{\lspl}&\sh{U_A\odot M}\ar[d]
	\\
	I\otimes \sh{M}\ar[u]^{\iunit\otimes \id}\ar[r]&\sh{M}
}\hspace{1cm}
\xymatrix{\dsh{M}\otimes\csh{U_A}\ar[r]^-{\rspl}&\dsh{M\odot U_A}\ar[d]
	\\ 
	\dsh{M}\otimes I\ar[u]^{\id\otimes \iunit}\ar[r]&\dsh{M}
}\]
\[\xymatrix{\sh{M}\otimes \esh{U_A}\ar[r]^-{\id\otimes \ounit}&\sh{M}\odot I\ar[d]
	\\
	\sh{M\odot U_A}\ar[u]^-{\ruspl}\ar[r]&\sh{M}} \hspace{1cm}
\xymatrix{\esh{U_A}\otimes\dsh{M}\ar[r]^-{\ounit\otimes \id}& I\odot\dsh{M}\ar[d]
	\\ \dsh{ U_A \odot M}\ar[u]^-{\luspl}\ar[r]&\dsh{M}
}\]
\caption{The commuting squares for a penumbra.  The unlabeled maps are unit isomorphisms. }\label{def:penumbra}
\end{figure}

\begin{figure}
\[\xymatrix@C=35pt{
 I\otimes \sh{N}\ar[r]^-{\iunit\otimes \id}\ar[d]
    &\csh{U_A}\otimes \sh{N}\ar[r]^-{\csh{\eta_N}\otimes \id}\ar[d]^{\lspl}
    &\csh{N\odot M}\otimes \sh{N}\ar[r]^-{\spl\otimes \id}\ar[d]^{\lspl}
    &\sh{N}\otimes \dsh{ M}\otimes \sh{N}\ar[d]^{\id\otimes \uspl}
    \\
    \sh{N}\ar[ddrr]^-\id
    &\sh{U_A\odot N}\ar[r]^-{\sh{\eta_N\odot \id}}\ar[l]
    &\sh{N\odot M\odot N}\ar[d]^{\sh{\id\odot \epsilon_N}}\ar[r]^-{\ruspl}
    &\sh{N}\otimes \esh{M\odot  N}\ar[d]^{\id\otimes \esh{\epsilon_N}}
    \\
    &&\sh{N\odot U_A}\ar[d]\ar[r]^-{\ruspl}
    &\sh{N}\otimes \esh{U_A}\ar[d]^{\ounit}
    \\
    &&\sh{N}
    &\sh{N}\otimes I\ar[l]
}\]
\[
\xymatrix@C=35pt{
\dsh{M} \otimes I\ar[r]^-{\id\otimes \iunit}\ar[d]
    &\dsh{M}\otimes \csh{U_A}\ar[r]^-{\id\otimes\csh{\eta_N}}\ar[d]^-{\rspl}
    &\dsh{M}\otimes \csh{N\odot M}\ar[r]^-{\id\otimes \spl}\ar[d]^-{\rspl}
    &\dsh{M}\otimes \sh{ N}\otimes \dsh{M}\ar[d]^-{\uspl\otimes \id}
    \\
    \dsh{M}\ar[r]\ar[ddrr]^\id
    &\dsh{M\odot U_A}\ar[r]^-{\dsh{\id\odot \eta_N}}
    &\dsh{M\odot N\odot M}\ar[r]^-{\luspl}
	\ar[d]^-{\dsh{\epsilon_N\odot \id}}
    &\esh{M\odot N}\otimes\dsh{M} \ar[d]^-{\esh{\epsilon_N}\otimes\id}
    \\
    &&\dsh{U_A\odot M}\ar[d]\ar[r]^-{\luspl}
    &\esh{U_A}\otimes \dsh{M}\ar[d]^{\ounit}
    \\
    &&\dsh{M}
    &I \otimes \dsh{M}\ar[l]
}\]\caption{Commuting diagrams for \cref{lem:penumbra_dual}}\label{fig:penumbra_dual}
\end{figure}
\clearpage}

We now turn to symmetry conditions. 
If the functors $\sh{\hspace{1mm}}$ and $\dsh{\hspace{1mm}}$ in a penumbra are shadows and $\mathbf{T}$ is a symmetric monoidal category, we can use the shadow isomorphisms and symmetry isomorphisms to produce two permutations of three 1-cells.  To prove \cref{thm:main_bzn} we need to know these maps are the same.

\begin{defn}\label{defn:umbra}
 A penumbra $(\sh{-}, \dsh{-}, \csh{-}, \esh{-})$ on a bicategory $\sB$ taking values in a symmetric monoidal category $(\mathbf{T},\otimes, I)$  is an {\bf umbra}  if 
\begin{enumerate}
\item $\sh{\hspace{1mm}}$ and $\dsh{\hspace{1mm}}$ are shadows (\cref{defn_shadow}) and 
\item the following diagram relating symmetry and shadow isomorphisms commutes.
\end{enumerate}
\begin{equation}
\xymatrix{
\sh{M\odot N}\otimes \dsh{P}\ar[d]^{\theta\otimes \id}
    &\csh{(M\odot N)\odot P}\cong \csh{M\odot (N\odot P)}\ar[l]_-{\spl}\ar[r]^-{\spl}
    &\sh{M}\otimes \dsh{N\odot P}\ar[d]^{\id \otimes \theta}
\\
\sh{N\odot M}\otimes \dsh{P}\ar[d]^{\gamma}
&&\sh{M}\otimes \dsh{P\odot N}\ar[d]^{\gamma}
\\
\dsh{P}\otimes \sh{N\odot M}\ar[r]^-{\uspl}
    &\esh{P\odot (N\odot M)}\cong \esh{(P\odot N)\odot M}
    &\dsh{P\odot N}\otimes \sh{M}\ar[l]_-{\uspl}
}
\label{eq:big_square}
\end{equation}
\end{defn}

\begin{thm}[Umbra version of \cref{thm:main_bzn}]\label{thm:main_umbra}
If $(\sh{\hspace{1mm}},\dsh{\hspace{1mm}},\csh{\hspace{1mm}},\esh{\hspace{1mm}})$ is an umbra on a bicategory $\sB$, $M$ is left dualizable and $N$ is right dualizable in $\sB(A,A)$
and $\phi\colon M\odot N\to N\odot M$ is a 2-cell in $\sB$
then \[\tr\left(\sh{N}\xto{\tr_M(\phi)} \sh{N}\right)=\tr\left(\dsh{M}\xto{\tr_N(\phi)}\dsh{M}\right).\]
\end{thm}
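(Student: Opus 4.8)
The plan is to prove \cref{thm:main_umbra} by unwinding both iterated traces into a single string of maps in $\mathbf{T}$ and exhibiting an explicit homotopy (i.e.\ a commuting planar diagram) between them. The two sides are built from the same raw ingredients: the duality data $(\eta_M,\epsilon_M)$ for the dual pair $(N,M)$ in $\sB$, the penumbra structure maps, the $2$-cell $\phi$, and the shadow/symmetry isomorphisms of $\mathbf{T}$. By \cref{lem:penumbra_dual}, $(\sh{N},\dsh{M})$ is a dual pair in $\mathbf{T}$, so both $\tr(\sh{N}\xto{\tr_M(\phi)}\sh{N})$ and $\tr(\dsh{M}\xto{\tr_N(\phi)}\dsh{M})$ are maps $I\to I$, each a composite whose middle is the $\mathbf{T}$-image of $\phi$.

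First I would write out $\tr_M(\phi)\colon \sh{N}\to \sh{N}$ explicitly from \cref{defn:twisted_trace}, using that $M$ is left dualizable with left dual $N$: it is a composite through $\sh{N\odot M\odot N}$ (or the appropriate bracketing) built from $\eta_M$, $\phi$, $\epsilon_M$ and the shadow isomorphism $\theta$. Then I would precompose and postcompose with the coevaluation $I\to \sh{N}\otimes\dsh{M}$ and evaluation $\dsh{M}\otimes\sh{N}\to I$ of \cref{lem:penumbra_dual}, expanding those into their definitions in terms of $\iunit$, $\csh{\eta_N}$, $\spl$, $\uspl$, $\esh{\epsilon_N}$, $\ounit$. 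This produces one long composite $I\to I$; doing the same for the $N$-side produces a second long composite. Both pass through a term of the form $(\text{functor applied to } M\odot N \text{ or } N\odot M)$ into which $\phi$ is inserted via $\sh{\id\odot\phi\odot\id}$ or $\dsh{\cdots}$.

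The core of the argument is then a diagram chase showing these two composites agree. The idea is to interpolate: starting from the $M$-side composite, I would use the penumbra coherence squares of \cref{def:penumbra} and the compatibility of $\spl,\uspl,\lspl,\ruspl,\rspl,\luspl$ with the shadow isomorphisms to ``slide'' the insertion of $\phi$ across the diagram, and to reassociate and re-split the various $\csh{-},\esh{-},\sh{-},\dsh{-}$ terms so that $M$ and $N$ swap roles. The two key non-formal inputs are (a) the naturality of all penumbra maps applied to $\phi\colon M\odot N\to N\odot M$, which lets one move $\phi$ past a splitting, and (b) the single extra umbra axiom \cref{eq:big_square}, which is precisely the statement that the two ways of permuting a triple of $1$-cells (here the triple obtained after inserting duality data, e.g.\ $N$, $M$, $N$, or $M$, $N$, $M$) using $\theta$ and the symmetry $\gamma$ coincide — this is what converts a trace taken ``with respect to $M$'' into one taken ``with respect to $N$''. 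I would organize the chase as a sequence of rectangles, each commuting by one of: a penumbra square, naturality of a penumbra map, a triangle identity in $\sB$ or in $\mathbf{T}$, a shadow axiom (\cref{defn_shadow}), or \cref{eq:big_square}.

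I expect the main obstacle to be bookkeeping: keeping track of the correct bracketings of $\odot$ and the correct splitting maps ($\csh{}$ vs.\ $\esh{}$, left vs.\ right) at each stage, since the penumbra has six distinct structure maps and the coherence squares only pin down specific combinations. The use of ``circuit diagrams'' (alluded to in the introduction) is presumably what makes this tractable, by rendering the associativity and symmetry moves graphically so that the chase becomes visibly a planar isotopy with $\phi$-insertion, reducing the verification to checking that each elementary move is one of the penumbra/umbra axioms. A secondary subtlety is making sure the cyclic invariance used implicitly (moving the evaluation around to close up $I\to I$) is legitimately the shadow isomorphism $\theta$ on $\mathbf{T}$ and not an unjustified appeal to symmetry — but this is exactly handled by the shadow axioms in \cref{defn_shadow} together with \cref{eq:big_square}.
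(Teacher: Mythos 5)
Your overall strategy is the paper's: both iterated traces are unwound into composites $I\to I$ and compared by a single diagram chase in which every region commutes either by definition of a composite, by naturality of the penumbra structure maps, or by the umbra axiom \cref{eq:big_square}, which supplies the one large central square. That part of the plan is sound.

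There is, however, a genuine error in your setup. The theorem does not assume that $M$ and $N$ are dual to one another; it only assumes that $M$ is left dualizable (with some left dual $\ldual{M}$) and that $N$ is right dualizable (with some right dual $\rdual{N}$), so in general four distinct $1$-cells are in play. You repeatedly identify $N$ with the left dual of $M$ --- ``the dual pair $(N,M)$'', ``$M$ is left dualizable with left dual $N$'', the composite through $\sh{N\odot M\odot N}$ --- and on that basis invoke \cref{lem:penumbra_dual} to produce the dual pair $(\sh{N},\dsh{M})$ in $\mathbf{T}$. That lemma requires $(N,M)$ to be a dual pair in $\sB$, which is not a hypothesis here. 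The correct statements are that the dual pair $(N,\rdual{N})$ yields $(\sh{N},\dsh{\rdual{N}})$ in $\mathbf{T}$, which is what makes $\tr(\tr_M(\phi))$ defined, while $(\ldual{M},M)$ yields $(\sh{\ldual{M}},\dsh{M})$, which is what makes $\tr(\tr_N(\phi))$ defined. Likewise $\tr_M(\phi)$ passes through $\sh{\ldual{M}\odot M\odot N}$ rather than $\sh{N\odot M\odot N}$, and the central instance of \cref{eq:big_square} is applied to the triple $\ldual{M}$, $N\odot M$, $\rdual{N}$, not to a triple built from $M$ and $N$ alone. As written, your argument would only establish the special case in which $M$ and $N$ happen to be each other's duals, which is not sufficient for the applications (for instance \cref{thm:lunts_1} takes $M=U_A$). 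Once you keep $M$, $N$, $\ldual{M}$, $\rdual{N}$ distinct, the rest of your chase goes through exactly as in the paper's \cref{fig:commuting_traces}.
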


\begin{proof} 
This follows from  the diagram in  \cref{fig:commuting_traces}.  To clarify notation we denote the dual of $M$ by $\ldual{M}$ and the dual of $N$ by $\rdual{N}$.

 All regions with a dashed arrow commute since the dashed 
arrow is defined to be the composite of the remaining arrows.  The remaining small squares commute by naturality of the penumbra structure maps.  The large central square is \cref{eq:big_square}.
\begin{figure}
\resizebox{\textwidth}{!}
{
\begin{tikzpicture}
\node (B1) at (0,13.75){$I$};
\node (C1) at (0,12.25){$\csh{U_A}$};

\node (D1) at (-8,10.75){$\sh{N}\otimes \dsh{\rdual{N}}$};
\node (D2) at (-2,10.75){$\csh{N\odot \rdual{N}}$};
\node (D3) at (2,10.75){$\csh{\ldual{M}\odot M}$};
\node (D4) at (8,10.75){$\sh{\ldual{M}}\otimes \dsh{M}$};

\node (E1) at (-4.25,9){$\sh{\ldual{M}\odot M\odot N}\otimes \dsh{\rdual{N}}$};
\node (E2) at (0,9){$\csh{\ldual{M}\odot M\odot N\odot \rdual{N}}$};
\node (E3) at (4.25,9){$\sh{\ldual{M}}\otimes \dsh{M\odot N\odot \rdual{N}}$};

\node (F1) at (-4.25,7.5){$\sh{\ldual{M}\odot N\odot M}\otimes \dsh{\rdual{N}}$};
\node (F2) at (0,7.5){$\csh{\ldual{M}\odot N\odot M\odot \rdual{N}}$};
\node (F3) at (4.25,7.5){$\sh{\ldual{M}}\otimes \dsh{N\odot M\odot \rdual{N}}$};

\node (G1) at (-4.25,6){$\sh{N\odot M\odot \ldual{M}}\otimes \dsh{\rdual{N}}$};
\node (G2) at (4.25,6){$\sh{\ldual{M}}\otimes \dsh{\rdual{N}\odot N\odot M}$};

\node (H1) at (-8,4.25){$\sh{N}\otimes \dsh{\rdual{N}}$};
\node (H2) at (-4.25,4.25){$\dsh{\rdual{N}}\otimes \sh{N\odot M\odot \ldual{M}}$};
\node (H3) at (0,4.25){$\esh{\rdual{N}\odot N\odot M\odot \ldual{M}}$};
\node (H4) at (4.25,4.25){$\dsh{\rdual{N}\odot N\odot M}\otimes \sh{\ldual{M}}$};
\node (H5) at (8,4.25){$\sh{\ldual{M}}\otimes \dsh{M}$};

\node (I1) at (-8,2.5){$\dsh{\rdual{N}}\otimes \sh{N}$};
\node (I2) at (-2,2.5){$\esh{\rdual{N}\odot N}$};
\node (I3) at (2,2.5){$\esh{M\odot \ldual{M}}$};
\node (I4) at (8,2.5){$\dsh{M}\otimes \sh{\ldual{M}}$};

\node (J1) at (0,1){$\esh{U_A}$};

\node (K1) at (0, -.25){$I$};

\draw [->] (B1)--(C1) node[midway,right]{$\iunit$};
\draw [dashed, ->] (B1)--(D1) node[midway,right]{};
\draw [dashed,->] (B1)--(D4) node[midway,right]{};

\draw [->] (C1)--(D2) node[midway,left]{$\csh{\eta_N}$};
\draw [->] (C1)--(D3) node[midway,right]{$\csh{\eta_M}$};

\draw [->] (D1)--(E1) node[midway,fill=white]{$\sh{\eta_M\odot \id}\otimes \id$};
\draw [->] (D2)--(D1) node[midway,above]{$\spl$};
\draw [->] (D2)--(E2) node[midway,fill=white]{$\csh{\eta_M\odot \id}$};
\draw [->] (D3)--(E2) node[midway,fill=white]{$\csh{\id\odot \eta_N}$};
\draw [->] (D3)--(D4) node[midway,above]{$\spl$};
\draw [->] (D4)--(E3) node[midway, fill=white]{$\id\otimes\dsh{\id\odot \eta_N}$};

\draw [dashed,->] (D1)--(H1) node[midway,fill=white]{$\tr_M(\phi)\otimes \id$};
\draw [dashed,->] (D4)--(H5) node[midway,fill=white]{$\id\otimes \tr_N(\phi)$};

\draw [->] (E1)--(F1) node[midway,right]{$\sh{\id\odot\phi}\otimes \id$};
\draw [->] (E2)--(E1) node[midway,above]{$\spl$};
\draw [->] (E2)--(F2) node[midway,right]{$\csh{\id\odot\phi\odot \id}$};
\draw [->] (E2)--(E3) node[midway,above]{$\spl$};
\draw [->] (E3)--(F3) node[midway,right]{$\id\otimes \dsh{\phi\odot \id}$};

\draw [->] (F1)--(G1) node[midway,right]{$\theta\otimes \id$};
\draw [->] (F2)--(F1) node[midway,above]{$\spl$};
\draw [->] (F2)--(F3) node[midway,above]{$\spl$};
\draw [->] (F3)--(G2) node[midway,right]{$\id\otimes \theta$};

\draw [->] (G1)--(H1) node[midway, fill=white]{$\sh{\id \otimes \epsilon_M}\otimes \id$};
\draw [->] (G1)--(H2) node[midway,right]{$\gamma$};
\draw [->] (G2)--(H4) node[midway,right]{$\gamma$};
\draw [->] (G2)--(H5) node[midway,fill=white]{$\id \otimes \dsh{\epsilon_N\odot \id}$};

\draw [->] (H1)--(I1) node[midway,right]{$\gamma$};
\draw [->] (H2)--(I1) node[midway,fill=white]{$\id \otimes \sh{\id\odot \epsilon_M}$};
\draw [->] (H2)--(H3) node[midway,above]{$\uspl$};
\draw [->] (H3)--(I2) node[midway,fill=white]{$\esh{\id\odot \epsilon_M}$};
\draw [->] (H3)--(I3) node[midway,fill=white]{$\esh{\epsilon_N\odot \id}$};
\draw [->] (H4)--(I4) node[midway,fill=white]{$\dsh{\epsilon_N\odot \id}\odot \id$};
\draw [->] (H4)--(H3) node[midway,above]{$\uspl$};
\draw [->] (H5)--(I4) node[midway,right]{$\gamma$};
\draw [->] (I1)--(I2) node[midway,above]{$\uspl$};
\draw [dashed, ->] (I1)--(K1) node[midway,right]{};
\draw [->] (I2)--(J1) node[midway,left]{$\esh{\epsilon_N}$};
\draw [->] (I3)--(J1) node[midway,right]{$\esh{\epsilon_M}$};
\draw [dashed,->] (I4)--(K1) node[midway,right]{$$};
\draw [->] (I4)--(I3) node[midway,above]{$\uspl$};

\draw [->] (J1)--(K1) node[midway,right]{$\ounit$};

\end{tikzpicture}}
\caption{Diagram for the proof of \cref{thm:main_umbra}}\label{fig:commuting_traces}
\end{figure}
\end{proof}

\section{Monoidal bicategories to umbras}\label{sec:monoidal_bicat}

All of the examples of umbras in this paper arise from a monoidal bicategory and we show in \cref{thm:monoidal_to_umbra} that a monoidal bicategory where all objects are 2-dualizable (\cref{defn:2_dualizable}) has an associated umbra.  This completes the proof of \cref{thm:main_bzn}. The main work in this section is in verifying that the second condition in \cref{defn:umbra} holds.

A rough outline of this section is as follows. We introduce monoidal bicategories and the definitions of 1-dualizability internal to them. The opacity of these definitions lead us to adopt a graphical calculus that we call \emph{circuit diagrams} to better represent operations in monoidal bicategories. We then define a shadow in any symmetric monoidal bicategory with suitably dualizable 1-cells (and verify the shadow axioms). Under the additional constraints of 2-dualizability (\cref{defn:2_dualizable}) we can define all of the structure of an umbra. We emphasize that the verification of all of the umbral properties are the technical core of this paper.

For the definition of a monoidal bicategory we follow \cite{stay} and briefly recall the relevant structure here.  

\begin{defn}\cite{stay} A {\bf monoidal bicategory} consists of:
\begin{itemize}
\item  a bicategory $\sB$.
\item  a  functor  $\otimes \colon  \sB \times \sB \to \sB$ and with an invertible  2-morphism 
\[( M \otimes N) \odot  ( M ' \otimes  N') \Rightarrow ( M \odot M ') \otimes (N \odot N').\] 
\begin{itemize}
\item An adjoint equivalence 
\[a: (A\otimes B)\otimes C \to  A\otimes (B\otimes C)\] that is pseudonatural in $A, B, C$.
\item  An invertible modification $\pi$ relating the two different ways of moving parentheses from being clustered at the left of four objects to being clustered at the right. 
\item An equation of modifications relating the various ways of getting from the parentheses clustered at the left of five objects to clustered at the right.
\end{itemize}
\item  a 0-cell $I$
\begin{itemize}
\item  Adjoint equivalence 1-cells $L_A\colon I\otimes A \to A$ and  $R_A\colon A\otimes I \to  A$ that are pseudonatural in A.
\item  Invertible modifications $\lambda\colon l\otimes B\Rightarrow l\circ a$, $\mu\colon r\otimes B\Rightarrow (A\otimes l)\circ a$, and $\rho\colon r \Rightarrow (A\otimes r)\circ a$. 
\item Four equations of modifications relating the unit modifications. 
\end{itemize}
\end{itemize}
\end{defn}

In this section it is useful to keep the example of the  bicategory of rings, bimodules and homomorphisms in mind.  This bicategory is monoidal under the tensor product over $\bZ$. We will give more examples of monoidal bicategories in \cref{sec:enriched}.  The examples in that section are generalizations of this bicategory.  A less familiar monoidal bicategory is 
the parameterized stable homotopy bicategory \cite{may_sigurdson}.  It is monoidal under the smash product.

In \cref{sec:duality} we described a generalization of duality for objects in a symmetric monoidal category  to duality for 1-cells in a bicategory.  There is also a generalization to duality of 0-cells in a  monoidal bicategory.

\begin{defn}\label{defn:1_dualizable}
A 0-cell $A$ is {\bf 1-dualizable} if there is
\begin{itemize}
\item a zero cell $\zdual{A}$, 
\item  1-cells $C\in \sB(I, A\otimes \zdual{A})$ and $E\in \sB(\zdual{A}\otimes A, I)$ and 
\item invertible 2-cells 
\[U_A\xto{\triangle} {L_A}^{-1} \odot
	(C\otimes U_A)\odot
	(U_A\otimes E) \odot
	{R_A} \]
\[U_{\zdual{A}}\xto{\triangle} {R_{\zdual{A}}^{-1}}\odot
	(U_{\zdual{A}}\otimes C)\odot
	(E\otimes U_{\zdual{A}})\odot
	L_{\zdual{A}}\]
\end{itemize}
\end{defn}

\begin{example}
A ring $A$ is 1-dualizable and $\zdual{A}$ is $A^{\mathrm{op}}$.  The 1-cells $C$ and $E$ are $A$ regarded as a $(\bZ,A\otimes A^{\mathrm{op}})$-bimodule and 
$(A^{\mathrm{op}}\otimes A,\bZ)$-bimodule.  
\end{example}

The expressions for the targets of the invertible 2-cells in this definition are unwieldy, and since these are just the first of many unwieldy expressions we will make extensive use of graphical calculi following \cite{douglas_schommer_pries_snyder, schommer-pries,ps:indexed}.  We call these {\bf circuit diagrams}.

\begin{example}
As a first example, \cref{fig:dualizable_1_cell} represents the maps 
and compatibility for a dual pair of 1-cells in a bicategory.  We represent 0-cells by shaded lines and 
 1-cells by colored boxes. 
Bicategorical composition $\odot$ is represented by horizontal concatenation.
The unlabeled gray boxes may contain any composite of 1-cells.  2-cells are represented as arrows between nodes that contain a bicategorical composite of 1-cells.

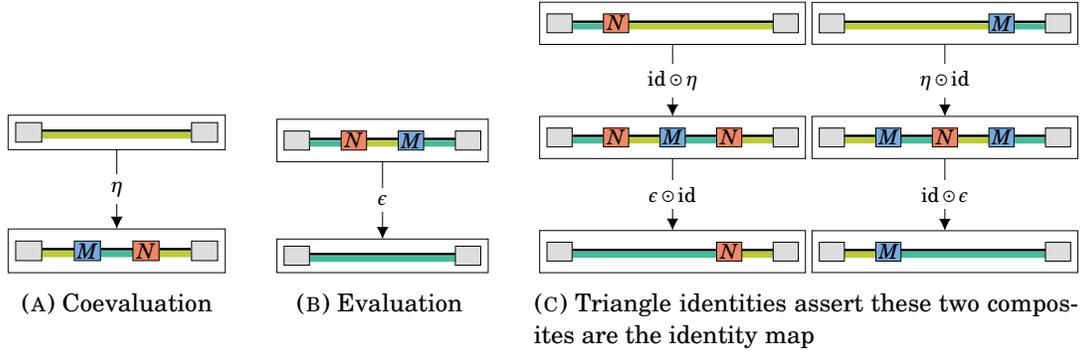
\begin{figure}
    \begin{subfigure}[t]{0.2\textwidth}
        \centering
\resizebox{\textwidth}{!}{\begin{tikzpicture}

\node[draw] (A1) at (0, 2){\begin{tikzpicture}
\oc{y}{}{Y}{4.5}{0}{0}{\en}
\oc{x}{}{X}{0}{0}{0}{\en}

  \begin{scope}[on background layer]
\alp{ ({x}t)--({y}t)}
\end{scope}
\end{tikzpicture}
};
\node[draw] (A2) at (0, 0){\begin{tikzpicture}
\oc{y}{}{Y}{4.5}{0}{0}{\en}
\oc{x}{}{X}{0}{0}{0}{\en}
\oc{m}{M}{}{1.5}{0}{0}{\et}
\oc{n}{N}{}{3}{0}{0}{\ep}
  \begin{scope}[on background layer]
\blp{ ({m}b)--({n}t)}
\alp{ ({x}b)--({m}b)}
\alp{ ({n}t)--({y}t)}
\end{scope}

\end{tikzpicture}
};

\draw[ ->](A1)--(A2)node [midway , fill=white] {$\eta$};
\end{tikzpicture}}

        \caption{Coevaluation}
    \end{subfigure}%
\hspace{.5cm}
    \begin{subfigure}[t]{0.2\textwidth}
        \centering
\resizebox{\textwidth}{!}{
\begin{tikzpicture}

\node[draw] (A1) at (0, 2){\begin{tikzpicture}
\oc{y}{}{Y}{4.5}{0}{0}{\en}
\oc{x}{}{X}{0}{0}{0}{\en}
\oc{n}{N}{}{1.5}{0}{0}{\ep}
\oc{m}{M}{}{3}{0}{0}{\et}

  \begin{scope}[on background layer]
\alp{({m}b)--({n}t)}
\blp{ ({x}b)--({n}b)}
\blp{({m}t)--({y}t)}
\end{scope}
\end{tikzpicture}
};

\node[draw] (A2) at (0, 0){\begin{tikzpicture}
\oc{y}{}{Y}{4.5}{0}{0}{\en}
\oc{x}{}{X}{0}{0}{0}{\en}
  \begin{scope}[on background layer]
\blp{ ({x}t)--({y}t)}
\end{scope}
\end{tikzpicture}
};

\draw[ ->](A1)--(A2)node [midway , fill=white] {$\epsilon$};
\end{tikzpicture}}

        \caption{Evaluation}
    \end{subfigure}%
 \hspace{.5cm}
    \begin{subfigure}[t]{0.5\textwidth}
        \centering
\resizebox{\textwidth}{!}{
\begin{tikzpicture}

\node[draw] (A3) at (0, 4){\begin{tikzpicture}
\oc{y}{}{Y}{6}{0}{0}{\en}
\oc{x}{}{X}{0}{0}{0}{\en}
\oc{n}{N}{}{1.5}{0}{0}{\ep}

  \begin{scope}[on background layer]
\alp{({n}b)--({y}t)}
\blp{({x}b)--({n}b)}
\end{scope}
\end{tikzpicture}
};

\node[draw] (A1) at (0, 2){\begin{tikzpicture}
\oc{y}{}{Y}{6}{0}{0}{\en}
\oc{x}{}{X}{0}{0}{0}{\en}
\oc{n}{N}{}{1.5}{0}{0}{\ep}
\oc{m}{M}{}{3}{0}{0}{\et}
\oc{n2}{N}{}{4.5}{0}{0}{\ep}
  \begin{scope}[on background layer]
\alp{ ({m}b)--({n}t)}
\alp{ ({n2}b)--({y}t)}
\blp{ ({x}b)--({n}b)}
\blp{({m}t)--({n2}t)}
\end{scope}
\end{tikzpicture}
};

\node[draw] (A2) at (0, 0){\begin{tikzpicture}
\oc{y}{}{Y}{6}{0}{0}{\en}
\oc{x}{}{X}{0}{0}{0}{\en}
\oc{n2}{N}{}{4.5}{0}{0}{\ep}
  \begin{scope}[on background layer]
\alp{ ({n2}b)--({y}t)}
\blp{({x}t)--({n2}b)}
\end{scope}
\end{tikzpicture}
};
\draw[ ->](A3)--(A1)node [midway , fill=white] {$\id\odot \eta$};
\draw[ ->](A1)--(A2)node [midway , fill=white] {$\epsilon\odot \id$};
\end{tikzpicture}
\hfill
%
\begin{tikzpicture}

\node[draw] (A3) at (0, 4){\begin{tikzpicture}
\oc{y}{}{Y}{6}{0}{0}{\en}
\oc{x}{}{X}{0}{0}{0}{\en}
\oc{m}{M}{}{4.5}{0}{0}{\et}
  \begin{scope}[on background layer]
\blp{ ({m}b)--({y}t)}
\alp{ ({x}b)--({m}b)}
\end{scope}
\end{tikzpicture}
};

\node[draw] (A1) at (0, 2){\begin{tikzpicture}
\oc{y}{}{Y}{6}{0}{0}{\en}
\oc{x}{}{X}{0}{0}{0}{\en}
\oc{m}{M}{}{1.5}{0}{0}{\et}
\oc{n}{N}{}{3}{0}{0}{\ep}
\oc{m2}{M}{}{4.5}{0}{0}{\et}

  \begin{scope}[on background layer]
\blp{({n}b)--({m}t)}
\blp{ ({m2}b)--({y}t)}
\alp{({x}b)--({m}b)}
\alp{({n}t)--({m2}t)}
\end{scope}

\end{tikzpicture}
};

\node[draw] (A2) at (0, 0){\begin{tikzpicture}
\oc{y}{}{Y}{6}{0}{0}{\en}
\oc{x}{}{X}{0}{0}{0}{\en}
\oc{m}{M}{}{1.5}{0}{0}{\et}
  \begin{scope}[on background layer]
\blp{ ({m}b)--({y}t)}
\alp{({x}b)--({m}b)}
\end{scope}
\end{tikzpicture}
};
\draw[ ->](A3)--(A1)node [midway , fill=white] {$\eta\odot \id$};
\draw[ ->](A1)--(A2)node [midway , fill=white] {$\id\odot \epsilon$};
\end{tikzpicture}}

        \caption{Triangle identities assert these two composites are the identity map}
    \end{subfigure}%
\caption{Circuit diagrams for dualizable 1-cells}\label{fig:dualizable_1_cell}
\end{figure}

\end{example}

The circuit diagrams associated to a monoidal bicategory can have more complicated structures than those associated to a bicategory. 
In a monoidal setting, we use vertical stacking to indicate the monoidal composition $\otimes$.  For 0-cells $A$ and $B$ the diagram for $A\otimes B$ has the edge for $A$ below the edge for $B$.  With this convention a 1-cell can have zero or any finite number of edges attached to the right and left sides.  If there are no edges attached to a side of a 1-cell then the 0-cell on that side is $I$.  If there is more than one edge attached to one side of a 1-cell, the 0-cell on that side is the monoidal product of those 0-cells.

\begin{example}\cref{fig:dualizable_0_cell} contains a first example of these features.  Note that the 1-cells $C$ and $E$ both have one side that has no inputs and one side with two inputs.  This means $C\in \sB(I, A\otimes B)$  and $E\in \sB(B\otimes A,I)$ for 0-cells $A$ and $B$.
Following \cite{douglas_schommer_pries_snyder}
the relative placement of the black line and the yellow line in the 0-cells in \cref{fig:dualizable_0_cell} indicates which 0-cells correspond to $A$ and which correspond to $\zdual{A}$.  In this case green above black is $\zdual{A}$ and black above green is $A$.
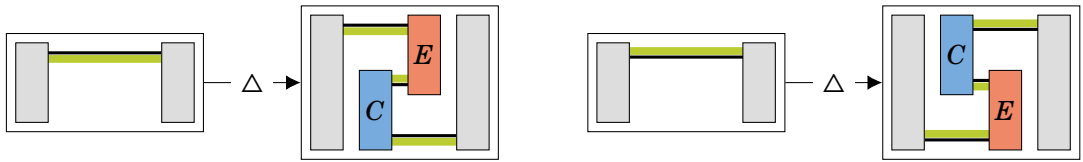
\begin{figure}[h]
\resizebox{\textwidth}{!}
{
\begin{tikzpicture}

\node[draw] (A2) at (0, 0){\begin{tikzpicture}
\oc{ta}{}{X}{2}{-1}{-2}{\en}
\oc{t2}{}{Y}{5}{-1}{-2}{\en}

  \begin{scope}[on background layer]
\alp{({ta}t)--({t2}t)}
\end{scope}
\end{tikzpicture}
};

\node[draw] (A3) at (4, 0){\begin{tikzpicture}
\oc{ta}{}{X}{1}{1}{-1}{\en}
\oc{t2}{}{Y}{4}{1}{-1}{\en}
\oc{t3}{C}{_3}{2}{0}{-1}{\et}
\oc{e4}{E}{_4}{3}{1}{0}{\ep}
  \begin{scope}[on background layer]
\dlp{({t3}t)--({e4}b)}
\alp{({t3}b)--({t2}b)}
\alp{({ta}t)--({e4}t)}
\end{scope}
\end{tikzpicture}
};

\draw[->](A2)--(A3)node [midway , fill=white] {$\triangle$};
\end{tikzpicture}
\hspace{1cm}
\begin{tikzpicture}

\node[draw] (A2) at (0, 0){\begin{tikzpicture}
\oc{ta}{}{X}{2}{-1}{-2}{\en}
\oc{t2}{}{Y}{5}{-1}{-2}{\en}
  \begin{scope}[on background layer]
\dlp{({ta}t)--({t2}t)}
\end{scope}
\end{tikzpicture}
};

\node[draw] (A3) at (4, 0){\begin{tikzpicture}
\oc{ta}{}{X}{1}{1}{-1}{\en}
\oc{t2}{}{Y}{4}{1}{-1}{\en}
\oc{t3}{C}{_3}{2}{1}{0}{\et}
\oc{e4}{E}{_4}{3}{0}{-1}{\ep}
  \begin{scope}[on background layer]
\alp{ ({t3}b)--({e4}t)}
\dlp{({t3}t)--({t2}t)}
\dlp{({ta}b)--({e4}b)}
\end{scope}
\end{tikzpicture}
};

\draw[->](A2)--(A3)node [midway , fill=white] {$\triangle$};
\end{tikzpicture}
}
\caption{Invertible 2-cells completing the definition of a 1-dualizable 0-cell}\label{fig:dualizable_0_cell}
\end{figure}

\end{example}

Following \cite{schommer-pries,ps:indexed} we could view our diagrams of 0-cells and 1-cells as horizontal slices of a three dimensional diagram where 2-cells are represented vertically as surfaces.  
We choose to not use surface diagrams since  the projections of 3-dimensional diagrams onto a page can be difficult to interpret.  We also choose not to prove our results using geometric reasoning since we have no desire to prove the relevant coherence theorem here.

In what follows we will give composites of 1-cells as circuit diagrams and we will not provide a translation similar to the original description of the invertible 2-cells in \cref{defn:1_dualizable}.  We will also suppress associativity and unit 1-cells.

The monoidal bicategories we are interested in are also symmetric and this structure is necessary for many of the results we need.
\begin{defn}\cite{stay} 
A {\bf symmetric monoidal bicategory} consists of the following:
\begin{itemize}
	\item A monoidal bicategory $\mc{B}$.
	\item An adjoint equivalence 
			\[b: A\otimes B \to B\otimes A\]
		pseudonatural in $A$ and  $B$.
	\item Invertible modifications $R$ and $S$ filling the hexagons
			\[\xymatrix{A\otimes (B\otimes C)\ar[r]^b&(B\otimes C)\otimes A\ar[d]^a
				\\
				(A\otimes B)\otimes C\ar[u]^a\ar[d]_{b\otimes C}&B\otimes (C\otimes A)
				\\
				(B\otimes A)\otimes C\ar[r]^a&B\otimes (A\otimes C)\ar[u]_{B\otimes b}} 
	\hfill 
			\xymatrix{(A\otimes B)\otimes C\ar[r]^b&C\otimes (A\otimes B)\ar[d]_a
				\\
				A\otimes (B\otimes C)\ar[u]^a\ar[d]_{A\otimes b}&(C\otimes A)\otimes B
				\\
				A\otimes (C\otimes B)\ar[r]^a&(A\otimes C)\otimes B\ar[u]^{b\otimes B}}\]
	\begin{itemize}
		\item Two equations shuffling one object into three objects.
		\item An equation shuffling two objects into two objects 
		\item An equation relating multiple applications of $b$.
	\end{itemize}
	\item An invertible modification 
		\[\nu\colon b\to b\]
		satisfying two equations relating $\nu$ and the modifications $R$ and $S$ above and an equation relating $\nu$ applied to $A\otimes B$ and $B\otimes A$.
\end{itemize}
\end{defn}

Monoidal bicategories have significant structure.  In particular, the shadows on the bicategories in \cref{def:enriched} are induced by the monoidal structure.

\begin{prop}\cite{ben_zvi_nadler_1}\label{prop:shadow} If $\mc{B}$ is a symmetric monoidal bicategory and all 0-cells of $\sB$ are 1-dualizable $\sB$ has a shadow that takes values in the 
category $\sB(I,I)$.
\end{prop}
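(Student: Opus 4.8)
The plan is to build the shadow explicitly from the $1$-dualizability data of the $0$-cells, and then check the two axioms. Fix, for each $0$-cell $A$ of $\mc{B}$, a $1$-dualizability structure $(\zdual{A}, C_A, E_A, \triangle)$ as in \cref{defn:1_dualizable}, so $C_A\in\mc{B}(I, A\otimes\zdual{A})$ and $E_A\in\mc{B}(\zdual{A}\otimes A, I)$. For $M\in\mc{B}(A,A)$ set
\[
\sh{M}\;:=\;C_A\odot(M\otimes U_{\zdual{A}})\odot b\odot E_A\;\in\;\mc{B}(I,I),
\]
where $b=b_{A,\zdual{A}}\colon A\otimes\zdual{A}\to\zdual{A}\otimes A$ is the braiding, and on a $2$-cell $\alpha$ put $\sh{\alpha}:=\id_{C_A}\odot(\alpha\otimes\id)\odot\id_b\odot\id_{E_A}$ (horizontal composites of $2$-cells). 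This is visibly a functor $\mc{B}(A,A)\to\mc{B}(I,I)$; a different choice of duality data yields a canonically isomorphic functor, so one fixed choice does no harm.

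Next I would construct the cyclicity isomorphism $\theta\colon\sh{M\odot N}\to\sh{N\odot M}$ for $M\in\mc{B}(A,B)$ and $N\in\mc{B}(B,A)$. The idea is to rotate the closed loop defining $\sh{M\odot N}$ until the $A$-cap/cup pair $(C_A,E_A)$ is converted into the $B$-pair $(C_B,E_B)$: using the right unitor write $M\odot N\simeq M\odot U_B\odot N$ and insert the bubble $U_B\xto{\triangle}L_B^{-1}\odot(C_B\otimes U_B)\odot(U_B\otimes E_B)\odot R_B$ supplied by the $1$-dualizability of $B$; then slide the $A$-strand around through this bubble using the interchange $2$-cells for $\otimes$, the hexagon modifications $R,S$, pseudonaturality of $b$ and $\nu$, and the zigzag $2$-cell $\triangle$ for $A$, arriving at $C_B\odot((N\odot M)\otimes U_{\zdual{B}})\odot b\odot E_B=\sh{N\odot M}$. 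Each constituent is an invertible $2$-cell and is natural in $M$ and $N$, so $\theta$ is a natural isomorphism. This argument is essentially an isotopy and is cleanest as a sequence of circuit diagrams: $\sh{M\odot N}$ is the ``figure-eight'' whose strands carry $\zdual{A}$ and $A$ (the $A$-strand broken by the marks $M,N$ and running through $B$), and $\theta$ is the isotopy exposing the $B$-loop.

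Finally I would verify the two coherence diagrams of \cref{defn_shadow} (the pentagon-type compatibility of $\theta$ with the associators, and the unit triangle). Expanding the definitions, each side of each diagram becomes a composite of structural $2$-cells (associators, unitors, interchangers, the hexagon modifications, $\nu$) together with copies of the zigzag $2$-cells $\triangle$; equality then follows from the coherence theorem for symmetric monoidal bicategories together with the swallowtail/zigzag equations packaged into \cref{defn:1_dualizable}. This is a long but mechanical diagram chase, again best organized with circuit diagrams.

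I expect the main obstacle to be the construction of $\theta$ rather than the axiom checks: because the framing forces the defining loop to be a figure-eight rather than a circle, one cannot simply ``rotate a circle'', and one must pin down $\theta$ so that exactly the right braidings appear — precisely the ones making the coherence diagrams of the last step commute. Once $\theta$ is correctly fixed, the remaining verifications are routine coherence bookkeeping.
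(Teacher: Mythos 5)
Your proposal matches the paper's proof: the shadow is defined by the same loop $C_A\odot(M\otimes U_{\zdual{A}})\odot b\odot E_A$, and $\theta$ is the same composite (insert the $\triangle$-bubble for the second $0$-cell, apply the monoidal symmetry, cancel the first $0$-cell's cap/cup pair with $\triangle^{-1}$), with the shadow axioms checked by a large diagram chase. One small caveat: the paper's \cref{defn:1_dualizable} imposes no swallowtail equations on $\triangle$ and none are needed --- its verification of the associativity and unit diagrams uses only naturality of the symmetry and unit maps --- so your appeal to a coherence theorem for symmetric monoidal bicategories plus zigzag equations invokes more than is required (and the paper deliberately avoids relying on such a coherence theorem).
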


\begin{proof} 
If $M\in \sB(A,A)$ and $A$ is 1-dualizable, the shadow of $M$ is the following bicategorical composition.
\begin{center}
\begin{tikzpicture}

\oc{t1}{C}{_1}{0}{5}{4}{\et}
\oc{x}{M}{}{1.5}{4}{4}{\en}

\oc{ex}{E}{_x}{4}{5}{4}{\ep}
\gc{g1}{3}{5}{4}

  \begin{scope}[on background layer]
\dlp{({t1}t)\gpt{g1}({ex}b)}

\alp{ ({t1}b)--({x}b)\gpb{g1}({ex}t)}
\end{scope}
\end{tikzpicture}
\end{center}
The shadow isomorphism is defined in \cref{fig:shadow_isomorphism}. 

The diagrams relating the shadow isomorphism and the unit and associativity maps are very large and can be found on page \pageref{fig:shadow_isomorphism_assoc}.
See \cref{fig:shadow_isomorphism_assoc} for  the associativity condition and \cref{fig:shadow_isomorphism_unit} one of the unit conditions.  All regions in these diagram commute by the naturality of the monoidal symmetry map and the unit maps.
\end{proof}

\afterpage{\clearpage
\begin{figure}
\resizebox{.85\textwidth}{!}
{\begin{tikzpicture}

\node[draw] (Z2) at (0, 4){\begin{tikzpicture}

\oc{t1}{C}{_1}{0}{5}{4}{\et}
\oc{x}{M}{}{1}{4}{4}{\en}
\oc{y}{N}{}{2}{4}{4}{\en}

\oc{ex}{E}{_x}{4}{5}{4}{\ep}
\gc{g1}{3}{5}{4}

  \begin{scope}[on background layer]
\dlp{({t1}t)\gpt{g1}({ex}b)}

\alp{ ({t1}b)--({x}b)}
\blp{({x}b)--({y}b)}
\alp{({y}b)\gpb{g1}({ex}t)}
\end{scope}
\end{tikzpicture}};

\node[draw] (Z3) at (5, 4){\begin{tikzpicture}

\oc{t1}{C}{_1}{1}{5}{4}{\et}
\oc{x}{M}{}{2}{4}{4}{\en}
\oc{y}{N}{}{3}{2}{2}{\en}
\oc{t5}{C}{_5}{2}{3}{2}{\et\dk}
\oc{e6}{E}{_6}{3}{4}{3}{\ep\dk}

\oc{ex}{E}{_x}{5}{5}{2}{\ep}
\gc{g1}{4}{5}{2}

  \begin{scope}[on background layer]
\dlp{({t1}t)\gpt{g1}({ex}b)}
\dblp{({t5}t)--({e6}b)}

\alp{({t1}b)--({x}b)}
\blp{({x}b)--({e6}t)}
\blp{ ({t5}b)--({y}b)}
\alp{({y}b)\gpb{g1}({ex}t)}
\end{scope}
\end{tikzpicture}};

\node[draw] (X5) at (15, 4){\begin{tikzpicture}

\oc{x}{M}{}{3}{4}{4}{\en}
\oc{y}{N}{}{2}{4}{4}{\en}
\oc{t5}{C}{_5}{1}{5}{4}{\et\dk}
\oc{e6}{E}{_6}{5}{5}{4}{\ep\dk}
\gc{g1}{4}{5}{4}
  \begin{scope}[on background layer]
\dblp{({t5}t)\gpt{g1}({e6}b)}
\blp{({t5}b)--({y}b)}
\alp{({y}b)--({x}b)}
\blp{({x}b)\gpb{g1}({e6}t)}
\end{scope}
\end{tikzpicture}};

\node[draw] (Y3) at (10, 4){\begin{tikzpicture}

\oc{t1}{C}{_1}{2}{3}{2}{\et}
\oc{x}{M}{}{3}{2}{2}{\en}
\oc{y}{N}{}{2}{4}{4}{\en}
\oc{t5}{C}{_5}{1}{5}{4}{\et\dk}
\oc{e6}{E}{_6}{5}{5}{2}{\ep\dk}

\oc{ex}{E}{_x}{3}{4}{3}{\ep}
\gc{g1}{4}{5}{2}
  \begin{scope}[on background layer]
\dlp{ ({t1}t)--({ex}b)}
\dblp{({t5}t)\gpt{g1}({e6}b)}

\alp{ ({t1}b)--({x}b)}
\blp{({x}b)\gpb{g1}({e6}t)}
\blp{ ({t5}b)--({y}b)}
\alp{({y}b)--({ex}t)}
\end{scope}
\end{tikzpicture}};

\draw[->](Z2)--(Z3)node [midway , fill=white] {$\triangle$};
\draw[<->](Z3)--(Y3)node [midway , fill=white] {$\gamma$};
\draw[->](Y3)--(X5)node [midway , fill=white] {$\triangle^{-1}$};
\end{tikzpicture}
\caption{The shadow isomorphism}\label{fig:shadow_isomorphism}
\end{figure}
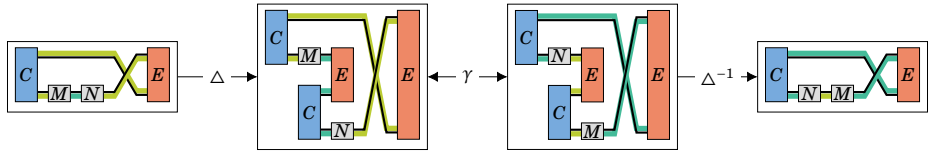

\begin{figure}
\resizebox{.75\textwidth}{!}
{\begin{tikzpicture}

\node[draw] (Z2) at (0, 11){\begin{tikzpicture}

\oc{t1}{C}{_1}{0}{5}{4}{\et}
\oc{m}{M}{}{1}{4}{4}{\en}
\oc{n}{N}{}{2}{4}{4}{\en}
\oc{p}{P}{}{3}{4}{4}{\en}

\oc{ex}{E}{_x}{5}{5}{4}{\ep}
\gc{g1}{4}{5}{4}

  \begin{scope}[on background layer]
\dlp{({t1}t)\gpt{g1}({ex}b)}
\alp{({t1}b)--({m}b)}
\blp{({m}b)--({n}b)}
\clp{({n}b)--({p}b)}
\alp{({p}b)\gpb{g1}({ex}t)}
\end{scope}
\end{tikzpicture}};

\node[draw] (Z3) at (0, 6){\begin{tikzpicture}

\oc{t1}{C}{_1}{1}{5}{4}{\et}
\oc{m}{M}{}{2}{4}{4}{\en}
\oc{n}{N}{}{3}{2}{2}{\en}
\oc{p}{P}{}{4}{2}{2}{\en}
\oc{t5}{C}{_5}{2}{3}{2}{\et\dk}
\oc{e6}{E}{_6}{3}{4}{3}{\ep\dk}

\oc{ex}{E}{_x}{6}{5}{2}{\ep}
\gc{g1}{5}{5}{2}

  \begin{scope}[on background layer]
\dlp{({t1}t)\gpt{g1}({ex}b)}
\dblp{({t5}t)--({e6}b)}

\alp{({t1}b)--({m}b)}
\blp{({m}b)--({e6}t)}
\blp{({t5}b)--({n}b)}
\clp{({n}b)--({p}b)}
\alp{({p}b)\gpb{g1}({ex}t)}
\end{scope}
\end{tikzpicture}};

\node[draw] (X5) at (0, -5){\begin{tikzpicture}

\oc{m}{M}{}{3}{4}{4}{\en}
\oc{n}{N}{}{1}{4}{4}{\en}
\oc{p}{P}{}{2}{4}{4}{\en}
\oc{t5}{C}{_5}{0}{5}{4}{\et\dk}
\oc{e6}{E}{_6}{5}{5}{4}{\ep\dk}
\gc{g1}{4}{5}{4}

  \begin{scope}[on background layer]
\dblp{({t5}t)\gpt{g1}({e6}b)}
\blp{({t5}b)--({n}b)}
\clp{({n}b)--({p}b)}
\alp{({p}b)-- ({m}b)}
\blp{({m}b)\gpb{g1}({e6}t)}
	\end{scope}
\end{tikzpicture}};

\node[draw] (Y3) at (0, 0){\begin{tikzpicture}

\oc{t1}{C}{_1}{2}{3}{2}{\et}
\oc{m}{M}{}{3}{2}{2}{\en}
\oc{n}{N}{}{1}{4}{4}{\en}
\oc{p}{P}{}{2}{4}{4}{\en}
\oc{t5}{C}{_5}{0}{5}{4}{\et\dk}
\oc{e6}{E}{_6}{5}{5}{2}{\ep\dk}

\oc{ex}{E}{_x}{3}{4}{3}{\ep}
\gc{g1}{4}{5}{2}

  \begin{scope}[on background layer]
\dlp{({t1}t)--({ex}b)}
\dblp{({t5}t)\gpt{g1}({e6}b)}

\alp{({t1}b)--({m}b)}
\blp{({m}b)\gpb{g1}({e6}t)}
\blp{ ({t5}b)--({n}b)}
\clp{({n}b)--({p}b)}
\alp{({p}b)--({ex}t)}
\end{scope}
\end{tikzpicture}};

\draw[->](Z2)--(Z3)node [midway , fill=white] {$\triangle$};
\draw[<->](Z3)--(Y3)node [midway , fill=white] {$\gamma$};
\draw[->](Y3)--(X5)node [midway , fill=white] {$\triangle^{-1}$};

\node[draw] (A1) at (6, -5){\begin{tikzpicture}

\oc{m}{M}{}{3}{2}{2}{\en}
\oc{n}{N}{}{1}{4}{4}{\en}
\oc{p}{P}{}{2}{2}{2}{\en}
\oc{t5}{C}{_5}{0}{5}{4}{\et\dk}
\oc{e6}{E}{_6}{5}{5}{2}{\ep\dk}
\oc{ta}{C}{_a}{1}{3}{2}{\et\lt}
\oc{eb}{E}{_b}{2}{4}{3}{\ep\lt}
\gc{g1}{4}{5}{2}

  \begin{scope}[on background layer]
\dblp{({t5}t)\gpt{g1}({e6}b)}
\blp{ ({t5}b)--({n}b)}
\clp{({n}b)-- ({eb}t)}
\clp{ ({ta}b)--({p}b)}
\alp{({p}b)--({m}b)}
\blp{({m}b)\gpb{g1}({e6}t)}
\dclp{({ta}t)--({eb}b)}
\end{scope}
\end{tikzpicture}};

\node[draw] (A2) at (12, -5){\begin{tikzpicture}

\oc{m}{M}{}{2}{4}{4}{\en}
\oc{n}{N}{}{3}{2}{2}{\en}
\oc{p}{P}{}{1}{4}{4}{\en}
\oc{t5}{C}{_5}{2}{3}{2}{\et\dk}
\oc{e6}{E}{_6}{3}{4}{3}{\ep\dk}
\oc{ta}{C}{_a}{0}{5}{4}{\et\lt}
\oc{eb}{E}{_b}{5}{5}{2}{\ep\lt}

\gc{g1}{4}{5}{2}

  \begin{scope}[on background layer]
\dblp{({t5}t)--({e6}b)}
\blp{ ({t5}b)--({n}b)}
\clp{({n}b)\gpb{g1} ({eb}t)}
\clp{ ({ta}b)--({p}b)}
\alp{({p}b)--({m}b)}
\blp{({m}b)--({e6}t)}
\dclp{({ta}t)\gpt{g1}({eb}b)}
\end{scope}
\end{tikzpicture}};

\node[draw] (A3) at (18, -5){\begin{tikzpicture}

\oc{m}{M}{}{2}{4}{4}{\en}
\oc{n}{N}{}{3}{4}{4}{\en}
\oc{p}{P}{}{1}{4}{4}{\en}
\oc{ta}{C}{_a}{0}{5}{4}{\et\lt}
\oc{eb}{E}{_b}{5}{5}{4}{\ep\lt}
\gc{g1}{4}{5}{4}

  \begin{scope}[on background layer]
\clp{ ({ta}b)--({p}b)}
\alp{({p}b)--({m}b)}
\blp{({m}b)--({n}b)}
\clp{({n}b)\gpb{g1} ({eb}t)}
\dclp{({ta}t)\gpt{g1}({eb}b)}
\end{scope}
\end{tikzpicture}};

\draw[->](X5)--(A1)node [midway , fill=white] {$\triangle$};

\draw[<->](A1)--(A2)node [midway , fill=white] {$\gamma$};
\draw[->](A2)--(A3)node [midway , fill=white] {$\triangle^{-1}$};

\node[draw] (B1) at (6, 0){\begin{tikzpicture}

\oc{t1}{C}{_1}{2}{1}{0}{\et}
\oc{m}{M}{}{3}{0}{0}{\en}
\oc{n}{N}{}{1}{4}{4}{\en}
\oc{p}{P}{}{2}{2}{2}{\en}
\oc{t5}{C}{_5}{0}{5}{4}{\et\dk}
\oc{e6}{E}{_6}{5}{5}{0}{\ep\dk}

\oc{ex}{E}{_x}{3}{2}{1}{\ep}
\gc{g1}{4}{5}{0}

\oc{ta}{C}{_a}{1}{3}{2}{\et\lt}
\oc{eb}{E}{_b}{2}{4}{3}{\ep\lt}

  \begin{scope}[on background layer]
\dclp{ ({ta}t)--({eb}b)}
\dlp{({t1}t)--({ex}b)}
\dblp{({t5}t)\gpt{g1}({e6}b)}

\alp{({t1}b)--({m}b)}
\blp{({m}b)\gpb{g1}({e6}t)}
\blp{ ({t5}b)--({n}b)}
\clp{({n}b)--({eb}t)}
\clp{ ({ta}b)--({p}b)}
\alp{({p}b)--({ex}t)}
\end{scope}
\end{tikzpicture}};

\node[draw] (B2) at (12, 0){\begin{tikzpicture}

\oc{t1}{C}{_1}{1}{1}{0}{\et}
\oc{m}{M}{}{2}{0}{0}{\en}
\oc{n}{N}{}{3}{-2}{-2}{\en}
\oc{p}{P}{}{1}{2}{2}{\en}
\oc{t5}{C}{_5}{2}{-1}{-2}{\et\dk}
\oc{e6}{E}{_6}{3}{0}{-1}{\ep\dk}

\oc{ex}{E}{_x}{2}{2}{1}{\ep}
\gc{g1}{4}{3}{-2}

\oc{ta}{C}{_a}{0}{3}{2}{\et\lt}
\oc{eb}{E}{_b}{5}{3}{-2}{\ep\lt}

  \begin{scope}[on background layer]
\dclp{ ({ta}t)\gpt{g1}({eb}b)}
\dlp{({t1}t)--({ex}b)}
\dblp{({t5}t)-- ({e6}b)}

\alp{({t1}b)--({m}b)}
\blp{({m}b)-- ({e6}t)}
\blp{ ({t5}b)--({n}b)}
\clp{({n}b)\gpb{g1}({eb}t)}
\clp{({ta}b)--({p}b)}
\alp{({p}b)--({ex}t)}
\end{scope}
\end{tikzpicture}};

\node[draw] (B3) at (18, 0){\begin{tikzpicture}
\oc{t1}{C}{_1}{1}{1}{0}{\et}
\oc{m}{M}{}{2}{0}{0}{\en}
\oc{n}{N}{}{3}{0}{0}{\en}
\oc{p}{P}{}{1}{2}{2}{\en}

\oc{ex}{E}{_x}{2}{2}{1}{\ep}
\gc{g1}{4}{3}{0}

\oc{ta}{C}{_a}{0}{3}{2}{\et\lt}
\oc{eb}{E}{_b}{5}{3}{0}{\ep\lt}

  \begin{scope}[on background layer]
\dclp{ ({ta}t)\gpt{g1}({eb}b)}
\dlp{ ({t1}t)--({ex}b)}

\alp{({t1}b)--({m}b)}
\blp{({m}b)--({n}b)}
\clp{({n}b)\gpb{g1}({eb}t)}
\clp{ ({ta}b)--({p}b)}
\alp{({p}b)--({ex}t)}
\end{scope}
\end{tikzpicture}};

\node[draw] (C1) at (6, 6){\begin{tikzpicture}

\oc{t1}{C}{_1}{-2}{5}{4}{\et}
\oc{m}{M}{}{-1}{4}{4}{\en}
\oc{n}{N}{}{0}{2}{2}{\en}
\oc{p}{P}{}{1}{0}{0}{\en}
\oc{t5}{C}{_5}{-1}{3}{2}{\et\dk}
\oc{e6}{E}{_6}{0}{4}{3}{\ep\dk}

\oc{ex}{E}{_x}{3}{5}{0}{\ep}
\gc{g1}{2}{5}{0}

\oc{ta}{C}{_a}{0}{1}{0}{\et\lt}
\oc{eb}{E}{_b}{1}{2}{1}{\ep\lt}

  \begin{scope}[on background layer]
\dclp{({ta}t)--({eb}b)}
\dlp{({t1}t)\gpt{g1}({ex}b)}
\dblp{({t5}t)--({e6}b)}

\alp{({t1}b)--({m}b)}
\blp{({m}b)--({e6}t)}
\blp{({t5}b)--({n}b)}
\clp{({n}b)--({eb}t)}
\clp{({ta}b)--({p}b)}
\alp{({p}b)\gpb{g1}({ex}t)}
\end{scope}
\end{tikzpicture}};

\node[draw] (C2) at (6, 11){\begin{tikzpicture}

\oc{t1}{C}{_1}{-2}{3}{2}{\et}
\oc{m}{M}{}{-1}{2}{2}{\en}
\oc{n}{N}{}{0}{2}{2}{\en}
\oc{p}{P}{}{1}{0}{0}{\en}

\oc{ex}{E}{_x}{3}{3}{0}{\ep}
\gc{g1}{2}{3}{0}

\oc{ta}{C}{_a}{0}{1}{0}{\et\lt}
\oc{eb}{E}{_b}{1}{2}{1}{\ep\lt}

  \begin{scope}[on background layer]
\dclp{ ({ta}t)--({eb}b)}
\dlp{({t1}t)\gpt{g1}({ex}b)}

\alp{({t1}b)--({m}b)}
\blp{({m}b)--({n}b)}
\clp{({n}b)--({eb}t)}
\clp{ ({ta}b)--({p}b)}
\alp{({p}b)\gpb{g1}({ex}t)}
\end{scope}
\end{tikzpicture}};

\draw[->](Y3)--(B1)node [midway , fill=white] {$\triangle$};
\draw[->](B1)--(A1)node [midway , fill=white] {$\triangle^{-1}$};
\draw[->](B1)--(B2)node [midway , fill=white] {$\gamma$};
\draw[->](B2)--(A2)node [midway , fill=white] {$\triangle^{-1}$};
\draw[->](C1)--(B1)node [midway , fill=white] {$\gamma$};
\draw[->](C1)--(B2)node [midway , fill=white] {$\gamma$};
\draw[->](C2)--(B3)node [midway , fill=white] {$\gamma$};

\draw[->](Z3)--(C1)node [midway , fill=white] {$\triangle$};
\draw[->](B2)--(B3)node [midway , fill=white] {$\triangle^{-1}$};
\draw[->](B3)--(A3)node [midway , fill=white] {$\triangle^{-1}$};
\draw[->](C2)--(C1)node [midway , fill=white] {$\triangle$};
\draw[->](Z2)--(C2)node [midway , fill=white] {$\triangle$};
\end{tikzpicture}
\caption{The associativity condition for the shadow isomorphism}\label{fig:shadow_isomorphism_assoc}
\end{figure}
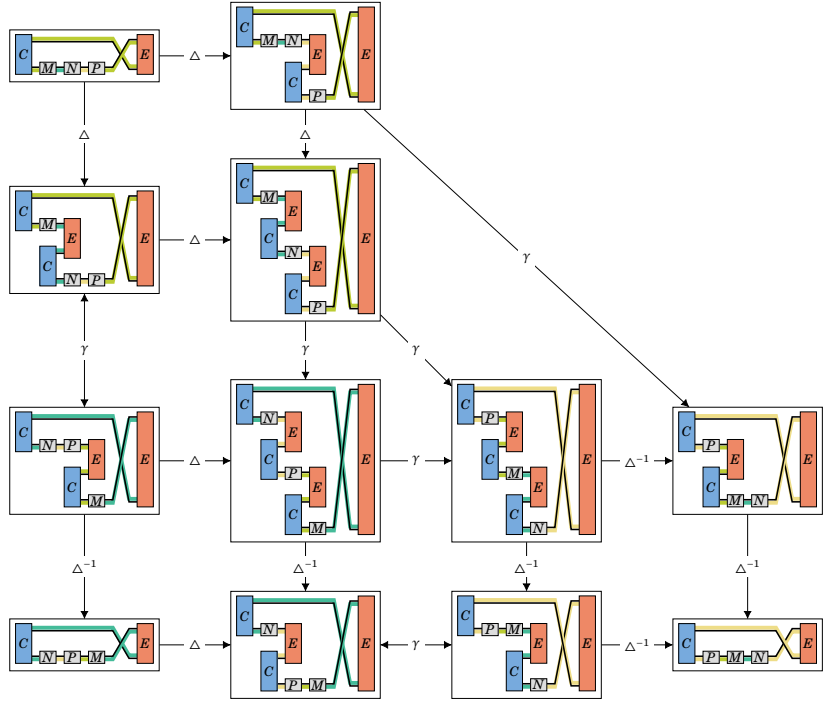

\begin{figure}
\resizebox{.75\textwidth}{!}
{\begin{tikzpicture}

\node[draw] (Z2) at (0, 7.5){\begin{tikzpicture}

\oc{t1}{C}{_1}{0}{5}{4}{\et}
\oc{x}{M}{}{1}{4}{4}{\en}
\oc{u}{U}{}{2}{4}{4}{\en}

\oc{ex}{E}{_x}{4}{5}{4}{\ep}
\gc{g1}{3}{5}{4}
  \begin{scope}[on background layer]
\dlp{({t1}t)\gpt{g1}({ex}b)}

\alp{ ({t1}b)--({x}b)--({u}b)\gpb{g1}({ex}t)}
\end{scope}
\end{tikzpicture}};

\node[draw] (Z3) at (0, 4){\begin{tikzpicture}

\oc{t1}{C}{_1}{1}{5}{4}{\et}
\oc{x}{M}{}{2}{4}{4}{\en}
\oc{u}{U}{}{3}{2}{2}{\en}
\oc{t5}{C}{_5}{2}{3}{2}{\et\dk}
\oc{e6}{E}{_6}{3}{4}{3}{\ep\dk}

\oc{ex}{E}{_x}{5}{5}{2}{\ep}
\gc{g1}{4}{5}{2}
  \begin{scope}[on background layer]
\dlp{ ({t1}t)\gpt{g1}({ex}b)}
\dlp{({t5}t)--({e6}b)}

\alp{ ({t1}b)--({x}b)--({e6}t)}
\alp{({t5}b)--({u}b)\gpb{g1}({ex}t)}
\end{scope}
\end{tikzpicture}};

\node[draw] (Y3) at (0, 0){\begin{tikzpicture}

\oc{t1}{C}{_1}{2}{3}{2}{\et}
\oc{x}{M}{}{3}{2}{2}{\en}
\oc{u}{U}{}{2}{4}{4}{\en}
\oc{t5}{C}{_5}{1}{5}{4}{\et\dk}
\oc{e6}{E}{_6}{5}{5}{2}{\ep\dk}

\oc{ex}{E}{_x}{3}{4}{3}{\ep}
\gc{g1}{4}{5}{2}
  \begin{scope}[on background layer]
\dlp{ ({t1}t)--({ex}b)}
\dlp{({t5}t)\gpt{g1}({e6}b)}

\alp{ ({t1}b)--({x}b)\gpb{g1}({e6}t)}
\alp{ ({t5}b)--({u}b)--({ex}t)}
\end{scope}
\end{tikzpicture}};

\node[draw] (X5) at (0, -3.5){\begin{tikzpicture}

\oc{x}{M}{}{3}{4}{4}{\en}
\oc{u}{U}{}{2}{4}{4}{\en}
\oc{t5}{C}{_5}{1}{5}{4}{\et\dk}
\oc{e6}{E}{_6}{5}{5}{4}{\ep\dk}
\gc{g1}{4}{5}{4}
  \begin{scope}[on background layer]
\dlp{({t5}t)\gpt{g1}({e6}b)}
\alp{ ({t5}b)--({u}b)--({x}b)\gpb{g1}({e6}t)}
\end{scope}
\end{tikzpicture}};

\node[draw] (A1) at (5, 7.5){\begin{tikzpicture}

\oc{t1}{C}{_1}{0}{5}{4}{\et}
\oc{x}{M}{}{1}{4}{4}{\en}

\oc{ex}{E}{_x}{4}{5}{4}{\ep}
\gc{g1}{3}{5}{4}
  \begin{scope}[on background layer]
\dlp{ ({t1}t)\gpt{g1}({ex}b)}

\alp{({t1}b)--({x}b)\gpb{g1}({ex}t)}
\end{scope}
\end{tikzpicture}};

\node[draw] (A2) at (5, 4){\begin{tikzpicture}

\oc{t1}{C}{_1}{1}{5}{4}{\et}
\oc{x}{M}{}{2}{4}{4}{\en}
\oc{t5}{C}{_5}{2}{3}{2}{\et\dk}
\oc{e6}{E}{_6}{3}{4}{3}{\ep\dk}

\oc{ex}{E}{_x}{5}{5}{2}{\ep}
\gc{g1}{4}{5}{2}
  \begin{scope}[on background layer]
\dlp{({t1}t)\gpt{g1}({ex}b)}
\dlp{({t5}t)--({e6}b)}

\alp{({t1}b)--({x}b)--({e6}t)}
\alp{({t5}b)\gpb{g1}({ex}t)}
\end{scope}
\end{tikzpicture}};

\node[draw] (A3) at (5, 0){\begin{tikzpicture}

\oc{t1}{C}{_1}{2}{3}{2}{\et}
\oc{x}{M}{}{3}{2}{2}{\en}
\oc{t5}{C}{_5}{1}{5}{4}{\et\dk}
\oc{e6}{E}{_6}{5}{5}{2}{\ep\dk}

\oc{ex}{E}{_x}{3}{4}{3}{\ep}
\gc{g1}{4}{5}{2}
  \begin{scope}[on background layer]
\dlp{({t1}t)--({ex}b)}
\dlp{({t5}t)\gpt{g1}({e6}b)}

\alp{({t1}b)--({x}b)\gpb{g1}({e6}t)}
\alp{({t5}b)--({ex}t)}
\end{scope}
\end{tikzpicture}};

\node[draw] (A4) at (5, -3.5){\begin{tikzpicture}

\oc{x}{M}{}{3}{4}{4}{\en}
\oc{t5}{C}{_5}{1}{5}{4}{\et\dk}
\oc{e6}{E}{_6}{5}{5}{4}{\ep\dk}

\gc{g1}{4}{5}{4}
  \begin{scope}[on background layer]
\dlp{({t5}t)\gpt{g1}({e6}b)}
\alp{({t5}b)--({x}b)\gpb{g1}({e6}t)}
\end{scope}
\end{tikzpicture}};

\node[draw] (B1) at (10, 7.5){\begin{tikzpicture}

\oc{t1}{C}{_1}{0}{5}{4}{\et}
\oc{x}{M}{}{2}{5}{5}{\en}

\oc{ex}{E}{_x}{3}{5}{4}{\ep}
\gc{g1}{1}{5}{4}
  \begin{scope}[on background layer]
\dlp{ ({t1}t)\gpt{g1}({ex}b)}
\alp{ ({t1}b)\gpb{g1}({x}b)--({ex}t)}
\end{scope}
\end{tikzpicture}};

\node[draw] (B2) at (10, 4){\begin{tikzpicture}

\oc{t1}{C}{_1}{2}{5}{2}{\et}
\oc{x}{M}{}{4}{5}{5}{\en}
\oc{t5}{C}{_5}{4}{4}{3}{\et\dk}
\oc{e6}{E}{_6}{5}{5}{4}{\ep\dk}

\oc{ex}{E}{_x}{5}{3}{2}{\ep}
\gc{g1}{3}{5}{2}
  \begin{scope}[on background layer]
\dlp{({t1}t)\gpt{g1}({ex}b)}
\dlp{({t5}t)--({e6}b)}

\alp{ ({t1}b)\gpb{g1}({x}b)--({e6}t)}
\alp{({t5}b)--({ex}t)}
\end{scope}
\end{tikzpicture}};

\node[draw] (B3) at (10, 0){\begin{tikzpicture}

\oc{t1}{C}{_1}{2}{3}{2}{\et}
\oc{x}{M}{}{3}{2}{2}{\en}
\oc{e6}{E}{_6}{5}{3}{2}{\ep\dk}

\gc{g1}{4}{3}{2}
  \begin{scope}[on background layer]
\dlp{({t1}t)\gpt{g1}({e6}b)}
\alp{ ({t1}b)--({x}b)\gpb{g1}({e6}t)}
\end{scope}
\end{tikzpicture}};

\node[draw] (B4) at (10, -3.5){\begin{tikzpicture}

\oc{x}{M}{}{3}{5}{5}{\en}
\oc{t5}{C}{_5}{1}{5}{4}{\et\dk}
\oc{e6}{E}{_6}{4}{5}{4}{\ep\dk}

\gc{g1}{2}{5}{4}
  \begin{scope}[on background layer]
\dlp{({t5}t)\gpt{g1}({e6}b)}
\alp{({t5}b)\gpb{g1}({x}b)--({e6}t)}
\end{scope}
\end{tikzpicture}};

\node[draw] (C2) at (15, 2){\begin{tikzpicture}

\oc{t1}{C}{_1}{2}{5}{4}{\et}
\oc{x}{M}{}{4}{5}{5}{\en}
\oc{e6}{E}{_6}{5}{5}{4}{\ep\dk}

\gc{g1}{3}{5}{4}
  \begin{scope}[on background layer]
\dlp{ ({t1}t)\gpt{g1}({e6}b)}
\alp{({t1}b)\gpb{g1}({x}b)--({e6}t)}
\end{scope}
\end{tikzpicture}};

\draw[->](Z2)--(Z3)node [midway , fill=white] {$\triangle$};
\draw[<->](Z3)--(Y3)node [midway , fill=white] {$\gamma$};
\draw[->](Y3)--(X5)node [midway , fill=white] {$\triangle^{-1}$};
\draw[->](A1)--(A2)node [midway , fill=white] {$\triangle$};
\draw[<->](A2)--(A3)node [midway , fill=white] {$\gamma$};
\draw[->](A3)--(A4)node [midway , fill=white] {$\triangle^{-1}$};
\draw[->](Z2)--(A1)node [midway , fill=white] {$r$};
\draw[->](Z3)--(A2)node [midway , fill=white] {$r$};
\draw[->,out=30,in=150](Y3)to node[midway , fill=white] {$r$} (A3);
\draw[->,out=-30,in=-150](Y3)to node[midway , fill=white] {$l$} (A3);
\draw[->](X5)--(A4)node [midway , fill=white] {$l$};

\draw[->](B1)--(B2)node [midway , fill=white] {$\triangle$};
\draw[<->](A1)--(B1)node [midway , fill=white] {$\gamma$};
\draw[<->](A2)--(B2)node [midway , fill=white] {$\gamma$};
\draw[<->](A4)--(B4)node [midway , fill=white] {$\gamma$};
\draw[->](B2)--(C2)node [midway , fill=white] {$\triangle^{-1}$};
\draw[->](B1)--(C2)node [midway , fill=white] {$=$};

\draw[->](B4)--(C2)node [midway , fill=white] {$=$};
\draw[<->](B3)--(C2)node [midway , fill=white] {$\gamma$};

\draw[->](A3)--(B3)node [midway , fill=white] {$\triangle^{-1}$};
\end{tikzpicture}
\caption{The unit condition for the shadow isomorphism}\label{fig:shadow_isomorphism_unit}
\end{figure}
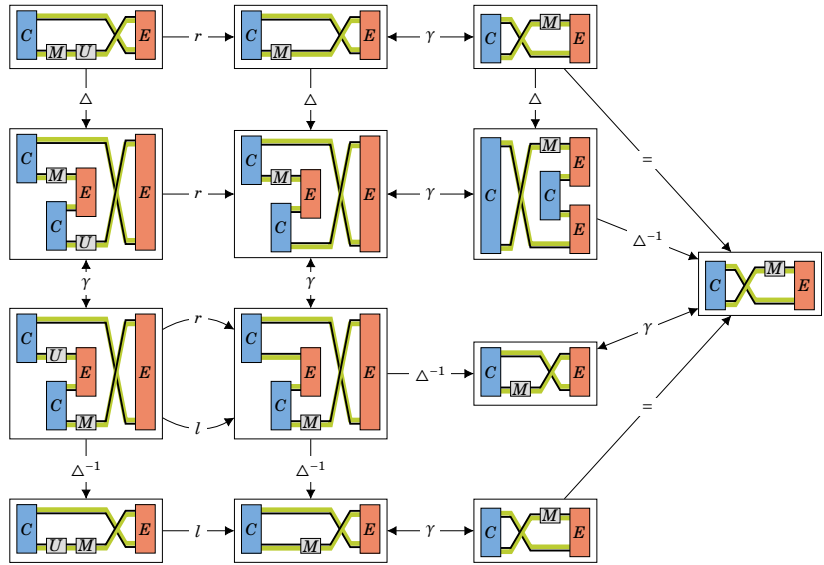
\clearpage}

In addition to defining a shadow, 1-cells $(C,E)$ witnessing the 1-dualizability of a 0-cell satisfy some strong compatibility results.
\begin{lem}
For a 1-dualizable 0-cell $A$ with witnessing 1-cells $(C,E)$  the 1-cell $C$ is left (respectively right) dualizable if and only if $E$ is right  (respectively left) dualizable. 
\end{lem}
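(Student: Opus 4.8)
The plan is to prove the single implication ``$C$ right dualizable $\implies$ $E$ left dualizable'' and to obtain the rest formally. The rotation construction below is reversible up to the triangle identities, so it actually gives ``$C$ right dualizable $\iff$ $E$ left dualizable''; applying this equivalence in the bicategory $\sB^{\mathrm{co}}$ obtained by reversing $2$-cells — which interchanges left and right dualizability of $1$-cells, carries the triangle $2$-cells of \cref{defn:1_dualizable} to their inverses (still exhibiting $A$ as $1$-dualizable with the same witnesses $C$, $E$), and is again symmetric monoidal — then yields ``$C$ left dualizable $\iff$ $E$ right dualizable''. So it is enough to build a left dual of $E$ from a right dual of $C$.

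Suppose $(C,N)$ is a dual pair with $N\in\sB(A\otimes\zdual A,I)$, unit $\eta\colon U_I\to C\odot N$, and counit $\epsilon\colon N\odot C\to U_{A\otimes\zdual A}$. The candidate left dual of $E$ is the \emph{rotation} $N^{\mathrm{rot}}\in\sB(I,\zdual A\otimes A)$ of $N$, obtained by folding each of the two source edges of $N$ around the $0$-cell duality: using the cup $C$ and the cap $E$ (together with the unit equivalences $L$ and $R$, and the symmetry $b$ to restore the order of the factors) each input edge of $N$ is bent so that it reappears as an output edge, the $A$-edge becoming a $\zdual A$-edge and the $\zdual A$-edge becoming an $A$-edge. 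Explicitly, $N^{\mathrm{rot}}$ is a composite of $N$, suitably tensored with identity $1$-cells, with copies of $C$ on one side and $E$ on the other; I would record it, and everything that follows, as circuit diagrams rather than iterated whiskered composites. The unit $U_I\to N^{\mathrm{rot}}\odot E$ and counit $E\odot N^{\mathrm{rot}}\to U_{\zdual A\otimes A}$ of the prospective pair $(N^{\mathrm{rot}},E)$ are assembled from $\eta$, $\epsilon$, and the triangle $2$-cells $\triangle^{\pm 1}$ that implement the bends.

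The main step, and the main obstacle, is the verification of the two triangle identities for $(N^{\mathrm{rot}},E)$. After unfolding the bends, each identity becomes a long composite that I would evaluate in two stages: first apply the triangle identities of \cref{defn:1_dualizable} for the $0$-cell duality — the ``vertical'' zig-zags, in the $\otimes$-direction (the stacking direction of circuit diagrams), which straighten a $C$-then-$E$ configuration back to an identity $1$-cell — and then apply the triangle identities of the dual pair $(C,N)$ — the ``horizontal'' zig-zag, in the $\odot$-direction, which contracts $C\odot N$ or $N\odot C$. Making these two families of moves commute past one another requires keeping careful track of associators, unitors, the interchange $2$-cell for $\otimes$, and the symmetry $b$; this bookkeeping is precisely what the circuit calculus of \cref{sec:monoidal_bicat} is designed to handle, and I would carry it out by chasing a large circuit diagram in the style of \cref{fig:shadow_isomorphism_assoc}, in which every region commutes either by naturality of a structural $2$-cell or by one of the two triangle identities. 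The converse implication comes from running the rotation backwards, and the remaining iff follows by passing to $\sB^{\mathrm{co}}$ as above.
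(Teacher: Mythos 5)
Your proposal is correct and takes essentially the same route as the paper: the paper also builds the dual of $E$ by rotating a chosen dual of $C$ around the $0$-cell duality data (a composite of two copies of $C$ with the dual of $C$), assembles the coevaluation and evaluation from $I_C$, $P_C$, the symmetry, and $\triangle^{\pm1}$, and verifies the triangle identities by a large circuit-diagram chase in which every region commutes by naturality or by one of the two families of triangle identities. The only cosmetic difference is which implication is taken as primary (the paper starts from $C$ left dualizable, you start from $C$ right dualizable) and how the remaining cases are formally deduced (``the other statement is dual'' versus passing to $\sB^{\mathrm{co}}$), which amount to the same reflection of the diagrams.
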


Before defining the relevant coevaluation and evaluations it is helpful to first observe the shapes of  circuit diagrams of the coevaluation, evaluation and triangle identities for 1-cells in $\sB(1,A\otimes B)$ and $\sB(A\otimes B,1)$.  These are in \cref{fig:coeval_1_cell,fig:coeval_1_cell_2}.  The empty nodes in this diagram should be regarded as filled by the unit 1-cell for the monoidal unit $I$.

\afterpage
{\clearpage
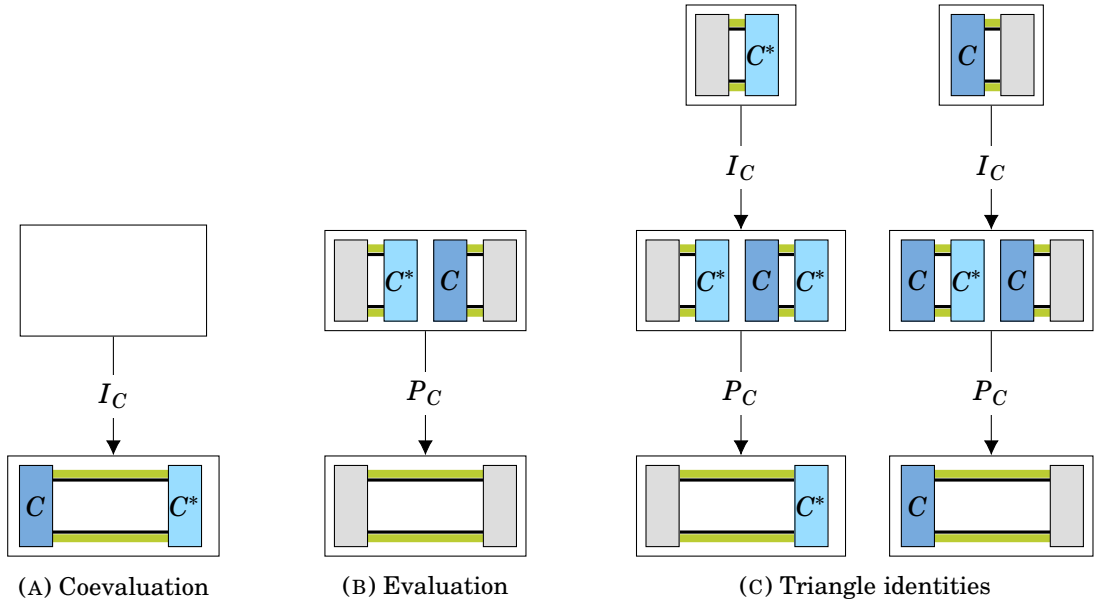
\begin{figure}
    \centering
    \begin{subfigure}[t]{0.2\textwidth}
        \centering
		\begin{tikzpicture}

		\node[draw] (A1) at (0, 3){\begin{tikzpicture}
		\node (ta) at (2,-1){};
		\node (ta) at (4,-2){};
		\end{tikzpicture}
		};

		\node[draw] (A2) at (0, 0){\begin{tikzpicture}
		\oc{ta}{C}{_a}{2}{-1}{-2}{\et}
		\oc{t2}{\rdual{C}}{_2}{5}{-1}{-2}{\etr}
		  \begin{scope}[on background layer]
		\dlp{ ({ta}t)--({t2}t)}
		\alp{({ta}b)--({t2}b)}
		\end{scope}
\end{tikzpicture}
};

\draw[->](A1)--(A2)node [midway , fill=white] {$I_C$};
\end{tikzpicture}

        \caption{Coevaluation}
    \end{subfigure}%
\hfill 
    \begin{subfigure}[t]{0.2\textwidth}
        \centering
\begin{tikzpicture}

\node[draw] (A1) at (0, 3){\begin{tikzpicture}
\oc{t1}{C}{_1}{6}{-1}{-2}{\et}
\oc{y}{}{Y}{7}{-1}{-2}{\en}
\oc{x}{}{X}{4}{-1}{-2}{\en}

\oc{t2}{\rdual{C}}{_2}{5}{-1}{-2}{\etr}
  \begin{scope}[on background layer]
\dlp{({x}t)--({t2}t)}
\dlp{({t1}t)--({y}t)}
\alp{({t1}b)--({y}b)}

\alp{({x}b)--({t2}b)}
\end{scope}
\end{tikzpicture}
};

\node[draw] (A2) at (0, 0){\begin{tikzpicture}
\oc{y}{}{Y}{7}{-1}{-2}{\en}
\oc{x}{}{X}{4}{-1}{-2}{\en}
  \begin{scope}[on background layer]
\dlp{({x}t)--({y}t)}
\alp{({x}b)--({y}b)}
\end{scope}
\end{tikzpicture}
};

\draw[->](A1)--(A2)node [midway , fill=white] {$P_C$};
\end{tikzpicture}
        \caption{Evaluation}
    \end{subfigure}
\hfill 
\begin{subfigure}[t]{0.42\textwidth}
        \centering
\begin{tikzpicture}

\node[draw] (A1) at (0, 6){\begin{tikzpicture}
\oc{x}{}{X}{4}{-1}{-2}{\en}

\oc{t2}{\rdual{C}}{_2}{5}{-1}{-2}{\etr}
  \begin{scope}[on background layer]
\dlp{({x}t)--({t2}t)}
\alp{({x}b)--({t2}b)}
\end{scope}
\end{tikzpicture}
};

\node[draw] (A2) at (0, 3){\begin{tikzpicture}
\oc{t1}{C}{_1}{6}{-1}{-2}{\et}
\oc{t3}{\rdual{C}}{_3}{7}{-1}{-2}{\etr\lt}
\oc{x}{}{X}{4}{-1}{-2}{\en}

\oc{t2}{\rdual{C}}{_2}{5}{-1}{-2}{\etr}
  \begin{scope}[on background layer]
\dlp{ ({x}t)--({t2}t)}
\dlp{ ({t1}t)--({t3}t)}
\alp{({t1}b)--({t3}b)}
\alp{({x}b)--({t2}b)}
\end{scope}
\end{tikzpicture}
};

\node[draw] (A3) at (0, 0){\begin{tikzpicture}
\oc{t3}{\rdual{C}}{_3}{7}{-1}{-2}{\etr\lt}
\oc{x}{}{X}{4}{-1}{-2}{\en}
  \begin{scope}[on background layer]
\dlp{({x}t)--({t3}t)}
\alp{({x}b)--({t3}b)}
\end{scope}
\end{tikzpicture}
};

\draw[->](A1)--(A2)node [midway , fill=white] {$I_C$};
\draw[->](A2)--(A3)node [midway , fill=white] {$P_C$};
\end{tikzpicture}
\hfill 
\begin{tikzpicture}

\node[draw] (A1) at (0, 6){\begin{tikzpicture}
\oc{t1}{C}{_1}{6}{-1}{-2}{\et}
\oc{y}{}{Y}{7}{-1}{-2}{\en}
  \begin{scope}[on background layer]
\dlp{({t1}t)--({y}t)}
\alp{({t1}b)--({y}b)}
\end{scope}
\end{tikzpicture}
};

\node[draw] (A2) at (0, 3){\begin{tikzpicture}
\oc{t1}{C}{_1}{6}{-1}{-2}{\et}
\oc{y}{}{Y}{7}{-1}{-2}{\en}
\oc{t3}{C}{_3}{4}{-1}{-2}{\et\lt}

\oc{t2}{\rdual{C}}{_2}{5}{-1}{-2}{\etr}
  \begin{scope}[on background layer]
\dlp{ ({t3}t)--({t2}t)}
\dlp{({t1}t)--({y}t)}
\alp{({t1}b)--({y}b)}
\alp{({t3}b)--({t2}b)}
\end{scope}
\end{tikzpicture}
};

\node[draw] (A3) at (0, 0){\begin{tikzpicture}
\oc{y}{}{Y}{7}{-1}{-2}{\en}
\oc{t3}{C}{_3}{4}{-1}{-2}{\et\lt}
  \begin{scope}[on background layer]
\dlp{ ({t3}t)--({y}t)}
\alp{({t3}b)--({y}b)}
\end{scope}
\end{tikzpicture}
};

\draw[->](A1)--(A2)node [midway , fill=white] {$I_C$};
\draw[->](A2)--(A3)node [midway , fill=white] {$P_C$};
\end{tikzpicture}
        \caption{Triangle identities}
    \end{subfigure}
\caption{Coevaluation, evaluation and triangle identities for right dualizability of a 1-cell $C$ in $\sB(1, A\otimes B)$.  Compare to \cref{fig:dualizable_1_cell}.}\label{fig:coeval_1_cell}
\end{figure}
\begin{figure}
    \begin{subfigure}[t]{0.2\textwidth}
        \centering
\begin{tikzpicture}

\node[draw] (A1) at (0, -3){\begin{tikzpicture}
\oc{e1}{\rdual{E}}{_1}{6}{-1}{-2}{\epr}
\oc{y}{}{Y}{7}{-1}{-2}{\en}
\oc{x}{}{X}{4}{-1}{-2}{\en}

\oc{e2}{{E}}{_2}{5}{-1}{-2}{\ep}
  \begin{scope}[on background layer]
\dlp{({x}b)--({t2}b)}
\dlp{ ({t1}b)--({y}b)}
\alp{ ({t1}t)--({y}t)}
\alp{({x}t)--({t2}t)}
\end{scope}
\end{tikzpicture}
};

\node[draw] (A2) at (0, 0){\begin{tikzpicture}
\oc{y}{}{Y}{7}{-1}{-2}{\en}
\oc{x}{}{X}{4}{-1}{-2}{\en}
  \begin{scope}[on background layer]
\dlp{({x}b)--({y}b)}
\alp{({x}t)--({y}t)}
\end{scope}
\end{tikzpicture}
};

\draw[->](A2)--(A1)node [midway , fill=white] {$C_E$};
\end{tikzpicture}
        \caption{Coevaluation}
    \end{subfigure}
\hfill 
    \centering
    \begin{subfigure}[t]{0.2\textwidth}
        \centering
		\begin{tikzpicture}

		\node[draw] (A1) at (0, -3){\begin{tikzpicture}
		\node (ta) at (2,-1){};
		\node (ta) at (4,-2){};
		\end{tikzpicture}
		};

		\node[draw] (A2) at (0, 0){\begin{tikzpicture}
		\oc{ea}{\rdual{E}}{_a}{2}{-1}{-2}{\epr}
		\oc{e2}{{E}}{_2}{5}{-1}{-2}{\ep}
	  \begin{scope}[on background layer]
		\dlp{ ({ea}b)--({e2}b)}
		\alp{({ea}t)--({e2}t)}
	\end{scope}
\end{tikzpicture}
};

\draw[->](A2)--(A1)node [midway , fill=white] {$L_E$};
\end{tikzpicture}

        \caption{Evaluation}
    \end{subfigure}%
\hfill 
\begin{subfigure}[t]{0.42\textwidth}
        \centering
\begin{tikzpicture}
\node[draw] (A2) at (0, 0){\begin{tikzpicture}
\oc{e3}{E}{_3}{7}{-1}{-2}{\ep\lt}
\oc{x}{}{X}{4}{-1}{-2}{\en}
  \begin{scope}[on background layer]
\dlp{({x}b)--({e3}b)}
\alp{({x}t)--({e3}t)}
\end{scope}
\end{tikzpicture}
};

\node[draw] (A1) at (0, -3){\begin{tikzpicture}
\oc{e1}{\rdual{E}}{_1}{6}{-1}{-2}{\epr}
\oc{e3}{E}{_3}{7}{-1}{-2}{\ep\lt}
\oc{x}{}{X}{4}{-1}{-2}{\en}

\oc{e2}{{E}}{_2}{5}{-1}{-2}{\ep}
  \begin{scope}[on background layer]
\dlp{({x}b)--({t2}b)}
\dlp{ ({t1}b)--({e3}b)}
\alp{ ({t1}t)--({e3}t)}
\alp{({x}t)--({t2}t)}
\end{scope}
\end{tikzpicture}
};

\node[draw] (A3) at (0, -6){\begin{tikzpicture}
\oc{x}{}{X}{4}{-1}{-2}{\en}
\oc{e2}{{E}}{_2}{5}{-1}{-2}{\ep}

  \begin{scope}[on background layer]
\dlp{({x}b)--({t2}b)}
\alp{({x}t)--({t2}t)}
\end{scope}
\end{tikzpicture}
};

\draw[->](A2)--(A1)node [midway , fill=white] {$C_E$};
\draw[->](A1)--(A3)node [midway , fill=white] {$L_E$};
\end{tikzpicture}
\hfill 
\begin{tikzpicture}
\node[draw] (A2) at (0, 0){\begin{tikzpicture}
\oc{y}{}{Y}{7}{-1}{-2}{\en}
\oc{e3}{\rdual{E}}{_3}{4}{-1}{-2}{\epr\lt}

  \begin{scope}[on background layer]
\dlp{({e3}b)--({y}b)}
\alp{({e3}t)--({y}t)}
\end{scope}
\end{tikzpicture}
};

\node[draw] (A1) at (0, -3){\begin{tikzpicture}
\oc{e1}{\rdual{E}}{_1}{6}{-1}{-2}{\epr}
\oc{y}{}{Y}{7}{-1}{-2}{\en}
\oc{e3}{\rdual{E}}{_3}{4}{-1}{-2}{\epr\lt}

\oc{e2}{{E}}{_2}{5}{-1}{-2}{\ep}

  \begin{scope}[on background layer]
\dlp{({e3}b)--({t2}b)}
\dlp{({t1}b)--({y}b)}
\alp{({t1}t)--({y}t)}
\alp{({e3}t)--({t2}t)}
\end{scope}
\end{tikzpicture}
};

\node[draw] (A3) at (0, -6){\begin{tikzpicture}
\oc{e1}{\rdual{E}}{_1}{6}{-1}{-2}{\epr}
\oc{y}{}{Y}{7}{-1}{-2}{\en}

  \begin{scope}[on background layer]
\dlp{({t1}b)--({y}b)}
\alp{({t1}t)--({y}t)}
\end{scope}
\end{tikzpicture}
};

\draw[->](A2)--(A1)node [midway , fill=white] {$C_E$};
\draw[->](A1)--(A3)node [midway , fill=white] {$L_E$};
\end{tikzpicture}
        \caption{Triangle identities}
    \end{subfigure}
\caption{Coevaluation, evaluation and triangle identities for right dualizability of a 1-cell $E$ in $\sB(A\otimes B,1)$.  Compare to \cref{fig:dualizable_1_cell}.}\label{fig:coeval_1_cell_2}
\end{figure}
\clearpage}

\begin{proof}
We will show that if $C$ is left dualizable then $E$ is right dualizable.  The other statement is dual.

If there are 2-cells as in the first two subfigures of \cref{fig:coeval_1_cell} then the dual of $E$ is the following composite.
\begin{center}
\begin{tikzpicture}
\oc{ta}{C}{_a}{2}{2}{1}{\et}
\oc{t2}{\rdual{C}}{_2}{4}{1}{0}{\etr}
\oc{t3}{C}{_3}{2}{0}{-1}{\et\lt}

\gc{g1}{3}{2}{1}
\gc{g2}{3}{0}{-1}
  \begin{scope}[on background layer]
\dlp{ ({ta}t)\gpt{g1}({t2}t)}
\dlp{({t3}t)\gpt{g2}(3,-.75)}
\alp{({t3}b)\gpb{g2}({t2}b)}
\alp{({ta}b)\gpb{g1}(3,1.5)}
\end{scope}
\end{tikzpicture}
\end{center}
The coevaluation and evaluation for $E$ are defined in  \cref{fig:coeval_eval_E}.  The triangle diagrams for this evaluation and coevaluation are large and so appear on page \pageref{fig:triangle_for_E}.  See \cref{fig:triangle_for_E} for one of the triangle diagrams for $E$.  The other triangle diagram is similar.
\end{proof}

\afterpage{\clearpage
\begin{figure}
    \centering
    \begin{subfigure}[t]{.65\textwidth}
\resizebox{\textwidth}{!}
{
\begin{tikzpicture}
\node[draw] (A1) at (0, 3.5){\begin{tikzpicture}
\node (ta) at (2,-1){};
\node (ta) at (4,-2){};
\end{tikzpicture}
};

\node[draw] (A2) at (4, 3.5){\begin{tikzpicture}
\oc{ta}{C}{_a}{3}{-1}{-2}{\et}
\oc{t2}{\rdual{C}}{_2}{5}{-1}{-2}{\etr}
  \begin{scope}[on background layer]
\dlp{ ({ta}t)--({t2}t)}
\alp{({ta}b)--({t2}b)}
\end{scope}
\end{tikzpicture}
};

\node[draw] (A3) at (8, 3.5){\begin{tikzpicture}
\oc{ta}{C}{_a}{2}{2}{1}{\et}
\oc{t2}{\rdual{C}}{_2}{4}{2}{-1}{\etr}
\oc{t3}{C}{_3}{2}{0}{-1}{\et\lt}
\oc{e4}{E}{_4}{3}{1}{0}{\ep}

  \begin{scope}[on background layer]
\dlp{({ta}t)--({t2}t)}
\dlp{({t3}t)--({e4}b)}
\alp{({t3}b)--({t2}b)}
\alp{({ta}b)--({e4}t)}
\end{scope}
\end{tikzpicture}
};

\node[draw] (A4) at (12, 3.5){\begin{tikzpicture}
\oc{ta}{C}{_a}{2}{2}{1}{\et}
\oc{t2}{\rdual{C}}{_2}{4}{1}{0}{\etr}
\oc{t3}{C}{_3}{2}{0}{-1}{\et\lt}
\oc{e4}{E}{_4}{5}{2}{-1}{\ep}

\gc{g1}{3}{2}{1}
\gc{g2}{3}{0}{-1}
  \begin{scope}[on background layer]
\dlp{ ({ta}t)\gpt{g1}({t2}t)}
\dlp{({t3}t)\gpt{g2}({e4}b)}
\alp{({t3}b)\gpb{g2}({t2}b)}
\alp{({ta}b)\gpb{g1}({e4}t)}
\end{scope}
\end{tikzpicture}
};
\draw[->](A1)--(A2)node [midway , fill=white] {$I_C$};
\draw[->](A2)--(A3)node [midway , fill=white] {$\triangle$};
\draw[<->](A3)--(A4)node [midway , fill=white] {$\gamma$};
\end{tikzpicture}
}
        \caption{Coevaluation $I_E$}
    \end{subfigure}
    ~ 
    \begin{subfigure}[t]{\textwidth}
\resizebox{\textwidth}{!}{
\begin{tikzpicture}

\node[draw] (A1) at (-5.5, 6.5){\begin{tikzpicture}
\oc{ta}{C}{_a}{2}{2}{1}{\et}
\oc{t2}{\rdual{C}}{_2}{4}{1}{0}{\etr}
\oc{t3}{C}{_3}{2}{0}{-1}{\et\lt}
\oc{e4}{E}{_4}{1}{2}{-1}{\ep}
\oc{y}{}{Y}{5}{2}{-1}{\en}
\oc{x}{}{X}{0}{2}{-1}{\en}

\gc{g1}{3}{2}{1}
\gc{g2}{3}{0}{-1}

  \begin{scope}[on background layer]
\dlp{ ({ta}t)\gpt{g1}({t2}t)}
\dlp{({t3}t)\gpt{g2}({y}b)}
\dlp{({x}b)--		({e4}b)}
\alp{({t3}b)\gpb{g2}({t2}b)}
\alp{({ta}b)\gpb{g1} ({y}t)}	
\alp{({x}t)--({e4}t)}
\end{scope}
\end{tikzpicture}
};

\node[draw] (A2) at (0, 6.5){\begin{tikzpicture}
\oc{ta}{C}{_a}{2}{2}{1}{\et}
\oc{t2}{\rdual{C}}{_2}{4}{1}{0}{\etr}
\oc{t3}{C}{_3}{2}{0}{-1}{\et\lt}
\oc{e4}{E}{_4}{1}{0}{-1}{\ep}
\oc{y}{}{Y}{5}{2}{-1}{\en}
\oc{x}{}{X}{-1}{2}{-1}{\en}
\oc{t5}{C}{_5}{0}{1}{0}{\et\dk}
\oc{e6}{E}{_6}{1}{2}{1}{\ep\dk}

\gc{g1}{3}{2}{1}
\gc{g2}{3}{0}{-1}

  \begin{scope}[on background layer]
\dlp{({t5}t)--({e6}b)}
\dlp{({ta}t)\gpt{g1}({t2}t)}
\dlp{({t3}t)\gpt{g2}({y}b)}
\dlp{ ({x}b)--		({e4}b)}
\alp{({t3}b)\gpb{g2}({t2}b)}
\alp{({ta}b)\gpb{g1} ({y}t)}		
\alp{({x}t)--({e6}t)}
\alp{({t5}b)--({e4}t)}
\end{scope}
\end{tikzpicture}
};

\node[draw] (A3) at (6, 6.4){\begin{tikzpicture}
\oc{ta}{C}{_a}{0}{2}{1}{\et}
\oc{t2}{\rdual{C}}{_2}{2}{1}{0}{\etr}
\oc{t3}{C}{_3}{0}{0}{-1}{\et\lt}
\oc{e4}{E}{_4}{4}{0}{-1}{\ep}
\oc{y}{}{Y}{5}{3}{-2}{\en}
\oc{x}{}{X}{-1}{3}{-2}{\en}
\oc{t5}{C}{_5}{3}{1}{0}{\et\dk}
\oc{e6}{E}{_6}{4}{2}{1}{\ep\dk}
\gc{g1}{1}{3}{1}
\gc{g2}{1}{0}{-2}
 
  \begin{scope}[on background layer]
\dlp{({t5}t)--({e6}b)}
\dlp{({ta}t)--({g1}ml)--({g1}br)--({t2}t)}
\dlp{({t3}t)--({g2}tl)--({g2}br)--({y}b)}
\dlp{({x}b)--({g2}bl)--({g2}mr)--	({e4}b)}
\alp{({t3}b)--({g2}ml)--({g2}tr)--({t2}b)}
\alp{({ta}b)--({g1}bl)--({g1}tr)--({y}t)}
\alp{({x}t)--({g1}tl)--({g1}mr)--({e6}t)}
\alp{({t5}b)--({e4}t)}
\end{scope}
\end{tikzpicture}
};

\node[draw] (A4) at (11.5, 6.5){\begin{tikzpicture}
\oc{ta}{C}{_a}{0}{2}{1}{\et}
\oc{t3}{C}{_3}{0}{0}{-1}{\et\lt}
\oc{e4}{E}{_4}{2}{0}{-1}{\ep}
\oc{y}{}{Y}{3}{3}{-2}{\en}
\oc{x}{}{X}{-1}{3}{-2}{\en}
\oc{e6}{E}{_6}{2}{2}{1}{\ep\dk}
\gc{g1}{1}{3}{1}
\gc{g2}{1}{0}{-2}

  \begin{scope}[on background layer]
\dlp{({ta}t)--({g1}ml)--({g1}br)--({e6}b)}
\dlp{({t3}t)--({g2}tl)--({g2}br)--({y}b)}
\dlp{ ({x}b)--({g2}bl)--({g2}mr)--	({e4}b)}
\alp{({t3}b)--({g2}ml)--({g2}tr)--({e4}t)}
\alp{({ta}b)--({g1}bl)--({g1}tr)--({y}t)}
\alp{({x}t)--({g1}tl)--({g1}mr)--({e6}t)}
\end{scope}
\end{tikzpicture}
};

\node[draw] (A5) at ( 16.5,6.5){\begin{tikzpicture}
\oc{y}{}{Y}{7}{-1}{-2}{\en}
\oc{x}{}{X}{4}{-1}{-2}{\en}

  \begin{scope}[on background layer]
\dlp{({x}t)--({y}t)}
\alp{({x}b)--({y}b)}
\end{scope}
\end{tikzpicture}
};

\draw[->](A1)--(A2)node [midway , fill=white] {$\triangle$};
\draw[<->](A2)--(A3)node [midway , fill=white] {$\gamma$};
\draw[->](A3)--(A4)node [midway , fill=white] {$P_C$};
\draw[->](A4)--(A5)node [midway , fill=white] {$(\triangle^{-1})^2$};
\end{tikzpicture}
}
        \caption{Evaluation $P_E$}
    \end{subfigure}
\caption{The coevaluation and evaluation for $E$}\label{fig:coeval_eval_E}
\end{figure}
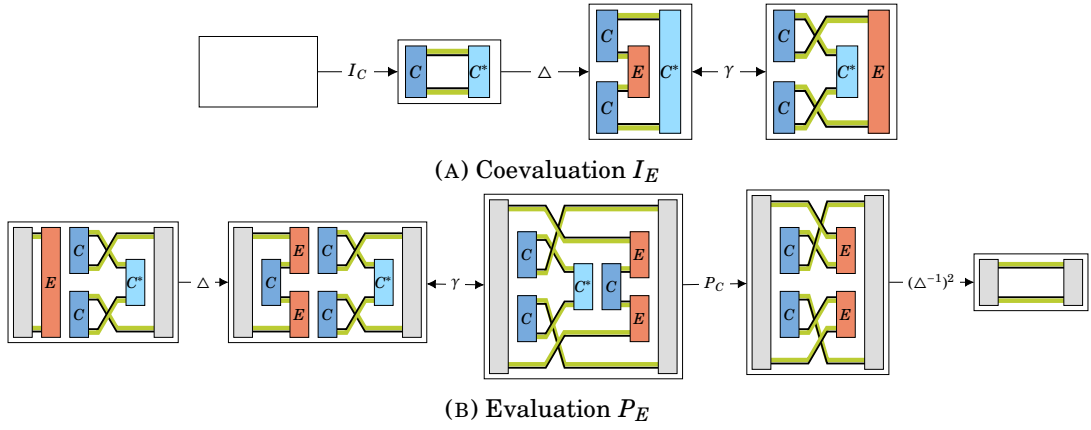

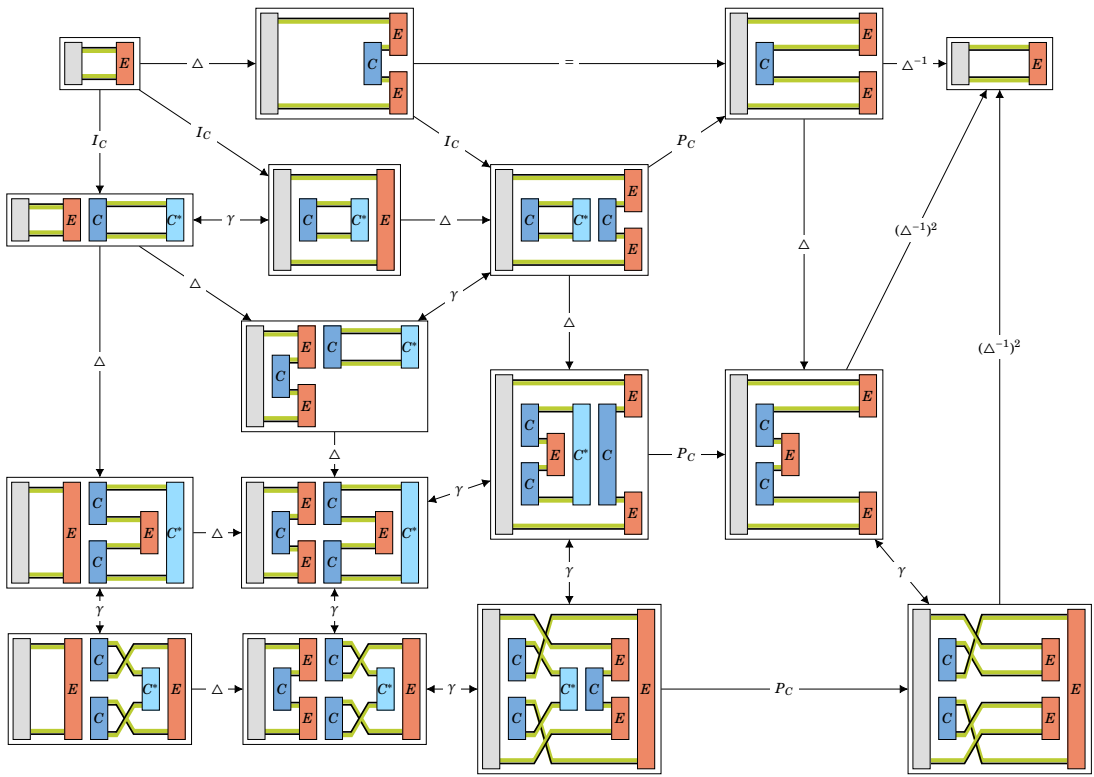
\begin{figure}
\resizebox{\textwidth}{!}
{\begin{tikzpicture}

\node[draw] (B3) at (0, 20){\begin{tikzpicture}
\oc{e4}{E}{_4}{1}{2}{1}{\ep}
\oc{x}{}{X}{-1}{2}{1}{\en}

  \begin{scope}[on background layer]
\dlp{({x}b)--({e4}b)}
\alp{ ({x}t)--({e4}t)}
\end{scope}
\end{tikzpicture}
};

\node[draw] (B2) at (0, 16){\begin{tikzpicture}
\oc{ta}{C}{_a}{2}{2}{1}{\et}
\oc{t2}{\rdual{C}}{_2}{5}{2}{1}{\etr}
\oc{e4}{E}{_4}{1}{2}{1}{\ep}
\oc{x}{}{X}{-1}{2}{1}{\en}

  \begin{scope}[on background layer]
\dlp{({ta}t)--({t2}t)}
\dlp{({x}b)--		({e4}b)}
\alp{({ta}b)--({t2}b)}
\alp{ ({x}t)--({e4}t)}
\end{scope}
\end{tikzpicture}
};

\node[draw] (B1) at (0, 8){\begin{tikzpicture}
\oc{ta}{C}{_a}{2}{2}{1}{\et}
\oc{t2}{\rdual{C}}{_2}{5}{2}{-1}{\etr}
\oc{t3}{C}{_3}{2}{0}{-1}{\et\lt}
\oc{e4}{E}{_4}{1}{2}{-1}{\ep}
\oc{e1}{E}{_1}{4}{1}{0}{\ep\lt}
\oc{x}{}{X}{-1}{2}{-1}{\en}

  \begin{scope}[on background layer]
\dlp{({ta}t)--({t2}t)}
\dlp{ ({t3}t)--({e1}b)}
\dlp{({x}b)--		({e4}b)}
\alp{({t3}b)--({t2}b)}
\alp{({ta}b)--({e1}t)}
\alp{({x}t)--({e4}t)}
\end{scope}
\end{tikzpicture}
};

\node[draw] (A1) at (0, 4){\begin{tikzpicture}
\oc{ta}{C}{_a}{2}{2}{1}{\et}
\oc{t2}{\rdual{C}}{_2}{4}{1}{0}{\etr}
\oc{t3}{C}{_3}{2}{0}{-1}{\et\lt}
\oc{e4}{E}{_4}{1}{2}{-1}{\ep}
\oc{e1}{E}{_1}{5}{2}{-1}{\ep\lt}
\oc{x}{}{X}{-1}{2}{-1}{\en}

\gc{g1}{3}{2}{1}
\gc{g2}{3}{0}{-1}

  \begin{scope}[on background layer]
\dlp{({ta}t)\gpt{g1}({t2}t)}
\dlp{({t3}t)\gpt{g2}({e1}b)}
\dlp{({x}b)--		({e4}b)}
\alp{({t3}b)\gpb{g2}({t2}b)}
\alp{({ta}b)\gpb{g1} ({e1}t)}
\alp{({x}t)--({e4}t)}
\end{scope}
\end{tikzpicture}
};

\node[draw] (A2) at (6, 4){\begin{tikzpicture}
\oc{ta}{C}{_a}{2}{2}{1}{\et}
\oc{t2}{\rdual{C}}{_2}{4}{1}{0}{\etr}
\oc{t3}{C}{_3}{2}{0}{-1}{\et\lt}
\oc{e1}{E}{_!}{5}{2}{-1}{\ep\lt}

\gc{g1}{3}{2}{1}
\gc{g2}{3}{0}{-1}

\oc{e4}{E}{_4}{1}{0}{-1}{\ep}
\oc{x}{}{X}{-1}{2}{-1}{\en}
\oc{t5}{C}{_5}{0}{1}{0}{\et\dk}
\oc{e6}{E}{_6}{1}{2}{1}{\ep\dk}

  \begin{scope}[on background layer]
\alp{ ({x}t)--({e6}t)}
\alp{ ({t5}b)--({e4}t)}
\dlp{({x}b)--({e4}b)}
\dlp{({t5}t)--({e6}b)}

\dlp{({ta}t)\gpt{g1}({t2}t)}
\dlp{({t3}t)\gpt{g2}({e1}b)}
\alp{({t3}b)\gpb{g2}({t2}b)}
\alp{({ta}b)\gpb{g1} ({e1}t)}		
\end{scope}
\end{tikzpicture}
};

\node[draw] (A3) at (12, 4){\begin{tikzpicture}
\oc{ta}{C}{_a}{0}{2}{1}{\et}
\oc{t2}{\rdual{C}}{_2}{2}{1}{0}{\etr}
\oc{t3}{C}{_3}{0}{0}{-1}{\et\lt}
\oc{e4}{E}{_4}{4}{0}{-1}{\ep}
\oc{e1}{E}{_1}{5}{3}{-2}{\ep\lt}
\oc{x}{}{X}{-1}{3}{-2}{\en}
\oc{t5}{C}{_5}{3}{1}{0}{\et\dk}
\oc{e6}{E}{_6}{4}{2}{1}{\ep\dk}
\gc{g1}{1}{3}{1}
\gc{g2}{1}{0}{-2}

  \begin{scope}[on background layer]
\dlp{({t5}t)--({e6}b)}
\dlp{({ta}t)--({g1}ml)--({g1}br)--({t2}t)}
\dlp{({t3}t)--({g2}tl)--({g2}br)--({e1}b)}
\dlp{({x}b)--({g2}bl)--({g2}mr)--	({e4}b)}
\alp{({t3}b)--({g2}ml)--({g2}tr)--({t2}b)}
\alp{({ta}b)--({g1}bl)--({g1}tr)--({e1}t)}
\alp{ ({x}t)--({g1}tl)--({g1}mr)--({e6}t)}
\alp{ ({t5}b)--({e4}t)}
\end{scope}
\end{tikzpicture}
};

\node[draw] (A4) at (23, 4){\begin{tikzpicture}
\oc{ta}{C}{_a}{0}{2}{1}{\et}
\oc{t3}{C}{_3}{0}{0}{-1}{\et\lt}
\oc{e4}{E}{_4}{4}{0}{-1}{\ep}
\oc{e1}{E}{_1}{5}{3}{-2}{\ep\lt}
\oc{x}{}{X}{-1}{3}{-2}{\en}
\oc{e6}{E}{_6}{4}{2}{1}{\ep\dk}
\gc{g1}{1}{3}{1}
\gc{g2}{1}{0}{-2}

  \begin{scope}[on background layer]
\dlp{({ta}t)--({g1}ml)--({g1}br)--({e6}b)}
\dlp{ ({t3}t)--({g2}tl)--({g2}br)--({e1}b)}
\dlp{({x}b)--({g2}bl)--({g2}mr)--	({e4}b)}
\alp{({t3}b)--({g2}ml)--({g2}tr)--({e4}t)}
\alp{({ta}b)--({g1}bl)--({g1}tr)--({e1}t)}
\alp{ ({x}t)--({g1}tl)--({g1}mr)--({e6}t)}
\end{scope}
\end{tikzpicture}
};

\node[draw] (A5) at (23, 20){\begin{tikzpicture}
\oc{e1}{E}{_1}{7}{-1}{-2}{\ep\lt}
\oc{x}{}{X}{4}{-1}{-2}{\en}

  \begin{scope}[on background layer]
\dlp{({x}t)--({e1}t)}
\alp{({x}b)--({e1}b)}
\end{scope}
\end{tikzpicture}
};

\node[draw] (C1) at (6, 12){\begin{tikzpicture}
\oc{ta}{C}{_a}{2}{2}{1}{\et}
\oc{t2}{\rdual{C}}{_2}{5}{2}{1}{\etr}

\oc{e4}{E}{_4}{1}{0}{-1}{\ep}
\oc{x}{}{X}{-1}{2}{-1}{\en}
\oc{t5}{C}{_5}{0}{1}{0}{\et\dk}
\oc{e6}{E}{_6}{1}{2}{1}{\ep\dk}

  \begin{scope}[on background layer]
\alp{ ({x}t)--({e6}t)}
\alp{ ({t5}b)--({e4}t)}
\dlp{({x}b)--({e4}b)}
\dlp{({t5}t)--({e6}b)}

\dlp{({ta}t)--({t2}t)}
\alp{({ta}b)--({t2}b)}
\end{scope}
\end{tikzpicture}
};

\node[draw] (C2) at (6, 8){\begin{tikzpicture}
\oc{ta}{C}{_a}{2}{2}{1}{\et}
\oc{t2}{\rdual{C}}{_2}{5}{2}{-1}{\etr}
\oc{t3}{C}{_3}{2}{0}{-1}{\et\lt}
\oc{e1}{E}{_1}{4}{1}{0}{\ep\lt}

\oc{e4}{E}{_4}{1}{0}{-1}{\ep}
\oc{x}{}{X}{-1}{2}{-1}{\en}
\oc{t5}{C}{_5}{0}{1}{0}{\et\dk}
\oc{e6}{E}{_6}{1}{2}{1}{\ep\dk}

  \begin{scope}[on background layer]
\alp {({x}t)--({e6}t)}
\alp{({t5}b)--({e4}t)}
\dlp{({x}b)--({e4}b)}
\dlp{({t5}t)--({e6}b)}

\dlp{({ta}t)--({t2}t)}
\dlp{({t3}t)--({e1}b)} 
\alp{({t3}b)--({t2}b)}
\alp{({ta}b)--({e1}t)}
\end{scope}
\end{tikzpicture}
};

\node[draw] (D1) at (6, 16){\begin{tikzpicture}
\oc{ta}{C}{_a}{0}{2}{1}{\et}
\oc{t2}{\rdual{C}}{_2}{2}{2}{1}{\etr}
\oc{e4}{E}{_4}{3}{3}{0}{\ep}
\oc{x}{}{X}{-1}{3}{0}{\en}

  \begin{scope}[on background layer]
\dlp{({ta}t)--({t2}t)}
\dlp{({x}b)--({e4}b)}
\alp{({ta}b)--({t2}b)}
\alp{ ({x}t)--({e4}t)}
\end{scope}
\end{tikzpicture}
};

\node[draw] (D2) at (12, 16){\begin{tikzpicture}
\oc{ta}{C}{_a}{0}{1}{0}{\et}
\oc{t2}{\rdual{C}}{_2}{2}{1}{0}{\etr}

\oc{e4}{E}{_4}{4}{0}{-1}{\ep}
\oc{x}{}{X}{-1}{2}{-1}{\en}
\oc{t5}{C}{_5}{3}{1}{0}{\et\dk}
\oc{e6}{E}{_6}{4}{2}{1}{\ep\dk}

  \begin{scope}[on background layer]
\alp{ ({x}t)--({e6}t)}
\alp{ ({t5}b)--({e4}t)}
\dlp{ ({x}b)--({e4}b)}
\dlp{({t5}t)--({e6}b)}

\dlp{({ta}t)--({t2}t)}
\alp{({ta}b)--({t2}b)}
\end{scope}
\end{tikzpicture}
};

\node[draw] (D3) at (12, 10){\begin{tikzpicture}
\oc{ta}{C}{_a}{0}{2}{1}{\et}
\oc{t2}{\rdual{C}}{_2}{2}{2}{-1}{\etr}
\oc{t3}{C}{_3}{0}{0}{-1}{\et\lt}
\oc{e1}{E}{_1}{1}{1}{0}{\ep\lt}

\oc{e4}{E}{_4}{4}{-1}{-2}{\ep}
\oc{x}{}{X}{-1}{3}{-2}{\en}
\oc{t5}{C}{_5}{3}{2}{-1}{\et\dk}
\oc{e6}{E}{_6}{4}{3}{2}{\ep\dk}

  \begin{scope}[on background layer]
\alp{({x}t)--({e6}t)}
\alp{({t5}b)--({e4}t)}
\dlp{({x}b)--({e4}b)}
\dlp{({t5}t)--({e6}b)}

\dlp{({ta}t)--({t2}t)}
\dlp{({t3}t)--({e1}b)}
\alp{({t3}b)--({t2}b)}
\alp{({ta}b)--({e1}t)}
\end{scope}	
\end{tikzpicture}
};

\node[draw] (E1) at (18, 20){\begin{tikzpicture}
\oc{ta}{C}{_a}{0}{1}{0}{\et}

\oc{e4}{E}{_4}{4}{0}{-1}{\ep}
\oc{x}{}{X}{-1}{2}{-1}{\en}
\oc{e6}{E}{_6}{4}{2}{1}{\ep\dk}

  \begin{scope}[on background layer]
\alp{({x}t)--({e6}t)}
\alp{({ta}b)--({e4}t)}
\dlp{({x}b)--({e4}b)}
\dlp{({ta}t)--({e6}b)}
\end{scope}
\end{tikzpicture}
};

\node[draw] (E2) at (18, 10){\begin{tikzpicture}
\oc{ta}{C}{_a}{0}{2}{1}{\et}
\oc{t3}{C}{_3}{0}{0}{-1}{\et\lt}
\oc{e1}{E}{_1}{1}{1}{0}{\ep\lt}

\oc{e4}{E}{_4}{4}{-1}{-2}{\ep}
\oc{x}{}{X}{-1}{3}{-2}{\en}
\oc{e6}{E}{_6}{4}{3}{2}{\ep\dk}

  \begin{scope}[on background layer]
\alp{({x}t)--({e6}t)}
\alp{({t3}b)--({e4}t)}

\dlp{ ({ta}t)--({e6}b)}
\dlp{({x}b)--({e4}b)}
\dlp{({t3}t)--({e1}b)}
\alp{({ta}b)--({e1}t)}
\end{scope}
\end{tikzpicture}
};

\node[draw] (F1) at (6, 20){\begin{tikzpicture}
\oc{e4}{E}{_4}{4}{0}{-1}{\ep}
\oc{x}{}{X}{-1}{2}{-1}{\en}
\oc{t5}{C}{_5}{3}{1}{0}{\et\dk}
\oc{e6}{E}{_6}{4}{2}{1}{\ep\dk}

  \begin{scope}[on background layer]
\alp{ ({x}t)--({e6}t)}
\alp{ ({t5}b)--({e4}t)}
\dlp{({x}b)--({e4}b)}
\dlp{({t5}t)--({e6}b)}
\end{scope}
\end{tikzpicture}
};

\draw[->](B3)--(B2)node [midway , fill=white] {$I_C$};
\draw[->](B2)--(B1)node [midway , fill=white] {$\triangle$};
\draw[<->](B1)--(A1)node [midway , fill=white] {$\gamma$};
\draw[->](A1)--(A2)node [midway , fill=white] {$\triangle$};
\draw[<->](A2)--(A3)node [midway , fill=white] {$\gamma$};
\draw[->](A3)--(A4)node [midway , fill=white] {$P_C$};
\draw[->](A4)--(A5)node [midway , fill=white] {$(\triangle^{-1})^2$};
\draw[->](B2)--(C1)node [midway , fill=white] {$\triangle$};
\draw[->](C1)--(C2)node [midway , fill=white] {$\triangle$};
\draw[->](B1)--(C2)node [midway , fill=white] {$\triangle$};
\draw[<->](C2)--(A2)node [midway , fill=white] {$\gamma$};
\draw[->](D1)--(D2)node [midway , fill=white] {$\triangle$};
\draw[->](B3)--(D1)node [midway , fill=white] {$I_C$};
\draw[<->](B2)--(D1)node [midway , fill=white] {$\gamma$};
\draw[<->](C1)--(D2)node [midway , fill=white] {$\gamma$};
\draw[->](D2)--(D3)node [midway , fill=white] {$\triangle$};
\draw[<->](C2)--(D3)node [midway , fill=white] {$\gamma$};
\draw[<->](D3)--(A3)node [midway , fill=white] {$\gamma$};
\draw[->](D2)--(E1)node [midway , fill=white] {$P_C$};
\draw[->](D3)--(E2)node [midway , fill=white] {$P_C$};
\draw[<->](E2)--(A4)node [midway , fill=white] {$\gamma$};
\draw[->](E2)--(A5)node [midway , fill=white] {$(\triangle^{-1})^2$};
\draw[->](E1)--(E2)node [midway , fill=white] {$\triangle$};
\draw[->](B3)--(F1)node [midway , fill=white] {$\triangle$};
\draw[->](F1)--(D2)node [midway , fill=white] {$I_C$};
\draw[->](F1)--(E1)node [midway , fill=white] {$=$};
\draw[->](E1)--(A5)node [midway , fill=white] {$\triangle^{-1}$};
\end{tikzpicture}}
\caption{One of the triangle diagrams for $E$}\label{fig:triangle_for_E}
\end{figure}
\clearpage}

This lemma motivates the following definition.

\begin{defn}\label{defn:2_dualizable}
A 1-dualizable $0$-cell $A$ is {\bf 2-dualizable} if the witnessing 1-cells $(C,E)$ satisfy any of the following equivalent conditions 
\begin{enumerate}
\item $C$ and $E$ are left dualizable
\item $C$ is left and right dualizable
\item $C$ and $E$ are right dualizable 
\item $E$ is left and right dualizable
\end{enumerate} 
\end{defn}
This is a special case of \emph{fully dualizable} objects in an $n$-category \cite{schommer-pries,lurie_cobordism}.

\begin{example}
  For dg-algebras, 2-dualizability is the familiar condition of  smoothness and properness. For example, $A$ is a dg-algebra, and the derived category $\mc{D}(A)$ is equivalent to the derived category $\mc{D}(X)$ of the category of quasicoherent sheaves of a smooth, proper scheme $X$, then $A$ is smooth and proper. Thus, there is a ready supply of them.

  Smoothness and properness amount to the same conditions for algebras in spectra, but the conditions seem harder to satisfy, and we know of only $K(n)$-local examples. This will be the subject of a sequel to this paper. 
\end{example}

We now come to the heart of this paper and complete the proof of \cref{thm:main_bzn}. The rest of this section is occupied with the diagrams verifying this statement. 

\begin{thm}\label{thm:monoidal_to_umbra}
If $\sB$ is a symmetric monoidal bicategory where each 0-cell is 2-dualizable then $\sB$ is an umbra.
\end{thm}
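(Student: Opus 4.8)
## Proof proposal for Theorem \ref{thm:monoidal_to_umbra}

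The plan is to build the four functors of the (pen)umbra and all of the structure maps directly from the monoidal-bicategorical structure, using the 1-dualizability data $(C,E)$ attached to each 0-cell, and then to verify the penumbra axioms and the single extra coherence square \eqref{eq:big_square} by circuit-diagram chases. The two ``shadow'' functors $\sh{-}$ and $\dsh{-}$ are both the shadow of \cref{prop:shadow}: given $M\in\sB(A,A)$, $\sh{M}$ is the composite $C_A\odot(M\otimes U_{\zdual A})\odot(\text{symmetry})\odot E_A$ living in $\sB(I,I)$, and $\dsh{-}$ is literally the same construction (the notational distinction only matters because in the statement of \cref{thm:main_umbra} we feed $M$ into one copy and $N$ into the other). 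For $\csh{-}$ and $\esh{-}$ I would use the ``half-open'' versions: $\csh{M}$ is the 1-cell obtained by capping off only with $C_A$ on the left (so $\csh{M}\in\sB(I,\zdual A\otimes A)$, schematically $C_A\odot(M\otimes U_{\zdual A})$ after a symmetry, i.e.\ a circuit diagram with one open pair of legs), and $\esh{M}$ is obtained by capping off only with $E_A$, giving a 1-cell in $\sB(\zdual A\otimes A, I)$. Then $\iunit\colon I\to\csh{U_A}$ is the coevaluation-type 2-cell $\triangle$ from \cref{defn:1_dualizable}, $\ounit\colon\esh{U_A}\to I$ is the evaluation-type $\triangle$, and the six structure maps $\rspl,\luspl,\uspl,\spl,\lspl,\ruspl$ are the evident ``insert/remove a $\odot$'' 2-cells coming from composing circuit diagrams with matching open legs, mediated by the invertible 2-morphism $(M\otimes N)\odot(M'\otimes N')\Rightarrow(M\odot M')\otimes(N\odot N')$ of the monoidal bicategory together with the symmetry $b$.

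With the data in place, the verification proceeds in four stages. First, check that $\sh{-}$ and $\dsh{-}$ are shadows: this is exactly \cref{prop:shadow}, so nothing new is needed beyond observing that both functors are that shadow. Second, verify the eight commuting squares of \cref{def:penumbra}: each is a compatibility between two of the ``insert a $\odot$'' maps, or between such a map and a unit isomorphism, and each reduces to an instance of the monoidal-bicategory axioms (the modifications $\pi$, $\lambda$, $\mu$, $\rho$, and naturality of $a$, $L$, $R$) after translating into circuit diagrams; I would present each as a diagram whose inner regions commute ``by naturality'' and whose outer shell is one of the structural modifications, exactly in the style of \cref{fig:shadow_isomorphism_assoc,fig:shadow_isomorphism_unit}. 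Third — and this is where 2-dualizability is used — one checks that all the relevant $\sh{-}$-values are dualizable in $\mathbf{T}=\sB(I,I)$ so that the ambient traces exist; this follows from \cref{lem:penumbra_dual} once we know $C$ and $E$ are themselves dualizable 1-cells, which is the content of the lemma preceding \cref{defn:2_dualizable}. Fourth, verify the big square \eqref{eq:big_square}: this is the genuinely new coherence, relating the three-fold permutation built from the shadow isomorphism $\theta$ (which, recall from \cref{fig:shadow_isomorphism}, is itself assembled from the monoidal symmetry $\gamma=b$ and two copies of $\triangle$) to the one built from the symmetry $\gamma$ of $\mathbf{T}$ together with $\uspl$ and $\spl$. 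The strategy is to expand every map in \eqref{eq:big_square} into its circuit-diagram definition, so that both composites become long strings of applications of $b$ and $\triangle^{\pm1}$ acting on a single fixed diagram (the capped picture of $M\odot N\odot P$ with an extra open pair for the $\csh{}/\esh{}$ legs), and then show the two strings agree using the braiding coherence modifications $R$, $S$, $\nu$ of the symmetric monoidal bicategory — essentially a large-but-mechanical Yang–Baxter-type computation.

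I expect the last step — the verification of \eqref{eq:big_square} — to be the main obstacle, both because the circuit diagrams involved are large (they carry the three 1-cells $M,N,P$ plus the $C$/$E$ caps plus an open pair of legs) and because it requires genuinely invoking the higher braiding coherences $R$, $S$, and $\nu$ rather than just the associativity/unit data that suffices for the penumbra squares. In practice I would organize this verification as a single commuting diagram, analogous to \cref{fig:commuting_traces}, in which the central region is \eqref{eq:big_square} reduced to a pure statement about the symmetric monoidal bicategory and every peripheral region commutes by naturality; the residual core statement is a standard hexagon-iteration identity for $b$ that is a consequence of the symmetric monoidal bicategory axioms. The remaining bookkeeping — naturality of $\rspl$ and its siblings, the two unit/associativity squares of \cref{def:penumbra} involving $\iunit$ and $\ounit$, and the observation that $\sh{-}$ and $\dsh{-}$ are literally the shadow of \cref{prop:shadow} — is routine once the notation is fixed, so the write-up should foreground the circuit-diagram definitions of the six structure maps and then devote its real effort to the diagram establishing \eqref{eq:big_square}.
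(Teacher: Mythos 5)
There is a genuine gap, and it lies in the definitions of the four functors rather than in the coherence chase. In the paper's construction all four functors $\sh{-},\dsh{-},\csh{-},\esh{-}$ take values in the \emph{same} monoidal category $\mathbf{T}=\sB(I,I)$, as the definition of a penumbra requires: $\csh{M}$ is the closed composite $C\odot(M\otimes U_{\zdual{A}})\odot\rdual{C}$ and $\esh{M}$ is $\rdual{E}\odot(\cdots)\odot E$, where $\rdual{C}$ and $\rdual{E}$ are the \emph{duals} of the witnessing 1-cells $C$ and $E$. Your half-open versions $\csh{M}\in\sB(I,\zdual{A}\otimes A)$ and $\esh{M}\in\sB(\zdual{A}\otimes A,I)$ do not live in $\mathbf{T}$, so the structure maps you need — e.g.\ $\spl\colon\csh{M\odot N}\to\sh{M}\otimes\dsh{N}$, $\lspl\colon\csh{M}\otimes\sh{N}\to\sh{M\odot N}$, and $\iunit\colon I\to\csh{U_A}$ — are not even well-typed as 2-cells, and \cref{lem:penumbra_dual} (which you correctly identify as supplying dualizability of $\sh{N}$ in $\mathbf{T}$) cannot be run. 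Relatedly, taking $\dsh{-}$ to be literally the same functor as $\sh{-}$ discards the distinction the paper needs: $\dsh{M}$ is the shadow built from $(\rdual{E},\rdual{C})$ rather than $(C,E)$, and one must first check that this pair again witnesses 1-dualizability (the invertible 2-cells $\rdual{\triangle}$ of \cref{fig:dual_triangle_2}, verified in \cref{fig:triangle_star_invertible}) before \cref{prop:shadow} can be applied a second time. This is precisely where the 2-dualizability hypothesis is consumed — in the mere existence of $\rdual{C},\rdual{E}$ needed to define $\dsh{},\csh{},\esh{}$ — not, as you suggest, only in guaranteeing that ambient traces exist downstream.

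Your plan for the big square \eqref{eq:big_square} (expand everything into circuit diagrams and invoke the braiding coherences) is in the right spirit but underspecified; the paper organizes this by introducing a single ``3-fold twisting map'' (\cref{fig:symmetric_shadow}) and showing each composite in \eqref{eq:big_square} equals it, which is the device that makes the verification tractable. That organizational difference would be acceptable on its own, but the proof cannot proceed until the functors $\csh{},\esh{},\dsh{}$ are redefined as closed diagrams in $\sB(I,I)$ using the duals of $C$ and $E$.
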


\begin{proof}
 If $A$ is a 2-dualizable 0-cell there are maps as in \cref{fig:dual_triangle_2}.  
These are inverses.  See \cref{fig:triangle_star_invertible} on page \pageref{fig:triangle_star_invertible}.

\afterpage{\clearpage

\begin{figure}
\resizebox{.4\textwidth}{!}
{\begin{tikzpicture}

\node[draw] (A1) at (5, 3.5){\begin{tikzpicture}
\oc{y}{}{Y}{6}{0}{-2}{\en}
\oc{x}{}{X}{1}{0}{-2}{\en}

  \begin{scope}[on background layer]
\alp{ ({x}t)--({y}t)}
\end{scope}
\end{tikzpicture}
};

\node[draw] (B1) at (10.5, 3.5){\begin{tikzpicture}
\oc{y}{}{Y}{6}{0}{-2}{\en}
\oc{x}{}{X}{1}{0}{-2}{\en}

\oc{ta}{C}{_a}{2}{-1}{-2}{\et}
\oc{t2}{\rdual{C}}{_2}{5}{-1}{-2}{\etr}

  \begin{scope}[on background layer]
\dlp{({ta}t)--({t2}t)}
\alp{({x}t)--({y}t)}

\alp{({ta}b)--({t2}b)}
\end{scope}
\end{tikzpicture}
};

\node[draw] (B2) at (10.5, 0){\begin{tikzpicture}
\oc{y}{}{Y}{6}{0}{-2}{\en}
\oc{x}{}{X}{1}{0}{-2}{\en}

\oc{ta}{C}{_a}{2}{-1}{-2}{\et}
\oc{t2}{\rdual{C}}{_2}{5}{-1}{-2}{\etr}

\oc{e4}{\rdual{E}}{_4}{4}{0}{-1}{\epr}
\oc{e3}{E}{_3}{3}{0}{-1}{\ep}

  \begin{scope}[on background layer]
\dlp{({ta}t)--({e3}b)}
\dlp{({e4}b)--({t2}t)}
\alp{({x}t)--({e3}t)}
\alp{ ({e4}t)--({y}t)}

\alp{({ta}b)--({t2}b)}
\end{scope}
\end{tikzpicture}
};

\node[draw] (B3) at (5, 0){\begin{tikzpicture}
\oc{y}{}{Y}{6}{0}{-2}{\en}
\oc{x}{}{X}{1}{0}{-2}{\en}
\oc{t2}{\rdual{C}}{_2}{5}{-1}{-2}{\etr}
\oc{e4}{\rdual{E}}{_4}{4}{0}{-1}{\epr}

  \begin{scope}[on background layer]
\dlp{({e4}b)--({t2}t)}
\alp{({x}b)--({t2}b)}
\alp{({e4}t)--({y}t)}
\end{scope}
\end{tikzpicture}
};

\draw[->](A1)--(B1)node [midway , fill=white] {$I_C$};
\draw[->](B1)--(B2)node [midway , fill=white] {$C_E$};

\draw[->](B2)--(B3)node [midway , fill=white] {$\triangle^{-1}$};
\draw[dashed, ->](A1)--(B3)node [midway , fill=white] {$\rdual{\triangle}$};

\end{tikzpicture}}
\hfill 
\resizebox{.4\textwidth}{!}
{\begin{tikzpicture}

\node[draw] (A2) at (0, 3.5){\begin{tikzpicture}
\oc{t1}{\rdual{C}}{_1}{5}{-1}{-2}{\etr}
\oc{x}{}{X}{3}{0}{-2}{\en}
\oc{y}{}{Y}{8}{0}{-2}{\en}
\oc{e3}{\rdual{E}}{_3}{4}{0}{-1}{\epr}

  \begin{scope}[on background layer]
\dlp{ ({t1}t)--({e3}b)}
\alp{({t1}b)--({x}b)}
\alp{ ({y}t)--({e3}t)}
\end{scope}
\end{tikzpicture}
};

\node[draw] (A3) at (0, 0){\begin{tikzpicture}
\oc{x}{}{X}{3}{0}{-2}{\en}
\oc{y}{}{Y}{8}{0}{-2}{\en}

  \begin{scope}[on background layer]
\alp{ ({y}b)--({x}b)}
\end{scope}
\end{tikzpicture}
};

\node[draw] (B1) at (5.5, 3.5){\begin{tikzpicture}
\oc{t1}{\rdual{C}}{_1}{5}{-1}{-2}{\etr}
\oc{x}{}{X}{3}{0}{-2}{\en}
\oc{y}{}{Y}{8}{0}{-2}{\en}
\oc{e3}{\rdual{E}}{_3}{4}{0}{-1}{\epr}

\oc{e5}{E}{_5}{7}{0}{-1}{\ep}
\oc{t6}{C}{_6}{6}{-1}{-2}{\et}

  \begin{scope}[on background layer]
\dlp{ ({t1}t)--({e3}b)}
\dlp{({t6}t)--({e5}b)}
\alp{({t1}b)--({x}b)}
\alp{({y}b)--({t6}b)}
\alp{ ({e5}t)--({e3}t)}
\end{scope}
\end{tikzpicture}
};

\node[draw] (B2) at (5.5,0){\begin{tikzpicture}
\oc{x}{}{X}{3}{0}{-2}{\en}
\oc{y}{}{Y}{8}{0}{-2}{\en}
\oc{e3}{\rdual{E}}{_3}{4}{0}{-1}{\epr}
\oc{e5}{E}{_5}{7}{0}{-1}{\ep}

  \begin{scope}[on background layer]
\dlp{({e3}b)--({e5}b)}
\alp{({y}b)--({x}b)}
\alp{ ({e5}t)--({e3}t)}
\end{scope}
\end{tikzpicture}
};

\draw[->](A2)--(B1)node [midway , fill=white] {$\triangle$};
\draw[->](B1)--(B2)node [midway , fill=white] {$P_C$};

\draw[dashed, ->](A2)--(A3)node [midway , fill=white] {$(\rdual{\triangle})^{-1}$};

\draw[->](B2)--(A3)node [midway , fill=white] {$L_E$}; 
\end{tikzpicture}}
\caption{The triangle 2-cells for $\rdual{C}/\rdual{E}$}\label{fig:dual_triangle_2}
\end{figure}

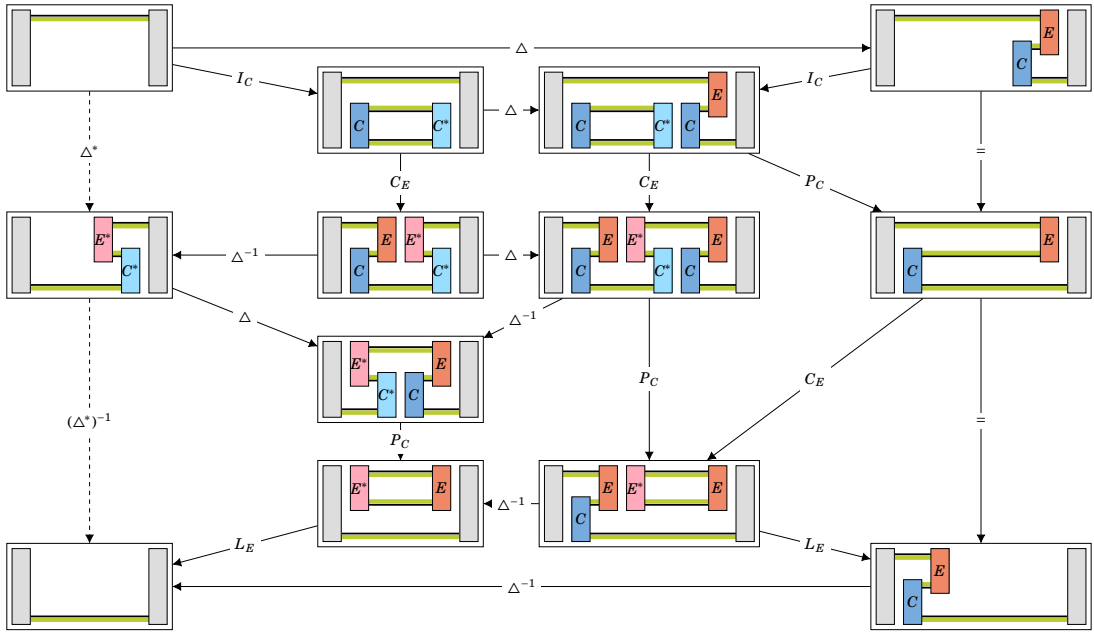
\begin{figure}
\resizebox{\textwidth}{!}{\begin{tikzpicture}[
    my style/.style={%
      label={right:\pgfkeysvalueof{/pgf/minimum width}},
    },
   my style/.style={%
     append after command={
       \pgfextra{\node [right] at (\tikzlastnode.mid east) {\tikzlastnode};}
     },
   },
  ]

\node[draw 
	] (A1) at (2, 5){\begin{tikzpicture}
\oc{y}{}{Y}{6}{0}{-2}{\en}
\oc{x}{}{X}{1}{0}{-2}{\en}

  \begin{scope}[on background layer]
\alp{ ({x}t)--({y}t)}
\end{scope}
\end{tikzpicture}
};

\node[draw
	] (B1) at (9.5, 3.5){\begin{tikzpicture}
\oc{y}{}{Y}{6}{0}{-2}{\en}
\oc{x}{}{X}{1}{0}{-2}{\en}

\oc{ta}{C}{_a}{2}{-1}{-2}{\et}
\oc{t2}{\rdual{C}}{_2}{5}{-1}{-2}{\etr}

  \begin{scope}[on background layer]
\dlp{({ta}t)--({t2}t)}
\alp{({x}t)--({y}t)}

\alp{({ta}b)--({t2}b)}
\end{scope}
\end{tikzpicture}
};

\node[draw 
] (B2) at (9.5, 0){\begin{tikzpicture}
\oc{y}{}{Y}{6}{0}{-2}{\en}
\oc{x}{}{X}{1}{0}{-2}{\en}

\oc{ta}{C}{_a}{2}{-1}{-2}{\et}
\oc{t2}{\rdual{C}}{_2}{5}{-1}{-2}{\etr}

\oc{e4}{\rdual{E}}{_4}{4}{0}{-1}{\epr}
\oc{e3}{E}{_3}{3}{0}{-1}{\ep}

  \begin{scope}[on background layer]
\dlp{({ta}t)--({e3}b)}
\dlp{({e4}b)--({t2}t)}
\alp{({x}t)--({e3}t)}
\alp{ ({e4}t)--({y}t)}

\alp{({ta}b)--({t2}b)}
\end{scope}
\end{tikzpicture}
};

\node[draw
	] (B3) at (2, 0){\begin{tikzpicture}
\oc{y}{}{Y}{6}{0}{-2}{\en}
\oc{x}{}{X}{1}{0}{-2}{\en}
\oc{t2}{\rdual{C}}{_2}{5}{-1}{-2}{\etr}
\oc{e4}{\rdual{E}}{_4}{4}{0}{-1}{\epr}

  \begin{scope}[on background layer]
\dlp{({e4}b)--({t2}t)}
\alp{({x}b)--({t2}b)}
\alp{({e4}t)--({y}t)}
\end{scope}
\end{tikzpicture}
};

\draw[->](A1)--(B1)node [midway , fill=white] {$I_C$};
\draw[->](B1)--(B2)node [midway , fill=white] {$C_E$};

\draw[->](B2)--(B3)node [midway , fill=white] {$\triangle^{-1}$};
\draw[dashed, ->](A1)--(B3)node [midway , fill=white] {$\rdual{\triangle}$};

%
%

\node[draw
	] (A3) at (2, -8){\begin{tikzpicture}
\oc{x}{}{X}{3}{0}{-2}{\en}
\oc{y}{}{Y}{8}{0}{-2}{\en}

  \begin{scope}[on background layer]
\alp{ ({y}b)--({x}b)}
\end{scope}
\end{tikzpicture}
};

\node[draw
	] (B4) at (9.5, -3){\begin{tikzpicture}
\oc{t1}{\rdual{C}}{_1}{5}{-1}{-2}{\etr}
\oc{x}{}{X}{3}{0}{-2}{\en}
\oc{y}{}{Y}{8}{0}{-2}{\en}
\oc{e3}{\rdual{E}}{_3}{4}{0}{-1}{\epr}

\oc{e5}{E}{_5}{7}{0}{-1}{\ep}
\oc{t6}{C}{_6}{6}{-1}{-2}{\et}

  \begin{scope}[on background layer]
\dlp{ ({t1}t)--({e3}b)}
\dlp{({t6}t)--({e5}b)}
\alp{({t1}b)--({x}b)}
\alp{({y}b)--({t6}b)}
\alp{ ({e5}t)--({e3}t)}
\end{scope}
\end{tikzpicture}
};

\node[draw
	] (B5) at (9.5,-6){\begin{tikzpicture}
\oc{x}{}{X}{3}{0}{-2}{\en}
\oc{y}{}{Y}{8}{0}{-2}{\en}
\oc{e3}{\rdual{E}}{_3}{4}{0}{-1}{\epr}
\oc{e5}{E}{_5}{7}{0}{-1}{\ep}

  \begin{scope}[on background layer]
\dlp{({e3}b)--({e5}b)}
\alp{({y}b)--({x}b)}
\alp{ ({e5}t)--({e3}t)}
\end{scope}
\end{tikzpicture}
};

\draw[dashed, ->](B3)--(A3)node [midway , fill=white] {$(\rdual{\triangle})^{-1}$};
\draw[->](B3)--(B4)node [midway , fill=white] {$\triangle$};
\draw[->](B4)--(B5)node [midway , fill=white] {$P_C$};

\draw[->](B5)--(A3)node [midway , fill=white] {$L_E$}; 

\node[draw
	] (C1) at (15.5, 0){\begin{tikzpicture}
\oc{y}{}{Y}{8}{0}{-2}{\en}
\oc{x}{}{X}{1}{0}{-2}{\en}

\oc{ta}{C}{_a}{2}{-1}{-2}{\et}
\oc{t2}{\rdual{C}}{_2}{5}{-1}{-2}{\etr}

\oc{e4}{\rdual{E}}{_4}{4}{0}{-1}{\epr}
\oc{e3}{E}{_3}{3}{0}{-1}{\ep}

\oc{tb}{C}{_b}{6}{-1}{-2}{\et\lt}

\oc{ec}{E}{_c}{7}{0}{-1}{\ep\lt}

  \begin{scope}[on background layer]
\dlp{({ta}t)--({e3}b)}
\dlp{({e4}b)--({t2}t)}
\alp{({x}t)--({e3}t)}
\alp{ ({e4}t)--({ec}t)}

\dlp{({tb}t)--({ec}b)}
\alp{({ta}b)--({t2}b)}
\alp{({tb}b)--({y}b)}
\end{scope}
\end{tikzpicture}
};

\node[draw
	] (C2) at (15.5, 3.5){\begin{tikzpicture}
\oc{y}{}{Y}{8}{0}{-2}{\en}
\oc{x}{}{X}{1}{0}{-2}{\en}

\oc{ta}{C}{_a}{2}{-1}{-2}{\et}
\oc{t2}{\rdual{C}}{_2}{5}{-1}{-2}{\etr}

\oc{tb}{C}{_b}{6}{-1}{-2}{\et\lt}

\oc{ec}{E}{_c}{7}{0}{-1}{\ep\lt}

  \begin{scope}[on background layer]
\dlp{({ta}t)--({t2}t)}
\alp{({x}t)--({ec}t)}

\dlp{({tb}t)--({ec}b)}
\alp{({ta}b)--({t2}b)}
\alp{({tb}b)--({y}b)}
\end{scope}
\end{tikzpicture}
};

\node[draw
	] (C3) at (15.5, -6){\begin{tikzpicture}
\oc{y}{}{Y}{8}{0}{-2}{\en}
\oc{x}{}{X}{1}{0}{-2}{\en}

\oc{ta}{C}{_a}{2}{-1}{-2}{\et}

\oc{e4}{\rdual{E}}{_4}{4}{0}{-1}{\epr}
\oc{e3}{E}{_3}{3}{0}{-1}{\ep}

\oc{ec}{E}{_c}{7}{0}{-1}{\ep\lt}

  \begin{scope}[on background layer]
\dlp{({ta}t)--({e3}b)}
\dlp{({e4}b)--({ec}b)}
\alp{({x}t)--({e3}t)}
\alp{ ({e4}t)--({ec}t)}

\alp{({ta}b)--({y}b)}
\end{scope}
\end{tikzpicture}
};

\node[draw
	] (C4) at (23.5, 0){\begin{tikzpicture}
\oc{y}{}{Y}{8}{0}{-2}{\en}
\oc{x}{}{X}{1}{0}{-2}{\en}

\oc{ta}{C}{_a}{2}{-1}{-2}{\et}

\oc{ec}{E}{_c}{7}{0}{-1}{\ep\lt}

  \begin{scope}[on background layer]
\dlp{({ta}t)--({ec}b)}
\alp{({x}t)--({ec}t)}

\alp{({ta}b)--({y}b)}
\end{scope}
\end{tikzpicture}
};

\node[draw
	] (D1) at (23.5, 5){\begin{tikzpicture}
\oc{y}{}{Y}{8}{0}{-2}{\en}
\oc{x}{}{X}{1}{0}{-2}{\en}

\oc{tb}{C}{_b}{6}{-1}{-2}{\et\lt}

\oc{ec}{E}{_c}{7}{0}{-1}{\ep\lt}

  \begin{scope}[on background layer]
\alp{({x}t)--({ec}t)}

\dlp{({tb}t)--({ec}b)}
\alp{({tb}b)--({y}b)}
\end{scope}
\end{tikzpicture}
};

\node[draw
	] (D2) at (23.5, -8){\begin{tikzpicture}
\oc{y}{}{Y}{8}{0}{-2}{\en}
\oc{x}{}{X}{1}{0}{-2}{\en}

\oc{ta}{C}{_a}{2}{-1}{-2}{\et}
\oc{e3}{E}{_3}{3}{0}{-1}{\ep}

  \begin{scope}[on background layer]
\dlp{({ta}t)--({e3}b)}
\alp{({x}t)--({e3}t)}

\alp{({ta}b)--({y}b)}
\end{scope}
\end{tikzpicture}
};

\draw[->](B2)--(C1)node [midway , fill=white] {$\triangle$};
\draw[->](B1)--(C2)node [midway , fill=white] {$\triangle$};

\draw[->](C1)--(B4)node [midway , fill=white] {$\triangle^{-1}$};

\draw[->](C3)--(B5)node [midway , fill=white] {$\triangle^{-1}$};
\draw[->](C2)--(C1)node [midway , fill=white] {$C_E$};
\draw[->](C1)--(C3)node [midway , fill=white] {$P_C$};
\draw[->](C4)--(C3)node [midway , fill=white] {$C_E$};
\draw[->](C2)--(C4)node [midway , fill=white] {$P_C$};

\draw[->](A1)--(D1)node [midway , fill=white] {$\triangle$};
\draw[->](D1)--(C2)node [midway , fill=white] {$I_C$};

\draw[->](D1)--(C4)node [midway , fill=white] {$=$};
\draw[->](C4)--(D2)node [midway , fill=white] {$=$};

\draw[->](D2)--(A3)node [midway , fill=white] {$\triangle^{-1}$};
\draw[->](C3)--(D2)node [midway , fill=white] {$L_E$};

\end{tikzpicture}

\caption{Demonstrating the 2-cells for $\rdual{C}$ and $\rdual{E}$ are invertible}\label{fig:triangle_star_invertible}
\end{figure}

\begin{figure}
\resizebox{\textwidth}{!}{\begin{tikzpicture}[
    my style/.style={
      label={right:\pgfkeysvalueof{/pgf/minimum width}},
    },
   my style/.style={%
     append after command={
       \pgfextra{\node [right] at (\tikzlastnode.mid east) {\tikzlastnode};}
     },
   },
  ]

\node[
draw] (B2) at (-2, 10){\begin{tikzpicture}
\oc{t1}{C}{_1}{0}{5}{4}{\et}
\oc{ta}{}{ta}{3}{7}{4}{\en}

\oc{tc}{}{tc}{0}{7}{6}{\en}
  \begin{scope}[on background layer]
\dlp{ ({t1}t)--(3*\d, 5*\h)}

\dlp{({tc}t)--({ta}t)}

\alp{ ({t1}b)--({ta}b)}

\alp{ ({tc}b)--(3*\d, 6*\h)}
\end{scope}
\end{tikzpicture}};

\node[
draw] (D2) at (4, 8){\begin{tikzpicture}
\oc{t1}{C}{_1}{2}{5}{4}{\et}
\oc{ta}{}{ta}{3}{7}{4}{\en}

\oc{t2}{\rdual{C}}{_2}{1}{5}{4}{\etr\lt}
\oc{t3}{C}{_3}{-2}{5}{4}{\et\lt}

\oc{tc}{}{tc}{-2}{7}{6}{\en}
  \begin{scope}[on background layer]
\dlp{ ({t1}t)--(3*\d, 5*\h)}

\alp{ ({t1}b)--({ta}b)}

\dlp{ ({t2}t)--({t3}t)}

\alp{ ({t2}b)--({t3}b)}

\dlp{({tc}t)--({ta}t)}

\alp{ ({t1}b)--({ta}b)}

\alp{ ({tc}b)--(3*\d, 6*\h)}
\end{scope}
\end{tikzpicture}};

\node[
draw] (D6) at (10, 6){\begin{tikzpicture}
\oc{ta}{}{ta}{3}{7}{4}{\en}

\oc{t3}{C}{_3}{-2}{5}{4}{\et\lt}

\oc{tc}{}{tc}{-2}{7}{6}{\en}
  \begin{scope}[on background layer]

\alp{ ({t1}b)--({ta}b)}

\dlp{ (3*\d, 5*\h)--({t3}t)}

\dlp{({tc}t)--({ta}t)}

\alp{ ({t3}b)--({ta}b)}

\alp{ ({tc}b)--(3*\d, 6*\h)}
\end{scope}
\end{tikzpicture}};

\node[
draw] (D3) at (16, 8){\begin{tikzpicture}
\oc{t1}{C}{_1}{2}{5}{4}{\et}
\oc{ta}{}{ta}{3}{7}{4}{\en}

\oc{e4}{\rdual{E}}{_4}{0}{6}{5}{\epr}
\oc{e5}{E}{_5}{-1}{6}{5}{\ep}

\oc{t2}{\rdual{C}}{_2}{1}{5}{4}{\etr\lt}
\oc{t3}{C}{_3}{-2}{5}{4}{\et\lt}

\oc{tc}{}{tc}{-2}{7}{6}{\en}
  \begin{scope}[on background layer]
\dlp{ ({t1}t)--(3*\d, 5*\h)}

\alp{ ({t1}b)--({ta}b)}

\dlp{ ({t2}t)-- ({e4}b)}
\dlp{({e5}b)--({t3}t)}

\alp{ ({t2}b)--({t3}b)}

\dlp{({tc}t)--({ta}t)}

\alp{ ({t1}b)--({ta}b)}

\alp{ ({tc}b)--({e5}t)}
\alp{({e4}t)--(3*\d, 6*\h)}
\end{scope}
\end{tikzpicture}};

\node[
draw] (B3) at (-2, 1){\begin{tikzpicture}
\oc{t1}{C}{_1}{-1}{5}{4}{\et}
\oc{ta}{}{ta}{3}{7}{4}{\en}

\oc{tc}{}{t_c}{-1}{7}{6}{\en}

\oc{ex}{E}{_x}{0}{6}{5}{\ep}
\oc{e8}{\rdual{E}}{_8}{1}{6}{5}{\epr\dk}

  \begin{scope}[on background layer]
\dlp{({t1}t)--({ex}b)}
\dlp{({e8}b)--(3*\d, 5*\h)}
\dlp{({tc}t)--({ta}t)}

\alp{ ({t1}b)--({ta}b)}

\alp{ ({tc}b)--({ex}t)}
\alp{ ({e8}t)--(3*\d, 6*\h)}
\end{scope}
\end{tikzpicture}};

\node[
draw] (D4) at (4, 4){\begin{tikzpicture}
\oc{t1}{C}{_1}{-1}{5}{4}{\et}
\oc{ta}{}{ta}{3}{7}{4}{\en}

\oc{tc}{}{t_c}{-5}{7}{6}{\en}

\oc{ex}{E}{_x}{0}{6}{5}{\ep}
\oc{e8}{\rdual{E}}{_8}{1}{6}{5}{\epr\dk}

\oc{t2}{\rdual{C}}{_2}{-2}{5}{4}{\etr\lt}
\oc{t3}{C}{_3}{-5}{5}{4}{\et\lt}

  \begin{scope}[on background layer]
\dlp{({t1}t)--({ex}b)}
\dlp{({e8}b)--(3*\d, 5*\h)}
\dlp{({tc}t)--({ta}t)}

\alp{ ({t1}b)--({ta}b)}

\alp{ ({tc}b)--({ex}t)}
\alp{ ({e8}t)--(3*\d, 6*\h)}

\dlp{ ({t2}t)--({t3}t)}

\alp{ ({t2}b)--({t3}b)}
\end{scope}
\end{tikzpicture}};

%
%
%
%
%
%
%
%
%
%

\node[
draw] (C2) at (22, 10){\begin{tikzpicture}
\oc{t1}{C}{_1}{0}{5}{4}{\et}
\oc{ta}{}{t_a}{1}{7}{4}{\en}

\oc{tc}{}{t_c}{-3}{7}{4}{\en}
\oc{t7}{\rdual{C}}{_7}{-1}{5}{4}{\etr\dk}
\oc{e8}{\rdual{E}}{_8}{-2}{6}{5}{\epr\dk}

\gc{g1}{2.5}{8}{7}
\gc{g2}{2.5}{6}{4}
  \begin{scope}[on background layer]
\dlp{ ({t1}t)--(1*\d, 5*\h)}
\dlp{({tc}t)--({ta}t)}
\dlp{({t7}t)--({e8}b)}

\alp{ ({t1}b)--({ta}b)}

\alp{ ({tc}b)--({t7}b)}
\alp{ ({e8}t)--(1*\d, 6*\h)}
\end{scope}
\end{tikzpicture}};

\node[
draw] (C3) at (22, 1){\begin{tikzpicture}
\oc{ta}{}{t_a}{4}{7}{4}{\en}

\oc{tc}{}{t_c}{0}{7}{4}{\en}
\oc{e8}{\rdual{E}}{_8}{1}{6}{5}{\epr\dk}

\gc{g1}{2.5}{8}{7}
\gc{g2}{2.5}{6}{4}
  \begin{scope}[on background layer]

\dlp{({e8}b)--(4*\d, 5*\h)}
\dlp{({tc}t)--({ta}t)}

\alp{ ({tc}b)--({ta}b)}

\alp{ ({e8}t)--(4*\d, 6*\h)}
\end{scope}
\end{tikzpicture}};

\node[
draw] (D5) at (16, 4){\begin{tikzpicture}
\oc{ta}{}{ta}{3}{7}{4}{\en}

\oc{e4}{\rdual{E}}{_4}{0}{6}{5}{\epr}
\oc{e5}{E}{_5}{-1}{6}{5}{\ep}

\oc{t3}{C}{_3}{-2}{5}{4}{\et\lt}

\oc{tc}{}{tc}{-2}{7}{6}{\en}
  \begin{scope}[on background layer]

\alp{ ({t1}b)--({ta}b)}

\dlp{ (3*\d, 5*\h)-- ({e4}b)}
\dlp{({e5}b)--({t3}t)}

\alp{ ({ta}b)--({t3}b)}

\dlp{({tc}t)--({ta}t)}

\alp{ ({tc}b)--({e5}t)}
\alp{({e4}t)--(3*\d, 6*\h)}
\end{scope}
\end{tikzpicture}};

\draw[dashed,->](B2)to node [midway , fill=white] {$\triangle^*$} (C2);
\draw[->](B3)--(C3)node [midway , fill=white] {$\triangle^{-1}$};
\draw[->](B2)--(B3)node [midway , fill=white] {$C_E$};
\draw[->](C2)--(C3)node [midway , fill=white] {$P_C$};
\draw[->](B2)--(D2)node [midway , fill=white] {$I_C$};
\draw[->](D2)--(D3)node [midway , fill=white] {$C_E$};
\draw[->](D3)--(C2)node [midway , fill=white] {$\triangle^{-1}$};
\draw[->](D2)--(D4)node [midway , fill=white] {$C_E$};

\draw[->](B3)--(D4)node [midway , fill=white] {$I_C$};
\draw[->](D3)--(D5)node [midway , fill=white] {$P_C$};
\draw[->](D5)--(C3)node [midway , fill=white] {$\triangle^{-1}$};

\draw[->](D2)--(D6)node [midway , fill=white] {$P_C$};
\draw[->](D6)--(D5)node [midway , fill=white] {$C_E$};
\draw[->](D4)--(D5)node [midway , fill=white] {$P_C$};
\draw[->](B3)to [out = 10, in =200,looseness =.5] node [midway , fill=white] {$=$}(D5);
\end{tikzpicture}}
\caption{Further compatibility of $\triangle^*$ and $\triangle$}\label{fig:dual_triangle_compare}
\end{figure}
\clearpage}

Then \cref{prop:shadow} implies there is a second shadow defined on $\sB(A,A)$ as the bicategorical composition 
\begin{center}
\begin{tikzpicture}
\oc{e1}{\rdual{E}}{_1}{0}{5}{4}{\epr}
\oc{x}{M}{}{2}{4}{4}{\en}

\oc{tx}{\rdual{C}}{_x}{3}{5}{4}{\etr}
\gc{g1}{1}{5}{4}
  \begin{scope}[on background layer]
\dlp{ ({e1}b)\gpb{g1}({tx}t)}
\alp{({e1}t)\gpt{g1}({x}b)--({tx}b)}
\end{scope}
\end{tikzpicture}
\end{center}
There are two other bicategorical compositions 
defined using $C$, $E$ and their duals that define functors $\sB(A,A)\to \sB(I, I)$.  These two constructions as well as the two shadows defined above are compared in \cref{fig:penumbra_functors_B}.

These define the four functors of a penumbra.  The maps are defined in \cref{fig:penumbra_maps_B} and the required commutative diagrams follow from conditions on the bicategory and the triangle identities in \cref{fig:coeval_1_cell,fig:coeval_1_cell_2}.
\begin{figure}
    \centering
    \begin{subfigure}[t]{0.22\textwidth}
        \centering
\begin{tikzpicture}

\oc{t1}{C}{_1}{1}{5}{4}{\et}
\oc{x}{M}{}{2}{4}{4}{\en}

\oc{ex}{E}{_x}{4}{5}{4}{\ep}
\gc{g1}{3}{5}{4}
  \begin{scope}[on background layer]
\dlp{ ({t1}t)\gpt{g1}({ex}b)}
\alp{ ({t1}b)--({x}b)\gpb{g1}({ex}t)}
\end{scope}
\end{tikzpicture}
        \caption{$\sh{M}$}
    \end{subfigure}%
\hfill
    \begin{subfigure}[t]{0.22\textwidth}
        \centering
\begin{tikzpicture}

\oc{e1}{\rdual{E}}{_1}{0}{5}{4}{\epr}
\oc{x}{M}{}{2}{4}{4}{\en}

\oc{tx}{\rdual{C}}{_x}{3}{5}{4}{\etr}
\gc{g1}{1}{5}{4}
  \begin{scope}[on background layer]
\dlp{ ({e1}b)\gpb{g1}({tx}t)}
\alp{({e1}t)\gpt{g1}({x}b)--({tx}b)}
\end{scope}
\end{tikzpicture}
        \caption{$\dsh{M}$}
    \end{subfigure}%
\hfill
    \begin{subfigure}[t]{0.18\textwidth}
        \centering
\begin{tikzpicture}

\oc{t1}{C}{_1}{0}{5}{4}{\et}
\oc{x}{M}{}{1}{4}{4}{\en}

\oc{tx}{\rdual{C}}{_x}{2}{5}{4}{\etr}
  \begin{scope}[on background layer]
\dlp{({t1}t)--({tx}t)}
\alp{({t1}b)--({x}b)--({tx}b)}
\end{scope}
\end{tikzpicture}
        \caption{$\csh{M}$}
    \end{subfigure}%
\hfill 
    \begin{subfigure}[t]{0.18\textwidth}
        \centering
\begin{tikzpicture}

\oc{e1}{\rdual{E}}{_1}{0}{5}{4}{\epr}
\oc{x}{M}{}{1}{5}{5}{\en}

\oc{ex}{E}{_x}{2}{5}{4}{\ep}
  \begin{scope}[on background layer]
\dlp{({e1}b)--({ex}b)}
\alp{({e1}t)--({x}t)--({ex}t)}
\end{scope}
\end{tikzpicture}
        \caption{$\esh{M}$}
    \end{subfigure}%
    \centering
    \begin{subfigure}[t]{0.48\textwidth}
        \centering
		\resizebox{\textwidth}{!}{\begin{tikzpicture}

	\node[draw] (A1) at (0, 0){\begin{tikzpicture}
		\oc{x}{M}{}{-1}{4}{4}{\en}
		\oc{t4}{\rdual{C}}{_4}{0}{5}{4}{\etr}
		\gc{g2}{-2}{5}{4}
		\oc{e3}{\rdual{E}}{_3}{-3}{5}{4}{\epr}
  \begin{scope}[on background layer]
		\dlp{ ({e3}b)\gpb{g2}({t4}t)}
		\alp{ ({e3}t)\gpt{g2}({x}b)--({t4}b)}
\end{scope}
		\oc{t1}{C}{_1}{1}{5}{4}{\et}
		\oc{y}{N}{}{2}{4}{4}{\en}
		\oc{tx}{\rdual{C}}{_x}{3}{5}{4}{\etr\lt}
  \begin{scope}[on background layer]
		\dlp{({t1}t)--({tx}t)}
		\alp{ ({t1}b)--({y}b)--({tx}b)}
\end{scope}
		\end{tikzpicture}
		};

		\node[draw] (A2) at (5.5, 0){\begin{tikzpicture}
		\oc{x}{M}{}{-1}{4}{4}{\en}
		\gc{g2}{-2}{5}{4}
		\oc{e3}{\rdual{E}}{_3}{-3}{5}{4}{\epr}

		\oc{y}{N}{}{0}{4}{4}{\en}
		\oc{tx}{\rdual{C}}{_x}{1}{5}{4}{\etr\lt}
  \begin{scope}[on background layer]
		\dlp{ ({e3}b)\gpb{g2}({tx}t)}
		\alp{ ({e3}t)\gpt{g2}({x}b)--({y}b)--({tx}b)}
\end{scope}
		\end{tikzpicture}
		};

		\draw[->](A1)--(A2)node [midway , fill=white] {$P_C$};
	\end{tikzpicture}}
        \caption{$\rspl$}
    \end{subfigure}
\hspace{.02\textwidth}
    \centering
    \begin{subfigure}[t]{0.48\textwidth}
        \centering
		\resizebox{\textwidth}{!}{\begin{tikzpicture}

	\node[draw] (A1) at (0, 0){\begin{tikzpicture}		
		\oc{x}{M}{}{-1}{4}{4}{\en}
		\gc{g2}{-2}{5}{4}
		\oc{e3}{\rdual{E}}{_3}{-3}{5}{4}{\epr}

		\oc{y}{N}{}{0}{4}{4}{\en}
		\oc{tx}{\rdual{C}}{_x}{1}{5}{4}{\etr}
  \begin{scope}[on background layer]
		\dlp{({e3}b)\gpb{g2}({tx}t)}
		\alp{ ({e3}t)\gpt{g2}({x}b)--({y}b)--({tx}b)}
\end{scope}
		\end{tikzpicture}
		};

		\node[draw] (A2) at (5.5, 0){\begin{tikzpicture}
		\oc{x}{M}{}{-4}{5}{5}{\en}
		\gc{g2}{-1}{5}{4}
		\oc{e3}{\rdual{E}}{_3}{-5}{5}{4}{\epr}
		\oc{e1}{E}{_1}{-3}{5}{4}{\ep}
		\oc{e2}{\rdual{E}}{_2}{-2}{5}{4}{\epr\lt}

		\oc{y}{N}{}{0}{4}{4}{\en}
		\oc{tx}{\rdual{C}}{_x}{1}{5}{4}{\etr}
  \begin{scope}[on background layer]
		\dlp{({e3}b)--({e1}b)}
		\dlp{({e2}b)\gpb{g2}({tx}t)}
		\alp{({e3}t)--({x}b)--({e1}t)}
		\alp{({e2}t)\gpt{g2}({y}b)--({tx}b)}
\end{scope}
		\end{tikzpicture}
		};

		\draw[->](A1)--(A2)node [midway , fill=white] {$P_C$};
	\end{tikzpicture}}
        \caption{$\luspl$}
    \end{subfigure}

    \centering
    \begin{subfigure}[t]{0.48\textwidth}
        \centering
		\resizebox{\textwidth}{!}{\begin{tikzpicture}

	\node[draw] (A1) at (0, 0){\begin{tikzpicture}
		\oc{e1}{\rdual{E}}{_1}{-1}{5}{4}{\epr}
		\oc{x}{M}{}{1}{4}{4}{\en}
		\oc{tx}{\rdual{C}}{_x}{2}{5}{4}{\etr}
		\gc{g1}{0}{5}{4}
  \begin{scope}[on background layer]
		\dlp{ ({e1}b)\gpb{g1}({tx}t)}
		\alp{({e1}t)\gpt{g1}({x}b)--({tx}b)}
\end{scope}
		\oc{t2}{C}{_2}{3}{5}{4}{\et}
		\oc{y}{N}{}{4}{4}{4}{\en}
		\gc{g2}{5}{5}{4}
		\oc{e3}{E}{_3}{6}{5}{4}{\ep}
  \begin{scope}[on background layer]
		\dlp{({t2}t)\gpt{g2}({e3}b)}
		\alp{ ({t2}b)--({y}b)\gpb{g2}({e3}t)}
\end{scope}
		\end{tikzpicture}
		};

		\node[draw] (A2) at (5.5, 0){\begin{tikzpicture}
		\oc{e1}{\rdual{E}}{_1}{-1}{5}{4}{\epr}
		\oc{x}{M}{}{0}{5}{5}{\en}
		\oc{y}{N}{}{1}{5}{5}{\en}
		\oc{e3}{E}{_3}{2}{5}{4}{\ep}
  \begin{scope}[on background layer]
		\dlp{ ({e1}b)--({e3}b)}
		\alp{ ({e1}t)--({x}b)--({y}b)--({e3}t)}
\end{scope}
		\end{tikzpicture}
		};

		\draw[->](A1)--(A2)node [midway , fill=white] {$P_C$};
	\end{tikzpicture}}
        \caption{$\uspl$}
    \end{subfigure}%
\hspace{.02\textwidth}
    \centering
    \begin{subfigure}[t]{0.48\textwidth}
        \centering
	\resizebox{\textwidth}{!}{\begin{tikzpicture}

	\node[draw] (A1) at (0, 0){\begin{tikzpicture}
		\oc{t1}{C}{_1}{0}{5}{4}{\et}
		\oc{x}{M}{}{1}{4}{4}{\en}
		\oc{y}{N}{}{2}{4}{4}{\en}
		\oc{tx}{\rdual{C}}{_x}{3}{5}{4}{\etr}
  \begin{scope}[on background layer]
		\dlp{({t1}t)--({tx}t)}
		\alp{ ({t1}b)--({x}b)--({y}b)--({tx}b)}
\end{scope}
		\end{tikzpicture}
		};

		\node[draw] (A2) at (5.5, 0){\begin{tikzpicture}
		\oc{t1}{C}{_1}{0}{5}{4}{\et}
		\oc{x}{M}{}{1}{4}{4}{\en}
		\oc{y}{N}{}{6}{4}{4}{\en}
		\oc{tx}{\rdual{C}}{_x}{7}{5}{4}{\etr}
		\gc{g1}{2}{5}{4}
		\gc{g2}{5}{5}{4}
		\oc{e2}{E}{_2}{3}{5}{4}{\ep}
		\oc{e3}{\rdual{E}}{_3}{4}{5}{4}{\epr}
  \begin{scope}[on background layer]
		\dlp{({t1}t)\gpt{g1}({e2}b)}
		\dlp{({e3}b)\gpb{g2}({tx}t)}
		\alp{ ({t1}b)--({x}b)\gpb{g1}({e2}t)}
		\alp{ ({e3}t)\gpt{g2}({y}b)--({tx}b)}
\end{scope}
		\end{tikzpicture}
		};

		\draw[->](A1)--(A2)node [midway , fill=white] {$C_E$};
	\end{tikzpicture}}
        \caption{$\spl$}
    \end{subfigure}
%

    \centering
    \begin{subfigure}[t]{0.48\textwidth}
        \centering
	\resizebox{\textwidth}{!}{	\begin{tikzpicture}

	\node[draw] (A1) at (0, 0){\begin{tikzpicture}
		\oc{t1}{C}{_1}{0}{5}{4}{\et}
		\oc{x}{M}{}{1}{4}{4}{\en}
		\oc{tx}{\rdual{C}}{_x}{2}{5}{4}{\etr}

		\oc{t2}{C}{_2}{3}{5}{4}{\et\lt}
		\oc{y}{N}{}{4}{4}{4}{\en}
		\gc{g1}{5}{5}{4}
		\oc{e3}{E}{_3}{6}{5}{4}{\ep}
  \begin{scope}[on background layer]
		\dlp{({t1}t)--({tx}t)}
		\alp{ ({t1}b)--({x}b)--({tx}b)}
		\dlp{ ({t2}t)\gpt{g1}({e3}b)}
		\alp{ ({t2}b)--({y}b)\gpb{g1}({e3}t)}
\end{scope}
		\end{tikzpicture}
		};

		\node[draw] (A2) at (5.5, 0){\begin{tikzpicture}
		\oc{t1}{C}{_1}{2}{5}{4}{\et}
		\oc{x}{M}{}{3}{4}{4}{\en}
		\oc{y}{N}{}{4}{4}{4}{\en}
		\gc{g1}{5}{5}{4}
		\oc{e3}{E}{_3}{6}{5}{4}{\ep}
  \begin{scope}[on background layer]
		\dlp{({t1}t)\gpt{g1}({e3}b)}
		\alp{({t1}b)--({x}b)--({y}b)\gpb{g1}({e3}t)}
\end{scope}
		\end{tikzpicture}
		};

		\draw[->](A1)--(A2)node [midway , fill=white] {$P_C$};
	\end{tikzpicture}}
        \caption{$\lspl$}
    \end{subfigure}%
\hspace{.02\textwidth}
 \centering
    \begin{subfigure}[t]{0.48\textwidth}
        \centering
		\resizebox{\textwidth}{!}{\begin{tikzpicture}

	\node[draw] (A1) at (0, 0){\begin{tikzpicture}
		\oc{x}{M}{}{3}{4}{4}{\en}
		\oc{t2}{C}{_2}{2}{5}{4}{\et}
		\oc{y}{N}{}{4}{4}{4}{\en}
		\gc{g2}{5}{5}{4}
		\oc{e3}{E}{_3}{6}{5}{4}{\ep}
  \begin{scope}[on background layer]
		\dlp{({t2}t)\gpt{g2}({e3}b)}
		\alp{({t2}b)--({x}b)--({y}b)\gpb{g2}({e3}t)}
\end{scope}
		\end{tikzpicture}
		};

		\node[draw] (A2) at (5.5, 0){\begin{tikzpicture}
		\oc{x}{M}{}{3}{4}{4}{\en}
		\oc{t2}{C}{_2}{2}{5}{4}{\et}
		\oc{y}{N}{}{7}{5}{5}{\en}
		\gc{g2}{4}{5}{4}
		\oc{e1}{E}{_1}{5}{5}{4}{\ep\lt}
		\oc{e2}{\rdual{E}}{_2}{6}{5}{4}{\epr}
		\oc{e3}{E}{_3}{8}{5}{4}{\ep}
  \begin{scope}[on background layer]
		\dlp{({t2}t)\gpt{g2}({e1}b)}
		\dlp{({e2}b)--({e3}b)}
		\alp{({t2}b)--({x}b)\gpb{g2}({e1}t)}
		\alp{ ({e2}t)--({y}b)--({e3}t)}
\end{scope}
		\end{tikzpicture}
		};

		\draw[->](A1)--(A2)node [midway , fill=white] {$C_E$};
	\end{tikzpicture}}
        \caption{$\ruspl$}
    \end{subfigure}%

    \centering
    \begin{subfigure}[t]{0.37\textwidth}
        \centering
		\resizebox{\textwidth}{!}{\begin{tikzpicture}

	\node[draw] (A1) at (0, 0){\begin{tikzpicture}		
		\node (ta) at (2,-1){};
		\node (ta) at (4,-2){};
		\end{tikzpicture}
		};

	\node[draw] (A2) at (4, 0){\begin{tikzpicture}
		\oc{t1}{C}{_1}{0}{5}{4}{\et}
		\oc{tx}{\rdual{C}}{_x}{2}{5}{4}{\etr}
  \begin{scope}[on background layer]
		\dlp{ ({t1}t)--({tx}t)}
		\alp{ ({t1}b)--({tx}b)}
\end{scope}
		\end{tikzpicture}
		};

		\draw[->](A1)--(A2)node [midway , fill=white] {$I_C$};
	\end{tikzpicture}}
        \caption{$\iunit$}
    \end{subfigure}
\hspace{.02\textwidth}
    \centering
    \begin{subfigure}[t]{0.37\textwidth}
        \centering
		\resizebox{\textwidth}{!}{\begin{tikzpicture}

	\node[draw] (A1) at (0, 0){\begin{tikzpicture}		
		\oc{e1}{\rdual{E}}{_1}{0}{5}{4}{\epr}
		\oc{ex}{E}{_x}{2}{5}{4}{\ep}
  \begin{scope}[on background layer]
		\dlp{({e1}t)--({ex}t)}
		\alp{({e1}b)--({ex}b)}
\end{scope}
		\end{tikzpicture}
		};

		\node[draw] (A2) at (4, 0){\begin{tikzpicture}
		\node (ta) at (2,-1){};
		\node (ta) at (4,-2){};
		\end{tikzpicture}
		};

		\draw[->](A1)--(A2)node [midway , fill=white] {$L_E$};
	\end{tikzpicture}}
        \caption{$\ounit$}
    \end{subfigure}
\caption{Shadows,  shadows like constructions and their maps on $\sB$}\label{fig:penumbra_functors_B}
\label{fig:penumbra_maps_B}
\end{figure}
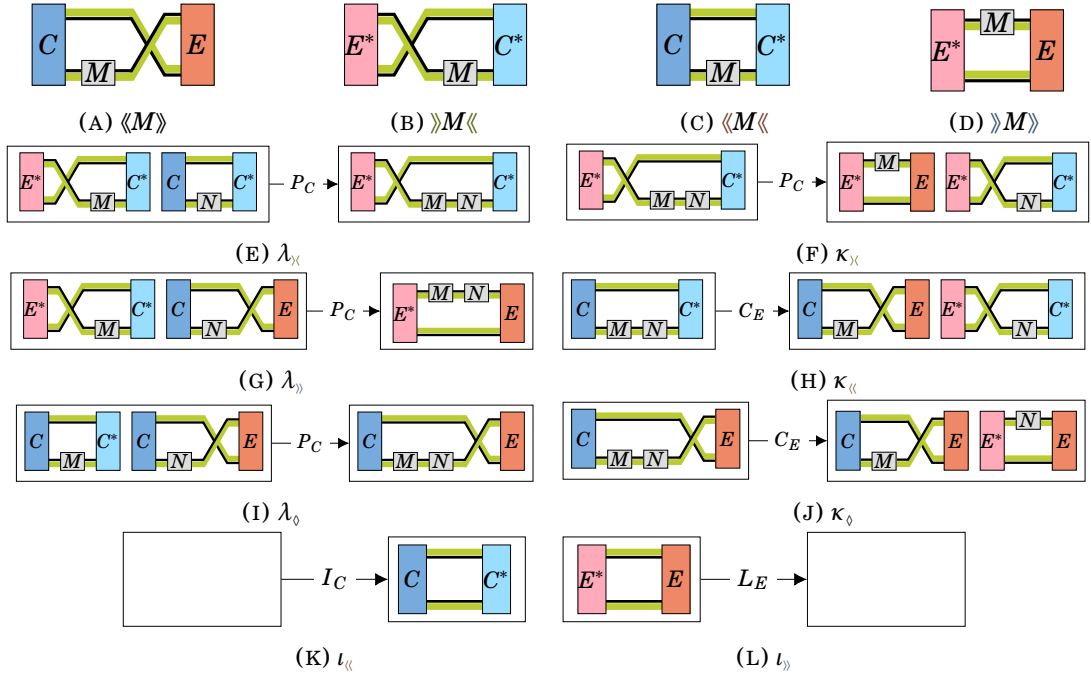
 
In a monoidal bicategory the composite in \cref{fig:symmetric_shadow}, called the {\bf 3-fold twisting map}, switches the order of the 1-cells as in \cref{eq:big_square} but treats them all more symmetrically.  
We verify the large diagram in the definition of an umbra commutes by showing the composites agree with the 3-fold twisting map.  One of these comparison is in 
 \cref{fig:compare_3_shadows} on page \pageref{fig:compare_3_shadows}.  The small regions commute by naturality.  All but one are immediate.  The remaining square commutes by  \cref{fig:dual_triangle_compare} on page \pageref{fig:dual_triangle_compare}.

The comparison of the 3-fold twisting map and the other composite in \cref{eq:big_square} is similar.
\end{proof}

\afterpage{\clearpage
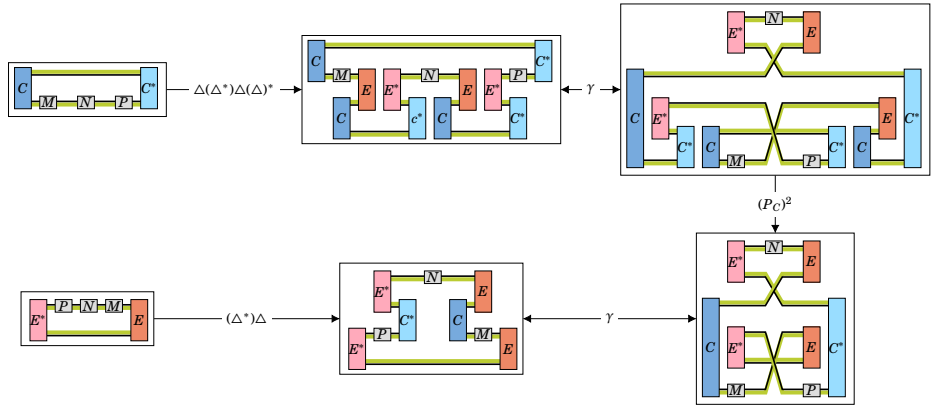
\begin{figure}
\resizebox{.85\textwidth}{!}
{\begin{tikzpicture}

\node[draw] (Y6) at (0,-6){\begin{tikzpicture}
\oc{x}{M}{}{3}{4}{4}{\en}
\oc{y}{N}{}{2}{4}{4}{\en}
\oc{e6}{E}{_6}{4}{4}{3}{\ep\dk}

\oc{e8}{\rdual{E}}{_8}{0}{4}{3}{\epr\dk}

\oc{z}{P}{}{1}{4}{4}{\en}
\gc{g1}{-2}{4}{1}
\gc{g2}{3}{4}{1}

\begin{scope}[on background layer]
\dlp{({e8}b)--({e6}b)}
\alp{ ({e8}t)--({z}b)--({y}b)--({x}b)--({e6}t)}
\end{scope}
\end{tikzpicture}};

\node[draw] (B9) at (0,0){\begin{tikzpicture}
\oc{t1}{C}{_1}{0}{5}{4}{\et}
\oc{x}{M}{}{1}{4}{4}{\en}
\oc{y}{N}{}{2.5}{4}{4}{\en}
\oc{t2}{\rdual{C}}{_2}{5}{5}{4}{\etr}

\oc{z}{P}{}{4}{4}{4}{\en}
\gc{g1}{-2}{4}{1}
\gc{g2}{3}{4}{1}

\begin{scope}[on background layer]
\dlp{({t1}t)--({t2}t)}

\alp{ ({t1}b)--({x}b)--({y}b)--({z}b)--({t2}b)}
\end{scope}
\end{tikzpicture}};

\node[draw] (C1) at (9,0){\begin{tikzpicture}
\oc{t1}{C}{_1}{-2}{5}{4}{\et}
\oc{x}{M}{}{-1}{4}{4}{\en}
\oc{y}{N}{}{2.5}{4}{4}{\en}
\oc{t2}{\rdual{C}}{_2}{7}{5}{4}{\etr}
\oc{t5}{C}{_5}{-1}{3}{2}{\et\dk}
\oc{e6}{E}{_6}{0}{4}{3}{\ep\dk}
\oc{ta}{\rdual{c}}{_a}{2}{3}{2}{\etr\lt}
\oc{eb}{\rdual{E}}{_b}{1}{4}{3}{\epr\lt}

\oc{tc}{C}{_c}{3}{3}{2}{\et\lt}
\oc{ed}{E}{_d}{4}{4}{3}{\ep\lt}
\oc{t7}{\rdual{C}}{_7}{6}{3}{2}{\etr\dk}
\oc{e8}{\rdual{E}}{_8}{5}{4}{3}{\epr\dk}

\oc{z}{P}{}{6}{4}{4}{\en}
\gc{g1}{-2}{4}{1}
\gc{g2}{3}{4}{1}

\begin{scope}[on background layer]
\dlp{ ({t1}t)--({t2}t)}
\dlp{({t5}t)--({e6}b)}
\dlp{({ta}t)--({eb}b)}
\dlp{({tc}t)--({ed}b)}
\dlp{({t7}t)--({e8}b)}

\alp{ ({t1}b)--({x}b)--({e6}t)}
\alp{ ({t5}b)--({ta}b)}
\alp{ ({eb}t)--({y}b)--({ed}t)}
\alp{ ({tc}b)--({t7}b)}
\alp{ ({e8}t)--({z}b)--({t2}b)}
\end{scope}
\end{tikzpicture}};

\node[draw] (C2) at (18,0){\begin{tikzpicture}
\oc{t1}{C}{_1}{0}{5}{4}{\et}
\oc{x}{M}{}{1}{4}{4}{\en}
\oc{y}{N}{}{2.5}{9}{9}{\en}
\oc{t2}{\rdual{C}}{_2}{5}{5}{4}{\etr}
\oc{t5}{C}{_5}{6}{5}{4}{\et\dk}
\oc{e6}{E}{_6}{7}{6}{5}{\ep\dk}
\oc{ta}{\rdual{C}}{_a}{8}{7}{4}{\etr\lt}
\oc{eb}{\rdual{E}}{_b}{1}{9}{8}{\epr\lt}

\oc{tc}{C}{_c}{-3}{7}{4}{\et\lt}
\oc{ed}{E}{_d}{4}{9}{8}{\ep\lt}
\oc{t7}{\rdual{C}}{_7}{-1}{5}{4}{\etr\dk}
\oc{e8}{\rdual{E}}{_8}{-2}{6}{5}{\epr\dk}

\oc{z}{P}{}{4}{4}{4}{\en}
\gc{g1}{2.5}{8}{7}
\gc{g2}{2.5}{6}{4}

\begin{scope}[on background layer]
\dlp{ ({t1}t)--({t2}t)}
\dlp{({t5}t)--({e6}b)}
\dlp{({ta}t)--({g1}br)--({g1}tl)--({eb}b)}
\dlp{({tc}t)--({g1}bl)--({g1}tr)--({ed}b)}
\dlp{({t7}t)--({e8}b)}

\alp{ ({t1}b)--({x}b)--({g2}bl)--({g2}tr)--({e6}t)}
\alp{ ({t5}b)--({ta}b)}
\alp{ ({eb}t)--({y}b)--({ed}t)}
\alp{ ({tc}b)--({t7}b)}
\alp{ ({e8}t)--({g2}tl)--({g2}br)--({z}b)--({t2}b)}
\end{scope}
\end{tikzpicture}};

\node[draw] (C4) at ( 18,-6){\begin{tikzpicture}

\oc{x}{M}{}{1}{4}{4}{\en}
\oc{y}{N}{}{2.5}{9}{9}{\en}
\oc{e6}{E}{_6}{4}{6}{5}{\ep\dk}
\oc{ta}{\rdual{C}}{_a}{5}{7}{4}{\etr\lt}
\oc{eb}{\rdual{E}}{_b}{1}{9}{8}{\epr\lt}

\oc{tc}{C}{_c}{0}{7}{4}{\et\lt}
\oc{ed}{E}{_d}{4}{9}{8}{\ep\lt}
\oc{e8}{\rdual{E}}{_8}{1}{6}{5}{\epr\dk}

\oc{z}{P}{}{4}{4}{4}{\en}
\gc{g1}{2.5}{8}{7}
\gc{g2}{2.5}{6}{4}

\begin{scope}[on background layer]
\dlp{({e8}b)--({e6}b)}
\dlp{({ta}t)--({g1}br)--({g1}tl)--({eb}b)}
\dlp{({tc}t)--({g1}bl)--({g1}tr)--({ed}b)}

\alp{({tc}b)--({x}b)--({g2}bl)--({g2}tr)--({e6}t)}
\alp{ ({e8}t)--({g2}tl)--({g2}br)--({z}b)--({ta}b)}
\alp{ ({eb}t)--({y}b)--({ed}t)}
\end{scope}
\end{tikzpicture}};

\node[draw] (C5) at (9,-6){\begin{tikzpicture}

\oc{x}{M}{}{1}{4}{4}{\en}
\oc{y}{N}{}{-1}{6}{6}{\en}
\oc{e6}{E}{_6}{2}{4}{3}{\ep\dk}
\oc{ta}{\rdual{C}}{_a}{-2}{5}{4}{\etr\lt}
\oc{eb}{\rdual{E}}{_b}{-3}{6}{5}{\epr\lt}

\oc{tc}{C}{_c}{0}{5}{4}{\et\lt}
\oc{ed}{E}{_d}{1}{6}{5}{\ep\lt}
\oc{e8}{\rdual{E}}{_8}{-4}{4}{3}{\epr\dk}

\oc{z}{P}{}{-3}{4}{4}{\en}

\begin{scope}[on background layer]
\dlp{({e8}b)--({e6}b)}
\dlp{({ta}t)--({eb}b)}
\dlp{({tc}t)--({ed}b)}

\alp{ ({tc}b)--({x}b)--({e6}t)}
\alp{ ({e8}t)--({z}b)--({ta}b)}
\alp{ ({eb}t)--({y}b)--({ed}t)}
\end{scope}
\end{tikzpicture}};

\draw[<->](C1)--(C2)node [midway , fill=white] {$\gamma$};
\draw[<->](C4)--(C5)node [midway , fill=white] {$\gamma$};
\draw[->](B9)--(C1)node [midway , fill=white] {$\triangle(\triangle^*)\triangle (\triangle)^*$};
\draw[->](C2)--(C4)node [midway , fill=white] {$(P_C)^2$};
\draw[->](Y6)--(C5)node [midway , fill=white] {$(\triangle^*)\triangle $};
\end{tikzpicture}
\caption{The 3-fold twisting map}\label{fig:symmetric_shadow}
\end{figure}
\begin{figure}
\resizebox{\textwidth}{!}{\input{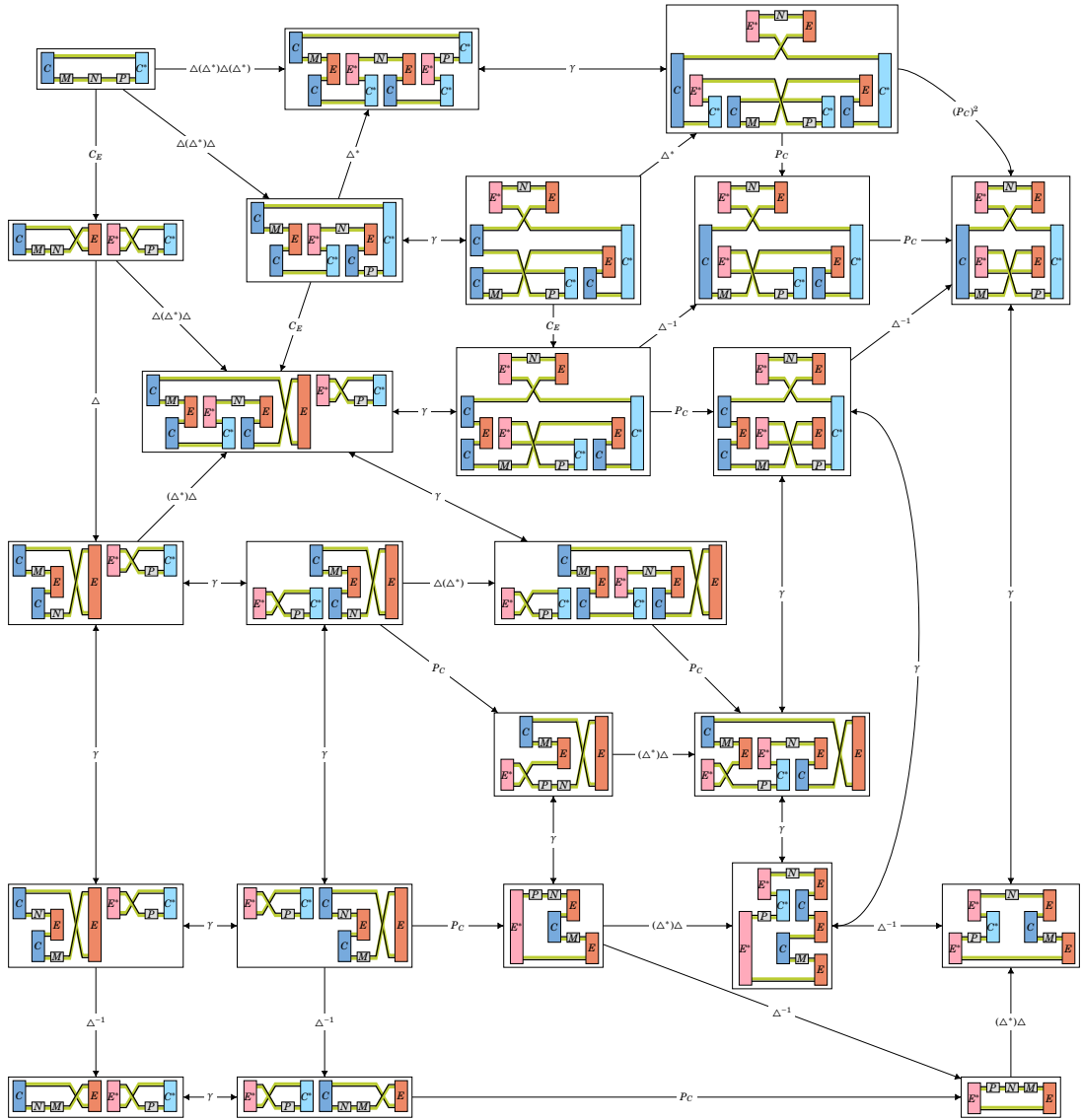}}
\caption{Comparing left composite to symmetric shadow}\label{fig:compare_3_shadows}
\end{figure}
\clearpage}

\section{Applications: 2-Characters}\label{sec:two_characters}

One of the major motivations for iterated traces is categorical character theory. The notion of a categorical 2-character seems to have been introduced by Ganter-Kapranov in \cite{ganter_kapranov}, with motivation from the theory of characters in Hopkins-Kuhn-Ravenel \cite{Hopkins-Kuhn-Ravenel}. 
Ben-Zvi--Nadler \cite[Thm. 1.4]{ben_zvi_nadler_1} establish the modular invariance of the 2-categorical character.  Since this is an easy consequence of the techniques we use here, we include the proof as an introduction to the power of these ideas. 

In our development, the modular invariance of  the 2-categorical character does not have much to do with group theory, and so we start with a few more results in the flavor of \cref{sec:bicat_duals} and then recall the relevant definitions of \cite{ganter_kapranov}.  It seems that the most natural place to discuss 2-characters is not the context of 2-categories, but instead the context of double categories. This context makes the action by the usual generators $S, T$ of the modular group apparent. Given this, much of the rest of this section is setting up the appropriate double categorical language. Once this is in place, the modular invariance of 2-characters follows formally from work above.

\subsection{Double shadows}
A {\bf double category} is a category internal to $Cat$.  We think of a double category $\mathbb{D}$ as 0-cells, vertical 1-cells, horizontal 1-cells, and 2-cells that fill squares
\begin{equation*}\begin{tikzcd}
 \arrow[d, "X" description] \arrow[r, "f" description] &  \arrow[d, "Y" description] \\
 \arrow[r, "g" description]                            & {}                      
\end{tikzcd}\end{equation*} 

\begin{example}  For a bicategory $\sB$, let $D(\sB)$ be the double category 
whose 0-cells are the 0-cells of $\sB$.  
The vertical  1-cells are tuples $(X,X^*,\eta,\epsilon)$ consisting of a right dualizable 1-cell and a choice of right dual, coevaluation and evaluation.  The horizontal 1-cells are tuples $(X,^*X,\eta,\epsilon)$ consisting of a left dualizable 1-cell and a choice of left dual, coevaluation and evaluation.  The 2-cells filling a square 
\begin{equation*}\begin{tikzcd}
 \arrow[dd, "{(Z,Z^*,\eta_Z,\epsilon_Z)}" description] \arrow[rr, "{(X,^*X,\eta_X,\epsilon_X)}"] &&  \arrow[dd, "{(W,W^*,\eta_W,\epsilon_W)}" description] \\ \\
 \arrow[rr, "{(Y,^*Y,\eta_Y,\epsilon_Y)}" ]                            & &{}                          
\end{tikzcd}\label{fig:standard_duals}
\end{equation*}
are maps $\alpha \colon X\odot W\to Z\odot Y $.  
\end{example}

In a double category $\mathbb{D}$  a vertical 1-cell $ X:A\to B$ and a horizontal 1-cell $\cm{X}:A\to B$ are a {\bf companion pair} if they come equipped with 2-cells
\[\begin{tikzcd}
 \arrow[d, "X" description] \arrow[r, "\cm{X}" description] & \arrow[ld,shorten <>=10pt,Rightarrow]  \arrow[d, dotted,] \\
 \arrow[r,dotted]                            & {}                          
\end{tikzcd}
\hspace{1cm}\begin{tikzcd}
 \arrow[d, dotted,]  \arrow[r,dotted]  & \arrow[ld,shorten <>=10pt,Rightarrow]   \arrow[d, "X" description] \\
  \arrow[r, "\cm{X}" description]                        & {}                          
\end{tikzcd}\]
so the pastings along $X$ and $\cm{X}$ are identity 2-cells.
A vertical 1-cell $ X:A\to B$ and a horizontal 1-cell $\cn{X}:A\to B$ are a 
{\bf conjoint pair} if they come equipped with 2-cells
\[\begin{tikzcd}
 \arrow[d, "X" description]  \arrow[r,dotted]  &  \arrow[d, dotted,] \arrow[dl,shorten <>=10pt,Rightarrow] \\
       \arrow[r, "\cn{X}" description]                   & {}                          
\end{tikzcd}
\hspace{1cm}\begin{tikzcd}
\arrow[d, dotted,] \arrow[r, "\cn{X}" description]            &  \arrow[dl,shorten <>=10pt,Rightarrow] \arrow[d, "X" description]      \\
        \arrow[r,dotted]     & {}                          
\end{tikzcd}\]
so the pastings along $X$ and $\cn{X}$ are identity 2-cells.

\begin{example}
 The double category $D(\sB)$ has functorial  conjoints that take 
$(X,X^*,\eta,\epsilon)$ to $(X^*,X,\eta,\epsilon)$.  The 2-cells that demonstrate this is a conjoint pair are the maps $\eta$ and $\epsilon$. 
\end{example}

In this bicategory companions require additional definitions.
\begin{defn}
A 1-cell $X$ in a bicategory $\sB$ is {\bf invertible} if there is a 1-cell $X^{-1}$ and invertible 2-cells
$U_A \cong X \odot X^{-1}$ and $X^{-1} \odot X\cong U_B $ satisfying the triangle identities for a dual pair.
\end{defn}

\begin{lem}
If $X$ is an invertible 1-cell with inverse $X^{-1}$, then $X^{-1}$ is both the left and right dual of $X$. 
\end{lem}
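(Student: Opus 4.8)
The statement to prove is: if $X$ is an invertible 1-cell in a bicategory $\sB$ with inverse $X^{-1}$, then $X^{-1}$ is both a left dual and a right dual of $X$. The key observation is that the very data witnessing invertibility --- the invertible 2-cells $\eta\colon U_A \cong X \odot X^{-1}$ and $\epsilon\colon X^{-1} \odot X \cong U_B$ together with the triangle identities --- \emph{is} exactly a choice of duality data, so one direction is essentially immediate: the quadruple $(X, X^{-1}, \eta, \epsilon)$ satisfies the conditions of \cref{defn:symm_mon_duality} (in its bicategorical form), so $X^{-1}$ is a right dual of $X$. The content of the lemma is therefore the \emph{other} handedness: that $X^{-1}$ is also a left dual of $X$, equivalently that $(X^{-1}, X)$ is also a dual pair with $X^{-1}$ right dualizable and $X$ its right dual.

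First I would record that, because $\eta$ and $\epsilon$ are \emph{invertible} 2-cells, we may form their inverses $\eta^{-1}\colon X \odot X^{-1} \to U_A$ and $\epsilon^{-1}\colon U_B \to X^{-1} \odot X$. The plan is to use $\epsilon^{-1}$ as a coevaluation and $\eta^{-1}$ as an evaluation for the pair $(X^{-1}, X)$: that is, to check that
\[
U_B \xrightarrow{\epsilon^{-1}} X^{-1} \odot X \qquad X \odot X^{-1} \xrightarrow{\eta^{-1}} U_A
\]
exhibit $X^{-1}$ as right dualizable with right dual $X$. The two triangle identities that need to be verified are the composites
\[
X^{-1} \xrightarrow{\epsilon^{-1}\odot\id} X^{-1}\odot X \odot X^{-1} \xrightarrow{\id\odot\eta^{-1}} X^{-1}
\quad\text{and}\quad
X \xrightarrow{\id\odot\epsilon^{-1}} X \odot X^{-1}\odot X \xrightarrow{\eta^{-1}\odot\id} X,
\]
both required to be identities (suppressing associators and unitors). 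Each of these is obtained from one of the original triangle identities for $(X, X^{-1}, \eta, \epsilon)$ by inverting every 2-cell in sight: since $\id_X$ and $\id_{X^{-1}}$ are their own inverses and $(\phi\odot\psi)^{-1} = \phi^{-1}\odot\psi^{-1}$ for invertible 2-cells, applying $(-)^{-1}$ to the identity ``$\id_X\odot\epsilon$ after $\eta\odot\id_X$ equals $\id_X$'' (up to the structural isomorphisms, which are themselves invertible and behave coherently under inversion) yields precisely the desired triangle for the reversed pair. So the second half of the lemma follows by a dualization argument on the coherence diagrams, with no new input beyond invertibility of $\eta$, $\epsilon$ and functoriality of $\odot$ on 2-cells.

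The only mild subtlety --- and the step I would expect to require the most care --- is bookkeeping with the associativity and unit constraints of the bicategory: the triangle identities as literally stated involve implicit $a$'s, $l$'s, and $r$'s, and one must check that inverting the 2-cell composite really does produce the triangle identity for the other handedness \emph{on the nose}, rather than up to some residual structural isomorphism. This is handled by noting that the structural isomorphisms are invertible and that the pentagon/triangle coherence for $\sB$ makes the relevant pasting diagrams of constraints compatible with passing to inverses; alternatively one can invoke the standard fact (see e.g. \cite{leinster}) that in any bicategory an adjoint equivalence can be upgraded so that \emph{both} unit and counit are invertible and each serves as unit/counit for the reverse adjunction. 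I would present the proof by exhibiting the explicit evaluation/coevaluation $(\eta^{-1}, \epsilon^{-1})$ for $(X^{-1}, X)$ and then citing this standard coherence fact to conclude the triangle identities hold, rather than writing out the diagram chase in full.
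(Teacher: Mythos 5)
Your proof is correct and follows the same route as the paper: take $(\eta,\epsilon)$ as the duality data exhibiting $X^{-1}$ as a right dual (this is built into the definition of invertibility), and take $(\epsilon^{-1},\eta^{-1})$ as coevaluation and evaluation for the reversed pair. The paper simply asserts the second half without comment; your observation that the reversed triangle identities are obtained by inverting the 2-cell composites in the original ones is the correct (and omitted) justification.
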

\begin{proof}
  Since there are isomorphisms $U_A \cong X \odot X^{-1}$ and $X^{-1} \odot X\cong U_B $ 
the maps 
	\[(U_A\to X\odot X^{-1}, X^{-1}\odot X\to U_B)\]
 demonstrate $X$ is right dualizable.  The maps 
	\[(U_B\to X^{-1}\odot X, X\odot X^{-1}\to U_A)\]
demonstrate $X$ is left dualizable.
\end{proof}

\begin{example}
The 1-cells $(X,X^*,\eta,\epsilon)$ where $X^*$ is an inverse for $X$ and $\eta$ and $\epsilon$ are isomorphisms have companions. 
The companion of $(X,X^*,\eta,\epsilon)$  is $(X,X^*,\epsilon^{-1},\eta^{-1})$.   The 2-cells that demonstrate this is a companion pair are the identity 2-cells.

Let $I(\sB)$ be the subdouble category of $D(\sB)$ consisting of invertible 1-cells.
\end{example}

A 2-cell $\alpha$ in a double category $\mathbb{D}$ filling a square of the form 
\begin{equation*}\begin{tikzcd}
 \arrow[d, "X" description] \arrow[r, "f" description] &  \arrow[d, "X" description] \\
 \arrow[r, "f" description]                            & {}                          
\end{tikzcd}\end{equation*} 
is an {\bf endomorphism 2-cell} in $\mathbb{D}$.
A {\bf double shadow} on a double category $\mathbb{D}$ is a function from the set of endomorphism 2-cells to a set $S$ so that the images of
\[\begin{tikzcd}
 \arrow[r, "f" description] \arrow[d, "X" description] &   \arrow[ld,shorten <>=10pt,Rightarrow,"\alpha" above]\arrow[d, "Y" description] \arrow[r, "g" description]  & \arrow[ld, shorten <>=10pt,Rightarrow,"\beta"above ] \arrow[d, "X" description] \\
 \arrow[r, "f" description]                                                  &  \arrow[r, "g" description]                                                  & {}                          
\end{tikzcd}\text{ and } \begin{tikzcd}
 \arrow[r, "g" description] \arrow[d, "Y" description]& \arrow[ld,shorten <>=10pt,Rightarrow, "\beta" above] \arrow[d, "X" description] \arrow[r, "f" description]  &\arrow[ld,shorten <>=10pt,Rightarrow, "\alpha" above]  \arrow[d, "Y" description] \\
 \arrow[r, "g" description]                                                  &  \arrow[r, "f" description]                                                  & {}                          
\end{tikzcd}\]
agree and the same for the images of 
\[ \begin{tikzcd}
 \arrow[d, "X" description] \arrow[r, "f" description]  &\arrow[ld,shorten <>=10pt,Rightarrow, "\alpha" above]  \arrow[d, "X" description] \\
 \arrow[d, "Y" description] \arrow[r, "g" description] &\arrow[ld,shorten <>=10pt,Rightarrow, "\beta" above ]  \arrow[d, "Y" description] \\
 \arrow[r, "f" description]                                                  & {}                          
\end{tikzcd}\text{ and } \begin{tikzcd}
 \arrow[d, "Y" description] \arrow[r, "g" description] &\arrow[ld,shorten <>=10pt,Rightarrow, "\beta" above]  \arrow[d, "Y" description] \\
 \arrow[d, "X" description] \arrow[r, "f" description] &\arrow[ld,shorten <>=10pt,Rightarrow, "\alpha" above]  \arrow[d, "X" description] \\
 \arrow[r, "g" description]                                                  & {}                        
\end{tikzcd}.\]
(This is similar but not identical to the definition in \cite[13.7]{ps:linearity}.)

\begin{thm}
The iterated trace is a double shadow on $D(\sB)$.
\end{thm}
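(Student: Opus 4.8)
The plan is to verify that the iterated trace, viewed as a function on endomorphism 2-cells of $D(\sB)$, satisfies the two coherence conditions in the definition of a double shadow: compatibility with horizontal composition and compatibility with vertical composition of endomorphism 2-cells. The key observation is that an endomorphism 2-cell in $D(\sB)$ filling the square with vertical 1-cell $(X, X^*, \eta, \epsilon)$ and horizontal 1-cell $(Y, {}^*Y, \eta', \epsilon')$ is precisely a 2-cell $\alpha \colon Y \odot X \to X \odot Y$ (with $X$ right dualizable and $Y$ left dualizable), so the iterated trace of $\alpha$ is exactly the common value $\tr_{\sh{X}}(\tr_Y(\alpha)) = \tr_{\sh{Y}}(\tr_X(\alpha))$ provided by \cref{thm:main_bzn}. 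Thus the function is well-defined; what remains is the two agreement conditions.

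First I would treat the horizontal composition condition. Given endomorphism 2-cells $\alpha$ on $((X, Y))$ and $\beta$ on $((X, Y'))$ that are horizontally composable (sharing the vertical 1-cell $X$ — wait, sharing the horizontal boundary), their horizontal composite is an endomorphism 2-cell whose underlying 2-cell in $\sB$ is the composite $Y \odot Y' \odot X \to \cdots \to X \odot Y \odot Y'$ built from $\alpha$ and $\beta$. Taking the trace of this composite with respect to $X$ and then applying \cref{thm:composite} (the composite trace theorem of \cite{ps:bicat}, 7.5) identifies $\tr_X$ of the horizontal composite with the composite of traces $\tr_X(\alpha)$ followed by $\tr_X(\beta)$, hence with the horizontal composite in the other order after one more application of \cref{thm:main_bzn} to commute the shadow-level trace past the composite; the symmetry statement ``$\alpha$-then-$\beta$ equals $\beta$-then-$\alpha$'' is then cyclicity of the trace in $\sB(I,I)$ together with the identification of the horizontal composite of endomorphism 2-cells being read left-to-right versus right-to-left. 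I would spell this out as: first reduce to a single trace via \cref{thm:main_bzn}, then apply \cref{thm:composite}, then use that the trace in the symmetric monoidal category $\sB(I,I)$ (equivalently, the iterated trace) is invariant under cyclic permutation, exactly as in the proof of \cref{thm:pol_2}.

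The vertical composition condition is dual: here $\alpha$ on $((X,Y))$ and $\beta$ on $((X',Y))$ share the horizontal 1-cell $Y$, their vertical composite has underlying 2-cell $Y \odot X \odot X' \to X \odot X' \odot Y$, and the dual form of \cref{thm:composite} (applied to $X \odot X'$ right dualizable, via \cref{lem:duals_compose}) identifies $\tr_Y$ of the composite with the composite of $\tr_Y(\alpha)$ and $\tr_Y(\beta)$; invariance of the trace in $\sB(I,I)$ under cyclic permutation again gives the required agreement of the two orders of vertical pasting. The main obstacle I anticipate is purely bookkeeping: matching the precise pasting diagrams of 2-cells in the double category $D(\sB)$ (which involve the chosen coevaluations and evaluations in an essential way) with the string of composites to which \cref{thm:composite} applies, and checking that the associativity and unit isomorphisms suppressed in both formalisms are compatible. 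There is no new conceptual content beyond \cref{thm:main_bzn}, \cref{thm:composite}, and cyclicity of the symmetric monoidal trace; the work is in confirming that the double-categorical composites are literally the bicategorical composites those theorems address.
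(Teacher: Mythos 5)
Your overall strategy---reduce each double-shadow axiom to a composite-of-traces identity plus cyclicity of the trace in $\sB(I,I)$---is workable, but one step in your treatment of the horizontal pasting condition does not typecheck as written. In that condition the two squares being pasted are \emph{not} individually endomorphism 2-cells: $\alpha$ fills a square with left vertical $X$, right vertical $Y$, and the same horizontal $f$ on top and bottom, so its underlying 2-cell is $\alpha\colon f\odot Y\to X\odot f$. This is not a twisted endomorphism of $X$, so ``$\tr_X(\alpha)$'' is undefined, and \cref{thm:composite} cannot be applied in the way you describe---that theorem decomposes the trace over a \emph{composite of right dualizable 1-cells} $M_1\odot M_2$, which is the shape of the \emph{vertical} condition (where your argument does go through, modulo the fact that in general the top and bottom horizontals of each square differ, which changes nothing essential). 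To repair your route for the horizontal condition you would first use \cref{thm:main_bzn} to rewrite $\tr_{\sh{f\odot g}}(\tr_X(-))$ as a trace of $\tr_{f\odot g}(-)$, then apply a left-dualizable analogue of \cref{thm:composite} to the horizontal 1-cells $f$ and $g$ to decompose $\tr_{f\odot g}$ of the pasting as $\tr_f(\alpha)\circ\tr_g(\beta)$, and only then invoke cyclicity.

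The paper's proof is shorter and avoids this entirely: it applies \cite[7.2]{ps:bicat} to conclude that $\tr_X$ of one pasting and $\tr_Y$ of the other are conjugate via the shadow isomorphism $\sh{f\odot g}\cong\sh{g\odot f}$, and then quotes \cite[2.4]{ps:symmetric} (conjugate endomorphisms have equal symmetric monoidal trace). Neither \cref{thm:main_bzn} nor \cref{thm:composite} is needed for the axioms themselves; the main theorem enters only to say that the iterated trace of an endomorphism 2-cell is well defined.
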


\begin{proof}
The iterated trace applied to a 2-cell filling  a square of the form 
\begin{equation*}\begin{tikzcd}
 \arrow[d, "X" description] \arrow[r, "f" description] &  \arrow[d, "X" description] \\
 \arrow[r, "f" description]                            & {}                          
\end{tikzcd}\end{equation*} 
is $\tr_{\sh{f}}(\tr_X(\alpha))=\tr_{\sh{X}}(\tr_f(\alpha))$.

Given 2-cells 
\[\begin{tikzcd}
 \arrow[r, "f" description] \arrow[d, "X"  description]  &  \arrow[ld,shorten <>=10pt,Rightarrow, "\alpha"above ]\arrow[d, "Y" description] \arrow[r, "g" description] & \arrow[ld,shorten <>=10pt,Rightarrow, "\beta" above] \arrow[d, "X" description] \\
 \arrow[r, "f" description]                                      &  \arrow[r, "g" description]                                                  & {}                          
\end{tikzcd}\]
Then 
\[\xymatrix@C=80pt{
\sh{f\odot g}\ar[r]^{\tr_X\left((\alpha\odot 1_g)\circ (1_f\odot \beta)\right)}\ar[d]^\sim &\sh{f\odot g}\ar[d]^\sim
\\
\sh{g\odot f}\ar[r]^{\tr_Y\left((\beta\odot 1_f)\circ (1_g\circ \alpha)\right)}&\sh{g\odot f}
}\]
by \cite[7.2]{ps:bicat}.
Then 
\[\tr_{\sh{f\odot g}}(\tr_X\left((\alpha\odot 1_g)\circ (1_f\odot \beta)\right))=\tr_{\sh{g\odot f}}(\tr_Y\left((\beta\odot 1_f)\circ (1_g\circ \alpha)\right))\]
by \cite[2.4]{ps:symmetric}. 

The other condition for a double shadow is similar.
\end{proof}

The next subsection is devoted to
describing symmetry properties of the endomorphisms of a double category and double shadow.

\subsection{Action of $SL_2(\mathbb{Z})$}
For a double category $\mathbb{D}$ let $2_\mathbb{D}$ be the set of 2-cells  subject to the relation 
$\alpha\sim\beta$ if there exist  isomorphisms 2-cells 
$\gamma_1, \gamma_2,\gamma_3,\gamma_4$ so that 
\[\begin{tikzcd}
 \arrow[d, dotted] \arrow[r, dotted]          &  \arrow[d, dotted] \arrow[r, "f" description]                          &  \arrow[d, dotted] \arrow[r, dotted]          &  \arrow[d, dotted]          \\
 \arrow[d, "X" description] \arrow[r, dotted] &  \arrow[d, "X" description] \arrow[r, "f" description]  &  \arrow[ld,shorten <>=10pt,Rightarrow,"\alpha" above]\arrow[d, "Y" description] \arrow[r, dotted] &  \arrow[d, "Y" description] \\
 \arrow[d, dotted] \arrow[r, dotted]          &  \arrow[r, "g" description] \arrow[d, dotted]                          & \arrow[r, dotted] \arrow[d, dotted]         &  \arrow[d, dotted]          \\
 \arrow[r, dotted]                            &  \arrow[r, "g" description]                                            &  \arrow[r, dotted]                            & {}                          
\end{tikzcd} = 
\begin{tikzcd}
 \arrow[d, dotted] \arrow[r, dotted]                          &  \arrow[d, dotted] \arrow[r, "f" description]     & \arrow[ld,shorten <>=10pt,Rightarrow,"\gamma_1" above] \arrow[d, dotted] \arrow[r, dotted]                          &  \arrow[d, dotted]          \\
 \arrow[d, "X" description] \arrow[r, dotted] &  \arrow[ld,shorten <>=10pt,Rightarrow,"\gamma_2" above]\arrow[d, "X'" description] \arrow[r, "f'" description]  & \arrow[ld,shorten <>=10pt,Rightarrow,"\beta" above] \arrow[d, "Y'" description] \arrow[r, dotted] &\arrow[ld,shorten <>=10pt,Rightarrow,"\gamma_3" above]  \arrow[d, "Y" description] \\
 \arrow[d, dotted] \arrow[r, dotted]                          &  \arrow[r, "g'" description] \arrow[d, dotted]      & \arrow[ld,shorten <>=10pt,Rightarrow, "\gamma_4" above] \arrow[r, dotted] \arrow[d, dotted]                         &  \arrow[d, dotted]          \\
 \arrow[r, dotted]                                            &  \arrow[r, "g" description]                                            &  \arrow[r, dotted]                                            & {}                          
\end{tikzcd}
\]
The dotted arrows are identity 1-cells and empty squares are filled by identity 2-cells.

\begin{lem}
There are group actions on $2_\mathbb{D}$ as specified in  \cref{table:group_act_2D}.
\begin{table}[]
\begin{tabular}{|l|p{8cm}|}
\hline
Group&Assumption\\ \hline
$\mathbb{Z}=\langle S\rangle$                  & There are companions and conjoint for all 1-cells                                                                                 \\ \hline
$\mathbb{Z}/4=\langle S| S^4\rangle$                                   & $\cn{\cm{X}}=\cm{\cn{X}}$ for all 1-cells and the diagrams in \cref{fig:big_z4_1,fig:big_z4_2} define the same 2-cells               \\ \hline
$\mathbb{N}=\langle T\rangle$                                & There are companions and conjoints for all horizontal 1-cells                                                                  \\ \hline
$\mathbb{Z}=\langle T\rangle$                          & There are companions and conjoints for all horizontal 1-cells.  There are  2-cells as in \cref{fig:inverse_for_t} and the 2-cells in \cref{fig:inverse_for_t_2} are identities    \\ \hline
$\mathbb{Z}/4*\mathbb{Z}=\langle S,T|S^4\rangle $ & All of the above                                                                                                             \\ \hline
\end{tabular}
\caption{Group actions on $2_\mathbb{D}$}\label{table:group_act_2D}
\end{table}
\end{lem}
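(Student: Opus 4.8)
The strategy is to build each of the five group actions in the table in the same way: identify generators on $2_{\mathbb{D}}$ and check the defining relations modulo the equivalence $\sim$. The element $S$ should act by ``rotating a square by ninety degrees'': given a 2-cell $\alpha\colon X\odot W\to Z\odot Y$ (in the notational conventions of $D(\sB)$, but the argument is purely double-categorical), replace each vertical leg by its companion and each horizontal leg by its conjoint, then use the companion/conjoint structure 2-cells to produce a new 2-cell filling the rotated square. That is, $S(\alpha)$ is the pasting of $\alpha$ with the four structure cells coming from the companions of $X,Y$ and the conjoints of the horizontal legs; under the hypothesis ``there are companions and conjoints for all 1-cells'' this is always defined, and one checks $S$ is well-defined on $2_{\mathbb{D}}$ since pasting with the $\gamma_i$ in the definition of $\sim$ simply gets absorbed into the new structure cells. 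For the $\mathbb{Z}$-action generated by $S$ one verifies only that $S$ is invertible on $2_{\mathbb{D}}$: $S^{-1}$ is the rotation in the opposite direction, and $S\circ S^{-1}\sim \mathrm{id}$ because the companion-of-companion and conjoint-of-conjoint cells compose to identities by the companion/conjoint coherence axioms (pastings along the legs are identity 2-cells). Here no further hypothesis is needed precisely because we only ask for an action of the free group $\mathbb{Z}$.

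For the $\mathbb{Z}/4$-action one must additionally check $S^4\sim \mathrm{id}$. After four rotations each leg has been replaced by its companion-or-conjoint twice; the hypothesis $\cn{\cm{X}}=\cm{\cn{X}}$ together with the identification ``the diagrams in \cref{fig:big_z4_1,fig:big_z4_2} define the same 2-cells'' is exactly what is needed to see that the accumulated pasting of eight structure cells is the identity, so $S^4(\alpha)=\alpha$ in $2_{\mathbb{D}}$. The $T$-action is the analogous construction using only \emph{horizontal} data: $T$ should ``slide a square one step to the right'' or conjugate by the companion/conjoint of a horizontal 1-cell, which requires companions and conjoints only for horizontal 1-cells; this gives a monoid homomorphism from $\mathbb{N}=\langle T\rangle$. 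Promoting $\mathbb{N}$ to $\mathbb{Z}$ requires $T$ to be invertible on $2_{\mathbb{D}}$, which is supplied by the 2-cells in \cref{fig:inverse_for_t} (a candidate inverse to the relevant companion/conjoint) together with \cref{fig:inverse_for_t_2} asserting the two composites are identities. Finally the action of the free product $\mathbb{Z}/4 * \mathbb{Z}=\langle S,T\mid S^4\rangle$ is obtained by combining the $S$- and $T$-actions: since the presentation of a free product imposes no relations \emph{between} $S$ and $T$, once both actions are separately defined and $S^4\sim\mathrm{id}$ holds, the universal property of the free product produces the action, so ``all of the above'' literally suffices with nothing more to check.

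The main obstacle will be the well-definedness of $S$ (and $T$) on the quotient $2_{\mathbb{D}}$ and the verification of the $S^4$ relation. These are not deep but are bookkeeping-heavy: one must carefully track how the structure 2-cells for companions and conjoints paste together, invoke their coherence axioms (the two pastings along a common leg being identities), and in the $\mathbb{Z}/4$ case appeal to the two named figure-level identities. I would organize the computation as a single large pasting diagram for $S$, observe that the relation $\alpha\sim\beta$ via $\gamma_1,\dots,\gamma_4$ transports to a relation $S(\alpha)\sim S(\beta)$ via suitably whiskered $\gamma_i$, and then reduce $S^4$ to a closed pasting of eight structure cells that collapses by the hypotheses. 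Everything else — invertibility of $S$ and $T$, and the assembly of the free-product action — is then formal.
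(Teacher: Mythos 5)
Your proposal matches the paper's proof in both strategy and detail: $S$ is the quarter-turn obtained by pasting $\alpha$ with the companion/conjoint structure cells (sending vertical legs to their companions and horizontal legs to their conjoints), $T$ is the shear pasting in the companion of the bottom horizontal 1-cell, invertibility of each is checked via the zig-zag identities for companions and conjoints, $S^4=\mathrm{id}$ is exactly what the two named figure-level identities supply, and the free-product action requires nothing beyond the two separate actions. The paper carries this out by exhibiting the explicit pasting diagrams for $S^{\pm 1}$, $T^{\pm 1}$ and the inverse verifications, which is precisely the bookkeeping you describe; your additional remark on well-definedness over the relation $\sim$ (whiskering the $\gamma_i$) is a point the paper leaves implicit.
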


\begin{proof}
Let $\tend{D}{X}fYg$ be the subset of $2_\mathbb{D}$ that fills squares of the form 
\begin{equation}\label{eq:shape}
\begin{tikzcd}[ampersand replacement=\&]
 {}\arrow[d, "X" description] \arrow[r, "f" description] \&  \arrow[d, "Y" description] \\
 {}\arrow[r, "g" description] \&{}                          
\end{tikzcd}
\end{equation}

The diagrams in \cref{fig:actions_of_generators} define functions
\begin{align}S\colon \tend{D}{X}{f}{Y}{g}&\to \tend{D}{\cn{f}}{\cm{Y}}{\cn{g}}{\cm{X}} \label{eq:map_s}
\\
S^{-1}\colon \tend{D}{X}{f}{Y}{g}&\to \tend{D}{\cm{g}}{\cn{X}}{\cm{f}}{\cn{Y}}\label{eq:map_s_inv}
\\
T\colon \tend{D}{X}{f}{Y}{g}&\to \tend{D}{(X\odot \cm{g})}{f}{(Y\odot \cm{g})}{g} \label{eq:map_t}
\\
T^{-1}\colon \tend{D}{X}{f}{Y}{g}&\to \tend{D}{(X\odot \cn{g})}{f}{(Y\odot \cn{g})}{g} \label{eq:map_t_inv}
\end{align}
See \cref{fig:s_t_inverses} for the verification that $S^{-1}$ acts as an inverse for $S$ and $T^{-1}$ acts as the inverse of $T$.
\begin{figure}
  \begin{subfigure}[b]{0.27\textwidth}
	\adjustbox{max width=\textwidth}{\begin{tikzcd}
 \arrow[d, "\cn{f}" description] \arrow[r, dotted]              &  \arrow[d, "\cn{f}" description] \arrow[r, dotted] & \arrow[ld,shorten <>=10pt,Rightarrow] \arrow[d, dotted] \arrow[r, "\cm{Y}" description]                                &  \arrow[d, dotted]          \\
\arrow[r, dotted] \arrow[d, dotted] &\arrow[ld,shorten <>=10pt,Rightarrow]  \arrow[d, "X" description] \arrow[r, "f" description]              &\arrow[ld,shorten <>=10pt,Rightarrow, "\alpha" above ]  \arrow[d, "Y" description] \arrow[r, "\cm{Y}" description]  & \arrow[ld,shorten <>=10pt,Rightarrow]  \arrow[d, dotted]          \\
 \arrow[r, "\cm{X}" description] \arrow[d, dotted]              &  \arrow[r, "g" description]  \arrow[d, dotted] & \arrow[ld,shorten <>=10pt,Rightarrow]\arrow[d, "\cn{g}" description] \arrow[r, dotted]                               &  \arrow[d, "\cn{g}" description] \\
 \arrow[r, "\cm{X}" description]                                &  \arrow[r, dotted]                                                  &  \arrow[r, dotted]                                                           & {}
\end{tikzcd}}
\caption{Action of $S$
 \eqref{eq:map_s}}
\end{subfigure}
\hfill 
  	\begin{subfigure}[b]{0.27\textwidth}
\adjustbox{max width=\textwidth}{\begin{tikzcd}
 \arrow[d, dotted] \arrow[r, "\cn{X}" description]                            &  \arrow[d, dotted] \arrow[r, dotted]        &\arrow[ld,shorten <>=10pt,Rightarrow]  \arrow[d, "\cm{f}" description] \arrow[r, dotted]                            &  \arrow[d, "\cm{f}" description] \\
 \arrow[r, "\cn{X}" description] \arrow[d, dotted] &\arrow[ld,shorten <>=10pt,Rightarrow]  \arrow[d, "X" description] \arrow[r, "f" description]                 &  \arrow[ld,shorten <>=10pt,Rightarrow, "\alpha" above]\arrow[d, "Y" description]  \arrow[r, dotted] & \arrow[ld,shorten <>=10pt,Rightarrow] \arrow[d, dotted]          \\
 \arrow[r, dotted] \arrow[d, "\cm{g}" description]                            &  \arrow[r, "g" description] \arrow[d, "\cm{g}" description]  & \arrow[ld,shorten <>=10pt,Rightarrow] \arrow[r, "\cn{Y}" description] \arrow[d, dotted]                            &  \arrow[d, dotted]          \\
 \arrow[r, dotted]                                                       &  \arrow[r, dotted]                                                     &  \arrow[r, "\cn{Y}" description]                                              & {}                         
\end{tikzcd}}
\caption{Action of $S^{-1}$
 \eqref{eq:map_s_inv}}
\end{subfigure}
\hfill 
  \begin{subfigure}[b]{0.19\textwidth}
\adjustbox{max width=\textwidth}{\begin{tikzcd}
 \arrow[d, "X" description] \arrow[r, "f" description] &  \arrow[ld,shorten <>=10pt,Rightarrow, "\alpha" above ]\arrow[d, "Y" description] \arrow[r, dotted]              &  \arrow[d, "Y" description] \\
 \arrow[r, "g" description]  \arrow[d, "\cm{g}" description]                  & \arrow[ld,shorten <>=10pt,Rightarrow] \arrow[r, dotted] \arrow[d, dotted] &\arrow[ld,shorten <>=10pt,Rightarrow]  \arrow[d, "\cm{g}" description] \\
 \arrow[r, dotted]                                      & \arrow[r, "g" description]                               & {}          
\end{tikzcd}}
	\caption{Action of $T$ \eqref{eq:map_t}}
  \end{subfigure}
\hfill
  \begin{subfigure}[b]{0.19\textwidth}
\adjustbox{max width=\textwidth}{\begin{tikzcd}
 \arrow[d, "X" description] \arrow[r, dotted]  &  \arrow[d, "X" description] \arrow[r, "f" description]              &\arrow[ld,shorten <>=10pt,Rightarrow,"\alpha" above]  \arrow[d, "Y" description] \\
 \arrow[r, dotted] \arrow[d, "\cn{g}" description]      &  \arrow[ld,shorten <>=10pt,Rightarrow]\arrow[r, "g" description] \arrow[d, dotted]  & \arrow[ld,shorten <>=10pt,Rightarrow] \arrow[d, "\cn{g}" description] \\
 \arrow[r, "g" description]                &  \arrow[r, dotted]                                                  & {}                          
\end{tikzcd}}
	\caption{Action of $T^{-1}$ \eqref{eq:map_t_inv}}
  \end{subfigure}
\caption{The actions of generators and their inverses on a 2-cell filling \cref{eq:shape}}\label{fig:actions_of_generators}
\end{figure}
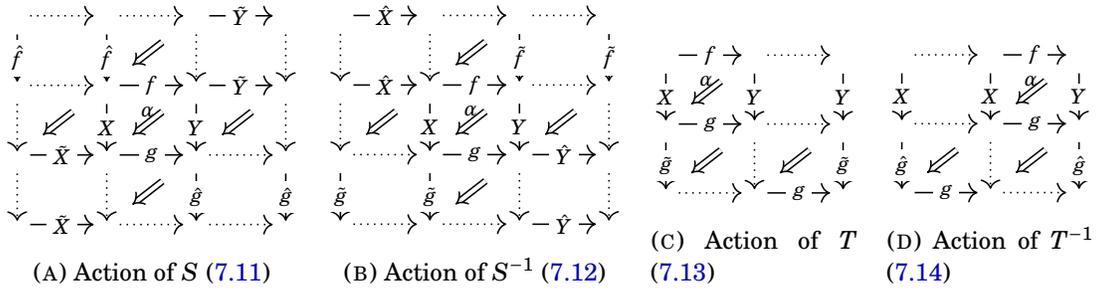
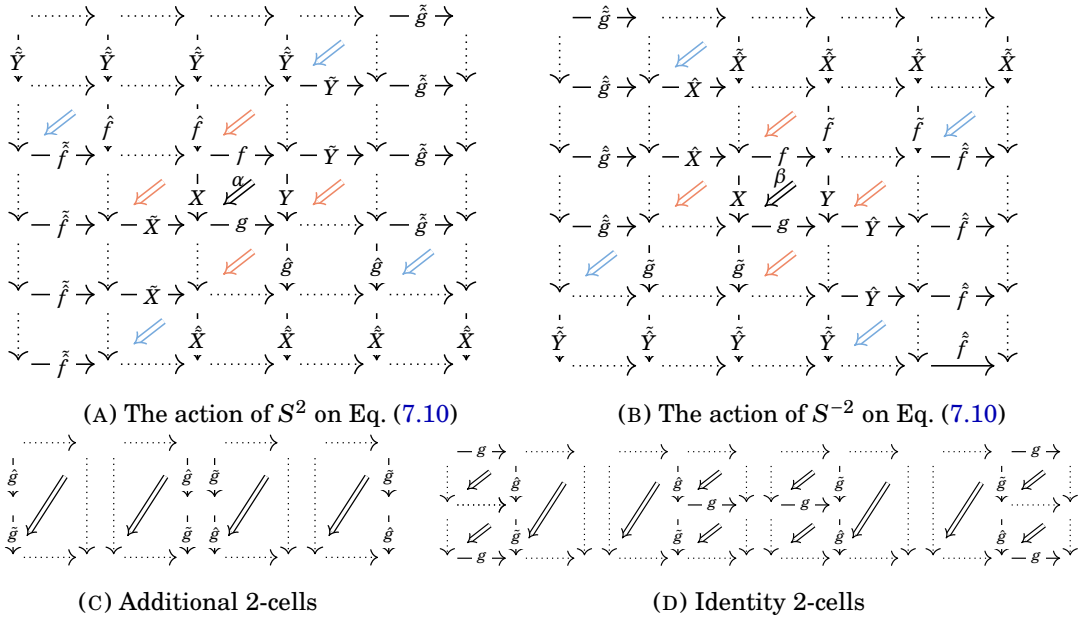
\begin{figure}
  \begin{subfigure}[b]{0.49\textwidth}
\begin{tikzcd}
 \arrow[d, "\cn{\cm{Y}}" description] \arrow[r, dotted]    &  \arrow[d, "\cn{\cm{Y}}" description] \arrow[r, dotted]                  &  \arrow[r, dotted] \arrow[d, "\cn{\cm{Y}}" description]                  &  \arrow[d, "\cn{\cm{Y}}" description] \arrow[r, dotted]   &\arrow[\et, ld,shorten <>=10pt,Rightarrow]\arrow[d, dotted] \arrow[r, "\cm{\cn{g}}" description]                                          &  \arrow[d, dotted]                    \\
 \arrow[d, dotted] \arrow[r, dotted] &  \arrow[\et, ld,shorten <>=10pt,Rightarrow]\arrow[d, "\cn{f}" description] \arrow[r, dotted]                       &  \arrow[d, "\cn{f}" description] \arrow[r, dotted]  & \arrow[\ep, ld,shorten <>=10pt,Rightarrow] \arrow[d, dotted] \arrow[r, "\cm{Y}" description]                                &  \arrow[d, dotted] \arrow[r, "\cm{\cn{g}}" description]                                          &  \arrow[d, dotted]                    \\
 \arrow[r, "\cm{\cn{f}}" description] \arrow[d, dotted]    &  \arrow[r, dotted] \arrow[d, dotted]               & \arrow[\ep, ld,shorten <>=10pt,Rightarrow] \arrow[d, "X" description] \arrow[r, "f" description]                   & \arrow[ld,shorten <>=10pt,Rightarrow,"\alpha" above] \arrow[d, "Y" description] \arrow[r, "\cm{Y}" description] &\arrow[ld,shorten <>=10pt,Rightarrow, \ep]  \arrow[d, dotted] \arrow[r, "\cm{\cn{g}}" description]                                          &  \arrow[d, dotted]                    \\
 \arrow[r, "\cm{\cn{f}}" description] \arrow[d, dotted]    &  \arrow[r, "\cm{X}" description] \arrow[d, dotted]                       &  \arrow[r, "g" description]  \arrow[d, dotted]      & \arrow[ld,shorten <>=10pt,Rightarrow, \ep] \arrow[d, "\cn{g}" description] \arrow[r, dotted]                                &  \arrow[d, "\cn{g}" description] \arrow[r, "\cm{\cn{g}}" description]& \arrow[ld,shorten <>=10pt,Rightarrow, \et] \arrow[d, dotted]                    \\
 \arrow[r, "\cm{\cn{f}}" description] \arrow[d, dotted]    &  \arrow[r, "\cm{X}" description] \arrow[d, dotted] &  \arrow[ld,shorten <>=10pt,Rightarrow, \et ]\arrow[r, dotted] \arrow[d, "\cn{\cm{X}}" description]                  &  \arrow[r, dotted] \arrow[d, "\cn{\cm{X}}" description]                           & {} \arrow[r, dotted] \arrow[d, "\cn{\cm{X}}" description]                                         &  \arrow[d, "\cn{\cm{X}}" description] \\
 \arrow[r, "\cm{\cn{f}}" description]                      &  \arrow[r, dotted]                                                       &  \arrow[r, dotted]                                                       &  \arrow[r, dotted]                                                                &  \arrow[r, dotted]                                                                               & {}                                    
\end{tikzcd}
\caption{The action of $S^2$ on \cref{eq:shape}}\label{fig:big_z4_1}
\end{subfigure}
  \begin{subfigure}[b]{0.49\textwidth}
\begin{tikzcd}
 \arrow[d, dotted] \arrow[r, "\cn{\cm{g}}" description]                       &  \arrow[r, dotted] \arrow[d, dotted]& \arrow[ld,shorten <>=10pt,Rightarrow,\et ] \arrow[d, "\cm{\cn{X}}" description] \arrow[r, dotted]                          &  \arrow[d, "\cm{\cn{X}}" description] \arrow[r, dotted]                       &  \arrow[d, "\cm{\cn{X}}" description] \arrow[r, dotted]                       &  \arrow[d, "\cm{\cn{X}}" description]    \\
 \arrow[d, dotted] \arrow[r, "\cn{\cm{g}}" description]                       &  \arrow[d, dotted] \arrow[r, "\cn{X}" description]                          &  \arrow[d, dotted] \arrow[r, dotted]& \arrow[ld,shorten <>=10pt,Rightarrow, \ep ] \arrow[d, "\cm{f}" description] \arrow[r, dotted]                       &  \arrow[d, "\cm{f}" description] \arrow[r, dotted]  & \arrow[ld,shorten <>=10pt,Rightarrow,\et] \arrow[d, dotted] \\
 \arrow[d, dotted] \arrow[r, "\cn{\cm{g}}" description]                       &  \arrow[r, "\cn{X}" description] \arrow[d, dotted]    & \arrow[ld,shorten <>=10pt,Rightarrow, \ep] \arrow[d, "X" description] \arrow[r, "f" description]                             & \arrow[ld,shorten <>=10pt,Rightarrow,"\beta" above] \arrow[d, "Y" description] \arrow[r, dotted] & \arrow[ld,shorten <>=10pt,Rightarrow, \ep] \arrow[d, dotted] \arrow[r, "\cn{\cm{f}}" description]                       &  \arrow[d, dotted] \\
 \arrow[r, "\cn{\cm{g}}" description] \arrow[d, dotted] & \arrow[ld,shorten <>=10pt,Rightarrow, \et] \arrow[r, dotted] \arrow[d, "\cm{g}" description]                          &  \arrow[r, "g" description] \arrow[d, "\cm{g}" description]   &\arrow[ld,shorten <>=10pt,Rightarrow, \ep]  \arrow[r, "\cn{Y}" description] \arrow[d, dotted]                       &  \arrow[d, dotted] \arrow[r, "\cn{\cm{f}}" description]                       &  \arrow[d, dotted] \\
 \arrow[r, dotted] \arrow[d, "\cm{\cn{Y}}" description]                       &  \arrow[r, dotted] \arrow[d, "\cm{\cn{Y}}" description]                          &  \arrow[r, dotted] \arrow[d, "\cm{\cn{Y}}" description]                          &  \arrow[r, "\cn{Y}" description] \arrow[d, "\cm{\cn{Y}}" description]    & \arrow[ld,shorten <>=10pt,Rightarrow, \et] \arrow[d, dotted] \arrow[r, "\cn{\cm{f}}" description]                       &  \arrow[d, dotted] \\
 \arrow[r, dotted]                                      &  \arrow[r, dotted]                                         &  \arrow[r, dotted]                                         &  \arrow[r, dotted]                                      &  \arrow[r, "\cn{\cm{f}}"]                                         & {}       
\end{tikzcd}\caption{The action of $S^{-2}$ on \cref{eq:shape}}\label{fig:big_z4_2}
\end{subfigure}
  \begin{subfigure}[b]{0.37\textwidth}
\adjustbox{max width=\textwidth}{
\begin{tikzcd}
 \arrow[d, "\cn{g}" description] \arrow[r, dotted] &  \arrow[ldd,shorten <>=10pt,Rightarrow]\arrow[dd, dotted] \\
 \arrow[d, "\cm{g}" description]                               &                     \\
 \arrow[r, dotted]                                             & {}            
\end{tikzcd}
\begin{tikzcd}
 \arrow[r, dotted] \arrow[dd, dotted] &  \arrow[ldd,shorten <>=10pt,Rightarrow]\arrow[d, "\cn{g}" description] \\
                                                             &  \arrow[d, "\cm{g}" description] \\
 \arrow[r, dotted]                                           & {}                          
\end{tikzcd}
\begin{tikzcd}
 \arrow[d, "\cm{g}" description] \arrow[r, dotted]&  \arrow[ldd,shorten <>=10pt,Rightarrow]\arrow[dd, dotted] \\
 \arrow[d, "\cn{g}" description]                                          &                     \\
\arrow[r, dotted]                                                       & {}              
\end{tikzcd}
\begin{tikzcd}
 \arrow[r, dotted] \arrow[dd, dotted] & \arrow[ldd,shorten <>=10pt,Rightarrow] \arrow[d, "\cm{g}" description] \\
                                                             &  \arrow[d, "\cn{g}" description] \\
 \arrow[r, dotted]                                           & {}                          
\end{tikzcd}}
\caption{Additional 2-cells}\label{fig:inverse_for_t}
\end{subfigure}
\hfill 
  \begin{subfigure}[b]{0.6\textwidth}
\adjustbox{max width=\textwidth}{
\begin{tikzcd}
 \arrow[r, "g" description] \arrow[d, dotted]      &\arrow[ld,shorten <>=10pt,Rightarrow]\arrow[d, "\cn{g}" description] \arrow[r, dotted]  & \arrow[ldd,shorten <>=10pt,Rightarrow] \arrow[dd, dotted] \\
 \arrow[r, dotted] \arrow[d, dotted] \arrow[r, dotted]  &  \arrow[ld,shorten <>=10pt,Rightarrow]\arrow[d, "\cm{g}" description]                               &                     \\
 \arrow[r, "g" description]                                                  &  \arrow[r, dotted]                                             & {}                 
\end{tikzcd}
\begin{tikzcd}
 \arrow[r, dotted] \arrow[dd, dotted] & \arrow[ldd,shorten <>=10pt,Rightarrow] \arrow[r, dotted] \arrow[d, "\cn{g}" description]      & \arrow[ld,shorten <>=10pt,Rightarrow] \arrow[d, dotted] \\
                                                             &  \arrow[r, "g" description] \arrow[d, "\cm{g}" description]  &  \arrow[ld,shorten <>=10pt,Rightarrow]\arrow[d, dotted] \\
 \arrow[r, dotted]                                           &  \arrow[r, dotted]                                                                & {}          
\end{tikzcd}
\begin{tikzcd}
 \arrow[d, dotted] \arrow[r, dotted]    &  \arrow[ld,shorten <>=10pt,Rightarrow]\arrow[d, "\cm{g}" description] \arrow[r, dotted]  & \arrow[ldd,shorten <>=10pt,Rightarrow] \arrow[dd, dotted] \\
 \arrow[r, "g" description] \arrow[d, dotted]& \arrow[ld,shorten <>=10pt,Rightarrow] \arrow[d, "\cn{g}" description]                                          &                     \\
 \arrow[r, dotted]                                                  & \arrow[r, dotted]                                                       & {}           
\end{tikzcd}
\begin{tikzcd}
 \arrow[r, dotted] \arrow[dd, dotted] & \arrow[ldd,shorten <>=10pt,Rightarrow] \arrow[r, "g" description] \arrow[d, "\cm{g}" description] & \arrow[ld,shorten <>=10pt,Rightarrow] \arrow[d, dotted] \\
                                                             &  \arrow[r, dotted] \arrow[d, "\cn{g}" description]       &  \arrow[ld,shorten <>=10pt,Rightarrow]\arrow[d, dotted] \\
 \arrow[r, dotted]                                           &  \arrow[r, "g" description]                                                       & {}          
\end{tikzcd}
}
\caption{Identity 2-cells}\label{fig:inverse_for_t_2}
\end{subfigure}
\caption{Conditions on companions and conjoints}
\end{figure}
\begin{figure}

  \begin{subfigure}[b]{0.9\textwidth}
\begin{tikzcd}
 \arrow[d, "X"  description] \arrow[r, dotted]                               &  \arrow[d, "X" description] \arrow[r, dotted]     & \arrow[ld,shorten <>=10pt,Rightarrow,\et] \arrow[d, dotted] \arrow[r, dotted]                              &  \arrow[d, dotted] \arrow[r, dotted]                                &  \arrow[d, dotted] \arrow[r, "f" description]                                     &  \arrow[d, dotted] 
\\
 \arrow[r, dotted] \arrow[d, dotted]                            &  \arrow[d, dotted] \arrow[r, "\cn{X}" description]                       &  \arrow[d, dotted] \arrow[r, dotted]  &\arrow[ld,shorten <>=10pt,Rightarrow, \ep]  \arrow[d, "\cm{f}" description] \arrow[r, dotted]                       &  \arrow[d, "\cm{f}" description] \arrow[r, "f" description] & \arrow[ld,shorten <>=10pt,Rightarrow,\et] \arrow[d, dotted] 
\\
 \arrow[r, dotted] \arrow[d, dotted]                            &  \arrow[r, "\cn{X}" description] \arrow[d, dotted]  & \arrow[ld,shorten <>=10pt,Rightarrow, \ep] \arrow[d, "X" description] \arrow[r, "f" description]            & \arrow[ld,shorten <>=10pt,Rightarrow, "\alpha" above ] \arrow[d, "Y" description] \arrow[r, dotted] & \arrow[ld,shorten <>=10pt,Rightarrow, \ep] \arrow[d, dotted] \arrow[r, dotted]                                  &  \arrow[d, dotted] 
\\
 \arrow[r, dotted] \arrow[d, dotted]  &  \arrow[ld,shorten <>=10pt,Rightarrow, \et]\arrow[r, dotted] \arrow[d, "\cm{g}" description]                       &  \arrow[r, "g" description] \arrow[d, "\cm{g}" description] & \arrow[ld,shorten <>=10pt,Rightarrow,\ep] \arrow[r, "\cn{Y}" description] \arrow[d, dotted]                       &  \arrow[d, dotted] \arrow[r, dotted]                                  &  \arrow[d, dotted] \\
 \arrow[r, "g" description] \arrow[d, dotted]                               &  \arrow[r, dotted] \arrow[d, dotted]                                &  \arrow[r, dotted] \arrow[d, dotted]                              &  \arrow[r, "\cn{Y}" description] \arrow[d, dotted]  & \arrow[ld,shorten <>=10pt,Rightarrow, \et] \arrow[d, "Y" description] \arrow[r, dotted]                                     &  \arrow[d, "Y" description]    
\\
 \arrow[r, "g" description]                                                 &  \arrow[r, dotted]                                                  &  \arrow[r, dotted]                                                &  \arrow[r, dotted]                                                  &  \arrow[r, dotted]                                                    & {}               
\end{tikzcd}
=
\begin{tikzcd}
 \arrow[d, "X" description, dotted] \arrow[r, dotted]   &  \arrow[ld,shorten <>=10pt,Rightarrow,\et ]\arrow[d, dotted] \arrow[r, dotted]             &  \arrow[ld,shorten <>=10pt,Rightarrow, \ep]\arrow[d, "\cm{f}" description] \arrow[r, "f" description] & \arrow[ld,shorten <>=10pt,Rightarrow,\et] \arrow[d, dotted]          \\
 \arrow[r, "\cn{X}" description] \arrow[d, dotted] &  \arrow[ld,shorten <>=10pt,Rightarrow, \ep]\arrow[d, "X" description] \arrow[r, "f" description]                            &\arrow[ld,shorten <>=10pt,Rightarrow, "\alpha" above ]  \arrow[d, "Y" description]  \arrow[r, dotted]          & \arrow[ld,shorten <>=10pt,Rightarrow, \ep] \arrow[d, dotted]          \\
 \arrow[r, dotted] \arrow[d, dotted]          &\arrow[ld,shorten <>=10pt,Rightarrow,\et]  \arrow[r, "g" description] \arrow[d, "\cm{g}" description] &  \arrow[ld,shorten <>=10pt,Rightarrow, \ep]\arrow[r, "\cn{Y}" description] \arrow[d, dotted]      & \arrow[ld,shorten <>=10pt,Rightarrow,\et] \arrow[d, "Y" description] \\
 \arrow[r, "g" description]                                              &  \arrow[r, dotted]                                                                &  \arrow[r, dotted]                                                                & {}                          
\end{tikzcd}
\caption{Verifying $SS^{-1}$ is the identity on \cref{eq:shape}}
\end{subfigure}
  \begin{subfigure}[b]{0.48\textwidth}
\adjustbox{max width=\textwidth}{
\begin{tikzcd}
 \arrow[d, "X" description] \arrow[r, dotted]                                &  \arrow[d, "X" description] \arrow[r, "f" description]              &  \arrow[ld,shorten <>=10pt,Rightarrow, "\alpha" above ]\arrow[d, "Y" description] \arrow[r, dotted]               &  \arrow[d, "Y" description] \\
 \arrow[r, dotted] \arrow[d, "\cn{g}" description]       & \arrow[ld,shorten <>=10pt,Rightarrow, \ep] \arrow[r, "g" description] \arrow[d, dotted]& \arrow[ld,shorten <>=10pt,Rightarrow,\ep] \arrow[d, "\cn{g}" description] \arrow[r, dotted]               &  \arrow[d, "\cn{g}" description] \\
 \arrow[r, "g" description] \arrow[d, "\cm{g}" description] &  \arrow[ld,shorten <>=10pt,Rightarrow,\et]\arrow[r, dotted] \arrow[d, dotted]                                &  \arrow[d, dotted] \arrow[r, dotted] &  \arrow[ld,shorten <>=10pt,Rightarrow,\et ]\arrow[d, "\cm{g}" description] \\
 \arrow[r, dotted]                                                           &  \arrow[r, dotted]                                                  &  \arrow[r, "g" description]                                 & {}                         
\end{tikzcd}=
\begin{tikzcd}
 \arrow[d, "X" description] \arrow[r, dotted]                                &  \arrow[d, "X" description] \arrow[r, "f" description]                       &  \arrow[ld,shorten <>=10pt,Rightarrow, "\alpha" above ]\arrow[d, "Y" description]  \\
 \arrow[r, dotted] \arrow[d, "\cn{g}" description]       & \arrow[ld,shorten <>=10pt,Rightarrow,\ep] \arrow[r, "g" description] \arrow[d, dotted]      &  \arrow[ld,shorten <>=10pt,Rightarrow, \ep]\arrow[d, "\cn{g}" description]  \\
 \arrow[r, "g" description] \arrow[d, "\cm{g}" description] &  \arrow[ld,shorten <>=10pt,Rightarrow,\et ]\arrow[r, dotted] \arrow[d, dotted] \arrow[r, dotted]  & \arrow[ld,shorten <>=10pt,Rightarrow,\et ]\arrow[d, "\cm{g}" description] \\
 \arrow[r, dotted]                                                           &  \arrow[r, "g" description]                                                  & {}                           
\end{tikzcd}}
	\caption{Verifying $TT^{-1}$ is the identity on \cref{eq:shape}}
  \end{subfigure}
  \begin{subfigure}[b]{0.48\textwidth}
\adjustbox{max width=\textwidth}{
\begin{tikzcd}
 \arrow[d, "X" description] \arrow[r, dotted]                       &  \arrow[d, "X" description] \arrow[r, "f" description]                       &  \arrow[ld,shorten <>=10pt,Rightarrow, "\alpha" above ]\arrow[d, "Y" description] \arrow[r, dotted]                       &  \arrow[d, "Y" description] \\
 \arrow[d, "\cm{g}" description] \arrow[r, dotted]                       &  \arrow[r, "g" description] \arrow[d, "\cm{g}" description] &  \arrow[ld,shorten <>=10pt,Rightarrow, \ep ]\arrow[d, dotted] \arrow[r, dotted]        & \arrow[ld,shorten <>=10pt,Rightarrow,\ep] \arrow[d, "\cm{g}" description] \\
\arrow[d, "\cn{g}" description] \arrow[r, dotted] &  \arrow[ld,shorten <>=10pt,Rightarrow,\et ]\arrow[r, dotted] \arrow[d, dotted]                                         &  \arrow[r, "g" description] \arrow[d, dotted]  & \arrow[ld,shorten <>=10pt,Rightarrow,\et ] \arrow[d, "\cn{g}" description] \\
 \arrow[r, "g" description]                                         &  \arrow[r, dotted]                                                           &  \arrow[r, dotted]                                                  & {}                          
\end{tikzcd}=\begin{tikzcd}
 \arrow[d, "X" description] \arrow[r, "f" description]                       &\arrow[ld,shorten <>=10pt,Rightarrow, "\alpha" above ]   \arrow[d, "Y" description] \arrow[r, dotted]                       & \arrow[d, "Y" description] \\
 \arrow[r, "g" description] \arrow[d, "\cm{g}" description] & \arrow[ld,shorten <>=10pt,Rightarrow,\ep] \arrow[d, dotted] \arrow[r, dotted]     &  \arrow[ld,shorten <>=10pt,Rightarrow, \ep]\arrow[d, "\cm{g}" description] \\
 \arrow[r, dotted] \arrow[d, "\cn{g}" description]       & \arrow[ld,shorten <>=10pt,Rightarrow,\et ] \arrow[r, "g" description] \arrow[d, dotted]  &  \arrow[ld,shorten <>=10pt,Rightarrow,\et ]\arrow[d, "\cn{g}" description] \\
 \arrow[r, "g" description]                                                  &  \arrow[r, dotted]                                                  & {}                          
\end{tikzcd}}
	\caption{Verifying $T^{-1}T$ is the identity on \cref{eq:shape}}
  \end{subfigure}
\caption{Verifying that $S$ and $T$ have inverses}\label{fig:s_t_inverses}
\end{figure}
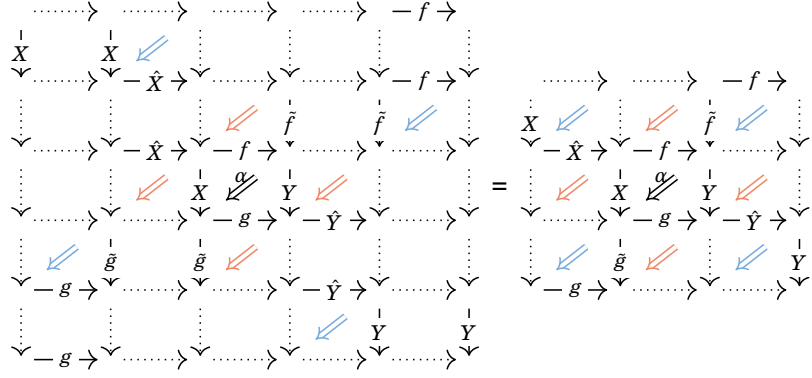
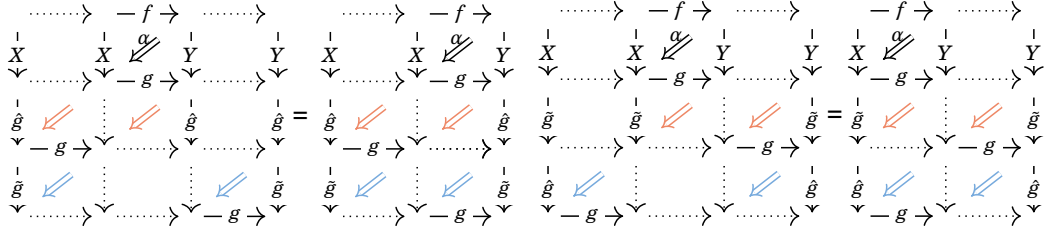  

Together these define an action of $\langle S,T\rangle$ on $2_\mathbb{D}$.  If the diagrams in \cref{fig:big_z4_1,fig:big_z4_2} define the same 2-cell then $S^4$ acts as the identity.
\end{proof}

\begin{prop}\label{prop:st_inv} If $\alpha$ is an endomorphism 2-cell in $\mathbb{D}$
 and  $\dosh{-}$ is a double shadow then \[\dosh{S\alpha }=\dosh{\alpha}=\dosh{T\alpha}.\]
\end{prop}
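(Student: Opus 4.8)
The plan is to reduce the statement to the defining axioms of a double shadow by identifying, for each generator, the endomorphism 2-cell obtained by applying the generator with a 2-cell built from the original $\alpha$ by pasting in companion/conjoint structure 2-cells. First I would recall that $S\alpha$, as defined in \cref{fig:actions_of_generators}, fills the square with vertical 1-cell $\cn{f}$ and horizontal 1-cell $\cm{Y}$ (and the bottom/right edges $\cn{g}$ and $\cm{X}$); since $\dosh{-}$ is only defined on endomorphism 2-cells, the statement $\dosh{S\alpha}=\dosh{\alpha}$ only makes sense once we observe that when $\alpha$ is an endomorphism 2-cell (so $X = Y$ and $f = g$ in the notation of \cref{eq:shape}) the cell $S\alpha$ is again an endomorphism 2-cell, filling the square with vertical 1-cell $\cn{f}$ and horizontal 1-cell $\cm{X}$. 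Similarly $T\alpha$ fills the square with vertical 1-cell $X \odot \cm{f}$ and horizontal 1-cell $f$, which is again an endomorphism 2-cell.

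The key step is then to exhibit $S\alpha$ and $T\alpha$ as vertical or horizontal composites of $\alpha$ with invertible structure 2-cells in such a way that the double shadow axioms force equality of the values. For $T$: the diagram in \cref{fig:actions_of_generators}(c) presents $T\alpha$ as a horizontal paste of $\alpha$ with the companion structure 2-cells of $\cm{f}$; since those structure 2-cells paste to identities along $\cm{f}$ and along the trivial vertical 1-cell, and since the double shadow is insensitive to horizontal reassociation of endomorphism 2-cells (by the second pair of diagrams in the definition, with one of the two factors being an identity-ish cell), we get $\dosh{T\alpha} = \dosh{\alpha}$. Concretely: $T\alpha$ is the horizontal composite $\beta \circ \alpha \circ \gamma$ where $\beta$ and $\gamma$ are built from the companion 2-cells of $\cm f$; the double shadow axiom applied to this composite, using that the relevant structure 2-cells compose to identities, collapses $\dosh{T\alpha}$ to $\dosh{\alpha}$. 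For $S$: the diagram in \cref{fig:actions_of_generators}(a) presents $S\alpha$ as a paste of $\alpha$ with the conjoint structure 2-cells of $f$ and the companion structure 2-cells of $Y=X$; now one uses both double shadow axioms — the horizontal-cyclicity axiom and the vertical-cyclicity axiom — together with the fact that the conjoint and companion 2-cells satisfy the defining pasting identities, so that after moving factors around the diagram the composite is identified with $\alpha$ itself up to structure 2-cells that paste to identities. This is exactly the double-shadow analogue of the fact, used in the proof that the iterated trace is a double shadow, that a trace is invariant under cyclic permutation and under replacing a 1-cell by its dual.

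The main obstacle is bookkeeping: making precise which pasting composite $S\alpha$ equals, and checking that every structure 2-cell appearing can be absorbed using one of the two double shadow axioms (or the identity-2-cell conventions for the dotted edges in \cref{fig:actions_of_generators}). The cleanest way to organize this is to first prove the two special cases $\dosh{S\alpha}=\dosh{\alpha}$ where one of the pasted regions is trivial, and then assemble; but since the diagrams in \cref{fig:actions_of_generators} already display $S\alpha$ and $T\alpha$ as single pasting diagrams containing $\alpha$ as a subcell surrounded by unit and (co)evaluation 2-cells, the argument is really: \emph{read off the pasting, apply horizontal and vertical cyclicity of $\dosh{-}$ to slide the surrounding 2-cells around, and use the companion/conjoint pasting laws to see the surrounding 2-cells cancel}. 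I would write the proof as two short paragraphs, one for $T$ (using only the second double-shadow axiom plus the companion pasting law) and one for $S$ (using both axioms plus the conjoint pasting law), referencing \cref{fig:actions_of_generators} for the explicit pasting diagrams and not redrawing them.
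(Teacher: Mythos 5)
Your proposal is correct and follows essentially the same route as the paper: the paper's proof consists exactly of the pasting arguments displayed in \cref{fig:triv_s_sh}, which slide the companion/conjoint structure 2-cells around $\alpha$ using the two cyclicity axioms of a double shadow and then cancel them via the companion/conjoint pasting identities. Your preliminary observation that $S\alpha$ and $T\alpha$ are again endomorphism 2-cells when $\alpha$ is one is the right (implicit) first step, and your division into a short argument for $T$ and a longer one for $S$ matches the structure of the paper's figure.
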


In particular, the action of $\langle S,T\rangle$ is trivial under the image of a double shadow.  The proof is a series of pasting arguments.  See \cref{fig:triv_s_sh}.

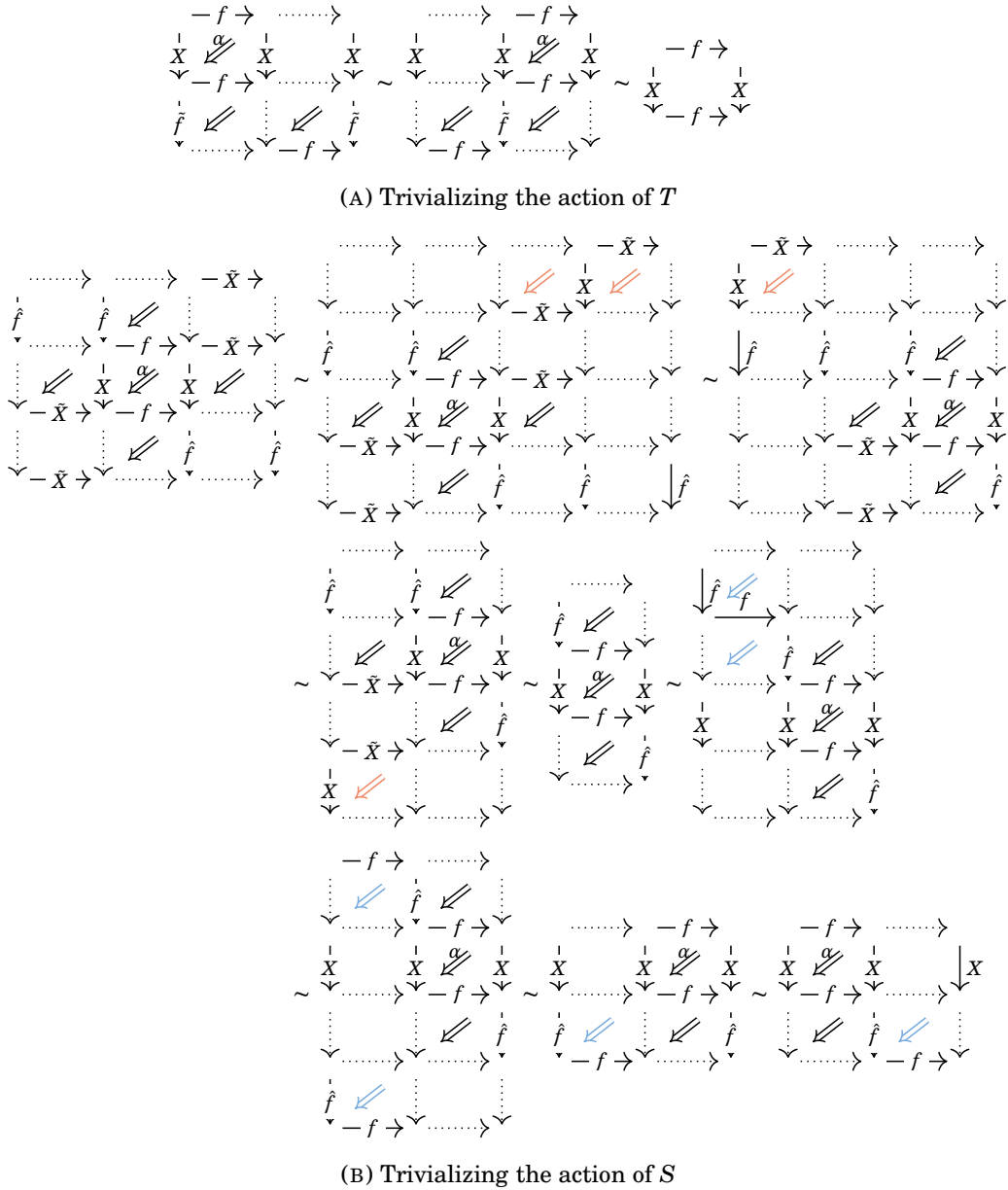
\begin{figure}  
\begin{subfigure}[b]{.65\textwidth}
\begin{tikzcd}
 \arrow[d, "X" description] \arrow[r, "f" description] &\arrow[ld,shorten <>=10pt,Rightarrow, "\alpha" above ]   \arrow[d, "X" description] \arrow[r, dotted]              &  \arrow[d, "X" description] \\
 \arrow[r, "f" description] \arrow[d, "\cm{f}" description]                  & \arrow[ld,shorten <>=10pt,Rightarrow] \arrow[r, dotted] \arrow[d, dotted]  &  \arrow[ld,shorten <>=10pt,Rightarrow]\arrow[d, "\cm{f}" description] \\
 \arrow[r, dotted]                                      &  \arrow[r, "f" description]                               & {}                       
\end{tikzcd}
$\sim$ 
\begin{tikzcd}
 \arrow[r, dotted] \arrow[d, "X" description]                          &  \arrow[d, "X" description] \arrow[r, "f" description]                            & \arrow[ld,shorten <>=10pt,Rightarrow, "\alpha" above ]  \arrow[d, "X" description] \\
 \arrow[r, dotted] \arrow[d, dotted]  & \arrow[ld,shorten <>=10pt,Rightarrow] \arrow[r, "f" description] \arrow[d, "\cm{f}" description] & \arrow[ld,shorten <>=10pt,Rightarrow] \arrow[d, dotted]          \\
 \arrow[r, "f" description]                                            &  \arrow[r, dotted]                                                                & {}                         
\end{tikzcd}
$\sim $
\begin{tikzcd}
 \arrow[d, "X" description] \arrow[r, "f" description] &  \arrow[d, "X" description] \\
 \arrow[r, "f" description]                            & {}                        
\end{tikzcd}
\caption{Trivializing the action of $T$}
\end{subfigure}

  \begin{subfigure}[b]{\textwidth}
\resizebox{\textwidth}{!}
{  \begin{minipage}{\textwidth}
\begin{align*}
	\begin{tikzcd}[ampersand replacement=\&]
 \arrow[d, "\cn{f}" description] \arrow[r, dotted]              \&  \arrow[d, "\cn{f}" description] \arrow[r, dotted]  \&\arrow[ld,shorten <>=10pt,Rightarrow]  \arrow[d, dotted] \arrow[r, "\cm{X}" description]                                \&  \arrow[d, dotted]          \\
\arrow[r, dotted] \arrow[d, dotted] \& \arrow[ld,shorten <>=10pt,Rightarrow] \arrow[d, "X" description] \arrow[r, "f" description]              \& \arrow[ld,shorten <>=10pt,Rightarrow, "\alpha" above ]  \arrow[d, "X" description] \arrow[r, "\cm{X}" description]  \&\arrow[ld,shorten <>=10pt,Rightarrow]  \arrow[d, dotted]          \\
 \arrow[r, "\cm{X}" description] \arrow[d, dotted]              \&  \arrow[r, "f" description]  \arrow[d, dotted] \& \arrow[ld,shorten <>=10pt,Rightarrow]\arrow[d, "\cn{f}" description] \arrow[r, dotted]                               \&  \arrow[d, "\cn{f}" description] \\
 \arrow[r, "\cm{X}" description]                                \&  \arrow[r, dotted]                                                  \&  \arrow[r, dotted]                                                           \& {}                   
\end{tikzcd}
&\sim
\begin{tikzcd}[ampersand replacement=\&]
 \arrow[r, dotted] \arrow[d, dotted]                       \&  \arrow[d, dotted] \arrow[r, dotted]                                     \&  \arrow[d, dotted] \arrow[r, dotted]                   \& \arrow[ld,shorten <>=10pt,Rightarrow,\ep] \arrow[d, "X" description] \arrow[r, "\cm{X}" description]  \& \arrow[ld,shorten <>=10pt,Rightarrow,\ep] \arrow[d, dotted] \\
 \arrow[d, "\cn{f}" description] \arrow[r, dotted]         \&  \arrow[d, "\cn{f}" description] \arrow[r, dotted] \& \arrow[ld,shorten <>=10pt,Rightarrow] \arrow[d, dotted] \arrow[r, "\cm{X}" description]                                \&  \arrow[d, dotted] \arrow[r, dotted]                 \&  \arrow[d, dotted] \\
\arrow[r, dotted] \arrow[d, dotted] \&  \arrow[ld,shorten <>=10pt,Rightarrow]\arrow[d, "X" description] \arrow[r, "f" description]                   \&  \arrow[ld,shorten <>=10pt,Rightarrow, "\alpha" above ] \arrow[d, "X" description] \arrow[r, "\cm{X}" description] \& \arrow[ld,shorten <>=10pt,Rightarrow] \arrow[d, dotted] \arrow[r, dotted]                 \&  \arrow[d, dotted] 
\\
 \arrow[r, "\cm{X}" description] \arrow[d, dotted]         \&  \arrow[r, "f" description]  \arrow[d, dotted]      \& \arrow[ld,shorten <>=10pt,Rightarrow]\arrow[d, "\cn{f}" description] \arrow[r, dotted]                               \&  \arrow[d, "\cn{f}" description] \arrow[r, dotted]   \&  \arrow[d, "\cn{f}"]    
\\
 \arrow[r, "\cm{X}" description]                           \&  \arrow[r, dotted]                                                       \&  \arrow[r, dotted]                                                                \&  \arrow[r, dotted]                                  \&                   {}
\end{tikzcd}
\sim
\begin{tikzcd}[ampersand replacement=\&]
 \arrow[d, "X" description] \arrow[r, "\cm{X}" description] \&  \arrow[ld,shorten <>=10pt,Rightarrow,\ep]\arrow[r, dotted] \arrow[d, dotted]                       \&  \arrow[d, dotted] \arrow[r, dotted]                                     \&  \arrow[d, dotted]                \\
 \arrow[r, dotted] \arrow[d, "\cn{f}"]                    \&  \arrow[d, "\cn{f}" description] \arrow[r, dotted]         \&  \arrow[d, "\cn{f}" description] \arrow[r, dotted] \&\arrow[ld,shorten <>=10pt,Rightarrow]  \arrow[d, dotted]                \\
 \arrow[d, dotted] \arrow[r, dotted]                 \&  \arrow[r, dotted] \arrow[d, dotted] \& \arrow[ld,shorten <>=10pt,Rightarrow] \arrow[d, "X" description] \arrow[r, "f" description]                   \& \arrow[ld,shorten <>=10pt,Rightarrow, "\alpha" above ]  \arrow[d, "X" description]       \\
 \arrow[r, dotted] \arrow[d, dotted]                 \&  \arrow[r, "\cm{X}" description] \arrow[d, dotted]         \&  \arrow[r, "f" description]  \arrow[d, dotted]      \& \arrow[ld,shorten <>=10pt,Rightarrow]\arrow[d, "\cn{f}" description] \\
 \arrow[r, dotted]                                   \&  \arrow[r, "\cm{X}" description]                           \&  \arrow[r, dotted]                                                       \& {}                                
\end{tikzcd}
\\
&\sim
\begin{tikzcd}[ampersand replacement=\&]
 \arrow[d, "\cn{f}" description] \arrow[r, dotted]                    \&  \arrow[d, "\cn{f}" description] \arrow[r, dotted]  \& \arrow[ld,shorten <>=10pt,Rightarrow] \arrow[d, dotted]                \\
\arrow[r, dotted] \arrow[d, dotted]            \&\arrow[ld,shorten <>=10pt,Rightarrow]  \arrow[d, "X" description] \arrow[r, "f" description]                   \& \arrow[ld,shorten <>=10pt,Rightarrow, "\alpha" above ]  \arrow[d, "X" description]       \\
 \arrow[r, "\cm{X}" description] \arrow[d, dotted]                    \&  \arrow[r, "f" description] \arrow[d, dotted]      \& \arrow[ld,shorten <>=10pt,Rightarrow] \arrow[d, "\cn{f}" description] \\
 \arrow[r, "\cm{X}" description] \arrow[d, "X" description]  \& \arrow[ld,shorten <>=10pt,Rightarrow,\ep,] \arrow[r, dotted] \arrow[d, dotted]                                     \& {} \arrow[d, dotted]               \\
 \arrow[r, dotted]                                                    \&  \arrow[r, dotted]                                                       \& {}                                
\end{tikzcd}
\sim
\begin{tikzcd}[ampersand replacement=\&]
 \arrow[d, "\cn{f}" description] \arrow[r, dotted] \&\arrow[ld,shorten <>=10pt,Rightarrow]  \arrow[d, dotted]                \\
 \arrow[d, "X" description] \arrow[r, "f" description]                   \& \arrow[ld,shorten <>=10pt,Rightarrow, "\alpha" above ]  \arrow[d, "X" description]       \\
 \arrow[r, "f" description]  \arrow[d, dotted]      \& \arrow[ld,shorten <>=10pt,Rightarrow] \arrow[d, "\cn{f}" description] \\
 \arrow[r, dotted]                                                       \& {}                              
\end{tikzcd}
\sim
\begin{tikzcd}[ampersand replacement=\&]
 \arrow[r, dotted] \arrow[d, "\cn{f}"]  \&  \arrow[ld,shorten <>=10pt,Rightarrow,\et]\arrow[d, dotted] \arrow[r, dotted]                                     \&  \arrow[d, dotted]                \\
 \arrow[r, "f"] \arrow[d, dotted]  \& \arrow[ld,shorten <>=10pt,Rightarrow,\et] \arrow[d, "\cn{f}" description] \arrow[r, dotted] \& \arrow[ld,shorten <>=10pt,Rightarrow] \arrow[d, dotted]                \\
 \arrow[r, dotted] \arrow[d, "X" description]                       \&  \arrow[d, "X" description] \arrow[r, "f" description]                   \& \arrow[ld,shorten <>=10pt,Rightarrow, "\alpha" above ]  \arrow[d, "X" description]       \\
 \arrow[r, dotted] \arrow[d, dotted]                    \&  \arrow[r, "f" description]  \arrow[d, dotted]      \& \arrow[ld,shorten <>=10pt,Rightarrow] \arrow[d, "\cn{f}" description] \\
 \arrow[r, dotted]                                      \&  \arrow[r, dotted]                                                       \& {}                               
\end{tikzcd}
\\
&\sim
\begin{tikzcd}[ampersand replacement=\&]
 \arrow[r, "f" description] \arrow[d, dotted] \&  \arrow[ld,shorten <>=10pt,Rightarrow,\et ]\arrow[d, "\cn{f}" description] \arrow[r, dotted]\&  \arrow[ld,shorten <>=10pt,Rightarrow]\arrow[d, dotted]                \\
 \arrow[r, dotted] \arrow[d, "X" description]                       \&  \arrow[d, "X" description] \arrow[r, "f" description]                   \& \arrow[ld,shorten <>=10pt,Rightarrow, "\alpha" above ]  \arrow[d, "X" description]       \\
 \arrow[r, dotted] \arrow[d, dotted]                    \&  \arrow[r, "f" description]  \arrow[d, dotted]      \& \arrow[ld,shorten <>=10pt,Rightarrow] \arrow[d, "\cn{f}" description] \\
 \arrow[r, dotted] \arrow[d, "\cn{f}" description] \&  \arrow[ld,shorten <>=10pt,Rightarrow,\et ]\arrow[r, dotted] \arrow[d, dotted]                                     \&  \arrow[d, dotted]               \\
 \arrow[r, "f" description]                                         \&  \arrow[r, dotted]                                                       \&                                  {}
\end{tikzcd}
\sim
\begin{tikzcd}[ampersand replacement=\&]
 \arrow[r, dotted] \arrow[d, "X" description]                       \&  \arrow[d, "X" description] \arrow[r, "f" description]              \& \arrow[ld,shorten <>=10pt,Rightarrow, "\alpha" above ]  \arrow[d, "X" description]       \\
 \arrow[r, dotted] \arrow[d, "\cn{f}" description]  \&\arrow[ld,shorten <>=10pt,Rightarrow,\et ]  \arrow[r, "f" description] \arrow[d, dotted] \& \arrow[ld,shorten <>=10pt,Rightarrow]\arrow[d, "\cn{f}" description] \\
 \arrow[r, "f"  description]                                 \&  \arrow[r, dotted]                                                  \& {}                                
\end{tikzcd}
\sim
\begin{tikzcd}[ampersand replacement=\&]
 \arrow[d, "X" description] \arrow[r, "f" description]              \&  \arrow[ld,shorten <>=10pt,Rightarrow, "\alpha" above ] \arrow[d, "X" description] \arrow[r, dotted]                             \&  \arrow[d, "X"]    \\
 \arrow[r, "f" description] \arrow[d, dotted] \& \arrow[ld,shorten <>=10pt,Rightarrow] \arrow[d, "\cn{f}" description] \arrow[r, dotted] \&  \arrow[ld,shorten <>=10pt,Rightarrow,\et ]\arrow[d, dotted] \\
 \arrow[r, dotted]                                                  \&  \arrow[r, "f"  description]                                                          \& {}     
\end{tikzcd}
\end{align*}
\end{minipage}
}
\caption{Trivializing the action of $S$}
\end{subfigure}

\caption{Trivializing the actions of $T$ and $S$ under the action of a double shadow}\label{fig:triv_s_sh}
\end{figure}

Let $I_\mathbb{D}$ be the set of endomorphism 2-cells
so that 
\[\begin{tikzcd}
\arrow[d, "\cn{f}" description ] \arrow[r, dotted]      & \arrow[ld,shorten <>=10pt,Rightarrow] \arrow[d, dotted] 
\\
 \arrow[d, "X" description] \arrow[r, "f" description]            & \arrow[ld,shorten <>=10pt,Rightarrow,"\alpha" above] \arrow[d, "X" description]    
\\
\arrow[d, "\cm{f}" description] \arrow[r, "f" description]  & \arrow[ld,shorten <>=10pt,Rightarrow] \arrow[d, dotted] 
\\
\arrow[r, dotted]                                                    &                 {}
\end{tikzcd}\] is an isomorphism.

\begin{lem}\label{lem:compareststst}
The $\langle S,T|S^4=1\rangle$ action above on endomorphism 2-cells induces an action of $SL_2(\mathbb{Z})$ on $I_\mathbb{D}$.
\end{lem}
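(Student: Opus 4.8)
The plan is to verify that the relation $SL_2(\mathbb{Z})=\langle S,T\mid S^4=(ST)^6=1,\ S^2\text{ central}\rangle$ (equivalently the standard presentation with generators $S=\begin{psmallmatrix}0&-1\\1&0\end{psmallmatrix}$ and $T=\begin{psmallmatrix}1&1\\0&1\end{psmallmatrix}$) is already forced by the hypotheses of the preceding lemma together with a single new computation. We have by hypothesis an action of $\mathbb{Z}/4*\mathbb{Z}=\langle S,T\mid S^4\rangle$ on $2_\mathbb{D}$, and hence on the subset $I_\mathbb{D}$ once we check $I_\mathbb{D}$ is preserved by $S$, $S^{-1}$, $T$, $T^{-1}$. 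To pass from the free product quotient $\langle S,T\mid S^4\rangle$ to $SL_2(\mathbb{Z})$ we must check exactly the braid-type relation $(ST)^3=S^2$ (together with $S^4=1$ this gives the full set of relations, since $SL_2(\mathbb{Z})\cong \mathbb{Z}/4*_{\mathbb{Z}/2}\mathbb{Z}/6$ and $ST$ has order $6$ with $(ST)^3=S^2$).

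First I would record why $I_\mathbb{D}$ is well-defined under all four generators. The defining condition on a 2-cell $\alpha\in I_\mathbb{D}$ is that the pasting displayed just before \cref{lem:compareststst} — the composite of $\alpha$ with the companion/conjoint structure 2-cells along $X$ and $f$ — is an isomorphism. Applying $S$ replaces $(X,f,Y,g)$ by $(\cn{f},\cm{Y},\cn{g},\cm{X})$ as in \eqref{eq:map_s}, and the corresponding "trace-like" pasting for $S\alpha$ is, after cancelling companion/conjoint zig-zags (using the pasting identities that define companion and conjoint pairs), conjugate by isomorphism 2-cells to the original pasting for $\alpha$; hence it is an isomorphism iff the original was. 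The same cancellation argument handles $S^{-1}$, and for $T$, $T^{-1}$ the vertical source and target of $\alpha$ are only altered by precomposition with the companion/conjoint pair $(\cm{g},\cn{g})$ of $g$, which contributes invertible structure 2-cells, so again the pasting for $T\alpha$ differs from that for $\alpha$ by composition with isomorphisms. This shows $S,T$ and their inverses restrict to bijections of $I_\mathbb{D}$, so we have an honest $\langle S,T\mid S^4\rangle$-action on $I_\mathbb{D}$.

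The substantive step is to show $(ST)^3$ acts on $I_\mathbb{D}$ the same way $S^2$ does. I would unwind both composites as explicit pasting diagrams in the double category $\mathbb{D}$. Starting from a 2-cell $\alpha\in\tend{D}{X}{f}{Y}{g}$, three successive applications of $ST$ produce a 2-cell whose underlying square has horizontal and vertical boundaries built from iterated companions and conjoints of $f,g$ and the 1-cells $X,Y$; the hypothesis $\cn{\cm{X}}=\cm{\cn{X}}$ and the hypothesis that the two diagrams of \cref{fig:big_z4_1,fig:big_z4_2} agree (which is exactly what makes $S^4=1$) are precisely the coherence inputs that let one collapse the $(ST)^3$-pasting onto the $S^2$-pasting. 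Concretely, I would produce the chain of pasting equalities: expand $(ST)\alpha$ using \cref{fig:actions_of_generators}, paste the next $ST$, and at each stage cancel the companion/conjoint zig-zags introduced by $T$ against those introduced by $S$ in the following step, exactly as in the verification that $SS^{-1}=1$ in \cref{fig:s_t_inverses}; after three rounds the leftover boundary data is $\cn{\cm{X}}=\cm{\cn{X}}$ and friends, matching the $S^2$ picture of \cref{fig:big_z4_1}. Since by hypothesis $S^2$ is already a well-defined self-bijection of $2_\mathbb{D}$ and restricts to $I_\mathbb{D}$, this identifies $(ST)^3$ with $S^2$ on $I_\mathbb{D}$; combined with $S^4=1$ this is the complete relation set for $SL_2(\mathbb{Z})$, giving the desired action.

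The main obstacle is the third step: the $(ST)^3$-versus-$S^2$ pasting comparison is a genuinely intricate 2-categorical diagram chase, of the same order of complexity as the diagrams in \cref{fig:big_z4_1,fig:big_z4_2,fig:s_t_inverses}, and the bookkeeping of which zig-zag cancels which must be done carefully so that the residual boundary 1-cells and structure 2-cells land exactly on the $S^2$ data rather than merely something isomorphic to it. I expect the companion/conjoint interchange identities and the standing assumption $\cn{\cm{X}}=\cm{\cn{X}}$ to be exactly sufficient, but making the cancellations explicit — likely via one large commuting diagram of pastings analogous to \cref{fig:big_z4_1} — is where the real work lies; the rest is formal.
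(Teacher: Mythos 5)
Your overall strategy---observe that, given $S^4=1$, a single additional braid-type relation presents $SL_2(\mathbb{Z})$, and verify that relation by a pasting-diagram computation---is the same as the paper's, and your group theory is correct. (The paper checks the equivalent identity $S^{-1}T^{-1}S=TST$ rather than $(ST)^3=S^2$; since each side is then only a length-three word, the diagrams in \cref{fig:s1t1s,fig:tst} stay of manageable size, whereas expanding $(ST)^3$ would stack six layers of companion/conjoint zig-zags.)

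There is, however, a genuine gap in where you locate the input that makes the relation hold. You assert that $\cn{\cm{X}}=\cm{\cn{X}}$ together with the agreement of \cref{fig:big_z4_1,fig:big_z4_2} (i.e.\ the hypotheses giving $S^4=1$) ``are precisely the coherence inputs that let one collapse the $(ST)^3$-pasting onto the $S^2$-pasting,'' and you use membership in $I_\mathbb{D}$ only as a closure condition. If that were so, the braid relation would hold on all of $2_\mathbb{D}$ and the lemma would not need $I_\mathbb{D}$ at all; the remark following \cref{thm:bzn_invariance} says explicitly that the $\langle S,T\rangle$-action does \emph{not} in general descend to $SL_2(\mathbb{Z})$. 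The essential ingredient is the defining condition of $I_\mathbb{D}$ itself: the pasting of $\alpha$ between the conjoint cell of $f$ above and the companion cell of $f$ below is required to be an isomorphism, and the paper's identification of $TST(\alpha)$ with $S^{-1}T^{-1}S(\alpha)$ is carried out in \cref{fig:expanded} precisely by inserting the \emph{inverse} of that pasting (the reversed-colour 2-cells in that figure, including the one labelled $\alpha$). Companion/conjoint zig-zag cancellations alone will not close up your $(ST)^3$ diagram onto the $S^2$ diagram, because the two sides have genuinely different arrangements of $\alpha$ relative to the duality data; one must be able to read the twist backwards. So your proof needs one further explicit step---invoke the inverse of the $I_\mathbb{D}$-pasting to rewrite one side---after which the argument goes through.
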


\begin{proof}
See \cref{fig:s1t1s,fig:tst} for the actions of  $S^{-1}T^{-1}S$ and $TST$ on a 2-cell of the form 
\cref{eq:shape}.  
Then restrict to endomorphism 2-cells
and expand as in \cref{fig:expanded}.
\end{proof}

\begin{defn}
  If we regard a discrete group $G$ as a 2-category with a single 0-cell and only identity 2-cells,  a \textbf{2-representation} of $G$ on $\sB$ is  a (strong) 2-functor 
	\[\rho\colon G \Longrightarrow \sB.\] 
\end{defn}
This data is spelled out fully in \cite{ganter_kapranov}.

Continuing to think of a discrete group $G$ as  a 2-category with a single 0-cell and only identity 2-cells, $I_{\mathbb{D}}$.   Under a 2-representation $\rho$ a pair of commuting elements $(g, h)$ pick out 
\begin{itemize}
\item  invertible 1-cells $X, Y$ and 
\item 
an invertible 2-morphism
  \[X \odot Y \to Y \odot X.\]
\end{itemize}

That is, a 2-representation $G \to \sB$ picks out a subset of $D(\sB)$, indeed, a subset of $I_{D_{\sB}}$. It is easy to see that this subset admits an $SL_2 (\mathbb{Z})$-action. The categorical 2-character of Ganter-Kapranov is by definition the iterated trace applied to this subset. 

Then  the main theorem of \cite{ben_zvi_nadler_1} follows from \cref{prop:st_inv}.
\begin{thm}\label{thm:bzn_invariance}
  Categorical 2-characters are invariant under the action of $SL_2 (\Z)$ in \cref{lem:compareststst}. 
\end{thm}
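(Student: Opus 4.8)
The statement to prove is \cref{thm:bzn_invariance}: the categorical 2-character of Ganter--Kapranov is $SL_2(\mathbb{Z})$-invariant. The plan is to assemble it from the pieces already in hand rather than do anything new. First I would recall that a 2-representation $\rho\colon G \to \sB$ together with a commuting pair $(g,h)$ determines an invertible 1-cell $X = \rho(g)$, an invertible 1-cell $Y = \rho(h)$, and an invertible 2-morphism $\phi\colon X \odot Y \to Y \odot X$ (coming from the structure 2-cell of $\rho$ applied to the relation $gh = hg$); moreover invertible 1-cells are their own left and right duals (by the Lemma after \cref{defn:invertible} --- the lemma stating $X^{-1}$ is both the left and right dual of $X$), so this data is precisely an endomorphism 2-cell in the subdouble category $I(\sB)$, and in fact lands in $I_\mathbb{D}$. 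The categorical 2-character is by definition the iterated trace $\tr_{\sh{X}}(\tr_Y(\phi)) = \tr_{\sh{Y}}(\tr_X(\phi))$ applied to this data, and this is well-defined (independent of the order of iteration) by \cref{thm:main_bzn}.

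Next I would observe that since $I(\sB)$ is a double category with all the structure (companions and conjoints for invertible 1-cells, given by the identity 2-cells as noted in the excerpt; $\cn{\cm{X}} = \cm{\cn{X}}$ and the $S^4$ coherence both hold because for invertible 1-cells every structure 2-cell in sight is an identity or canonical isomorphism; and the 2-cells of \cref{fig:inverse_for_t} exist since $\cm{g}$ and $\cn{g}$ are themselves invertible), the Lemma giving group actions on $2_\mathbb{D}$ applies with the full group $\mathbb{Z}/4 * \mathbb{Z} = \langle S,T \mid S^4\rangle$. Then \cref{lem:compareststst} upgrades this to an honest $SL_2(\mathbb{Z})$-action on $I_\mathbb{D}$, and restricting along the inclusion of the subset picked out by the 2-representation $\rho$ (which is visibly closed under $S$ and $T$, since these operations are built from companions/conjoints of invertible 1-cells, which stay invertible) gives an $SL_2(\mathbb{Z})$-action on the set of 2-characters-worth of data coming from $\rho$.

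Finally I would invoke \cref{prop:st_inv}: the iterated trace is a double shadow on $D(\sB)$ (by the theorem just above \cref{prop:st_inv}), hence restricts to a double shadow on $I(\sB)$, and \cref{prop:st_inv} says $\dosh{S\alpha} = \dosh{\alpha} = \dosh{T\alpha}$ for any endomorphism 2-cell $\alpha$ and any double shadow $\dosh{-}$. Since $S$ and $T$ generate $SL_2(\mathbb{Z})$ (via the presentation in \cref{lem:compareststst}), it follows that $\dosh{-}$ is constant on $SL_2(\mathbb{Z})$-orbits in $I_\mathbb{D}$; applying this with $\dosh{-}$ equal to the iterated trace gives exactly the claimed invariance of categorical 2-characters.

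\textbf{Main obstacle.} The real content has already been shelled out into \cref{prop:st_inv} and \cref{lem:compareststst}; the only genuine thing to check in the proof itself is the bookkeeping that the Ganter--Kapranov 2-character data (commuting pair under a 2-representation) truly sits inside $I_\mathbb{D}$ and that the $\langle S,T\rangle$-action restricts to the subset it spans --- i.e.\ that applying $S$ or $T$ to such a 2-cell again produces a 2-cell of the same type, which comes down to the fact that companions, conjoints, and duals of invertible 1-cells are again invertible 1-cells with the canonical (identity/isomorphism) structure 2-cells. I expect no surprises there, so the proof is short; the burden of proof genuinely lives in the earlier pasting diagrams (\cref{fig:triv_s_sh}, \cref{fig:s1t1s}, \cref{fig:tst}), which we are entitled to cite.
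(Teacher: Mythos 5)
Your proposal is correct and follows essentially the same route as the paper: identify the Ganter--Kapranov data as endomorphism 2-cells in $I_{D(\sB)}$, note that the iterated trace is a double shadow, and conclude from \cref{prop:st_inv} (invariance under $S$ and $T$) together with \cref{lem:compareststst} (which assembles the $\langle S,T\mid S^4\rangle$-action into an $SL_2(\Z)$-action on $I_\mathbb{D}$). The paper's own proof is exactly this one-line deduction, so your write-up just makes explicit the bookkeeping the paper leaves implicit.
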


\begin{rmk}
  A curious feature of this theorem is that 2-characters, defined invariants of 2-cells in double categories, are in fact invariant under the action of the free group $\Z \ast \Z = \langle S, T \rangle$. However, this action only descends to an $\operatorname{SL}_2 (\Z)$ action under very stringent conditions on the double category. 
\end{rmk}

\begin{figure}
 \begin{subfigure}[b]{\textwidth}
\adjustbox{max width=\textwidth}{
\begin{tikzcd}
 \arrow[r, "\cm{X}" description] \arrow[d, dotted]                       &  \arrow[r, "f" description] \arrow[d, dotted]                       &  \arrow[r, dotted] \arrow[d, dotted]                                         &  \arrow[r, dotted] \arrow[d, dotted]                                &  \arrow[r, dotted] \arrow[d, dotted]                                &  \arrow[d, dotted] \arrow[r, dotted]        &  \arrow[ld,shorten <>=10pt,Rightarrow,\et ]\arrow[d, "Y" description] \arrow[r, dotted]                        &  \arrow[r, dotted] \arrow[d, "Y" description]                       &  \arrow[d, "Y" description] 
\\
 \arrow[r, "\cm{X}" description] \arrow[d, dotted]                       &  \arrow[r, "f" description] \arrow[d, dotted] & \arrow[ld,shorten <>=10pt,Rightarrow,\et ] \arrow[d, "\cn{f}" description] \arrow[r, dotted]                                &  \arrow[r,  dotted] \arrow[d,  "\cn{f}" description]                       &  \arrow[d ,"\cn{f}" description] \arrow[r,  dotted] &  \arrow[ld,shorten <>=10pt,Rightarrow, \ep ]\arrow[d, dotted] \arrow[r, "\cm{Y}" description, ]                                &  \arrow[d, dotted, ] \arrow[r, dotted]                                 &  \arrow[r, dotted] \arrow[d, dotted]                                &  \arrow[d, dotted]          
\\
 \arrow[r, "\cm{X}" description] \arrow[d, dotted]                       &  \arrow[r, dotted] \arrow[d, dotted]                                &  \arrow[d, dotted] \arrow[r, dotted]                                         &  \arrow[r, dotted] \arrow[d, dotted]        &\arrow[ld,shorten <>=10pt,Rightarrow, \ep]  \arrow[d, "X" description] \arrow[r, "f" description]              &  \arrow[ld,shorten <>=10pt,Rightarrow, "\alpha" above ] \arrow[d, "Y" description] \arrow[r, "\cm{Y}" description] &  \arrow[ld,shorten <>=10pt,Rightarrow, \ep]\arrow[d, dotted] \arrow[r, dotted]                                 &  \arrow[r, dotted] \arrow[d, dotted]                                &  \arrow[d, dotted]         
 \\
 \arrow[r, "\cm{X}" description] \arrow[d, dotted]                       &  \arrow[r, dotted] \arrow[d, dotted]                                &  \arrow[d, dotted] \arrow[r, dotted]                                         &  \arrow[r, "\cm{X}" description] \arrow[d, dotted]                       &  \arrow[r, "g" description] \arrow[d, dotted] & \arrow[ld,shorten <>=10pt,Rightarrow, \ep] \arrow[d, "\cn{g}" description] \arrow[r, dotted]                                &  \arrow[d, "\cn{g}" description] \arrow[r, dotted]                        &  \arrow[d, "\cn{g}" description] \arrow[r, dotted] & \arrow[ld,shorten <>=10pt,Rightarrow,\et ] \arrow[d, dotted]          \\
 \arrow[r, "\cm{X}" description] \arrow[d, dotted] & \arrow[ld,shorten <>=10pt,Rightarrow,\et ] \arrow[r, dotted] \arrow[d, "\cn{\cm{X}}" description]                       &  \arrow[d, "\cn{\cm{X}}" description] \arrow[r, dotted]      &\arrow[ld,shorten <>=10pt,Rightarrow,\etr ]  \arrow[r, "\cm{X}" description] \arrow[d, dotted] & \arrow[ld,shorten <>=10pt,Rightarrow,\etr ] \arrow[r, dotted] \arrow[d, "\cn{\cm{X}}" description]                       &  \arrow[r, dotted] \arrow[d, "\cn{\cm{X}}" description]                                & {} \arrow[d, "\cn{\cm{X}}" description] \arrow[r, dotted]  &  \arrow[ld,shorten <>=10pt,Rightarrow,\et ]\arrow[d, dotted] \arrow[r, "g" description]                       &  \arrow[d, dotted]          \\
 \arrow[r, dotted] \arrow[d, "X" description]                       &  \arrow[r, dotted] \arrow[d, "X" description]                       &  \arrow[r, "\cm{X}" description] \arrow[d, "X" description] & \arrow[ld,shorten <>=10pt,Rightarrow,\et ] \arrow[r, dotted] \arrow[d, dotted]                                &  \arrow[r, dotted] \arrow[d, dotted]                                &  \arrow[r, dotted] \arrow[d, dotted]                                         & {} \arrow[r, "\cm{X}" description] \arrow[d, dotted]                       &  \arrow[r, "g" description] \arrow[d, dotted]                       &  \arrow[d, dotted]          \\
 \arrow[r, dotted]                                                  &  \arrow[r, dotted]                                                  &  \arrow[r, dotted]                                                           &  \arrow[r, dotted]                                                  &  \arrow[r, dotted]                                                  &  \arrow[r, dotted]                                                           &  \arrow[r, "\cm{X}" description]                                          &  \arrow[r, "g" description]                                         & {}                         
\end{tikzcd}

=\begin{tikzcd}
    \arrow[r, "\cm{X}" description] \arrow[d, "X" description]           
 &  \arrow[ld,shorten <>=10pt,Rightarrow,\et ]\arrow[r, dotted] \arrow[d, dotted] & \arrow[ld,shorten <>=10pt,Rightarrow,\ep] \arrow[d, "X" description] \arrow[r, "f" description] &\arrow[ld,shorten <>=10pt,Rightarrow, "\alpha" above ]   \arrow[d, "Y" description] 
\\        \arrow[r, dotted]              
 &  \arrow[r, "\cm{X}" description]      &  \arrow[r, "g" description]  &   {}        
\end{tikzcd}
}
\caption{The action of $S^{-1}T^{-1}{S}$ on \cref{eq:shape}}\label{fig:s1t1s}
\end{subfigure}

  \begin{subfigure}[b]{\textwidth}
\adjustbox{max width=\textwidth}{
\begin{tikzcd}
 \arrow[d, "\cn{f}" description] \arrow[r, dotted]                          &  \arrow[d, "\cn{f}" description] \arrow[r, dotted]                          &  \arrow[d, "\cn{f}" description] \arrow[r, dotted] & \arrow[ld,shorten <>=10pt,Rightarrow, \etr] \arrow[d, dotted] \arrow[r, dotted]                       &  \arrow[d, dotted] \arrow[r, "\cm{Y}" description]                    &  \arrow[d, dotted] \arrow[r, "g" description]                    &  \arrow[d, dotted] \arrow[r, dotted]                                          &  \arrow[r, dotted] \arrow[d, dotted]                       &  \arrow[d, dotted] 
\\
 \arrow[d, dotted] \arrow[r, dotted] &  \arrow[ld,shorten <>=10pt,Rightarrow,\etr ]\arrow[d, "X" description] \arrow[r, dotted]                          &  \arrow[d, "X" description] \arrow[r, "f" description]                          &\arrow[ld,shorten <>=10pt,Rightarrow, "\alpha" above ]   \arrow[d, "Y" description] \arrow[r, dotted]                          &  \arrow[d, "Y" description] \arrow[r, "\cm{Y}" description]  & \arrow[ld,shorten <>=10pt,Rightarrow,\etr] \arrow[r, "g" description] \arrow[d, dotted]                    &  \arrow[d, dotted] \arrow[r, dotted]                                          &  \arrow[r, dotted] \arrow[d, dotted]                       &  \arrow[d, dotted] 
\\
 \arrow[r, "\cm{X}" description] \arrow[d, dotted]                          &  \arrow[d, dotted] \arrow[r, dotted] & \arrow[ld,shorten <>=10pt,Rightarrow,\etr ] \arrow[r, "g" description] \arrow[d, "\cm{g}" description]  &  \arrow[ld,shorten <>=10pt,Rightarrow,\ep]\arrow[d, dotted] \arrow[r, dotted] &\arrow[ld,shorten <>=10pt,Rightarrow,\ep]  \arrow[d, "\cm{g}" description] \arrow[r, dotted]                    &  \arrow[r, "g" description] \arrow[d, "\cm{g}" description] & \arrow[ld,shorten <>=10pt,Rightarrow,\etr] \arrow[d, dotted] \arrow[r, dotted]                                          &  \arrow[r, dotted] \arrow[d, dotted]                       &  \arrow[d, dotted] 
\\
 \arrow[r, "\cm{X}" description] \arrow[d, dotted]                          &  \arrow[r, "g" description] \arrow[d, dotted]                          &  \arrow[r, dotted] \arrow[d, dotted]                    &  \arrow[r, "g" description] \arrow[d, dotted]  &  \arrow[ld,shorten <>=10pt,Rightarrow,\etr ]\arrow[r, dotted] \arrow[d, "\cn{g}" description]                    &  \arrow[r, dotted] \arrow[d, "\cn{g}" description]                    &  \arrow[d, "\cn{g}" description] \arrow[r, dotted]                                             &  \arrow[r, dotted] \arrow[d, "\cn{g}" description]                          &  \arrow[d, "\cn{g}" description]   
 \\
 \arrow[r, "\cm{X}" description] \arrow[d, "X" description]    &  \arrow[ld,shorten <>=10pt,Rightarrow,\et]\arrow[r, "g" description] \arrow[d, dotted]                          &  \arrow[r, dotted] \arrow[d, dotted]                    &  \arrow[r, dotted] \arrow[d, dotted]                       &  \arrow[r, dotted] \arrow[d, dotted]                 &  \arrow[r, dotted] \arrow[d, dotted]                 & {} \arrow[r, dotted] \arrow[d, dotted] \arrow[r, dotted]  &  \arrow[ld,shorten <>=10pt,Rightarrow,\et ]\arrow[r, dotted] \arrow[d, "X" description]                          &  \arrow[d, "X" description]    
\\
 \arrow[d, "\cm{g}" description] \arrow[r, dotted]               &  \arrow[r, "g" description] \arrow[d, "\cm{g}" description]       &  \arrow[ld,shorten <>=10pt,Rightarrow,\et]\arrow[d, dotted] \arrow[r, dotted]                    &  \arrow[d, dotted] \arrow[r, dotted]                       &  \arrow[r, dotted] \arrow[d, dotted]                 &  \arrow[d, dotted] \arrow[r, dotted]                 &  \arrow[d, dotted] \arrow[r, "\cm{X}" description]                                             &  \arrow[r, dotted] \arrow[d, dotted] &  \arrow[ld,shorten <>=10pt,Rightarrow,\et ]\arrow[d, "\cm{g}" description]   
 \\
 \arrow[r, dotted]                                         &  \arrow[r, dotted]                                         &  \arrow[r, dotted]                                      &  \arrow[r, dotted]                                         &  \arrow[r, dotted]                                   &  \arrow[r, dotted]                                   &  \arrow[r, "\cm{X}" description]                                                               &  \arrow[r, "g" description]                                            & {}                
\end{tikzcd}
=\begin{tikzcd}
 \arrow[d, "\cn{f}" description] \arrow[r, dotted]& \arrow[ld,shorten <>=10pt,Rightarrow,\etr] \arrow[d, dotted] \arrow[r, "\cm{Y}" description]                    &  \arrow[d, dotted] \arrow[r, dotted]                                 &  \arrow[d, dotted] \arrow[r, "g" description]    &  \arrow[ld,shorten <>=10pt,Rightarrow,\etr ]\arrow[d, "\cn{g}" description]              \\
 \arrow[d, "X" description] \arrow[r, "f" description]                               &\arrow[ld,shorten <>=10pt,Rightarrow, "\alpha" above ]   \arrow[d, "Y" description] \arrow[r, "\cm{Y}" description] & \arrow[ld,shorten <>=10pt,Rightarrow,\etr ] \arrow[d, dotted] \arrow[r, dotted]           & \arrow[ld,shorten <>=10pt,Rightarrow,\et ] \arrow[d, "X" description] \arrow[r, dotted]               &  \arrow[d, "X" description]                   \\
 \arrow[r, "g" description] \arrow[d, "\cm{g}" description]  &\arrow[ld,shorten <>=10pt,Rightarrow,\ep]  \arrow[d, dotted] \arrow[r, dotted]                                  &  \arrow[r, "\cm{X}" description] \arrow[d, dotted] &  \arrow[d, dotted] \arrow[r, dotted]  & \arrow[ld,shorten <>=10pt,Rightarrow,\et ] \arrow[d, "\cm{g}" description] 
\\
 \arrow[r, dotted]                                          &  \arrow[r, dotted]                                                    &  \arrow[r, "\cm{X}" description]                                     &  \arrow[r, "g" description]                                 & {}                                
\end{tikzcd}
}
\caption{The action of $TST$ on \cref{{eq:shape}}}\label{fig:tst}
\end{subfigure}

  \begin{subfigure}[b]{.6\textwidth}
\begin{tikzcd}
 \arrow[d, dotted] \arrow[r, dotted]            &  \arrow[lddd,shorten <>=10pt,Rightarrow,\epr]\arrow[d, "\cn{f}"] \arrow[r, dotted] & \arrow[ld,shorten <>=10pt,Rightarrow,\etr ] \arrow[d, dotted] \arrow[r, "\cm{X}" description]                              &  \arrow[d, dotted] \arrow[r, dotted]                                 &  \arrow[d, dotted] \arrow[r, "f" description]    & \arrow[ld,shorten <>=10pt,Rightarrow,\etr ] \arrow[d, "\cn{f}" description] \arrow[r, dotted]   &  \arrow[ld,shorten <>=10pt,Rightarrow,\epr ]\arrow[d, dotted] \\
\arrow[d, "X" description]                          &  \arrow[d, "X" description] \arrow[r, "f" description]       & \arrow[ld,shorten <>=10pt,Rightarrow,"\alpha" above] \arrow[d, "X" description] \arrow[r, "\cm{X}" description] &  \arrow[ld,shorten <>=10pt,Rightarrow,\etr ]\arrow[d, dotted] \arrow[r, dotted]           & \arrow[ld,shorten <>=10pt,Rightarrow,\et ] \arrow[d, "X" description] \arrow[r, dotted]               &  \arrow[d, "X" description] \arrow[r, "f" description]      &  \arrow[ld,shorten <>=10pt,Rightarrow,\epr, "\alpha" above]\arrow[d, "X" description]    
\\
\arrow[d, dotted]& \arrow[r, "f" description] \arrow[d, "\cm{f}" description]  & \arrow[ld,shorten <>=10pt,Rightarrow,\ep] \arrow[d, dotted] \arrow[r, dotted]                                            &  \arrow[r, "\cm{X}" description] \arrow[d, dotted] &  \arrow[d, dotted] \arrow[r, dotted] & \arrow[ld,shorten <>=10pt,Rightarrow,\et] \arrow[d, "\cm{f}" description] \arrow[r, "f" description] & \arrow[ld,shorten <>=10pt,Rightarrow,\epr] \arrow[d, dotted]
 \\
 \arrow[r, dotted]                                                & \arrow[r, dotted]                                          &  \arrow[r, dotted]                                                              &  \arrow[r, "\cm{X}" description]                                     &  \arrow[r, "f" description]                                 &  \arrow[r, dotted]                                                    &                   {}
\end{tikzcd}
\caption{The expanded action of $TST$ following \cref{lem:compareststst}}\label{fig:expanded}
\end{subfigure}
\caption{Comparing the actions of $S^{-1}T^{-1}S$ and $TST$}
\end{figure}
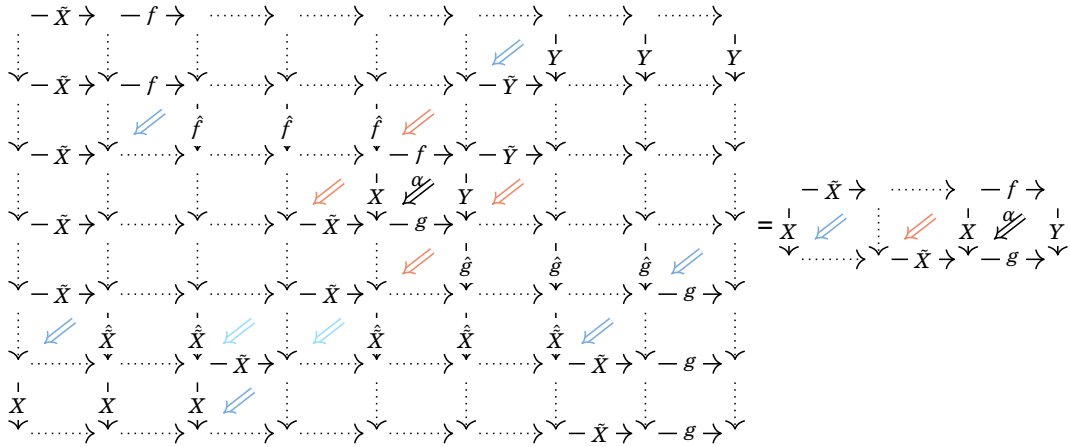
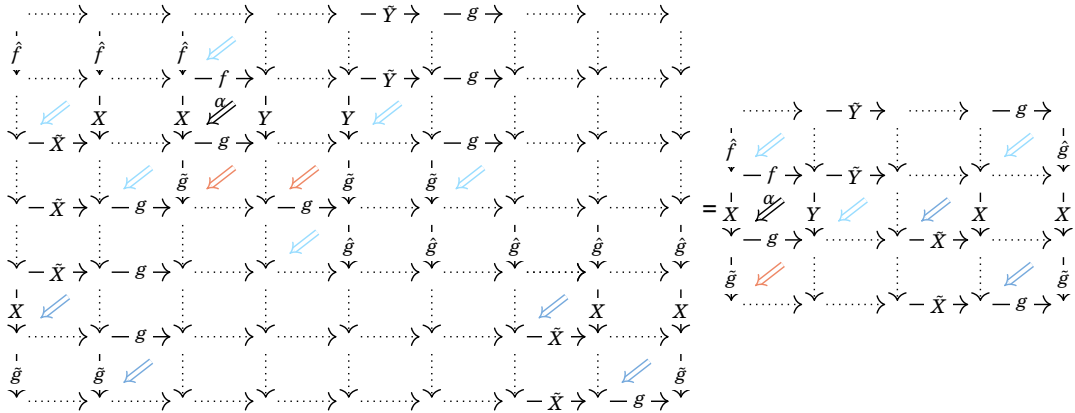
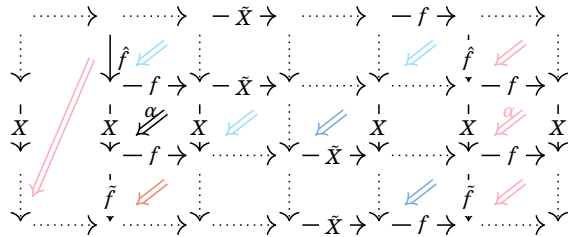

\section{Enriched homotopical categories to monoidal bicategories}\label{sec:enriched}
We now turn to the verification that the bicategories of dg-categories and their bimodules and homomorphisms and spectral categories and their bimodules and homomorphisms as in \cref{ex:enriched_bicat} are symmetric monoidal bicategories.  This follows from work of Shulman \cite{shulman, shulman_symmetric} that we summarize here since it does not seem to be nearly as well known as it deserves to be.

The first step is the verification that they appropriately assemble into bicategories. This relies almost exclusively on work of \cite{shulman} and this section can hopefully serve as an introduction to that lovely paper.  A textbook treatment of some of that material can also be found in \cite{riehl_homotopical}. The second step verifies that these bicategories are symmetric monoidal.  It is completed in \cref{sec:smb} and relies on \cite{shulman_symmetric}.

We are most interested in dg or spectral categories, but the constructions in this section apply to other categories with similar formal properties.  These are  $\mc{V}$-enriched categories (\cref{def:enriched}) where $\mc{V}$ is a symmetric monoidal category satisfying some additional conditions  (\cref{ass:homotopy}).  These are settings where bar resolutions (\cref{def:bar}) and cyclic bar resolutions (\cref{def:cyclic_bar}) are defined and give us the bicategorical composition and shadow.

An alternative would be to work in 
enriched $(\infty,2)$-categories (this technology is developed by Haugseng in \cite{haugseng}) but that seems unnecessary here. However, we point out that the formalism we work in has almost exactly the same power that that set-up has. Any dualizability statement on an $(\infty,2)$-category is reflected by the underlying homotopy 2-category. We are essentially building the homotopy 2-category of a symmetric monoidal $(\infty,2)$-category by hand. 

\subsection{Enriched categories and bimodules}\label{sec:enriched_bimod}
From this point on $\mc{V}$ is a closed symmetric monoidal category. We denote such a category by $(\mc{V}, \otimes, \hom, I)$ when we need to include the data of the category, the tensor product, internal hom, and monoidal unit.

\begin{defn}\label{def:enriched}
  A {\bf $\mc{V}$-enriched category} $\mc{C}$   is  
\begin{itemize}
  \item a {collection of objects} $a, b, c, \dots$
  \item {a morphism object} $\mc{C}(a,b) \in \mc{V}$ for each pair of objects 
  \item unit maps $I\to \mc{C}(a,a)$ for each object in $\mc{C}$ and 
  \item associative and unital { composition maps} 
    \[
    \mc{C}(a,b) \otimes \mc{C} (b, c) \to \mc{C}(a,c).
    \]
  \end{itemize}
\end{defn}

\begin{defn}
  Let $\mc{C}$ be an $\mc{V}$-enriched category. The \textbf{underlying category} of $\mc{C}$, denoted $\mc{C}_0$, has the same objects as $\mc{C}$ and 
  \[
  \mc{C}_0 (a, b)\coloneqq \mc{C}(I, \mc{C}(a, b)) 
  \]
\end{defn}

  If $\mc{A}$ and $\mc{B}$ are $\mc{V}$-enriched categories the {\bf pointwise tensor product} of 
$\mc{A}$ and $\mc{B}$, denoted $\mc{A} \otimes \mc{B}$, is a $\mc{V}$-enriched category whose objects are 
pairs of elements $(a, b)$ with $a \in \operatorname{ob} \mc{A}$ and $b \in \operatorname{ob} \mc{B}$.
The morphism spaces are given by 
    \[
    \mc{A} \otimes \mc{B} ((a,b), (a',b')) := \mc{A}(a,a') \otimes \mc{B}(b,b') 
    \]

    The following definition is critical and useful in what follows.
    
\begin{defn}
  A ($\mc{V}$-){\bf two-variable adjunction}
  \[
  (\oast, \hom^l, \hom_r): \mc{M} \otimes \mc{N} \to \mc{P} 
  \]
  is a triple of functors
  \begin{align*}
    \oast &: \mc{M} \otimes \mc{N} \to \mc{P}\\
    \hom^l &: \mc{M}^{\text{op}} \otimes \mc{P} \to \mc{N}\\
    \hom^r &: \mc{N}^{\text{op}} \otimes \mc{P} \to \mc{M}
  \end{align*}
  and natural isomorphisms
  \[
  \mc{P}(M \oast N, P) \cong \mc{N}(N, \hom^l (M, P)) \cong \mc{M}(M, \hom^r(N, P))
  \]
  for $M \in \mc{M}, N \in \mc{N}$ and $P \in \mc{P}$.
\end{defn}

\begin{eg} There are two very standard examples of 2-variable adjunctions.
\begin{itemize}
\item 
If $\mc{V}$ is a symmetric monoidal closed category, then $\otimes : \mc{V} \times \mc{V} \to \mc{V}$ participates in a 2-variable adjunction $(\otimes,\hom ,\{,\})$. This is a crucial example.
\item  Given three rings $A, B, C$ there is a 2-variable adjunction
  \[
  (\otimes_B, F_A, F_C): _A \Mod_B \otimes _B \Mod_C \to _A \Mod_C 
  \]
\end{itemize}
\end{eg}

\begin{eg}\label{eg:enrichmenet_2_var}
  Enrichment can also be encoded as a 2-variable adjunction \cite[Defn~14.3]{shulman}. That is, an unenriched category $\mc{C}_0$ can be given the structure of a $\mc{V}$-enriched, tensored, and cotensored category if there is an (unenriched) 2-variable adjunction
  \[
  (\odot, \{\}, \hom): \mc{V}_0 \times \mc{C}_0 \to \mc{C}_0 
  \]
  together with associatiity and unit isomorphisms
  \[
  V_1 \odot (V_2 \odot C) \cong (V_1 \otimes V_2) \odot C \qquad I \odot C \cong C. 
  \]
  
\end{eg}

We recall a generalization of the Morita bicategory where  rings are replaced by $\mc{V}$-enriched categories. The core definition is the following. 

\begin{defn}
  Let $\mc{A}, \mc{B}$ be $\mc{V}$-categories. An $(\mc{A},\mc{B})$-{\bf bimodule} is a $\sV$-functor $\mc{M}: \mc{A}^{\text{op}} \otimes \mc{B} \to \mc{V}$. 
\end{defn}

An enriched category  $\mc{A}$ itself is naturally an $\mc{A}^{\text{op}} \otimes \mc{A}$-bimodule 
and $\mc{V}$-functors generalize bimodule homomorphisms. 

The most basic operation with bimodules is tensor product. We have that same operation in its ``many objects'' version here.

\begin{defn}\label{def:odot}
  Let $\mc{M}$ be an $(\mc{A}, \mc{B})$-module and $\mc{N}$ an $(\mc{B}, \mc{C})$-module. We define $\mc{M} \odot_{\mc{B}} \mc{N}$ to be the following $(\mc{A},\mc{C})$-module
  \[
  \mc{M}\odot_{\mc{B}} \mc{N} := \left(
  \begin{tikzcd}
    \displaystyle\coprod_{b,b'} \mc{M}(-,b) \otimes \mc{B}(b,b') \otimes \mc{N}(b', -) \ar[r,shift left=.5ex]\ar[r,shift right=.5ex]&  \displaystyle\coprod_{b''} \mc{M}(-,b'') \otimes \mc{N}(b'',-)
  \end{tikzcd}
  \right)
  \]
\end{defn}

The relative tensor product $\odot_{\mc{B}}$ participates in a 2-variable adjunction, just as in the case of rings and bimodules.

\begin{defn}
  Let $\mc{A},\mc{B},\mc{C}$ be $\mc{V}$-categories. Let $\mc{M}$ be an $(\mc{A}, \mc{B})$-module, $\mc{N}$ a $(\mc{B},\mc{C})$-module and $\mc{P}$ an $(\mc{A}, \mc{C})$-module. We define a $(\mc{B},\mc{C})$-module $  \hom^{\mc{A}} (\mc{M}, \mc{P})(b,c) $ by 
  \[
\operatorname{eq}\left(  \begin{tikzcd}
    \displaystyle\prod_{a} \hom^{\mc{V}}( \mc{M}(a,b), \mc{P}(a,c))\ar[r,shift left=.5ex] \ar[r, shift right=.5ex] & \displaystyle\prod_{a,a'} \hom^{\mc{V}}(\mc{A}(a',a) \otimes \mc{M}(a,b), \mc{P}(a', c))
  \end{tikzcd}
  \right)
  \]
  and similarly a $(\mc{A},\mc{B})$-module  $\hom^{\mc{C}} (\mc{N}, \mc{P})(a,b)$ by 
\begin{equation*}
{
  \operatorname{eq}\left(
  \begin{tikzcd}
      \displaystyle\prod_{c} \hom^{\mc{V}}(\mc{N}(a,c), \mc{P}(b,c))\ar[r,shift left = .5ex] \ar[r, shift right = .5ex] & 
	  \displaystyle\prod_{c,c'} \hom^{\mc{V}}\left(\mc{C}(c,c') \otimes \mc{N}(a,c), \mc{P}(b,c') \right)
  \end{tikzcd}
 \right)}
\end{equation*}
\end{defn}

If $\mc{A}$ and $\mc{B}$ are $\mc{V}$-categories, we let  $_{\mc{A}} \Mod_{\mc{B}}$ denote the category of $(\mc{A},\mc{B})$-bimodules and their homomorphisms.
\begin{prop}\label{prop:two_var}
  The three functors above assemble into a two variable adjunction
  \[
  (\odot, \hom^{\mc{A}}, \hom^{\mc{C}}) : _{\mc{A}} \Mod_{\mc{B}} \otimes _{\mc{B}} \Mod_{\mc{C}} \to _{\mc{A}} \Mod_{\mc{C}} 
  \]
\end{prop}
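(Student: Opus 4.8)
\textbf{Proof proposal for Proposition \ref{prop:two_var}.}

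The plan is to reduce the statement to the unenriched theory of two-variable adjunctions together with the standard coequalizer/equalizer manipulations, following the same template that establishes the ring-and-bimodule case. First I would observe that by the defining universal properties it suffices to exhibit, for fixed $\mc{M}\in\,_{\mc{A}}\Mod_{\mc{B}}$, $\mc{N}\in\,_{\mc{B}}\Mod_{\mc{C}}$, $\mc{P}\in\,_{\mc{A}}\Mod_{\mc{C}}$, natural isomorphisms
\[
{}_{\mc{A}}\Mod_{\mc{C}}(\mc{M}\odot_{\mc{B}}\mc{N},\mc{P})\;\cong\;{}_{\mc{B}}\Mod_{\mc{C}}(\mc{N},\hom^{\mc{A}}(\mc{M},\mc{P}))\;\cong\;{}_{\mc{A}}\Mod_{\mc{B}}(\mc{M},\hom^{\mc{C}}(\mc{N},\mc{P})),
\]
with compatible naturality in all three variables. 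Since $\mc{M}\odot_{\mc{B}}\mc{N}$ is by \cref{def:odot} the coequalizer of a reflexive pair built out of the pointwise tensor, a map out of it into $\mc{P}$ is the same as a map out of $\coprod_{b}\mc{M}(-,b)\otimes\mc{N}(b,-)$ equalizing the two composites; I would then unwind this as a family of maps $\mc{M}(a,b)\otimes\mc{N}(b,c)\to\mc{P}(a,c)$, dinatural in $b$ and compatible with the $\mc{A}$- and $\mc{C}$-actions.

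Next I would massage that data using the closed structure on $\mc{V}$: a map $\mc{M}(a,b)\otimes\mc{N}(b,c)\to\mc{P}(a,c)$ is adjoint to a map $\mc{N}(b,c)\to\hom^{\mc{V}}(\mc{M}(a,b),\mc{P}(a,c))$, and the requirement that the original family be dinatural in $b$ and compatible with the $\mc{A}$-action translates exactly into the equalizer condition defining $\hom^{\mc{A}}(\mc{M},\mc{P})(b,c)$, while compatibility with the $\mc{C}$-action is precisely the statement that the resulting map is a morphism of $(\mc{B},\mc{C})$-bimodules. This yields the first isomorphism; the second is obtained symmetrically by instead taking the adjoint into the $\mc{N}$-variable and recognizing the defining equalizer of $\hom^{\mc{C}}(\mc{N},\mc{P})$. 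All the identifications are instances of the $\mc{V}$-enriched Yoneda/end manipulations, so naturality in $\mc{M}$, $\mc{N}$, $\mc{P}$ is automatic once one records that every isomorphism used is natural. I would note, as in \cref{eg:enrichmenet_2_var}, that this is the ``many objects'' version of the ring case, and that one may alternatively phrase the whole argument using coends: $\mc{M}\odot_{\mc{B}}\mc{N}(a,c)=\int^{b}\mc{M}(a,b)\otimes\mc{N}(b,c)$ and $\hom^{\mc{A}}(\mc{M},\mc{P})(b,c)=\int_{a}\hom^{\mc{V}}(\mc{M}(a,b),\mc{P}(a,c))$, after which the two isomorphisms are the enriched Fubini theorem together with the two-variable adjointness of $(\otimes,\hom^{\mc{V}})$ applied inside the (co)end.

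The main obstacle is bookkeeping rather than conceptual: one must check that the coequalizer in \cref{def:odot} and the equalizers defining the internal homs are preserved well enough by $\otimes$ to commute with the various (co)ends, which in turn requires the hypotheses on $\mc{V}$ recorded in \cref{ass:homotopy} (in particular that $-\otimes X$ be a left adjoint, hence cocontinuous, so it commutes with the reflexive coequalizer, and that $\mc{V}$ be complete and cocomplete so all the displayed (co)limits exist). I expect the cleanest route is to cite \cite[\S14]{shulman} for the general two-variable-adjunction machinery and simply verify that our $(\odot,\hom^{\mc{A}},\hom^{\mc{C}})$ is an instance of it; the only genuinely new content is matching the formulas in \cref{def:odot} and the subsequent definition with Shulman's abstract framework, which is a direct comparison of the relevant (co)end expressions.
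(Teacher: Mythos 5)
Your argument is correct, and it is the standard one: the paper in fact gives no proof of this proposition at all, treating it as the routine ``many objects'' generalization of the ring--bimodule case within Shulman's framework, so your write-up supplies exactly the verification the paper leaves implicit. The reduction of maps out of the reflexive coequalizer of \cref{def:odot} to dinatural families $\mc{M}(a,b)\otimes\mc{N}(b,c)\to\mc{P}(a,c)$, followed by adjointing across the closed structure and matching the resulting conditions with the equalizers defining $\hom^{\mc{A}}$ and $\hom^{\mc{C}}$, is precisely how this is proved, and the coend/end reformulation you mention is an equivalent packaging.

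One small correction of attribution: the cocontinuity of $-\otimes X$ that you need to commute the tensor past the coequalizer does not come from \cref{ass:homotopy} (which concerns the simplicial adjunction and the homotopical structure, and is only imposed later); it comes from the standing hypothesis at the start of the section that $\mc{V}$ is \emph{closed} symmetric monoidal, so that $-\otimes X$ is a left adjoint. Likewise the existence of the coproducts and products in the displayed formulas is a (co)completeness assumption on $\mc{V}$ that the paper uses silently. Neither point affects the validity of your argument.
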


\subsection{Homotopical categories and enriched categories}
All of the categories $\mc{V}$ that we are interested in are fundamentally homotopical. In this section we review the necessary machinery to that the Morita bicategory is appropriately homotopical.  

We work in a homotopical situation that is slightly more general than a Quillen model category. This is desirable because it 
more clearly illustrates the required technical assumptions and 
  gives added flexibility in computations. 

\begin{defn}\cite{DHKS}
  A \textbf{homotopical category} is a category $\mc{C}$ equipped with a subcategory of weak equivalences $\mc{W}$ that satisfy the 2/6 property: If $h \circ g$ and $g \circ f$ in the diagram below are in $\mc{W}$, then so are the remaining four. 
  \[
  \xymatrix{
    & X \ar[ddr]^g\ar[drr]^{h \circ g} & & \\
    & & &Z \\
    W\ar[uur]^f\ar[rr]_{g \circ f}\ar[urrr]^(.4){h \circ g \circ f} &\ & N\ar[ur]_h 
  }
  \]
A functor $F: \mc{C} \to \mc{D}$ between two homotopical categories is {\bf homotopical} if $F$ preserves weak equivalences. 
\end{defn}

To  define left and right derived functors, we need  left and right deformations. These are  formal analogues of cofibrant and fibrant replacement and  capture the homotopically well-behaved objects. This weakening is useful since some homotopically well-behaved objects do not participate in a model category structure (e.g. flat resolutions), but are useful tools for homotopical control. 

\begin{defn}\cite{DHKS}
Let $\mc{C}$ be a homotopical category. A \textbf{left deformation} is an endofunctor $Q: \mc{C} \to \mc{C}$ together with a natural weak equivalence $Q \Rightarrow \id_\mc{C}$. Dually, a \textbf{right deformation} is an endofunctor $R :\mc{C} \to \mc{C}$ together with a natural weak equivalence $\id_{\mc{C}} \Rightarrow R$. 

The \textbf{left deformation retract} of $\mc{C}$ is the full subcategory on objects in the image of $Q$.  We denote this by $\mc{C}_Q$. Similarly for a \textbf{right deformation retract}, $\mc{C}_R$. If $\mc{C}$ has both and left and right deformation we call the pair $(\mc{C}_Q, \mc{C}_R)$ a {\bf deformation retract} of $\mc{C}$. 
\end{defn}

\begin{rmk}
At this point the sources of deformation retracts are not important.  As we will see later, the main lesson of \cite{shulman} is that bar and cobar constructions almost always provide excellent models for them. 
\end{rmk}

The categories we are interested in are both homotopical and enriched and so our next step is to define what it means for those structures to be compatible.  This requires the notion of a deformation of a 2-variable adjunction. In turn, this requires that each of the categories participating in a 2-variable adjunction be homotopical.  

\begin{defn}\cite[Defn.~15.1]{shulman}
  A \textbf{deformation} for a 2-variable adjunction \[(\oast, \hom^l, \hom_r): \mc{M} \times \mc{N} \to \mc{P}\] consists of left deformation retracts $\mc{M}_Q$, $\mc{N}_Q$ and their associated deformations, and a right deformation retract $\mc{P}_R$ and its associated deformation, such that: 
  \begin{itemize}
  \item \textbf{(tensor homotopical)} $\oast$ is homotopical on $\mc{M}_Q \times \mc{N}_Q$
  \item \textbf{(left hom homotopical)} $\hom^l$ is homotopical on $\mc{M}^{\text{op}} \times \mc{P}_R$
  \item \textbf{(right hom homotopical)} $\hom^r$ is homotopical on $\mc{N}^{\text{op}}_Q \times \mc{P}_R$
  \end{itemize}
\end{defn}

Among other uses, this definition provides us with a context for discussing a deformation of an enrichment, since an enrichment is simply a 2-variable adjunction  (\cref{eg:enrichmenet_2_var}). This leads to the following, absolutely crucial, definition. The definition gives the conditions under which the enrichment and the homotopical structures ``play well'' together. 

\begin{defn}\label{defn:v_homotopical}
  Let $\mc{V}$ be a homotopical category.  
 An enriched, tensored and cotensored $\mc{V}$-category $\mc{M}$ is \textbf{$\mc{V}$-homotopical} if there are left and right deformations $\mc{M}_Q, \mc{M}_R$ of $\mc{M}$ such that 
	\[(\mc{V}_Q, \mc{M}_Q, \mc{M}_R)\]
 is a deformation of the two variable adjunction for the enrichment and the following hold
  \begin{itemize}
  \item \textbf{(cofibrant-fibrant preservation)} $\odot$ takes $\mc{V}_Q \times \mc{M}_Q$ to $\mc{M}_Q$ and $\{-,-\}$ takes $\mc{V}^{\text{op}}_Q \times \mc{M}_R$ to $\mc{M}_R$.
  \item \textbf{(unit conditions)} Let $M \in \mc{M}_Q$ and $N \in \mc{M}_R$, then the natural maps
\[
      QI \odot M \to I \odot M \qquad \{I,N\} \to \{QI, N\}
\]
    are weak equivalences 
  \end{itemize}
\end{defn}
We think of the first of these conditions as asserting that the tensor preserves cofibrant objects and the cotensor preserves fibrant objects.  A consequence is the following.

\begin{prop}\cite[Prop.16.2]{shulman}
If $\mc{M}$ is $\mc{V}$-homotopical, then $\operatorname{Ho}(\mc{M}_0)$ is a tensored and cotensored $\operatorname{Ho}(\mc{V})$-category. 
\end{prop}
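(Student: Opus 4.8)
\textbf{Proof plan for Proposition \cite[Prop.~16.2]{shulman}.}
The plan is to transport the $\mc{V}$-enrichment of $\mc{M}$ through the localization functors on both sides simultaneously, using the deformation data supplied by \cref{defn:v_homotopical} to make the transported structure well-defined and coherent. Concretely, I would first recall that since $\mc{M}$ is tensored and cotensored over $\mc{V}$, the enrichment is encoded by the two-variable adjunction $(\odot,\{-,-\},\hom)\colon \mc{V}_0\times\mc{M}_0\to \mc{M}_0$ of \cref{eg:enrichmenet_2_var}, together with the associativity and unit isomorphisms $V_1\odot(V_2\odot C)\cong (V_1\otimes V_2)\odot C$ and $I\odot C\cong C$. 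The hypothesis that $\mc{M}$ is $\mc{V}$-homotopical says precisely that $(\mc{V}_Q,\mc{M}_Q,\mc{M}_R)$ is a deformation of this two-variable adjunction, so by the general theory of derived two-variable adjunctions (which is the content of the surrounding sections of \cite{shulman}, and which I would cite rather than reprove) the functors $\odot$, $\{-,-\}$, $\hom$ descend to a two-variable adjunction
\[
(\odot^{\mathbf{L}},\{-,-\}^{\mathbf{R}},\hom^{\mathbf{R}})\colon \operatorname{Ho}(\mc{V}_0)\times\operatorname{Ho}(\mc{M}_0)\to \operatorname{Ho}(\mc{M}_0),
\]
computed by first applying the left deformation $Q$ on the $\mc{V}$- and $\mc{M}$-variables and the right deformation $R$ on the target.

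The second step is to promote this derived two-variable adjunction to an actual enrichment of $\operatorname{Ho}(\mc{M}_0)$ over the symmetric monoidal category $\operatorname{Ho}(\mc{V})$, i.e.\ to produce derived associativity and unit isomorphisms. For associativity, one must show that the two functors $V_1\odot^{\mathbf{L}}(V_2\odot^{\mathbf{L}}C)$ and $(V_1\otimes^{\mathbf{L}}V_2)\odot^{\mathbf{L}}C$ are naturally isomorphic in $\operatorname{Ho}(\mc{M}_0)$. Here the \textbf{(cofibrant-fibrant preservation)} clause of \cref{defn:v_homotopical} does the work: $\odot$ sends $\mc{V}_Q\times\mc{M}_Q$ into $\mc{M}_Q$, so iterated tensors of deformation-retract objects stay in $\mc{M}_Q$ where $\odot$ is homotopical, and hence the point-set associativity isomorphism, applied to $QV_1,QV_2,QC$, already lands among homotopically meaningful objects and descends directly. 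For the unit, one uses the \textbf{(unit conditions)} clause: the map $QI\odot M\to I\odot M$ is a weak equivalence for $M\in\mc{M}_Q$, so the derived unit $\operatorname{Ho}(I)\odot^{\mathbf{L}}(-)$, which is computed as $QI\odot Q(-)$, agrees with the identity up to natural weak equivalence, giving the unit isomorphism in $\operatorname{Ho}(\mc{M}_0)$. One then checks the pentagon and triangle coherence identities; these hold in $\operatorname{Ho}(\mc{M}_0)$ because they hold in $\mc{M}_0$ after restricting to the deformation retracts and the localization functor is (strong) monoidal on $\mc{V}$ by the corresponding property of the deformation of $\otimes$.

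The third step is bookkeeping: assemble the derived composition, unit, tensor and cotensor into the structure of a tensored and cotensored $\operatorname{Ho}(\mc{V})$-category on $\operatorname{Ho}(\mc{M}_0)$, verifying the two-variable adjunction isomorphisms survive localization (they do, since adjunctions between homotopical categories with deformations descend to the homotopy categories). I expect the main obstacle to be the coherence verification in step two --- not because any individual diagram is hard, but because one must be careful that all the objects appearing in the pentagon and triangle live in the region ($\mc{M}_Q$, and $\mc{V}_Q$ closed under $\otimes$) where the structure functors are homotopical, so that the point-set coherence isomorphisms genuinely represent the derived ones. This is exactly where \textbf{(cofibrant-fibrant preservation)} and \textbf{(unit conditions)} are used, and tracking them carefully is the crux; everything else is a formal consequence of the machinery of derived two-variable adjunctions already developed in the preceding part of \cite{shulman}.
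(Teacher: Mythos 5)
The paper gives no proof of this statement; it is quoted verbatim from Shulman, and your plan reconstructs essentially the argument given there: the $\mc{V}$-homotopical hypothesis is by definition a deformation of the hom-tensor-cotensor two-variable adjunction, so the adjunction descends to homotopy categories, and the cofibrant-fibrant preservation and unit clauses are exactly what make the associativity and unit coherence isomorphisms descend. Your proposal is correct and takes the same approach as the cited source.
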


That is, the enriched structure descends to homotopy categories.

Finally, with the definition of $\mc{V}$-homotopical categories in hand,  we can discuss 2-variable adjunctions \textit{between} homotopical $\mc{V}$-categorical categories.

\begin{defn}\cite[16.11]{shulman}
  Let $(\oast, \hom^l, \hom^r): \mc{M} \otimes \mc{N} \to \mc{P}$ be a $\mc{V}$-two variable adjunction between $\mc{V}$-homotopical categories and  $(\mc{M}_Q, \mc{N}_Q, \mc{P}_R)$ be a deformation retract for $\mc{M}_0 \times \mc{N}_0 \to \mc{P}_0$. 
  \begin{itemize}
  \item The adjunction is $\oast$-$\mc{V}$-\textbf{deformable} if $\oast$ takes $\mc{M}_Q \times \mc{N}_Q$ to $\mc{P}_Q$.
  \item The adjunction is $\hom^l$-$\mc{V}$-\textbf{deformable} if $\hom^l$ takes $\mc{M}^{\text{op}} \times \mc{P}_R$ to $\mc{M}_R$
  \item The adjunction is $\hom^r$-$\mc{V}$-\textbf{deformable} if $\hom^r$ takes $\mc{N}^{\text{op}} \times \mc{P}_R$ to $\mc{N}_R$ 
  \end{itemize}
\end{defn}

The key point is the following proposition

\begin{prop}\cite[16.13]{shulman}
  A $\mc{V}$-deformable two-variable adjunction $\mc{M} \otimes \mc{N} \to \mc{P}$ descends to a $\operatorname{Ho}(\mc{V})$-2-variable adjunction
  \[
  \operatorname{Ho}(\mc{M}) \otimes^{\mbf{L}} \operatorname{Ho}(\mc{N}) \to \operatorname{Ho}(\mc{P}) 
  \]
\end{prop}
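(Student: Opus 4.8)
The statement to prove is \cite[16.13]{shulman}: a $\mc{V}$-deformable two-variable adjunction $(\oast,\hom^l,\hom^r)\colon \mc{M}\otimes\mc{N}\to\mc{P}$ descends to a $\operatorname{Ho}(\mc{V})$-two-variable adjunction on homotopy categories. The approach is to produce the three derived functors by composing with the deformation retractions, then chase the defining natural isomorphisms down to $\operatorname{Ho}$ using the fact that homotopical functors descend and that a natural isomorphism of functors between homotopical categories induces one on localizations.

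\emph{Step 1: construct the derived functors.} Using $\oast$-$\mc{V}$-deformability, $\oast$ restricted to $\mc{M}_Q\times\mc{N}_Q$ is homotopical (by the deformation-retract hypothesis in \cref{defn:v_homotopical}, and lands in $\mc{P}_Q$ by deformability), so it descends to $\operatorname{Ho}(\mc{M})\otimes^{\mathbf L}\operatorname{Ho}(\mc{N})\to\operatorname{Ho}(\mc{P})$ after precomposing with the left deformations $Q_\mc{M},Q_\mc{N}$. Dually, $\hom^l$-$\mc{V}$-deformability makes $\hom^l$ homotopical on $\mc{M}^{\mathrm{op}}\times\mc{P}_R$ with values in $\mc{M}_R$, giving a total right derived functor $\mathbf R\hom^l$; and similarly $\mathbf R\hom^r$. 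I would invoke Shulman's general deformation-retract formalism (this is exactly the content of \cite[Ch.~15]{shulman}) to know that each of these is a well-defined derived functor independent of the chosen deformation up to canonical isomorphism.

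\emph{Step 2: descend the natural isomorphisms.} The key is that the two hom-isomorphisms $\mc{P}(M\oast N,P)\cong\mc{N}(N,\hom^l(M,P))\cong\mc{M}(M,\hom^r(N,P))$ are natural isomorphisms of $\mc{V}$-valued functors; applying $\operatorname{Ho}(\mc{V})$-enrichment, or more simply evaluating on the enriched hom-objects and passing to $\operatorname{Ho}$, I would show that after restricting all inputs to the appropriate deformation retracts the displayed isomorphisms are between homotopical functors, hence induce isomorphisms on $\operatorname{Ho}(\mc{V})$. Concretely: on $\mc{M}_Q\times\mc{N}_Q\times\mc{P}_R$ all three of $M\oast N$, $\hom^l(M,P)$, $\hom^r(N,P)$ are homotopically well-behaved (cofibrant enough, fibrant enough), so each side of each isomorphism is a composite of homotopical functors; a natural isomorphism of homotopical functors localizes to a natural isomorphism of their derived functors. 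This yields the required $\operatorname{Ho}(\mc{V})$-natural isomorphisms $\operatorname{Ho}(\mc{P})(M\oast^{\mathbf L}N,P)\cong\operatorname{Ho}(\mc{N})(N,\mathbf R\hom^l(M,P))\cong\operatorname{Ho}(\mc{M})(M,\mathbf R\hom^r(N,P))$.

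\emph{Step 3: verify functoriality and coherence of the derived triple.} Finally I would check that $(\oast^{\mathbf L},\mathbf R\hom^l,\mathbf R\hom^r)$ is genuinely a $\operatorname{Ho}(\mc{V})$-two-variable adjunction in the sense of the definition: the three functors have the correct variances on $\operatorname{Ho}(\mc{M}),\operatorname{Ho}(\mc{N}),\operatorname{Ho}(\mc{P})$, and the natural isomorphisms from Step 2 are themselves natural in all three variables, which follows because naturality is preserved by localization. The main obstacle is Step 2: making sure the three categories' deformation retracts are compatible enough that the \emph{same} restriction simultaneously makes all three functors (and both isomorphisms) homotopical — this is precisely why the ``$\mc{V}$-deformable'' hypothesis bundles the three conditions on $\oast$, $\hom^l$, $\hom^r$ together, and the proof amounts to observing that these conditions are exactly what is needed to run the argument on the common retract $\mc{M}_Q\times\mc{N}_Q\times\mc{P}_R$. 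Since the excerpt only needs the \emph{existence} of the descended adjunction (it will be applied via \cref{prop:two_var} to bimodule categories), it suffices to cite Shulman for the deformation-retract machinery and assemble these three steps.
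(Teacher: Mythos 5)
The paper does not prove this statement: it is imported verbatim as a citation to Shulman's \emph{Homotopy limits and colimits and enriched homotopy theory} (Prop.~16.13), so there is no in-paper argument to compare against. Your outline is a faithful reconstruction of the standard deformation-retract proof that appears in the cited source: derive $\oast$ by precomposing with the left deformations, derive the two homs by precomposing with the right deformation, and descend the adjunction isomorphisms on the common retract $\mc{M}_Q\times\mc{N}_Q\times\mc{P}_R$. The only place where your Step~2 is lighter than a complete argument is that the derived adjunction isomorphism is not literally the localization of the point-set isomorphism restricted to the retract; one must also check that the comparison maps coming from the deformations (e.g.\ $QM\oast QN\to M\oast N$ and $\hom^l(M,P)\to\hom^l(QM,RP)$) intertwine the two sides, which is exactly what the bundled deformability hypotheses guarantee and what Shulman's Chapter~15 formalism of deformable (two-variable) adjunctions packages. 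With that caveat made explicit, your plan is the correct proof.
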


\subsection{Bimodules with homotopy}
Our goal is now to combine the bicategory of bimodules in a symmetric monoidal category $\mc{V}$ with a homotopical structure on $\mc{V}$.  The primary challenge is to show that for 
the 2-variable adjunction
\[
_{\mc{A}} \Mod_{\mc{B}} \otimes _{\mc{B}} \Mod_{\mc{C}} \to _{\mc{A}} \Mod_{\mc{C}}
\]
the functor $-\odot N$ preserves weak equivalences. 
Shulman's  \cite{shulman} key observation is that under good conditions bar and cobar constructions  compute an enriched deformation of the tensor product defined in \cref{def:odot}.

At this point, we make the following technical assumption about $\mc{V}$.

\begin{ass}\label{ass:homotopy}
   $\mc{V}$ is a closed, symmetric monoidal homotopical category with a symmetric monoidal adjunction $\mbf{Set}_{\Delta} \leftrightarrows \mc{V}$ where the left adjoint is strong symmetric monoidal. 
\end{ass}
This equips $\mc{V}$ with a canonical cosimplicial object associated to $\Delta^\bullet$ in $\mbf{Set}_{\Delta}$. We denote this cosimplicial object by $\Delta^\bullet: \Delta \to \mc{V}$.

\begin{defn}\label{def:bar}
  Let $(\oast, \hom^l, \hom^r): \mc{M} \otimes \mc{N} \to \mc{P}$ be a 2-variable adjunction of $\mc{V}$-categories, $\mc{D}$ be a small $\mc{V}$-category and $F: \mc{D}^{\text{op}} \to \mc{M}$ and $G: \mc{D} \to \mc{N}$ be $\mc{V}$-functors. The \textbf{enriched simplicial two-sided bar construction} is the simplicial object in $\mc{P}$ given in degree $n$ by
  \[
  B_n (F, \mc{D},G):= \coprod_{d_0 \to d_1 \to \cdots \to d_n} \left(\mc{D}(d_{n-1},d_n) \otimes \cdots \otimes \mc{D}(d_0, d_1) \right)\odot \left(\left (G(d_n) \oast F(d_0)\right)\right)
  \]
  The \textbf{enriched two-sided bar construction} is the geometric realization of the above simplicial object in $\mc{P}$
  \[
  B(F, \mc{D}, G) := \Delta^\bullet \odot_{\Delta} B_n (F, \mc{D}, G). 
  \]

\end{defn}

Bar constructions define homotopy colimits and preserve pointwise weak equivalences in certain cases because the simplicial objects they produce are Reedy cofibrant \cite[Prop.~23.6]{shulman}. In the $\mc{V}$-homotopical situation, a more comprehensive condition is needed. 
\begin{defn}\cite[Defn. 20.1]{shulman}
  Let $\mc{D}$ be a small $\mc{V}$-category and
  \[
  (\oast, \hom^l, \hom^r): \mc{M} \otimes \mc{N} \to \mc{P}
  \]
  be a deformable two variable $\mc{V}$-adjunction between $\mc{V}$-homotopical categories. We say $\mc{D}$ is \textbf{very good for $\oast$} if
  \begin{itemize}
\item $B(-,\mc{D},-)$ is homotopical on $\mc{M}^{\mc{D}^{\text{op}}}_Q \times \mc{N}^{\mc{D}}_Q$. This condition is the ``pointwise preservation of weak equivalences'' condition.
\item $B(\mc{D},\mc{D},F) \in \mc{N}^{\mc{D}}_Q$ and $B(G, \mc{D},\mc{D}) \in \mc{M}^{\mc{D}^{\text{op}}}_Q$
\item $B(G, \mc{D}, F) \in \mc{P}_Q$
\item $\mc{D}(x,y)$ is in $\mc{V}_Q$ for each $x, y \in \operatorname{ob}(\mc{D})$. This is ``pointwise cofibrancy''.
  \end{itemize}
\end{defn}

This  condition  ensures that the enriched bar construction descends to a derived functor. It can be thought of as a generalization of Reedy cofibrancy.  Verifying the condition in practice is another matter, which we take up later in this section. We most frequently use the condition for the hom-tensor-cotensor 2-variable adjunction for $\mc{V}$. For convenience, we fully state that.

\begin{defn}\label{defn:very_good}
  A small $\mc{V}$-category $\mc{D}$ is {\bf very good} for $\otimes$ if
  \begin{itemize}
  \item $B(-, \mc{D}, -)$ is homotopical on $\mc{V}_Q^{\mc{D}^{\text{op}}} \otimes \mc{V}_Q^{\mc{D}}$
  \item $B(\mc{D}, \mc{D}, F) \in \mc{V}^{\mc{D}}_Q$ and $B(G, \mc{D}, \mc{D}) \in \mc{V}^{\mc{D}^{\text{op}}}_Q$
  \item $B(G, \mc{D}, F) \in \mc{V}_Q$
  \item $\mc{D}(x, y) \in \mc{V}_Q$ for all $x, y \in \operatorname{ob} \mc{D}$. 
  \end{itemize}
\end{defn}

The $\mc{V}$-categories that are good for $(\mc{V}, \otimes)$ are the objects of a bicategory (in fact, double category, as we will show below).

\begin{thm}\cite[Prop. 22.11]{shulman}\label{thm:main_construction}
  Let $\mc{V}$ be a closed symmetric monoidal homotopical category with strong monoidal adjunction $\mbf{Set}_{\Delta} \leftrightarrows \mc{V}$. Assume  that all simplicial homotopy equivalences in $\mc{V}$ are weak equivalences and that $\mc{V}$ is saturated. Let $\mc{B}(\Cat_{\mc{V}})$ be the bicategory with
  \begin{itemize}
  \item 0-cells: very good small $\mc{V}$-categories.
  \item 1-cells: bimodules, $\mc{A}^{\text{op}} \otimes \mc{B} \to \mc{V}$. 
  \item 2-cells: $\mc{V}$-natural transformations. 
  \end{itemize}
  $\mc{B}(\Cat_{\mc{V}})$ has a homotopy bicategory $\operatorname{Ho}(\mc{B}(\Cat_{\mc{V}}))$ that is $\operatorname{Ho}(\mc{V})$ enriched.
\end{thm}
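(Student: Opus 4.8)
The plan is to deduce \cref{thm:main_construction} essentially by unwinding the definitions and verifying, step by step, the hypotheses of the general machinery developed in \cite{shulman}. The assertion that very good small $\mc{V}$-categories, bimodules, and $\mc{V}$-natural transformations assemble into a bicategory is the content of \cite[\S 22]{shulman}; we are not reproving it but recording it in the form we need. First I would confirm that the data as described is exactly what \cite{shulman} calls (the double category underlying) a derived bicategory of bimodules: the $0$-cells are the very good $\mc{V}$-categories of \cref{defn:very_good}, the hom-categories $\mc{B}(\Cat_\mc{V})(\mc{A},\mc{B})$ are ${}_\mc{A}\Mod_\mc{B}$, and horizontal composition is the derived relative tensor product $\odot^{\mathbf{L}}$ computed via the enriched two-sided bar construction of \cref{def:bar}. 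The hypotheses ``all simplicial homotopy equivalences in $\mc{V}$ are weak equivalences'' and ``$\mc{V}$ is saturated'' are precisely the standing assumptions under which \cite{shulman} shows that bar constructions are homotopical and compute derived tensor products; together with \cref{ass:homotopy} (the strong monoidal Quillen pair $\mbf{Set}_\Delta \leftrightarrows \mc{V}$) they guarantee that $\Delta^\bullet \odot_\Delta (-)$ behaves as a geometric realization and that the Reedy/very-good cofibrancy arguments of \cite[\S 20, \S 23]{shulman} apply.

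The substantive steps are then: (i) verify that the relative tensor product participates in a deformable $\mc{V}$-two-variable adjunction ${}_\mc{A}\Mod_\mc{B} \otimes {}_\mc{B}\Mod_\mc{C} \to {}_\mc{A}\Mod_\mc{C}$ — this is \cref{prop:two_var} for the underlying $\mc{V}$-two-variable adjunction, upgraded to deformability using the bar-construction deformation; (ii) check associativity and unit constraints up to coherent isomorphism on homotopy categories, which is where the ``very good'' condition on the $0$-cells is used, since it is exactly what makes $B(\mc{D},\mc{D},F)$ and $B(G,\mc{D},\mc{D})$ land back in the cofibrant locus and makes the simplicial bar objects Reedy cofibrant, so that the usual simplicial identities for iterated bar constructions give the associator and unitors; (iii) extract the homotopy bicategory $\operatorname{Ho}(\mc{B}(\Cat_\mc{V}))$ by passing to $\operatorname{Ho}(\mc{V})$-valued hom-categories, invoking \cite[16.13]{shulman} to see that the derived two-variable adjunctions descend, whence the horizontal composition becomes a genuinely associative-and-unital (up to iso) composition of $\operatorname{Ho}(\mc{V})$-enriched functors, and the result is $\operatorname{Ho}(\mc{V})$-enriched.

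I would organize the write-up as: recall the underlying (unenriched, unhomotopical) bicategory of bimodules from \cref{def:odot} and \cref{prop:two_var}; observe that \cref{ass:homotopy} plus the stated hypotheses on $\mc{V}$ put us in the situation of \cite[\S 22]{shulman}, so that $B(-,\mc{D},-)$ provides an enriched deformation of $\odot$; note that the ``very good for $\otimes$'' condition on $0$-cells is the localized form of the hypotheses needed to apply \cite[Prop.~22.11]{shulman} objectwise; and conclude. The proof is therefore a citation-assembly proof: almost every ingredient is quoted, and the only thing to check by hand is that our definition of $0$-cell (\cref{defn:very_good}) matches Shulman's hypothesis list and that the coherence data survives, which it does because the relevant simplicial objects are Reedy cofibrant.

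\textbf{Main obstacle.} The genuine difficulty — which in our exposition is delegated wholesale to \cite{shulman} — is the homotopical bookkeeping behind the claim that $-\odot \mc{N}$ preserves weak equivalences and that the resulting composition is associative up to \emph{coherent} isomorphism on $\operatorname{Ho}(\mc{V})$-enriched hom-categories. Concretely, one must know that iterated two-sided bar constructions $B(F,\mc{A},B(G,\mc{B},H))$ and $B(B(F,\mc{A},G),\mc{B},H)$ are connected by a chain of weak equivalences natural in all variables, and that these equivalences satisfy the pentagon; this is exactly the ``very good'' machinery, and it is the reason the hypothesis on $0$-cells must be built into the definition rather than imposed ad hoc. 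Since \cref{thm:main_construction} is stated as a citation to \cite[Prop.~22.11]{shulman}, I would not reprove this, but I would flag that it is the technical heart of the construction and the place where saturation of $\mc{V}$ and the simplicial-homotopy-equivalence hypothesis are essential.
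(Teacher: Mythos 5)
Your proposal is correct and matches the paper's treatment: the theorem is stated as a direct citation of \cite[Prop.~22.11]{shulman}, and the paper adds only the remark that associativity of the composition is proved in \cite[p.~65]{shulman}, so your citation-assembly outline (verifying that the very-good hypothesis, the bar-construction deformation, and the saturation/simplicial-homotopy-equivalence assumptions are exactly Shulman's hypotheses) is the same approach, just spelled out in more detail than the paper bothers to.
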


The associativity of the categorical composition is proved in \cite[p.65]{shulman}.

There are many examples of categories $\mc{V}$ for which we can identify large classes of very good $\mc{V}$-categories. The following two examples are the most useful for us. 

\begin{eg}
  Let $\mc{V}$ be the symmetric monoidal category of orthogonal spectra $(\Sp^{O}, \sma)$, symmetric spectra $(\Sp^{\Sigma}, \sma)$ with the stable model structure \cite{mandell_may_shipley_schwede} or the category $\mc{M}_S$ of $S$-modules \cite{EKMM}. Then a small $\mc{V}$-category $\mc{C}$ such that
  \begin{itemize}
  \item $\mc{C}(a,b)$ is cofibrant for each $a, b \in \operatorname{ob} \mc{C}$
  \item The unit inclusion $S \to \mc{C}(c,c)$ is a cofibration for each $c \in \mc{C}$ 
  \end{itemize}
  is very good for $\otimes$ by \cite[Thm.~23.12]{shulman}. The verification amounts to a proof of Reedy cofibrancy for the necessary bar constructions.
\end{eg}

\begin{eg}
Let $\operatorname{Ch}_k$ denote the category of chain complexes equipped with the projective model structure. Then $\operatorname{Ch}_k$-enriched categories that are pointwise cofibrant are very good. 
\end{eg}

\subsection{Symmetric Monoidal Structure}\label{sec:smb}

In this subsection, we discuss the symmetric monoidal structure on $\mc{B}(\Cat_{\mc{V}})$. Putting a symmetric monoidal structure on a bicategory requires checking many coherence conditions (to see this, consult \cite{stay}). However, in particularly nice situations, such as bicategories that come from double categories \cite{ehresmann}, there are shortcuts available \cite{shulman_symmetric}. Luckily, we are in this situation, and we describe that structure now.

Throughout this section we freely use the language of double categories and double categorical notation. Horizontal morphisms will be marked with a vertical bar $|$ for clarity. We also recall that there are two ways of taking an ``opposite'' double category: one horizontally, which we call $\mc{D}^{\text{hop}}$ and one vertically, which we call $\mc{D}^{\text{vop}}$. Finally, we denote the underlying bicategory of a double category by $\underline{\mc{D}}$. 

The workhorse theorem in this section is a theorem of Shulman's \cite[Thm.~1.1]{shulman_symmetric} that states that the underlying bicategory of a symmetric monoidal double category is also symmetric monoidal. 

\begin{thm}\cite[Thm~1.1]{shulman_symmetric}
If $\mc{D}$ is a fibrant (see \cref{defn:fibrant_double_category}) symmetric monoidal double category, then the underlying bicategory $\underline{\mc{D}}$ is symmetric monoidal. 
\end{thm}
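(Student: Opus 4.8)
The plan is to prove the statement exactly as worded, namely to show that for a fibrant symmetric monoidal double category $\mc{D}$ the underlying bicategory $\underline{\mc{D}}$ inherits a symmetric monoidal structure. The overall strategy is to transfer each piece of data of the symmetric monoidal double category --- the monoidal product functor $\otimes\colon\mc{D}\times\mc{D}\to\mc{D}$, the unit object, the associator, unitors, braiding, and all the higher coherence modifications --- to the underlying bicategory, using fibrancy at precisely the points where a double-categorical notion (2-cells in squares, or horizontal/vertical 1-cells) must be converted into a purely bicategorical one. Fibrancy (\cref{defn:fibrant_double_category}) is exactly the condition that every vertical 1-cell has a companion and a conjoint; this is what lets us turn the niche-filling and restriction operations of a double category into the pseudonatural transformations and invertible modifications required of a symmetric monoidal bicategory.

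The key steps, in order, would be as follows. First I would recall that taking the underlying bicategory $\underline{(-)}$ of a double category is a functor on an appropriate 2-category (or tricategory) of double categories, functors, transformations, and modifications --- this is Shulman's framework in \cite{shulman_symmetric}, and the content is that it sends companion-adjoint-containing structures to their bicategorical shadows. Second, I would apply this functor to the pseudomonoid structure on $\mc{D}$ inside that 2-category: the monoidal product, unit, associativity adjoint equivalence, and unit adjoint equivalences all descend directly, since $\underline{(-)}$ preserves products and sends the relevant transformations to pseudonatural transformations of the underlying bicategories. Third, I would transport the invertible modifications $\pi$ (the pentagonator), $\lambda,\mu,\rho$ (the unit coherators), and the braiding data $b$, $R$, $S$, $\nu$: each of these is an invertible modification of double functors in $\mc{D}$, and $\underline{(-)}$ carries it to an invertible modification of the corresponding pseudofunctors, which is exactly the data required by \cref{defn:symm_mon_duality}'s bicategorical analogue (the definition of symmetric monoidal bicategory recalled in \cref{sec:monoidal_bicat}). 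Fourth, I would check that the equations of modifications --- the three pentagon-type axioms, the four unit axioms, the hexagon shuffles, and the axioms relating $\nu$ to $R$, $S$ --- are preserved; these hold because the functor $\underline{(-)}$ is (op)lax or pseudo enough that an equality of pasting composites in $\mc{D}$ maps to the corresponding equality in $\underline{\mc{D}}$, and because fibrancy guarantees the companion/conjoint choices are coherent up to canonical isomorphism so no new obstruction arises.

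The main obstacle, and the step I expect to require the most care, is the transport of the braiding and its coherence. In a symmetric monoidal double category the braiding is a functor-level structure together with modifications $R, S, \nu$ living over squares; to produce the braiding $b\colon A\otimes B\to B\otimes A$ as a pseudonatural equivalence of the underlying bicategory one must use companions to replace the vertical-1-cell components with horizontal-1-cell (hence bicategorical-1-cell) data, and then verify that the naturality squares, which are 2-cells in $\mc{D}$, restrict correctly to invertible 2-cells in $\underline{\mc{D}}$ compatible with the companion transition isomorphisms. The hexagon axioms $R$ and $S$ then become equalities of pasting diagrams mixing the associator, the braiding, and these companion transitions, and checking that they reduce to the bicategorical hexagons is the genuinely delicate bookkeeping. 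I would organize this by first establishing a clean lemma (essentially Shulman's) that $\underline{(-)}$ on the tricategory of fibrant double categories is a functor preserving all of the structure of a braided/symmetric pseudomonoid, so that the theorem becomes: apply this functor to $\mc{D}$. With that lemma in hand the remaining verifications are routine, and the proof is complete once one notes that fibrancy of $\mc{D}$ is used precisely to invoke the lemma.
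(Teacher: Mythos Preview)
The paper does not prove this theorem; it is quoted verbatim as \cite[Thm~1.1]{shulman_symmetric} and used as a black box in \cref{sec:smb}. There is therefore no proof in the paper to compare your proposal against.

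That said, your outline is a faithful sketch of Shulman's actual argument in \cite{shulman_symmetric}: he constructs a locally-full sub-2-category of fibrant double categories, shows that the ``underlying bicategory'' construction extends to a product-preserving functor of tricategories on it (using companions and conjoints to convert vertical transformations into pseudonatural ones), and then transports the symmetric pseudomonoid structure along this functor. Your identification of the braiding transport as the delicate step is correct; Shulman handles it by proving once and for all that companion/conjoint assignments are pseudofunctorial and coherent, after which the hexagon and syllepsis axioms follow formally. If you were to flesh this out, the one place to be careful is that you need the companions to be chosen compatibly with the monoidal structure (or to show the choice does not matter up to unique isomorphism), which is the content of Shulman's Theorems 3.7 and 4.1 in that reference.
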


There are a number of conditions to be introduced and checked in order to be able to use this theorem. Although the following list may seem overwhelming, each of the verifications is trivial, and almost all of them follow from a simple statement about the category $\mc{V}$. 

\begin{itemize}
\item Show that $\operatorname{Ho}(\mc{B}(\Cat_{\mc{V}}))$ is the underlying bicategory of a double category. 
\item Define the tensor product and show that tensor product descends to homotopy category. 
\item Show that the tensor product is a $\operatorname{Ho}(\mc{V})$-functor.
\item Construct the ``globular morphisms'' or ``tensorators'' and unit morphisms.
\item Show that the globular morphisms and unit morphisms satisfy the required axioms.
\item Show that the double category possesses companions and conjoints (i.e. forms a \textit{fibrant} double category).
\item Apply Shulman's theorem.
\end{itemize}

We define the main double category of interest.

\begin{defn}\label{defn:double_cat}
  Let $\mc{D}(\Cat_{\mc{V}})$ be  a double category with
  \begin{itemize}
  \item \textbf{category of objects, $\mc{D}_0$}: very good $\mc{V}$-categories $\mc{A},\mc{B},\mc{C}...$ with morphisms given by functors between very good categories. 
  \item \textbf{category of morphisms, $\mc{D}_1$:} the category $(\mc{A},\mc{B})$-bimodules and morphisms homotopy classes of $\mc{V}$-natural transformations,  $\operatorname{Ho}(_{\mc{A}} \Mod_{\mc{B}})$
  \item \textbf{horizontal composition:} Horizontal composition is given by derived tensor product
    \[
    \odot^{\mbf{L}}: \operatorname{Ho}(_{\mc{A}} \Mod_{\mc{B}}) \times  \operatorname{Ho}(_{\mc{B}} \Mod_{\mc{C}}) \to  \operatorname{Ho}(_{\mc{A}} \Mod_{\mc{C}})
    \]
  \item \textbf{unit, source, target:} The unit $\mc{D}_0 \to \mc{D}_1$ is given by $\mc{A}\mapsto U_{\mc{A}}$. The source and target functors are the left and right categories. 
  \item \textbf{associators, left unit, right unit}. The associator expresses the associativity of composition:
    \[
    \begin{tikzcd}
      (M \odot^{\mbf{L}} N) \odot^{\mbf{L}} P \xrightarrow{\cong} M \odot^{\mbf{L}} (N \odot^{\mbf{L}} P) 
    \end{tikzcd}
    \]
    and the units are typical the maps expressing that tensoring by the unit is an isomorphism:
    \[
    \begin{tikzcd}
      l : U_A \odot^{\mbf{L}} M \ar{r}{\cong} & M & r: M \odot^{\mbf{L}} U_B \ar{r}{\cong} & M 
    \end{tikzcd}
    \]
  \end{itemize}
\end{defn}

We now need to produce a symmetric monoidal structure on this double category. This will involve showing that $\mc{D}_0$ and $\mc{D}_1$ are symmetric monoidal categories that satisfy a number of conditions. For the moment, we assume the following, and we later verify it in cases of interest.

\begin{ass}\label{ass:very_good}
Suppose $\mc{A}, \mc{B}$ are very good $\mc{V}$-categories. Then $\mc{A} \otimes \mc{B}$ is very good.   
\end{ass}

\begin{rmk}
With the exception of the final requirement, it turns out that the requirements of \cref{defn:very_good} are \textit{not} formal, and cannot be transferred automatically. However, in all of the cases of interest, \cref{ass:very_good} does hold, and given how very goodness is verified, it is difficult to imagine a case where it would not. 
\end{rmk}

\begin{eg}
For any of the categories of spectra the tensor product of pointwise cofibrant spectral categories is pointwise cofibrant, so \cref{ass:very_good} is satisfied. 
\end{eg}

\begin{defn}
  Define the symmetric monoidal structure on $\mc{D}_0$ (see \cref{defn:double_cat}) via pointwise tensor: Given $\mc{A}, \mc{B}$ very good small $\mc{V}$-categories, define
  \[
  \mc{A} \otimes \mc{B} ((a_1, b_1), (a_2, b_2)) = \mc{A}(a_1, a_2) \otimes \mc{B}(b_1, b_2)
  \]
\end{defn}

\begin{defn}
  The symmetric monoidal structure on $\mc{D}_1$  (see \cref{defn:double_cat}) is defined pointwise: Given $\mc{M} \in \,_\mc{A}\operatorname{Mod}_{\mc{B}}$, $\mc{N} \in \,_{\mc{C}} \Mod_{\mc{D}}$, define $\mc{M} \otimes \mc{N} \in \,_{\mc{A} \otimes \mc{B}} \Mod_{\mc{C} \otimes \mc{D}}$ by
  \[
  \mc{M} \otimes \mc{N} ((a, b), (c, d)) := \mc{M}(a,b) \otimes \mc{N}(c,d) 
  \]
\end{defn}

\begin{rmk}
The definition above descends to homotopy categories since $\otimes$ is homotopical on $\mc{V}_Q$. Furthermore, since $\otimes$ is an $\operatorname{Ho}(\mc{V})$-functor, so are the tensor products. 
\end{rmk}

We now define (part of) the structure of a symmetric monoidal double category. For a full definition, see \cite[Defn.~2.9]{shulman_symmetric}. The main interesting structure in a symmetric monoidal double category is the following. 

\begin{defn}
  The \textbf{globular morphism} or \textbf{tensorator}, $\operatorname{tns}$, on $\operatorname{Ho}(\mc{B}(\Cat_{\mc{V}}))$  is defined as follows. Let $_{\mc{A}}\mc{M}_{\mc{A}'}$, $_{\mc{A}'}\mc{M}'_{\mc{A}''}$, $_{\mc{B}} \mc{N}_{\mc{B}'}$, $_{\mc{B}'} \mc{N}'_{\mc{B}''}$ be objects in $\mc{B}(\Cat_{\mc{V}})$. Define a map 
  \[
  \begin{tikzcd}
    (\mc{M} \otimes \mc{N}) \odot (\mc{M}' \otimes \mc{N}') := |B_\bullet(\mc{M} \otimes \mc{N}, \mc{A} \otimes \mc{B}, \mc{N} \otimes \mc{N}')| \ar{d}{\operatorname{tns}}[swap]{\cong}\\
    (\mc{M} \odot \mc{M}') \otimes (\mc{N} \odot \mc{N}') := |B_\bullet (\mc{M}, \mc{A}', \mc{M}')| \otimes |B_\bullet (\mc{N}, \mc{B}', \mc{N}')| 
  \end{tikzcd}
  \]
  via the shuffle homomorphisms. This descends to the homotopy categories.

  The \textbf{unitor} $U_{\mc{A} \otimes \mc{B}} \to (U_{\mc{A}} \otimes U_{\mc{B}})$ is defined via the maps
  \[
  U_{\mc{A} \otimes \mc{B}} := _{\mc{A} \otimes \mc{B}} (\mc{A}\otimes \mc{B})_{\mc{A} \otimes \mc{B}} \xrightarrow{\cong} _{\mc{A}} \mc{A}_{\mc{A}} \otimes _{\mc{B}} \mc{B}_{\mc{B}} 
  \]
\end{defn}

As per \cite[Defn~2.9]{shulman_symmetric} there are a number of axioms to check. All of them can be checked using the same technique and we give one example. The key point is that the tensor product $\otimes: \mc{V} \times \mc{V} \to \mc{V}$ is homotopical, and the horizontal composition passes to the homotopy category. All of the conditions in \cite[Defn~2.9]{shulman_symmetric} can be verified in the same way. 

\begin{prop} (see \cite[Defn~2.9(iv)]{shulman_symmetric}) 
  Let $\mc{M}_1 \in _{\mc{A}_1} \Mod_{\mc{A}_2}$, $\mc{N}_1 \in _{\mc{B}_1} \Mod_{\mc{B}_2}$,  $\mc{M}_2 \in _{\mc{A}_2} \Mod_{\mc{A}_3}$, $\mc{N}_2 \in _{\mc{B}_2} \Mod_{\mc{B}_3}$,  $\mc{M}_3 \in _{\mc{A}_3} \Mod_{\mc{A}_4}$, $\mc{N}_3 \in _{\mc{B}_3} \Mod_{\mc{B}_4}$. Then the following diagram commutes
  \[
  \begin{tikzcd}
    ((\mc{M}_1 \otimes \mc{N}_1) \odot (\mc{M}_2 \otimes \mc{N}_2)) \odot (\mc{M}_3 \otimes \mc{N}_3) \ar{r}{t \odot 1}\ar{d}{\alpha}  & ((\mc{M}_1 \odot \mc{M}_2) \otimes (\mc{N}_1 \odot \mc{N}_2)) \odot (\mc{M}_3 \otimes \mc{N}_3) \ar{d}{t}\\
    (\mc{M}_1 \otimes \mc{N}_1) \odot ((\mc{M}_2 \otimes \mc{N}_2) \odot (\mc{M}_3 \otimes \mc{N}_3)) \ar{d}{1 \odot t}  & ((\mc{M}_1 \odot \mc{M}_2) \odot \mc{M}_3) \otimes ((\mc{N}_1 \odot \mc{N}_2) \odot \mc{N}_3)\ar{d}{\alpha \otimes \alpha}\\
    (\mc{M}_1 \otimes \mc{N}_1) \odot ((\mc{M}_2 \odot \mc{M}_3) \otimes (\mc{N}_2 \odot \mc{N}_3)) \ar{r} & (\mc{M}_1 \odot (\mc{M}_2 \odot \mc{M}_3)) \otimes (\mc{N}_1 \odot (\mc{N}_2 \odot \mc{N}_3))
  \end{tikzcd}
  \]
\end{prop}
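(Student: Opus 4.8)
The plan is to reduce the pentagon-type diagram to a collection of routine coherence facts about the single tensor product $\otimes\colon \mc{V}\times\mc{V}\to\mc{V}$ and about the associator for $\odot^{\mathbf{L}}$, exactly as flagged in the paragraph preceding the statement. First I would unwind both the tensorator $\operatorname{tns}$ and the associator $\alpha$ to the level of bar constructions, using the formulas $(\mc{M}\otimes\mc{N})\odot(\mc{M}'\otimes\mc{N}') = |B_\bullet(\mc{M}\otimes\mc{N},\mc{A}'\otimes\mc{B}',\mc{N}\otimes\mc{N}')|$ and $(\mc{M}\odot\mc{M}')\otimes(\mc{N}\odot\mc{N}') = |B_\bullet(\mc{M},\mc{A}',\mc{M}')|\otimes|B_\bullet(\mc{N},\mc{B}',\mc{N}')|$, and recalling that $\operatorname{tns}$ is induced by the Eilenberg--Zilber shuffle map. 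Since every map in sight is a homotopy class, and $\otimes$ is homotopical on $\mc{V}_Q$ while the bar constructions on very good categories land in the deformation retracts (\cref{defn:very_good}, \cref{ass:very_good}), it suffices to check that the diagram commutes at the level of the point-set simplicial/realized objects up to the canonical isomorphisms, and then pass to $\operatorname{Ho}(\mc{V})$.

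Next I would observe that the whole diagram lives over a triple bar construction in each variable: the common domain is (up to canonical iso) $|B_\bullet(\mc{M}_1\otimes\mc{N}_1,\;\mc{A}_2\otimes\mc{B}_2,\;B_\bullet(\mc{M}_2\otimes\mc{N}_2,\mc{A}_3\otimes\mc{B}_3,\mc{M}_3\otimes\mc{N}_3))|$ and the common codomain is $|B_\bullet(\mc{M}_1,\mc{A}_2,B_\bullet(\mc{M}_2,\mc{A}_3,\mc{M}_3))|\otimes|B_\bullet(\mc{N}_1,\mc{B}_2,B_\bullet(\mc{N}_2,\mc{B}_3,\mc{N}_3))|$. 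The left-hand composite $\alpha\circ(1\odot t)\circ(\text{---})$ first associates the bar constructions and then shuffles; the right-hand composite $t\circ(t\odot 1)$ first shuffles the outer layer and then the inner layer and then reassociates. So the content is purely a statement about shuffle maps: the Eilenberg--Zilber shuffle maps are associative and compatible with the reassociation isomorphisms of iterated bar constructions. I would factor the verification through the classical fact that the Alexander--Whitney/shuffle maps make realization of simplicial objects a (lax symmetric) monoidal functor, so that shuffling three tensor factors is independent of the order and compatible with the associators $\alpha$ on both sides.

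The key steps, in order, are: (1) pass from homotopy classes to point-set representatives, using that all objects involved lie in the relevant $(-)_Q$ and that $\otimes$ and $B(-,-,-)$ are homotopical there, so that it is enough to produce a commuting diagram of genuine isomorphisms in $\mc{V}$; (2) identify both the domain and codomain of the hexagon as realizations of iterated two-sided bar constructions, one pair for the $\mc{A}$-variables and one for the $\mc{B}$-variables, glued by $\otimes$; (3) rewrite $\alpha$, $1\odot t$, $t\odot 1$, $t$, and $\alpha\otimes\alpha$ as the appropriate combinations of bar-associativity isomorphisms and shuffle maps; (4) invoke associativity and coherence of the shuffle map for realization of simplicial $\mc{V}$-objects, together with the coherence of the bar-construction associativity isomorphism proved in \cite[p.65]{shulman}, to conclude that the two composites agree. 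As the introductory paragraph already asserts, each of the remaining coherence conditions in \cite[Defn~2.9]{shulman_symmetric} is of exactly the same shape and is handled by the identical argument, so I would state this one in detail and remark that the rest are verbatim analogues.

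The main obstacle I anticipate is bookkeeping rather than conceptual: keeping straight the several canonical identifications (the shuffle map, the Reedy-cofibrant realization comparison $\Delta^\bullet\odot_\Delta(-)$, the interchange isomorphism $(M\otimes M')\odot(N\otimes N')\cong(M\odot N)\otimes(M'\odot N')$ from the monoidal structure on $\mc{V}$, and the bar-associativity isomorphism) and checking that they are mutually compatible on the nose before descending to homotopy categories. Once one commits to doing everything at the level of simplicial objects and only realizing at the end, the coherence of the shuffle map does all the work and the homotopy-category descent is immediate from homotopicality of $\otimes$ on $\mc{V}_Q$; but the honest labor is in writing the large diagram of simplicial objects whose commutativity is manifest from Eilenberg--Zilber and the simplicial identities.
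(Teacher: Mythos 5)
Your proposal is correct and follows essentially the same route as the paper: the paper likewise unwinds everything to the level of the simplicial bar constructions, observes that every map in the diagram is induced degreewise by shuffle (symmetry) isomorphisms of tensor factors in $\mc{V}$, and concludes commutativity from the coherence of $\otimes$ on $\mc{V}$ before realizing and descending to homotopy categories. The only cosmetic difference is that you phrase the degreewise check via iterated bar constructions and monoidality of realization, whereas the paper simply displays the level-$n$ diagram of shuffle maps and notes it commutes by assumption on $\mc{V}$.
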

\begin{proof}
  We check that these maps commute at each level of the geometric realizations that define the bicategorical composition. They are induced by the following diagram, where all of the maps are appropriate shuffle homomorphisms. 
  \[
  \begin{tikzpicture}[baseline= (a).base]
    \node[scale=.8] (a) at (0,0){
      \begin{tikzcd}
           (\mc{M}_1 \otimes \mc{N}_1 \otimes (\mc{A}_2 \otimes \mc{B}_2)^{\otimes n} \otimes \mc{M}_2 \otimes \mc{N}_2) \otimes (\mc{A}_3 \otimes \mc{B}_3)^{\otimes n}  \otimes (\mc{M}_3 \otimes \mc{N}_3) \ar{dd}{\alpha} \ar{dr}{t \otimes 1}  & [-150pt]
	\\
	 & ((\mc{M}_1 \otimes \mc{A}^{\otimes n}_2 \otimes \mc{M}_2) \otimes (\mc{N}_1 \otimes \mc{B}^{\otimes n}_2 \otimes \mc{N}_2)) \otimes (\mc{A}_3 \otimes \mc{B}_3)^{\otimes n} \otimes (\mc{M}_3 \otimes \mc{N}_3) \ar{dd}
\\
        \mc{M}_1 \otimes \mc{N}_1 \otimes (\mc{A}_2 \otimes \mc{B}_2)^{\otimes n} \otimes (\mc{M}_2 \otimes \mc{N}_2 \otimes (\mc{A}_3 \otimes \mc{B}_3)^{\otimes n} \otimes (\mc{M}_3 \otimes \mc{N}_3)) \ar{dd} 
	\\
& ((\mc{M}_1 \otimes \mc{A}^{\otimes n}_2 \otimes \mc{M}_2) \otimes \mc{A}^{\otimes n}_3 \otimes \mc{M}_3) \otimes ((\mc{N}_1 \otimes \mc{B}^{\otimes n}_2 \otimes \mc{N}_2) \otimes \mc{B}^{\otimes n}_3 \otimes  \mc{N}_3 ) \ar{dd}\\
        \mc{M}_1 \otimes \mc{N}_1 \otimes (\mc{A}_2 \otimes \mc{B}_2)^{\otimes n} \otimes ((\mc{M}_2 \otimes \mc{A}^{\otimes n}_3 \otimes \mc{M}_3 ) \otimes (\mc{N}_2 \otimes \mc{B}^{\otimes n}_3 \otimes  \mc{N}_3)\ar{dr} 
	& 
	\\
         &(\mc{M}_1 \otimes \mc{A}^{\otimes n}_2 \otimes (\mc{M}_2 \otimes \mc{A}^{\otimes n}_3 \otimes \mc{M}_3)) \otimes (\mc{N}_1 \otimes \mc{B}^{\otimes n}_2 \otimes (\mc{N}_2 \otimes \mc{B}^{\otimes n}_3 \otimes  \mc{N}_3))   
      \end{tikzcd}
    };
  \end{tikzpicture}
  \]
  This diagram commutes by assumption. 
\end{proof}

\begin{rmk}
The other verifications are  as tedious and unilluminating as this one and so we leave them to the highly motivated reader.  They are all  consequences of the corresponding statements for $\mc{V}$. 
\end{rmk}

In order to apply \cite[Thm.~1.2]{shulman_symmetric} we need one more concept. Essentially, we need to be able to flip vertial morphisms into horizontal morphisms. 

\begin{defn}\cite[Defn.~3.1]{shulman_symmetric}
  Let $\mc{D}$ be a double category and let $f: A \to B$ be a horizontal morphism. A \textbf{companion} for $f$ is a horizontal morphism $\widehat{f}: A \to B$ together with 2-morphisms 
  \[
  \begin{tikzcd}
    A \arrow[r, mid vert, "\widehat{f}",  ""{name = U, below}] \arrow[d, swap,  "f"] & B \ar[d,equals]\\
    B \arrow[r,mid vert,swap, "U_B", ""{name = D, above}] \arrow[Rightarrow, from = U, to = D] & B
  \end{tikzcd}
  \qquad
  \begin{tikzcd}
    A\ar[d,equals] \arrow[r, mid vert,  "U_A ", ""{name = U, below}] & A\ar{d}{f} \\
    A\ar[r, mid vert,swap, " \widehat{f} ", ""{name = D, above}] \arrow[Rightarrow, from = U, to = D] & B 
  \end{tikzcd}
  \]
  such that the conditions in \cite[Eqn.~3.2]{shulman} hold.  A \textbf{conjoint}  for $f$ is a horizontal morphism $\check{f}: B \to A$ that is a companion for $f$ in $\mc{D}^{hop}$. 
\end{defn}

\begin{defn}\cite[Defn.~3.4]{shulman_symmetric}\label{defn:fibrant_double_category}
A \textbf{fibrant} double category is one where every vertical morphism $f: A \to B$ has a companion $\widehat{f}$ and a conjoint $\check{f}$. 
\end{defn}

\begin{lem}
$\mc{D}(\Cat_{\mc{V}})$ is a fibrant double category. 
\end{lem}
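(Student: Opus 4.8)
The statement is that $\mc{D}(\Cat_{\mc{V}})$ is a fibrant double category, i.e.\ every vertical morphism has a companion and a conjoint. The vertical morphisms here are functors $F\colon \mc{A}\to\mc{B}$ between very good $\mc{V}$-categories, and the horizontal morphisms are (homotopy classes of) bimodules. The plan is to exhibit, for each such $F$, the base change bimodules of \cref{defn:base_change} as the companion and conjoint, and to produce the four structure 2-cells together with the coherence identities of \cite[Eqn.~3.2]{shulman}.

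First I would recall the candidates. For a functor $F\colon\mc{A}\to\mc{B}$, the companion should be the $(\mc{A},\mc{B})$-bimodule $_F\mc{B}$ defined by $(_F\mc{B})(a,b)=\mc{B}(F(a),b)$, with $\mc{A}$ acting on the left through $F$ and $\mc{B}$ acting on the right by composition; the conjoint should be the $(\mc{B},\mc{A})$-bimodule $\mc{B}_F$ with $\mc{B}_F(b,a)=\mc{B}(b,F(a))$. These are exactly the base change 1-cells, and the point of \cref{defn:fibrant_double_category} being about \emph{companions} and \emph{conjoints} (rather than mere adjoints) is that these two bimodules are related by $\mc{B}_F=$ the conjoint of the companion in $\mc{D}^{\mathrm{hop}}$, which is immediate from the definitions since passing to $\mc{D}^{\mathrm{hop}}$ swaps the roles of left and right module structures. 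Next I would write down the four 2-cells. The ``unit into companion'' 2-cell $U_{\mc{A}}\to\, _F\mc{B}\odot^{\mathbf{L}}U_{\mc{B}}\simeq\, _F\mc{B}$ is the $\mc{V}$-natural transformation $\mc{A}(a,a')\to\mc{B}(F(a),F(a'))$ given by the enrichment of $F$; the ``companion down to unit'' 2-cell $U_{\mc{A}}\odot^{\mathbf{L}}{}_F\mc{B}\simeq\, _F\mc{B}\to U_{\mc{B}}$ (after composing along $F$) is again built from functoriality of $F$, i.e.\ the map $\mc{B}(F(a),b)\to\mc{B}(b,b)$ is not what we want --- more precisely, the relevant square has $F$ on the vertical edges and $U$ on the horizontal ones, and the 2-cell is the canonical comparison $\mc{A}(a,a')\otimes\mc{B}(F(a'),b)\to\mc{B}(F(a),b)$. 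The conjoint 2-cells are obtained by applying the same construction in $\mc{D}^{\mathrm{hop}}$.

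The two coherence conditions of \cite[Eqn.~3.2]{shulman} assert that the two evident pastings of these 2-cells along $F$ and along $_F\mc{B}$ respectively equal identity 2-cells. I would verify these by observing that, after unwinding the derived tensor products via the bar construction of \cref{def:bar}, both pastings reduce on each simplicial level to the associativity and unit axioms for composition in the $\mc{V}$-category $\mc{B}$ together with the functoriality axioms for $F$, which hold strictly; hence they hold after geometric realization and after passing to the homotopy category. This is the place where one uses that $\mc{A}\otimes\mc{B}$ etc.\ remain very good (\cref{ass:very_good}) so that the derived composites are computed by these bar constructions and the identities are not merely up to higher homotopy. The main obstacle, and the only non-formal point, is precisely this last bookkeeping: one must check that the strict companion/conjoint structure on the underlying (point-set) double category of bimodules --- which is standard, as in \cite{shulman}, \cite{shulman_symmetric} --- descends correctly to the homotopy category $\operatorname{Ho}(_{\mc{A}}\Mod_{\mc{B}})$, i.e.\ that the base change bimodules are already homotopically well-behaved (pointwise cofibrant when $\mc{B}$ is very good, so that $\odot^{\mathbf{L}}$ agrees with $\odot$ on them) and that the defining 2-cells are weak equivalences where they need to be. Given the hypotheses already in force in \cref{thm:main_construction} and \cref{ass:very_good}, this is routine, and I would simply cite \cite[Defn.~11.5, Thm.~11.6]{shulman_symmetric} for the abstract fact that the double category of a ``framed bicategory''-style construction built from a deformable two-variable adjunction is fibrant, checking only that our hypotheses supply the inputs to that theorem.
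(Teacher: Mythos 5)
Your proposal is correct and follows the same route as the paper, whose entire proof is the single observation that the companions and conjoints are the base change 1-cells of \cref{defn:base_change}; you simply spell out the structure 2-cells and the coherence checks that the paper leaves implicit. The extra homotopical bookkeeping you flag (that the base change bimodules are pointwise cofibrant when the target is very good, so the derived and point-set composites agree) is exactly the right thing to worry about and is covered by the standing hypotheses.
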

\begin{proof}
The companions and conjoints are the base change objects of  \cref{defn:base_change}. 
\end{proof}

With all of the requirements verified, we can now state the following theorem. 

\begin{thm}\label{main_categorical_construction}
$\mc{D}(\Cat_{\mc{V}})$ is a symmetric monoidal double category, and so its underlying bicategory, $\operatorname{Ho}(\mc{B}(\Cat_{\mc{V}}))$ is symmetric monoidal. 
\end{thm}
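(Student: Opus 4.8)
The plan is to verify that $\mc{D}(\Cat_{\mc{V}})$ satisfies all the hypotheses of \cite[Thm.~1.1]{shulman_symmetric}, namely that it is a fibrant symmetric monoidal double category, and then invoke that theorem together with the fact that $\operatorname{Ho}(\mc{B}(\Cat_{\mc{V}}))$ is the underlying bicategory of $\mc{D}(\Cat_{\mc{V}})$ (which holds by \cref{thm:main_construction} and \cref{defn:double_cat}). Concretely, the verification breaks into three pieces: that $\mc{D}_0$ and $\mc{D}_1$ are symmetric monoidal categories with compatible structure, that the globular tensorators and unitors are well-defined and satisfy the coherence axioms of \cite[Defn.~2.9]{shulman_symmetric}, and that the double category is fibrant.

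The first step is to record that $\mc{D}_0$ is symmetric monoidal under pointwise tensor product; here we invoke \cref{ass:very_good} to guarantee $\mc{A} \otimes \mc{B}$ is again very good, so the tensor product stays inside $\mc{D}_0$. Symmetry, associativity, and unit coherence for the pointwise tensor are inherited directly from the symmetric monoidal structure on $\mc{V}$. Similarly, $\mc{D}_1$ is symmetric monoidal under the pointwise tensor of bimodules, and this descends to homotopy categories because $\otimes$ is homotopical on $\mc{V}_Q$ (so preserves the weak equivalences being inverted) and is an $\operatorname{Ho}(\mc{V})$-functor. We then check compatibility of the two symmetric monoidal structures with the source, target, and unit functors of the double category, which is immediate from the definitions since all the structure is pointwise.

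The second step is to produce the tensorator $\operatorname{tns}$ and the unitor, which are already displayed in \cref{defn:double_cat} and the surrounding text: they are induced by shuffle homomorphisms on the simplicial bar constructions and descend to homotopy categories. The heart of the argument is verifying the axioms of \cite[Defn.~2.9]{shulman_symmetric}. As illustrated by the Proposition just above (the instance \cite[Defn~2.9(iv)]{shulman_symmetric}), each axiom is checked levelwise on the geometric realizations defining bicategorical composition, where it reduces to a diagram of shuffle homomorphisms that commutes by the corresponding coherence in $\mc{V}$. The remaining axioms---compatibility of $\operatorname{tns}$ with the left and right unitors, with the braiding, and the other associativity pentagons---are verified by the identical technique, so we cite the Remark above and omit the routine diagram chases. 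Finally, fibrancy is exactly the lemma proved just above: every vertical morphism (functor of very good $\mc{V}$-categories) has a companion and conjoint given by the base change bimodules of \cref{defn:base_change}. Having assembled all of these, \cite[Thm.~1.1]{shulman_symmetric} applies and yields that $\underline{\mc{D}(\Cat_{\mc{V}})} = \operatorname{Ho}(\mc{B}(\Cat_{\mc{V}}))$ is symmetric monoidal, completing the proof.

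The main obstacle is not conceptual but bookkeeping: confirming that the tensorators and unitors remain well-defined after passage to homotopy categories and that every coherence cell of \cite[Defn.~2.9]{shulman_symmetric} really does commute at the simplicial level. The substance of why this works has already been isolated---it always reduces to a shuffle-homomorphism identity in $\mc{V}$, and the homotopical content is entirely absorbed by the deformation-theoretic framework of \cref{sec:enriched} (so that $\odot^{\mathbf{L}}$ and $\otimes$ are genuinely functors on homotopy categories). Thus the proof is, as the section title suggests, an application of Shulman's machinery rather than new work.
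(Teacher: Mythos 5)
Your proposal is correct and follows essentially the same route as the paper: the theorem is presented there as the culmination of the preceding verifications (symmetric monoidal structures on $\mc{D}_0$ and $\mc{D}_1$ via pointwise tensor under \cref{ass:very_good}, the shuffle-homomorphism tensorator and unitor, one coherence axiom of \cite[Defn.~2.9]{shulman_symmetric} checked levelwise with the rest left as analogous, and fibrancy via the base change companions and conjoints), followed by an appeal to \cite[Thm.~1.1]{shulman_symmetric}. Nothing in your argument deviates from or adds to the paper's treatment.
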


The symmetric monoidal bicategory 
$\operatorname{Ho}(\mc{B}(\Cat_{\mc{V}}))$
has a shadow and it coincides with Hochschild homology in cases of interest.

\begin{defn}\label{def:cyclic_bar}
  Let $\mc{C}$ be a very good $\mc{V}$-category and $\mc{M}$ be a $(\mc{C},\mc{C})$-module in $\mc{B}(\Cat_{\mc{V}})$.
	We define $\sh{\mc{M}}$ to be the coend
  \begin{align*}
    \sh{\mc{M}} &:= \mbf{L}\int^{c} \mc{M}(c,c) \\
    &=\biggl| \coprod_{c_0, c_1, \dots, c_n} \mc{C}(c_0,c_1) \otimes \cdots \otimes \mc{C}(c_{n-1},c_n) \otimes \mc{M}(c_n, c_0) \biggr|
  \end{align*}
  This descends to the homotopy category to define a functor \[\sh{-}: \coprod_{\mc{C}} \operatorname{Ho}(\Cat_{\mc{V}}) \to \operatorname{Ho}(\mc{V}).\]
\end{defn}

 The equality follows since a coend is a colimit and derived homotopy colimits are computed via the bar construction. 

\begin{eg}
  If $\mc{M}$ is a spectral $(\mc{C},\mc{C})$-bimodule 
	then 
  \[
  \sh{\mc{M}} \cong \thh(\mc{C}; \mc{M})
  \]

  If $\mc{M}$ is a dg-$(\mc{A},\mc{A})$-bimodule then
  \[
  \sh{\mc{M}} \cong \hh(\mc{A};\mc{M}) 
  \]
\end{eg}

\begin{rmk} In this paper we ignore the $S^1$-equivariance of the Hochschild and topological Hochschild constructions. Dealing with the $S^1$-equivariance requires working exclusively with point-set models rather than homotopy categories. It seems possible that some of the results in this paper could be stated for topological restriction homology if the traces were handled with care. 
\end{rmk}

\bibliographystyle{amsalpha2}
\bibliography{fixed_points}
\end{document}